\theoremstyle{plain}
\newtheorem{thm}{Theorem}[section]
\newtheorem{lem}[thm]{Lemma}
\newtheorem{prop}[thm]{Proposition}
\newtheorem{cor}[thm]{Corollary}
\newtheorem{conj}[thm]{Conjecture}
\theoremstyle{definition}
\newtheorem{eg}[thm]{Example}
\newtheorem{defn}[thm]{Definition}
\newtheorem{remark}[thm]{Remark}
\newtheorem{faidhb}[thm]{Problem}
\date{}
\newcommand\bit{\begin{itemize}}
\newcommand\eit{\end{itemize}}
\newcommand\bet{\begin{enumerate}}
\newcommand\eet{\end{enumerate}}
\newcommand\ed{\end{document}}
\DeclareFontFamily{U}{mathx}{\hyphenchar\font45}
\DeclareFontShape{U}{mathx}{m}{n}{
      <5> <6> <7> <8> <9> <10>
      <10.95> <12> <14.4> <17.28> <20.74> <24.88>
      mathx10
      }{}
\DeclareSymbolFont{mathx}{U}{mathx}{m}{n}
\DeclareMathAccent{\widecheck}{0}{mathx}{"71}
\DeclareMathAccent{\wideparen}{0}{mathx}{"75}
\renewcommand{\d}{\delta}
\newcommand{\e}{\varepsilon}
\newcommand{\f}{\varphi}
\newcommand\w{\omega}
\newcommand\Om{\Omega}
\newcommand\adel{\ol{\partial}}
\newcommand\DEL{\Delta}
\newcommand\G{\Gamma}
\newcommand\bC{{\mathbb C}}
\newcommand\bR{{\mathbb R}}
\newcommand\F{{\mathcal F}}
\newcommand\MM{{\mathcal M}}
\newcommand\NN{{\mathcal N}}
\renewcommand{\O}{\mathcal{O}}
\newcommand\co{\mathrm{co}}
\newcommand\exd{\mathrm{d}}
\newcommand\unit{\mathrm{U}}
\newcommand\counit{\mathrm{C}}
\newcommand\id{\mathrm{id}}
\newcommand\proj{\mathrm{proj}}
\newcommand\inv{^{-1}}
\newcommand\oby{\otimes}
\newcommand\sseq{\subseteq}
\def\qbinom#1#2{\ensuremath{\left[\kern-.3em\left[\genfrac{}{}{0pt}{}{#1}{#2}\right]\kern-.3em\right]_q}}
\newcommand\ol{\overline}
\newcommand\EE{{\mathcal E}}
\newcommand\FF{{\mathcal F}}
\newcommand{\OO}{\mathcal{O}}
\newcommand{\xmark}{\ding{55}}%
\author[R. \'O Buachalla]{R\'eamonn \'O Buachalla}
\address{Mathematical Institute of Charles University, Sokolovsk\'a 83, Prague, Czech Republic} 
\email{reamonnobuachalla@gmail.com}
\author[P. Somberg]{Petr Somberg}
\address{Mathematical Institute of Charles University, Sokolovsk\'a 83, Prague, Czech Republic} \email{somberg@karlin.mff.cuni.cz}
\title[A Dolbeault Complex  for $A$-Series Full Quantum Flag Manifolds]{Lusztig's Positive Root Vectors and a Dolbeault Complex for the $A$-Series Full Quantum Flag Manifolds} 
\dedicatory{I nd\'il chuimhne ar She\'an \'O Buachalla}
\thanks{R\'{O}B is supported by the GA\v{C}R/NCN grant \emph{Quantum Geometric Representation Theory and Non- commutative Fibrations} 24-11728K, and acknowledges support from COST Action 21109 CaLISTA, supported by COST (European Cooperation in Science and Technology) and HORIZON- MSCA-2022-SE-01-01 CaLIGOLA. AC acknowledges support from MSCA-DN CaLiForNIA - 101119552. PS is supported by the GA\v{C}R grant GA22-00091S.}
\begin{document}

\begin{abstract}
For the Drinfeld--Jimbo quantum enveloping algebra $U_q(\mathfrak{sl}_{n+1})$, we show that the span of Lusztig's positive root vectors,  with respect to Littlemann's nice reduced decompositions of the longest element of the Weyl group $S_{n+1}$, form quantum tangent spaces  for the full quantum flag manifold $\mathcal{O}_q(\mathrm{F}_{n+1})$. The associated differential calculi are direct $q$-deformations of the anti-holomorphic Dolbeault complex of the classical full flag manifold $\mathrm{F}_{n+1}$. As an application we establish a quantum Borel--Weil theorem for $\mathcal{O}_q(\mathrm{F}_{n+1})$, giving a noncommutative differential geometric realisation of all the finite-dimensional type-$1$ irreducible representations of $U_q(\mathfrak{sl}_{n+1})$. Restricting this differential calculus to the quantum Grassmannians is shown to reproduce the celebrated Heckenberger--Kolb anti-holomorphic Dolbeault complex. Lusztig's positive root vectors for non-nice decompositions of the longest element of $S_{n+1}$ are examined for low orders, and are exhibited  to either not give tangents spaces, or to produce differential calculi of non-classical dimension.
\end{abstract}

\maketitle

\tableofcontents

\section{Introduction}

The Heckenberger--Kolb differential calculi for the irreducible quantum flag manifolds are among the most important and well-studied structures in the noncommutative geometry of quantum groups. They are an  essentially unique covariant $q$-deformation of the de Rham complex of the classical flag manifolds of Hermitian symmetric type, a striking phenomemon given the many difficulties in formulating a comprehensive theory of noncommutative geometry for Drinfeld--Jimbo quantum groups. Moreover, as has become increasingly clear in recent years, these calculi come with an extremely rich $q$-deformed  K\"ahler geometry. They admit direct noncommutative generalisations of Lefschetz decomposition and the K\"ahler identities \cite{MMF3,MarcoConj}, of the Borel--Weil theorem \cite{BwGrassDM,CDOBBW}, and Kodaira vanishing \cite{OSV}. This has allowed for the construction of Dolbeault--Dirac Fredholm operators from noncommutative complex geometric arguments \cite{DOSFred}, and for the construction of spectral triples, in the sense of Connes, for special cases such as quantum projective space \cite{DOS1} and the first odd quantum quadric \cite{SpectTripBGG}.

It is natural to ask is what happens outside the irreducible setting, which is to say, when the classical flag manifold is no longer of Hermitian symmetric type. In this more general setting, quantum flag manifolds exhibit a richer and more challenging structure. Even in the classical setting, the  holomorphic and anti-holomorphic tangent spaces of a non-irreducible flag manifold are no longer irreducible as modules over the corresponding Levi subalgebra. For example, the space of one forms of  the full flag manifold decomposes into a direct sum of line bundles. Moreover, in the quantum setting, the  dual coalgebras of the non-irreducible quantum flag manifolds contain far fewer twisted primitive elements.  Even though the Heckenberger--Kolb calculi appeared over twenty years ago, up until now, the question of noncommutative differential structures for non-irreducible examples has received very little attention. There are, to our knowledge, just two studies: First is the quantum BGG sequence of Heckenberger and Kolb \cite{HKBGG}. Second is the work of Yuncken and Voigt  \cite{VoigtYuncken} on the  noncommutative geometry of the full quantum flag manifold of $\OO_q(\mathrm{SU}_3)$, which resulted in a proof of the quantum Baum--Connes conjecture for the discrete quantum dual of $\OO_q(\mathrm{SU}_3)$.

Here we adopt an approach to this problem based on Lusztig's celebrated root vectors. Classically the tangent space of the full flag manifold is isomorphic to the nilpotent Lie algebra $\mathfrak{n} \subseteq \mathfrak{g}$.  Formulating a quantum analogue of the inclusion $\mathfrak{g} \hookrightarrow U(\mathfrak{g})$ is a subtle question, see for example the approach of Majid \cite{BraidLieMaj}. In \cite{LusztigLeabh}, Lusztig showed  that the embedding of the nilpotent subalgebra $\mathfrak{n} \hookrightarrow  U(\mathfrak{g})$ has a direct quantum analogue for each choice of reduced decomposition of the longest element of the Weyl group of $\mathfrak{g}$. These quantum root vectors were originally introduced to prove a quantum generalisation of the classical Poincar\'e--Birkhoff--Witt theorem, and would later find many striking applications, most notably the theory of canonical bases \cite{LusztigLeabh}.  Our approach to extending the anti-holomorphic Heckenberger--Kolb construction starts by taking the span of Lusztig's positive root vectors as a tangent space for the full quantum flag, and then constructing a differential calculus from it. This allows us to make novel connections between the structural theory  of Drinfeld--Jimbo quantum groups and their noncommutative differential geometry.

In this paper we focus on the $A$-series Drinfeld--Jimbo quantum groups, and the reduced decomposition $\mathbf{j}$ given by
\begin{align} \label{eqn:nicereduceddecomp}
w_0 = (s_ns_{n-1} \cdots s_1)(s_ns_{n-1} \cdots s_{2}) \cdots (s_ns_{n-1})s_n.
\end{align}
This decomposition is one of the two nice decompositions in the sense of Littlemann \cite[\textsection 5]{LittleCrystal}. We show that the associated space of positive Lusztig root vectors forms a quantum tangent space for the full quantum flag manifold. The associated covariant differential calculus is a direct $q$-deformation of the anti-holomorphic subcomplex of the classical $A$-series full quantum flag manifold, in particular it has classical dimension. Its quantum exterior algebra admits a concise generator and relation presentation in terms of the $A$-series root system. The quantum exterior algebra is moreover a Frobenius algebra. Notably, its Nakayama automorphism is \emph{not} $q$-deformed. Restricting this differential calculus to the partial quantum flag manifolds then gives a differential calculus for each of them. In particular, for the quantum Grassmannians, we recover the anti-holomorphic subcomplex of the Heckenberger--Kolb differential calculus.

In order to explore and test our new construction, we prove a direct $q$-deformation of the Borel--Weil theorem for the full flag manifold. This gives noncommutative geometric realisations of all the type-$1$ finite-dimensional representations of $U_q(\mathfrak{sl}_{n+1})$, extending previous work for the irreducible quantum flag case \cite{BwGrassDM,CDOBBW}. The proof requires the introduction of a quantum principal bundle presentation, in the sense of Majid and Brzezinski, of our differential calculus. Moreover, it presents the relative line modules over $\OO_q(\mathrm{F}_{n+1})$ as noncommutative holomorphic line modules in the sense of Beggs and Majid \cite[\textsection 7]{BeggsMajid:Leabh}.

For low-dimensional cases, we also examine the behaviour of the positive Lusztig roots vectors associated to the other reduced decompositions. We see that for $A_3$, all $8$ commutation classes of reduced decompositions give tangent spaces. However, apart from the decompositon dual to \eqref{eqn:nicereduceddecomp} (that is the decomposition given by the action of the opposite automorphism of the $A$-series Dynkin diagram on $\mathbf{j}$) the associated differential calculi have non-classical dimension. Moreover, for the case of $A_4$, there are reduced decompositions that are not even tangent spaces. This motivates the conjecture that for the $A$-series, Littlemann's nice reduced decompositions are the only decompositions with a well-behaved noncommutative geometry.

This work leads naturally to a number of future topics. First is the question of a \mbox{$q$-deformation} of the entire Dolbeault double complex, along with its classical complex geometry. This will be addressed in subsequent work using Lusztig's negative root vectors  \cite{ACROBJR}. Subsequently, the $B,C$, and $D$ series will be treated following the same program. Ultimately, it is hoped that this will show the way to an understanding of the noncommutative geometry of Drinfeld--Jimbo coordinate algebras themselves.

\subsection{Summary of the Paper}

The paper is organised as follows: In \textsection 2 we recall some basic material about covariant differential calculi over Hopf algebras and their associated tangent spaces, as well as the maximal prolongation of covariant first-order differential calculi. 

In \textsection 3 we treat the space of positive root vectors associated to the reduced decomposition of the longest element of the Weyl group given in \eqref{eqn:nicereduceddecomp}, and show that their span forms a quantum tangent space for $\OO_q(\mathrm{SU}_{n+1})$. We then determine the module structure of the dual cotangent space, and give a full set of relations for the maximal prolongation of the associated differential calculus $\Omega^{(0,\bullet)}_q(\mathrm{SU}_{n+1})$. We also present a number of instructive low-dimensional examples. We then examine the algebraic properties of the maximal prolongation, and show that it has classical dimension by examining the associated graded algebra of a canonical filtration on the space of left-coinvariant forms $\Lambda^{(0,\bullet)}_q$. We also show that the graded algebra $\Lambda^{(0,\bullet)}_q$ is a Frobenius algebra and show that the associated Nakayama automorphism is of classical type.

In \textsection 4 we begin by recalling some basic results on quantum homogeneous spaces and their tangent spaces. We then examine the restriction of $\Omega^{(0,\bullet)}_q(\mathrm{SU}_{n+1})$ to the quantum Grassmannians, showing  that we recover the anti-holomorphic Heckenberger--Kolb differential calculus.  

In \textsection 5 we examine the restriction of $\Omega^{(0,\bullet)}_q(\mathrm{SU}_{n+1})$ to the full quantum flag manifold $\OO_q(\mathrm{F}_{n+1})$. We show that this differential calculus has classical dimension, and present it as a direct sum of line modules over $\OO_q(\mathrm{F}_{n+1})$. Notably, we show that its right module structure is more complicated than that of the Heckenberger--Kolb calculus. In particular, we show the differential calculus is not contained the subcategory of relative Hopf modules to which the simple monoidal version of Takeuchi's categorical equivalence applies.

In \textsection 6 we prove a direct $q$-deformation of the Borel--Weil theorem for the classial full flag manifold, giving noncommutative geometric realisations of all the type-$1$ finite-dimensional irreducible representations of $U_q(\mathfrak{sl}_{n+1})$.

In \textsection 7, for the special cases of $U_q(\mathfrak{sl}_4)$ and $U_q(\mathfrak{sl}_5)$, we examine the situation for reduced decompositions other than Littlemann's nice decompositions. In all cases examined, we see that the space of root vectors either do not give a tangent space, or the associated differential calculus does not have classical dimension. Motivated by this, we make some conjectures about the general $U_q(\mathfrak{sl}_{n+1})$ situation.

We finish with two appendices. In the first we present necessary details on Drinfeld--Jimbo quantum groups and Lusztig's root vectors. In the second we make some comments about filtered algebras and their associated graded algebras. 

\subsubsection*{Acknowledgements:} We would like to thank Arnab Bhattacharjee, Alessandro Caro-tenuto, Andrey Krutov, Karen Strung, and  Bart Vlaar, for many useful discussions. We would also like to thanks Nicolas M. Thiery and Nathan Williams for providing references related to commutation classes of reduced decompositions. Finally, we would like to thank the referee for their helpful remarks and their very careful reading of the article.


\section{Preliminaries}

In this section we recall some basic material about covariant differential calculi over Hopf algebras and their associated tangent spaces. We use Sweedler notation, denote by $\Delta, \e$, and $S$ the coproduct, counit, and antipode of a Hopf algebra respectively. We write $A^{\circ}$ for the dual coalgebra (Hopf algebra) of a (Hopf) algebra $A$, and denote the pairing between $A$ and $A^{\circ}$ by angular brackets. Throughout the paper, all algebras are over $\mathbb{C}$ and assumed to be unital, all unadorned tensor products are over $\mathbb{C}$, and all Hopf algebras are assumed to have bijective antipodes.

\subsection{Covariant Differential Calculi over Hopf algebras}

A {\em differential calculus}, or a \emph{dc}, is a differential graded algebra (dga) 
$$
\Big(\Om^\bullet \cong \bigoplus_{k \in \mathbb{Z}_{\geq 0}} \Om^k, \exd\Big)
$$  
which is generated as an algebra by the elements $a, \exd b$, for $a,b \in \Om^0$. When no confusion arises, we denote the dc by $\Omega^{\bullet}$, omitting the exterior derivative $\exd$. We denote the degree of a homogeneous element $\w \in \Om^{\bullet}$ by $|\w|$. For a given algebra $B$, a differential calculus {\em over} $B$ is a differential calculus such that $B = \Om^0$. A differential calculus is said to be of {\em total degree} $m \in \mathbb{Z}_{\geq 0}$ if $\Om^m \neq 0$, and $\Om^{k} = 0$, for all $k > m$.

A {\em first-order differential calculus} (fodc) over an algebra $B$ is a pair $(\Om^1(B),\exd)$, where $\Omega^1(B)$ is a $B$-bimodule and $\exd: B \to \Omega^1(B)$ is a derivation such that $\Om^1$ is generated as a left  (or equivalently right) $B$-module by those elements of the form~$\exd b$, for~$b \in B$. We say that a differential calculus $(\G^\bullet,\exd_\G)$ {\em extends} a first-order calculus $(\Om^1,\exd_{\Om})$ if there exists a bimodule isomorphism $\f:\Om^1 \to \G^1$ such that $\exd_\G = \f \circ \exd_{\Om}$. It can be shown  \cite[\textsection 2.5]{MMF2} that any first-order calculus admits an extension $\Om^\bullet$ which is \emph{maximal},  in the sense that there exists a unique differential map from $\Om^\bullet$ onto any other extension of $\Om^1$. We call this extension the {\em maximal prolongation} of the first-order calculus.

For $A$ a Hopf algebra, a dc $\Omega^\bullet$ over a left $A$-comodule algebra $(P,\Delta_L)$ is said to be \emph{left covariant} if the coaction $\Delta_L : P \to A \otimes P$ extends to a (necessarily unique) coaction $\Delta_L : \Omega^\bullet \to A \otimes \Omega^\bullet$, giving $\Omega^\bullet$ the structure of a left $A$-comodule algebra, such that~$\exd$ is a left $A$-comodule map.  A right covariant dc over a right $A$-comodule algebra is defined similarly.

\subsection{Tangent Spaces and Two-Sided Hopf Modules} \label{subsection:TangentSpaces}

A \emph{tangent space} for a Hopf algebra $A$ is a linear subspace $T$ of the dual Hopf algebra $A^{\circ}$ such that $X(1) = 0$, for all $X \in T$, and $T \oplus \mathbb{C}1$ is a right coideal of $A^{\circ}$. For any tangent space $T$, a right $A$-ideal of $A^+ := \mathrm{ker}(\e)$ is given by 
\begin{align} \label{eqn:theideal}
I := \Big\{ x \in A^+ \,|\, X(x) = 0, \textrm{ for all } X \in T\Big\},
\end{align}
meaning that the quotient $\Lambda^1(A) := A^+/I$ is naturally an object in the category $\mathrm{Mod}_A$ of right $A$-modules.  We call $\Lambda^1(A)$ the \emph{cotangent space} of the fodc.

Consider next the category ${}^A_A\mathrm{Mod}_A$: This has as objects left $A$-comodules $\F$, endowed with an $A$-bimodule structure, such that the comodule structure map is an $A$-bimodule map with respect to the diagonal $A$-bimodule structure on $A \otimes \F$. Its morphisms are left $A$-comodule, $A$-bimodule, maps. We have a functor 
\begin{align*}
A \otimes - : \mathrm{Mod}_A \to {}^A_A\mathrm{Mod}_A,  & & V \mapsto A \otimes V,
\end{align*}
where the left $A$-module, left $A$-comodule, structure of $A \otimes V$ is given by the first tensor factor, and the right $A$-module structure is given by the diagonal action. In the other direction, we have a functor 
\begin{align*}
F: {}^A_A\mathrm{Mod}_A \to \mathrm{Mod}_A, & & \F \mapsto \F/A^+\F.
\end{align*}
This determines an adjoint equivalence between the two categories, with unit and counit
\begin{align*}
\unit: \MM \to A \otimes F(\MM), & & g \mapsto g_{(1)} \otimes [g_{(0)}],\\
\counit: F(A \otimes V) \to V, & & a \otimes v \mapsto \e(a)v.
\end{align*}
This equivalence is known as the \emph{fundamental theorem of two-sided Hopf modules}, see \cite[Theorem 5.7]{Sch.YD}, or \cite[\textsection 4.1]{Sweedler}, for more details. The category ${}^A_A\mathrm{Mod}_A$ has a monoidal structure given by the usual tensor products of $A$-bimodules and left $A$-comodules. Moreover, $\mathrm{Mod}_A$ has a monoidal structure given by the tensor product of vector spaces endowed with the diagonal $A$-module structure. With respect to these two monoidal structures, the natural transformation with components 
\begin{align} \label{eqn:generalINVERSEMonMult}
\mu:F(\MM) \otimes F(\NN) \to F(\MM \otimes_A \NN), & & [m] \otimes [n] \mapsto [mS(n_{(-1)}) \otimes n_{(0)}].
\end{align}
gives $F$ the structure of a monoidal functor. Note that the inverse of $\mu$ is given by 
\begin{align*} 
\mu^{-1}:F(\MM \otimes \NN) \to F(\MM) \otimes_A F(\NN), & & [m \otimes n] \mapsto [mn_{(-1)}] \otimes [n_{(0)}]
\end{align*}
Consider now the object
\begin{align*}
{}^A_A\mathrm{Mod}_A \ni \Omega^1(A) := A \otimes \Lambda^1(A).
\end{align*}
If $\{X_i\}_{i=1}^n$ is a basis for $T$, and $\{e_i\}_{i=1}^n$ is the dual basis of $\Lambda^1(A)$, then the map
\begin{align} \label{eqn:tangent.exd}
\exd: A \to \Omega^1(A), & &  a \mapsto [a_{(1)}] \otimes [a_{(2)}^+] = \sum_{i=1}^n (X_i \triangleright a) \otimes e_i
\end{align}
is a derivation, and the pair $(\Omega^1(A),\exd)$ is a left $A$-covariant fodc over $A$.  Moreover, it follows from the theorem of two-sided Hopf modules that all  fodc, which are finitely generated as left $A$-modules, arise from a finite-dimensional tangent space. Indeed, this gives a bijective correspondence between isomorphism classes of finite-dimensional tangent spaces and finitely-generated  left-covariant fodc  \cite{WoroDC}.

Let $W$ be a Hopf subalgebra of $A^{\circ}$, and denote by $\pi:(A^{\circ})^{\circ} \to W^{\circ}$ the Hopf algebra map given by restriction of domains. Considering $A$ as contained in $(A^{\circ})^{\circ}$, we denote $K:= \pi(A)$, and call $K$ the \emph{quantum subgroup} associated to $W$. Now if a tangent space $T \subseteq A^{\circ}$ is invariant under the adjoint action $\mathrm{ad}$ of $W$, that is, if $\mathrm{W}T \subseteq T$, then the corresponding ideal $I \subseteq A^+$ is a subcomodule with respect to the \emph{relative adjoint coaction}  
\begin{align*}
\mathrm{Ad_{R,\pi}}: A^+ \to A^+ \otimes K, & & a \mapsto a_{(2)} \otimes \pi(S(a_{(1)})a_{(3)}).
\end{align*}
This endows $\Lambda^1(A)$ with a right $K$-coaction, which in turn gives the associated fodc $\Omega^1$ the structure of a right $K$-covariant dc, defined by the tensor product $K$-coaction. Moreover, via the dual pairing $W \times K \to \mathbb{C}$, both $\Lambda^1(A)$ and $\Omega^{1}$ have left $W$-actions dual to their right $K$-coactions.

Just as for any covariant fodc, this right $K$-coaction extends to a right $K$-coaction on $\Omega^{\bullet}$, the maximal prolongation of $\Omega^1$. This gives $\Omega^{\bullet}$ the structure of a right $K$-comodule algebra. Together the left $A$-coaction and the right $K$-coaction give $\Omega^{\bullet}$ the structure of an $(A,K)$-bicomodule. This means that the space of left $A$-coinvariant forms $1 \otimes \Lambda^{\bullet}(A)$ is a right $K$-subcomodule of $\Omega^{\bullet}$. In particular, it is a right $K$-comodule algebra.

\subsection{Generating Relations of the Maximal Prolongation} \label{subsection:MC}

An explicit presentation of the maximal prolongation of a left $A$-covariant fodc $\Omega^1(A)$ over a Hopf algebra $A$ is given as follows: For $I \subseteq A^+$ the classifying ideal of the fodc, and $\Lambda^1(A)$ the cotangent space, consider the subspace
\begin{align*}
I^{(2)} := \left\{\omega(x) := [x_{(1)}^+] \otimes [x_{(2)}^+]  ~ | ~ x \in I \right\} \subseteq \Lambda^1(A) \otimes  \Lambda^1(A).
\end{align*}
From the tensor algebra  $\mathcal{T}(\Lambda^1(A))$ of $\Lambda^1(A)$, we construct the $\mathbb{Z}_{\geq 0}$-graded algebra
\begin{align*}
\Lambda^{\bullet}(A) :=  \bigoplus_{k \in \mathbb{Z}_{\geq 0}} \Lambda^k(A) := \mathcal{T}\big(\Lambda^1(A)\big)/\langle I^{(2)} \rangle,
\end{align*}
which we call the \emph{quantum exterior algebra} of $\Omega^1(A)$, and whose multiplication we denote by $\wedge$. It follows from the fundamental theorem of Hopf modules, and \eqref{eqn:tangent.exd}, that an isomorphism between $F(\Omega^k(A))$ and $\Lambda^{k}(A)$, for $k \in \mathbb{Z}_{\geq 0}$, is determined by  
\begin{align*}
[\exd a_1 \wedge \cdots \wedge \exd a_k] ~ \mapsto  ~ \bigwedge_{i=1}^k \, [(a_i)^+_{(i)}(a_{(i+1)})_{(i)} \cdots (a_{(k)})_{(i)}],
\end{align*}
where we have denoted $(a_1)_{(1)} := a_1$. See \cite{KSLeabh} for further details, or \cite{MMF2} for the more general quantum homogeneous space case.


\section{A First-Order Differential Calculus for $\OO_q(\mathrm{SU}_{n+1})$} \label{section:LusztigRoots}

In this section we show that, for a distinguished decomposition of the longest element of the Weyl group,  Lusztig's root vectors give a tangent space for $\OO_q(\mathrm{SU}_{n+1})$. The maximal prolongation of the associated fodc is then described, and shown to have classical dimension. We also show that quantum exterior algebra $\Lambda^{(0,\bullet)}_q$ has the structure of a Frobenius algebra and prove that its Nakayama automorphism is of classical type. Throughout, low rank examples are treated in detail, and we use the notation set in Appendix \ref{app:Lusztigroots}. Moreover, we find it convenient to identify  $\Delta^+$ with the set of natural number pairs $(i,j)$ for which $\alpha_{ij}$ is a positive root.

\subsection{The Lusztig Root Vectors for $U_q(\mathfrak{sl}_{n+1})$}

Consider the decomposition $\mathbf{j}$ of the longest element of the Weyl group $w_0 \in S_{n+1}$:
\begin{align*}
w_0 =  (s_ns_{n-1} \cdots s_1)(s_ns_{n-1} \cdots s_{2}) \cdots (s_ns_{n-1})s_n,
\end{align*}
where $s_1, \dots, s_{n}$ denote the standard generators of $S_{n+1}$. As calculated in Appendix \ref{app:Lusztigroots},  the root vectors corresponding to this decomposition are given explicitly by 
\begin{align*}
E_{ji} := [E_{j-1},[E_{j-2},[ \cdots [E_{i+1},E_i]_{q^{-1}} \cdots ]_{q^{-1}}, & & \textrm{ for } (i,j) \in \Delta^+.
\end{align*} 
We now produce a general coproduct identity that will be used to verify the tangent space condition for our choice of decomposition of $w_0$.

\begin{lem} \label{lem:coproductformula}
Let $X$ be an element of $U_q(\mathfrak{sl}_{n+1})$ whose coproduct admits a presentation $\Delta(X) = X_{(1)} \otimes X_{(2)}$ such that the second  tensor factor of each summand  is a weight vector of weight $\rho$, with respect to the right adjoint action of $U_q(\mathfrak{sl}_{n+1})$ on itself. Then it holds that 
\begin{align} \label{eqn:coproductformula}
\Delta([X,E_i]_{q^{-1}}) = [X_{(1)},E_i]_{q^{\langle\alpha_i,\rho\rangle - 1}} \otimes X_{(2)}K_i - X_{(1)} \otimes [X_{(2)},E_{i}]_{q^{-1}}.
\end{align}
\end{lem}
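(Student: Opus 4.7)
The plan is a direct computation, whose only real input beyond the definitions is the weight hypothesis on $X_{(2)}$. I would begin by expanding
\[
\Delta([X,E_i]_{q^{-1}}) = \Delta(X)\Delta(E_i) - q^{-1}\Delta(E_i)\Delta(X),
\]
using the given coproduct of $X$ together with the standard coproduct $\Delta(E_i)=E_i\otimes K_i+1\otimes E_i$ recalled in Appendix \ref{app:Lusztigroots}. Distributing the two products over the tensor factors yields four summands on each side. Three of the resulting second tensor factors are already compatible, being of the form $X_{(2)}K_i$, $X_{(2)}E_i$, or $E_iX_{(2)}$; only the term $E_iX_{(1)}\otimes K_iX_{(2)}$ coming from $\Delta(E_i)\Delta(X)$ has its $K_i$ on the wrong side of $X_{(2)}$.

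The weight hypothesis is used precisely to handle that one term. By assumption, each $X_{(2)}$ is a weight vector of weight $\rho$ under the right adjoint action of the Cartan part, which translates into the commutation rule $K_iX_{(2)}=q^{\langle\alpha_i,\rho\rangle}X_{(2)}K_i$ (up to the usual sign convention). Substituting this rewrites the offending summand as (a scalar multiple of) $E_iX_{(1)}\otimes X_{(2)}K_i$, so that it is ready to be combined with the term $X_{(1)}E_i\otimes X_{(2)}K_i$ already present in $\Delta(X)\Delta(E_i)$.

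Collecting the eight resulting terms according to whether their second tensor factor is $X_{(2)}K_i$ or of the form $X_{(2)}E_i,\,E_iX_{(2)}$, the first group assembles into $[X_{(1)},E_i]_{q^{\langle\alpha_i,\rho\rangle-1}}\otimes X_{(2)}K_i$, while the second group collapses directly into $X_{(1)}\otimes[X_{(2)},E_i]_{q^{-1}}$. Reassembling produces \eqref{eqn:coproductformula}. The argument is essentially pure bookkeeping, and the only conceptual point is that the weight scalar $q^{\langle\alpha_i,\rho\rangle}$ combines with the $q^{-1}$ from the definition of the outer $q^{-1}$-commutator to produce the shifted deformation parameter $q^{\langle\alpha_i,\rho\rangle-1}$ inside the first $q$-commutator; there is no genuine obstacle, only careful tracking of signs and exponents.
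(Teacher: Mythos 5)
Your proposal is correct and follows essentially the same route as the paper's proof: expand $\Delta(X)\Delta(E_i) - q^{-1}\Delta(E_i)\Delta(X)$, invoke the weight hypothesis only to move $K_i$ past $X_{(2)}$ in the single offending summand $E_iX_{(1)}\otimes K_iX_{(2)}$, and regroup by the form of the second tensor factor. One caveat worth recording: the bookkeeping you describe (and the paper's own computation, as well as its use in Example \ref{eg:singlecomm} and Proposition \ref{prop:coidealformula}) actually yields $+\,X_{(1)}\otimes[X_{(2)},E_i]_{q^{-1}}$ for the second group, so the minus sign in the displayed formula \eqref{eqn:coproductformula} is a typo in the statement rather than something your regrouping should, or could, reproduce.
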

\begin{proof}
Expanding the expression 
$$
\Delta\left([X,E_i]_{q^{-1}}\right) = \Delta(X)\Delta(E_i) - q^{-1}\Delta(E_i)\Delta(X)
$$
we arrive at the sum
$$
 X_{(1)}E_i \otimes X_{(2)}K_i + X_{(1)} \otimes X_{(2)}E_i  - q^{-1} E_i X_{(1)} \otimes K_iX_{(2)} - q^{-1} X_{(1)} \otimes E_iX_{(2)}.
$$
Since the second tensor factor of each summand  is a weight vector, this is equal to 
\begin{align*}
\left(X_{(1)}E_i  - q^{\langle \alpha_i,\rho\rangle - 1} E_iX_{(1)}\right)\! \otimes X_{(2)}K_i  + X_{(1)} \otimes \! \Big(X_{(2)}E_i - q^{-1}  X_{(2)}E_i\Big),
\end{align*}
which gives the claimed sum.
\end{proof}

\begin{eg} \label{eg:singlecomm}
Consider the $q$-deformed commutator $[E_i,E_{i - 1}]_{q^{-1}}$. It follows from the definition of $U_q(\mathfrak{sl}_{n+1})$ that the second tensor factor of each summand of $\Delta(E_i)$ is a weight vector. From the formula above, we now see that $\Delta([E_i,E_{i - 1}]_{q^{-1}})$ is equal to 
\begin{align*}
 [E_i,E_{i-1}]_{q^{-1}} \otimes K_iK_{i-1} + [1,E_{i-1}]_{q^{-2}} \otimes E_iK_{i-1}  + E_i \otimes [K_i,E_{i-1}]_{q^{-1}} + 1 \otimes [E_i,E_{i-1}]_{q^{-1}},
\end{align*}
where we have used the fact that $\mathrm{wt}_{i-1}(E_i) = -1$. This expression then simplifies to 
\begin{align*}
 [E_i,E_{i-1}]_{q^{-1}} \otimes K_iK_{i-1} + q^{-1}\nu E_{i-1} \otimes E_iK_{i-1} +  1 \otimes [E_i,E_{i-1}]_{q^{-1}},
\end{align*}
where we recall the notation $\nu := q - q^{-1}$.
\end{eg}

We will now use an inductive argument to generalise the formulae given in this example to the case of  $U_q(\mathfrak{sl}_{n+1})$.

\begin{prop} \label{prop:coidealformula}
For any $(i,j) \in \Delta^+\!,$ it holds that 
\begin{align*}
\Delta(E_{ji}) = E_{ji} \otimes K_{j-1,i} ~ + ~ q^{-1}\nu \sum_{a = i+1}^{j-1} E_{ai} \otimes E_{ja}K_{a-1,i} ~ + ~ 1 \otimes E_{ji},
\end{align*}
where we have denoted $K_{j-1,i} := K_{j-1} \cdots K_i$, for $(i,j) \in \Delta^+\!,$ and $K_{jj} := K_j$.
\end{prop}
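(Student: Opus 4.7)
The plan is an induction on $d = j - i \geq 1$, using the recursive identity $E_{ji} = [E_{j-1}, E_{j-1,i}]_{q^{-1}}$ built into the nested-commutator definition. The base case $d = 1$ is immediate: one has $E_{i+1,i} = E_i$, $K_{i,i} = K_i$, and the sum over $a$ is empty, so the claim reduces to the standard formula $\Delta(E_i) = E_i \otimes K_i + 1 \otimes E_i$.

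For the inductive step, I would write
\begin{align*}
\Delta(E_{ji}) = \Delta(E_{j-1})\Delta(E_{j-1,i}) - q^{-1} \Delta(E_{j-1,i})\Delta(E_{j-1}),
\end{align*}
and substitute $\Delta(E_{j-1}) = E_{j-1} \otimes K_{j-1} + 1 \otimes E_{j-1}$ together with the inductive expression for $\Delta(E_{j-1,i})$. Writing $T := E_{j-1}$ and $Y := E_{j-1,i}$, the expansion yields twelve families of summands, which I would group by their first tensor factor.

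The collapse then rests on three structural facts in $U_q(\mathfrak{sl}_{n+1})$: first, $T$ commutes both with $E_{ai}$ and with $K_{a-1,i}$ whenever $a \leq j-2$, because all indices involved are at distance at least $2$ from $j-1$; second, $K_{j-1} E_{j-1,a} = q^{-1} E_{j-1,a} K_{j-1}$, reflecting that the root $\alpha_a + \cdots + \alpha_{j-2}$ pairs to $-1$ with $\alpha_{j-1}$; third, $K_{j-1} K_{j-2,i} = K_{j-1,i}$. Under these identities, the $TY$ and $YT$ pieces combine into $E_{ji} \otimes K_{j-1,i}$; the two terms with first factor $1$ combine into $1 \otimes E_{ji}$; the two terms with first factor $T$ alone cancel against each other (via the weight-$(-1)$ identity applied to $Y$); the two families with first factor $E_{ai}T$ for $a \leq j-2$ likewise cancel; the family with first factor $E_{ai}$ for $a \leq j-2$ collapses to $q^{-1}\nu\sum_{a=i+1}^{j-2} E_{ai} \otimes [T, E_{j-1,a}]_{q^{-1}} K_{a-1,i} = q^{-1}\nu\sum_{a=i+1}^{j-2} E_{ai} \otimes E_{j,a} K_{a-1,i}$; and the pair with first factor $Y$ alone combines to $q^{-1}\nu\, E_{j-1,i} \otimes E_{j,j-1} K_{j-2,i}$ (using $E_{j,j-1} = E_{j-1}$), supplying precisely the $a = j-1$ contribution needed to extend the sum to $\{i+1,\dots,j-1\}$.

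The main obstacle is organisational rather than conceptual: the twelve families must be paired up correctly, and each cancellation or combination justified by the appropriate commutation relation. The essential driver is the weight-$(-1)$ identity $K_{j-1} E_{j-1,a} = q^{-1} E_{j-1,a} K_{j-1}$, which powers both of the cancellations. One could equivalently derive a left-sided analogue of the preceding lemma for commutators of the form $[E_{j-1}, X]_{q^{-1}}$ and apply it directly, but the bare expansion already makes the structure transparent and keeps the proof self-contained.
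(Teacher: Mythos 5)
Your proof is correct, and all the cancellations you describe do go through (I checked the twelve families; in particular the $T$-alone and $E_{ai}T$ cancellations both reduce to $K_{j-1}Z = q^{-1}ZK_{j-1}$ for $Z$ of weight $\alpha_{j-2}+\cdots+\alpha_a$, and the $Y$-alone pair does produce exactly the $a=j-1$ summand). The route is genuinely different from the paper's: the paper first isolates a general coproduct identity for $[X,E_i]_{q^{-1}}$ (Lemma \ref{lem:coproductformula}, valid whenever the second legs of $\Delta(X)$ are weight vectors) and then inducts \emph{downward} in $i$ via $E_{j,i-1}=[E_{ji},E_{i-1}]_{q^{-1}}$, i.e.\ it appends the lowest-index generator on the inside of the nested bracket; you instead induct on $j-i$ via $E_{ji}=[E_{j-1},E_{j-1,i}]_{q^{-1}}$, prepending the top generator, and expand bare-handed. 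What the paper's version buys is the reusable Lemma \ref{lem:coproductformula} (which it also invokes elsewhere, e.g.\ in Example \ref{eg:singlecomm} and in \textsection\ref{section:Nonnice} for non-nice decompositions); what yours buys is self-containment and a slightly more transparent bookkeeping, since the quantum correction terms visibly assemble from $[E_{j-1},E_{j-1,a}]_{q^{-1}}=E_{ja}$. The only point worth making explicit is your opening identity: with the left-nested convention of Appendix \ref{app:Lusztigroots} (the one the paper's own proof uses), $E_{ji}=[E_{j-1},E_{j-1,i}]_{q^{-1}}$ is not literally the definition but follows from re-associating the brackets, which is legitimate precisely because $E_{j-1}$ commutes with $E_{j-3},\dots,E_i$; a one-line remark to that effect would close the argument completely.
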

\begin{proof}
We begin by noting that the claimed identity clearly holds for each simple generator $E_{i+1,i} = E_i$. Let us now assume that the claimed formula holds for some $E_{ji}$, such that $i \geq 2$. Since the requirements of Lemma \ref{lem:coproductformula} are satisfied, we can use  formula \eqref{eqn:coproductformula} to calculate the coproduct of  $E_{j,i-1}$. The first term of \eqref{eqn:coproductformula} is given by the sum
\begin{align*}
 [E_{ji},E_{i-1}]_{q^{-1}} \otimes K_{j-1,i-1} ~ + ~ q^{-1}\nu \sum_{a = i+1}^{j-1} [E_{ai},E_{i-1}]_{q^{-1}} \otimes E_{ja}K_{a,i-1} \\
  + ~ [1,E_{i-1}]_{q^{-2}} \otimes E_{ji}K_{i-1}, ~~~~~~~~~~~~~~~~~~~~~~~~~~~~~~~~~~~~~~~~~~~~~~
\end{align*}
which reduces to the sum
\begin{align*}
 \, E_{j,i-1} \otimes K_{j-1,i-1} ~ + ~ q^{-1}\nu \sum_{a = i}^{j-1} E_{a,i-1} \otimes E_{ja}K_{a,i-1}.
\end{align*}
The second term of \eqref{eqn:coproductformula} is given by the sum
\begin{align*}
 \, E_{ji} \otimes [K_{j-1,i},E_{i-1}]_{q^{-1}} ~ + ~ q^{-1}\nu \sum_{a = i+1}^{j-1} E_{ja} \otimes [E_{ai}K_{j-1,a},E_{i-1}]_{q^{-1}} \\
+ ~ 1 \otimes [E_{ji},E_{i-1}]_{q^{-1}}, ~~~~~~~~~~~~~~~~~~~~~~~~~~~~~~~~~~~~~~~~~~~~~~
\end{align*}
which reduces to the single term $ 1 \otimes E_{j,i-1}$. Adding these two terms together gives the claimed identity for $E_{j,i-1}$, and hence for the general case by an inductive argument. 
\end{proof}

\begin{cor} \label{cor:tangentspace}
A tangent space is given by 
\begin{align*}
T^{(0,1)} := \mathrm{span}_{\mathbb{C}}\{ E_{\beta} \,|\, \beta \in \Delta^+ \}.
\end{align*}
\end{cor}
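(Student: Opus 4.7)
The plan is to verify the two defining conditions of a tangent space from Section 2.2 directly, both of which will fall out with little work from Proposition \ref{prop:coidealformula} and the recursive definition of the $E_{ji}$.

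First I would check the counit condition $E_\beta(1) = 0$, or equivalently $\epsilon(E_\beta) = 0$, for every $\beta \in \Delta^+$. The base case is immediate since $\epsilon(E_i) = 0$ for every simple generator. The inductive step uses that $\epsilon$ is an algebra homomorphism, so for any $X,Y$ one has $\epsilon([X,Y]_{q^{-1}}) = \epsilon(X)\epsilon(Y) - q^{-1}\epsilon(Y)\epsilon(X)$, and hence $\epsilon([X,E_i]_{q^{-1}}) = 0$ whenever $\epsilon(X) = 0$. Applying this to the iterated definition $E_{ji} = [E_{j-1},[E_{j-2},[\cdots [E_{i+1},E_i]_{q^{-1}}\cdots]_{q^{-1}}]_{q^{-1}}$ then gives the result by induction on $j - i$.

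Second, I would verify that $T^{(0,1)} \oplus \mathbb{C} 1$ is a right coideal of $U_q(\mathfrak{sl}_{n+1})$, thought of as a Hopf subalgebra of the dual $\OO_q(\mathrm{SU}_{n+1})^{\circ}$. This amounts to showing $\Delta(T^{(0,1)} \oplus \mathbb{C}1) \subseteq (T^{(0,1)} \oplus \mathbb{C}1) \otimes U_q(\mathfrak{sl}_{n+1})$, and is read off directly from Proposition \ref{prop:coidealformula}: in the expression
\begin{align*}
\Delta(E_{ji}) = E_{ji} \otimes K_{j-1,i} + q^{-1}\nu \sum_{a=i+1}^{j-1} E_{ai} \otimes E_{ja} K_{a-1,i} + 1 \otimes E_{ji},
\end{align*}
the first tensor factor of every summand is either one of the root vectors $E_{ai}$, $E_{ji}$, all of which lie in $T^{(0,1)}$, or the unit element $1$. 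Combined with $\Delta(1) = 1 \otimes 1$, this is exactly the right coideal property.

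There is no real obstacle: the corollary is essentially a repackaging of Proposition \ref{prop:coidealformula} together with the observation about $\epsilon$. The main work was done already in establishing the closed coproduct formula, whose proof relied on the inductive Lemma \ref{lem:coproductformula} and the careful choice of nested $q$-commutator defining the Lusztig root vectors for the reduced decomposition $\mathbf{j}$.
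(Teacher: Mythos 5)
Your proposal is correct and is essentially the paper's argument: the paper's proof of this corollary is the single remark that it "follows directly from Proposition \ref{prop:coidealformula} and the definition of a tangent space," and what you have written is precisely the spelling-out of that remark — reading the right coideal property off the first tensor factors of the coproduct formula and checking the counit condition on the nested $q$-commutators. No gaps; you have simply made explicit the two verifications the paper leaves implicit.
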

\begin{proof}
This follows directly from the Proposition \ref{prop:coidealformula} and the definition of a tangent space given in \textsection \ref{subsection:TangentSpaces}.
\end{proof}

Each root vector is a weight vector (see Appendix \ref{app:Lusztigroots}), hence $\mathrm{ad}(U_q(\mathfrak{h}))T^{(0,1)} \subseteq T^{(0,1)}$. The quantum  subgroup associated  to $U_q(\mathfrak{h})$ is the group Hopf  algebra of $\mathbb{Z}^{n}$, which we denote by $\OO(T^{n})$. This implies the following corollary.

\begin{cor} \label{cor:FOBasisElements}
Denote by $(\Omega^1_q(\mathrm{SU}_{n+1}),\adel)$ the fodc associated to the tangent space $T^{(0,1)}$. It holds that $\Omega^1_q(\mathrm{SU}_{n+1})$ is right $\OO(T^{n})$-covariant. Moreover, with respect to the  associated left $U_q(\mathfrak{h})$-action, the basis elements have weight
\begin{align*}
|E_{ji}| = \alpha_{j-1} + \alpha_{j-2} + \cdots + \alpha_i, & & \textrm{ for } (i,j) \in \Delta^+.
\end{align*}
\end{cor}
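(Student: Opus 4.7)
The plan is that both assertions follow quickly from ingredients already in place: the first from the general framework recalled in Section 2.2, and the second from a routine weight calculation using the inductive definition of the Lusztig root vectors as iterated $q$-commutators.

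For the right $\OO(T^n)$-covariance, I would simply invoke the discussion at the end of Section 2.2. The paragraph preceding the corollary observes that each $E_{ji}$ is a weight vector for the adjoint action of $U_q(\mathfrak{h})$, so $\mathrm{ad}(U_q(\mathfrak{h}))T^{(0,1)} \subseteq T^{(0,1)}$. Hence the ideal $I \subseteq \OO_q(\mathrm{SU}_{n+1})^+$ defining the fodc is a subcomodule under the relative adjoint coaction of the quantum subgroup associated to $U_q(\mathfrak{h})$, which by standard Drinfeld--Jimbo duality is the group Hopf algebra $\OO(T^n)$ of the weight lattice. The general framework then equips the cotangent space $\Lambda^1_q$, and consequently $\Omega^1_q(\mathrm{SU}_{n+1}) \cong \OO_q(\mathrm{SU}_{n+1}) \otimes \Lambda^1_q$, with the structure of a right $\OO(T^n)$-covariant fodc.

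For the weight of each basis element, I would proceed by induction on the height $j - i$ of the positive root $\alpha_{ij}$. The base case $j = i + 1$ reduces to $E_{i+1,i} = E_i$, which has weight $\alpha_i$ directly from the Drinfeld--Jimbo relations. For the inductive step, since each $K_a$ is grouplike, the map $\mathrm{ad}(K_a)$ is an algebra automorphism of $U_q(\mathfrak{sl}_{n+1})$, and so it preserves $q$-commutators. Applying this to the inductive definition $E_{ji} = [E_{j-1}, E_{j-1,i}]_{q^{-1}}$, together with the inductive hypothesis that $E_{j-1,i}$ has weight $\alpha_{j-2} + \cdots + \alpha_i$, gives
\begin{align*}
\mathrm{ad}(K_a)(E_{ji}) = q^{\langle \alpha_a,\, \alpha_{j-1} + \alpha_{j-2} + \cdots + \alpha_i \rangle}\, E_{ji},
\end{align*}
so that $E_{ji}$ has weight $\alpha_{j-1} + \alpha_{j-2} + \cdots + \alpha_i$ for the adjoint action.

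It remains to match this weight with the weight of the corresponding cotangent basis element under the left $U_q(\mathfrak{h})$-action. By the construction in Section 2.2, this action is dual to the right $\OO(T^n)$-coaction on $\Lambda^1_q$, which is itself dual to the adjoint action on $T^{(0,1)}$; with the standard pairing identifying $U_q(\mathfrak{h})$ and $\OO(T^n) \cong \mathbb{C}[\mathbb{Z}^n]$ weight-to-weight, the two weights coincide. The only potential obstacle here is keeping sign conventions consistent between the adjoint action on $U_q(\mathfrak{sl}_{n+1})$ and the induced action on the dual; once these are fixed, the stated formula for $|E_{ji}|$ follows.
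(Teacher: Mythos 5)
Your proposal is correct and matches the paper's treatment: the paper gives no separate proof, simply noting that each $E_{ji}$ is a weight vector (so $\mathrm{ad}(U_q(\mathfrak{h}))T^{(0,1)} \subseteq T^{(0,1)}$) and invoking the general covariance mechanism of \textsection 2.2, with the explicit weights recorded in Appendix A. Your inductive height computation just makes explicit the weight assertion that the paper states without proof, so the two arguments are essentially identical.
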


\subsection{The Module Structure of the Cotangent Space $\Lambda^{(0,1)}_q$}

 In this subsection we determine the right $\OO_q(\mathrm{SU}_{n+1})$-module structure of  $\Lambda^{(0,1)}_q$ by detailing how the matrix generators act on the basis dual to the root vector basis of $T^{(0,1)}$. 
We observe that the dual pairing of Hopf algebras $\langle -,- \rangle:  U_q(\mathfrak{sl}_{n+1}) \times \OO_q(\mathrm{SU}_{n+1}) \to \mathbb{C}$, as given in Appendix \ref{app:Lusztigroots},  descends to a non-degenerate pairing 
  \begin{align*}
 \langle -,- \rangle: T^{(0,1)} \times \Lambda^{(0,1)}_q \to \mathbb{C},
 \end{align*}
for which we use the same symbol by abuse of notation.

\begin{lem} \label{lem:dualpairing}
It holds that  
\begin{align*}
\langle E_{ji}, [u_{rs}] \rangle = \delta_{jr}\delta_{is},  & & \textrm{ for all } (i,j) \in \Delta^+, \textrm{\emph{ and  }}r,s = 1, \dots, n + 1.
\end{align*}
Thus a basis of $\Lambda^{(0,1)}_q$, dual to the defining basis of $T^{(0,1)}$, is given by 
\begin{align*}
\Big\{e_{\alpha_{ij}} := e_{ji} := [u_{ji}] \,|\, \textrm{ for } (i,j) \in \Delta^+ \Big\}.
\end{align*}
\end{lem}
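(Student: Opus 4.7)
My plan is to establish the pairing identity $\langle E_{ji}, u_{rs}\rangle = \delta_{j,r}\delta_{i,s}$ directly on $\OO_q(\mathrm{SU}_{n+1})$ by induction on $j - i \geq 1$, and then descend it to $\Lambda^{(0,1)}_q$. The engine is the recursive identity $E_{ji} = [E_{j-1}, E_{j-1,i}]_{q^{-1}}$ together with the Hopf-pairing compatibility $\langle xy, a \rangle = \langle x, a_{(1)}\rangle \langle y, a_{(2)}\rangle$ and the comatrix coproduct $\Delta(u_{rs}) = \sum_{k} u_{rk}\otimes u_{ks}$. The base case $j = i+1$ is read straight off Appendix \ref{app:Lusztigroots}: in the convention fixed there, the simple generator $E_i = E_{i+1,i}$ is represented on the standard vector representation by the matrix unit $e_{i+1,i}$, giving $\langle E_i, u_{rs}\rangle = \delta_{i+1,r}\delta_{i,s}$ at once.

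For the inductive step, I would expand the $q^{-1}$-commutator and dualise each product against $\Delta(u_{rs})$ to obtain
\[
\langle E_{ji}, u_{rs}\rangle \,=\, \sum_{k}\langle E_{j-1}, u_{rk}\rangle\langle E_{j-1,i}, u_{ks}\rangle \,-\, q^{-1}\sum_{k}\langle E_{j-1,i}, u_{rk}\rangle\langle E_{j-1}, u_{ks}\rangle.
\]
The first sum is non-zero only at $k = j-1$, and by combining the inductive hypothesis for $E_{j-1,i}$ with the base case for $E_{j-1}$ it collapses to $\delta_{j,r}\delta_{i,s}$. The second sum would require $k = i$ from the first factor and $k = j$ from the second, and since $i < j$ no such $k$ exists, so it vanishes identically. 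This strict-inequality cancellation is the only structural point in the argument and closes the induction.

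To transfer the formula to the class $[u_{rs}] \in \Lambda^{(0,1)}_q = A^+/I$, I observe that each $E_{ji}$ is a polynomial in the $E_k$ with no scalar term, whence $\e(E_{ji}) = 0$; the pairing therefore does not distinguish between $u_{rs}$ and $u_{rs}^+ = u_{rs} - \delta_{rs}$, and the formula passes verbatim to the quotient. The dual-basis statement then follows immediately: by construction $I$ is the annihilator of $T^{(0,1)}$ in $A^+$, so the induced pairing $T^{(0,1)} \times \Lambda^{(0,1)}_q \to \mathbb{C}$ is non-degenerate, and the $|\Delta^+|$ elements $[u_{ji}]$ pair with the root-vector basis of $T^{(0,1)}$ as a Kronecker delta; together with the dimension count $\dim \Lambda^{(0,1)}_q = \dim T^{(0,1)} = |\Delta^+|$ this forces them to be a basis. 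The main obstacle is essentially just index book-keeping, together with care about the convention that sends $E_k$ to $e_{k+1,k}$ rather than $e_{k,k+1}$; once this is fixed the induction is mechanical.
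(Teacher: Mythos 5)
Your proposal is correct and takes essentially the same route as the paper's proof: an induction that expands the twisted commutator defining $E_{ji}$, dualises it against the comatrix coproduct $\Delta(u_{rs})=\sum_k u_{rk}\otimes u_{ks}$, and kills the cross term because its two Kronecker constraints on the summation index are incompatible when $i<j$. The only cosmetic difference is the direction of the induction — you peel the simple generator $E_{j-1}$ off the left (inducting on $j$), whereas the paper peels $E_{i-1}$ off the right (inducting downward on $i$); both nested-commutator presentations of $E_{ji}$ appear in the paper, so this is the same argument.
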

\begin{proof}
It follows from the dual pairing formulae given in \eqref{eqn:dualpairing} that the formula holds for all the simple generators $E_i = E_{i+1,i}$. Let us now assume that the formula holds for some $E_{ji}$. If $i > 1$, then it follows from Proposition \ref{prop:coidealformula} that the pairing 
\begin{align*}
\langle E_{j,i-1}, u_{kl} \rangle = & \, \langle E_{ji}E_{i-1}, u_{kl} \rangle - q^{-1}\langle E_{i-1}E_{ji}, u_{kl}\rangle 
\end{align*}
reduces to the product of Kroenecker delta symbols 
\begin{align*}
\sum_{a=1}^{n+1} \langle E_{ji}, u_{ka} \rangle \langle E_{i-1}, u_{al} \rangle - q^{-1}\sum_{a=1}^{n+1} \langle E_{i-1}, u_{ka} \rangle \langle E_{ji}, u_{al} \rangle
=  \, \delta_{jk}\delta_{i-1,l}.
\end{align*}
Thus  by an inductive argument, the formula holds for all root vectors.
\end{proof}

\begin{prop} \label{prop:Gens}
A generating set $G$ for the ideal $I$ associated to the tangent space $T^{(0,1)}$ is given by the union of the following three sets of elements:
\begin{align*}
G_1 \, :=  & \{ u_{ij}, \, u_{kk}-1, \,|\, (i,j) \in \Delta^+,  k = 1, \dots, n+1\},\\
G_2\, := & \{ u_{ji}u_{kl} \, | \, (i,j), \, (k,l) \in \Delta^+\},\\
G_3 \, := & \{u_{ji}u_{lk} \,| \,(i,j), \, (k,l) \in \Delta^+, \, j \neq k \}.
\end{align*}
Moreover,  for $(i,j), \, (j,j') \in \Delta^+$, and $k =1, \dots, n$, the actions
\begin{align} \label{eqn:quantumaction}
e_{ji}u_{kk} = q^{\delta_{jk} - \delta_{ik}}e_{ji}, & & e_{ji}u_{j'j} =  \nu e_{j'i},  & &
\end{align}
are the only non-zero actions of the generators $u_{kl}$ on the basis elements $e_{ji}$.
\end{prop}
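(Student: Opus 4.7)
The plan is to prove both assertions in parallel by a unified dual-pairing analysis, leveraging Lemma~\ref{lem:dualpairing}, Proposition~\ref{prop:coidealformula}, and the elementary observation that $\langle K_\ell, u_{ab}\rangle = 0$ unless $a = b$ (the $K_\ell$ act diagonally on the defining representation). I would first verify that $G \subseteq I$, then read off the action formulas from the same computation, and finally conclude via a dimension count that $\langle G\rangle = I$.

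To establish $G \subseteq I$, apply Lemma~\ref{lem:dualpairing} directly to $G_1$: the identity $\langle E_{rs}, u_{ij}\rangle = \delta_{ri}\delta_{sj}$ vanishes whenever $(s,r)$ and $(i,j)$ both lie in $\Delta^+$, and $\langle E_{rs}, u_{kk}-1\rangle = 0$ because $r \neq s$ while $\e(E_{rs}) = 0$. For $G_2$ and $G_3$, use $\langle E_{rs}, xy\rangle = \langle \Delta(E_{rs}), x\otimes y\rangle$ together with the coproduct of Proposition~\ref{prop:coidealformula}. In both cases the outer two summands of $\Delta(E_{rs})$ vanish trivially (diagonal vanishing and $\e(u_{ji}) = 0$ respectively), while the middle summand is killed either by a positional contradiction—in the $G_2$ case, surviving terms would force simultaneously $k < j$ and $j < k$—or, in the $G_3$ case, by the hypothesis $j \neq k$, which forces the surviving Kronecker factor $\delta_{jk}$ to zero.

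The action formulas of \eqref{eqn:quantumaction} then fall out of the same pairing calculation. Expanding $e_{ji}\cdot u_{kl} = \sum_{(s,r)\in\Delta^+}\langle E_{rs}, u_{ji}u_{kl}\rangle\, e_{rs}$ via Proposition~\ref{prop:coidealformula}, one finds that only two families of $u_{kl}$ contribute nontrivially: for $u_{kl} = u_{kk}$, the first summand of $\Delta(E_{rs})$ yields the eigenvalue $q^{\delta_{jk}-\delta_{ik}}$ of $K_{r-1,s}$ on $u_{kk}$; for $u_{kl} = u_{j'j}$ with $(j,j')\in\Delta^+$, a single surviving term of the middle summand contributes $\nu\, e_{j'i}$. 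For every other $u_{kl}$, all three summands pair to zero against each $E_{rs}$, yielding the ``only non-zero actions'' clause.

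Finally, since $\langle G\rangle \subseteq I$, there is a right $A$-module surjection $A^+/\langle G\rangle \twoheadrightarrow A^+/I = \Lambda^{(0,1)}_q$ onto a space of dimension $|\Delta^+|$ (Lemma~\ref{lem:dualpairing}), so it suffices to show that $\{[u_{ji}] : (i,j)\in\Delta^+\}$ spans $A^+/\langle G\rangle$. Writing $\tilde{u}_{ab} := u_{ab} - \delta_{ab}$, every element of $A^+$ is a linear combination of monomials in the $\tilde{u}_{ab}$ of positive total degree; modulo $G_1$ only monomials with all factors strictly lower-triangular $u_{ji}$ survive, and an induction on degree—using the FRT commutation relations to rearrange factors while invoking $G_2$, $G_3$, and the action formulas above to kill or reduce ``wrong'' adjacencies—collapses each such monomial onto a scalar multiple of a single $u_{ji}$. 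The main obstacle is precisely this final induction: the $G_i$ directly control only the degree-$\leq 2$ monomials they name, so pushing an arbitrary higher-degree monomial into $\langle G\rangle + \mathrm{span}\{u_{ji}\}$ requires a careful choreography between the FRT relations and the $G_2,G_3$ reductions. The earlier steps, by contrast, are essentially mechanical once Proposition~\ref{prop:coidealformula} is in hand.
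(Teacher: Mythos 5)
Your first two steps are sound. The verification that $G \subseteq I$ via the coproduct of Proposition \ref{prop:coidealformula} matches the paper's argument, and your derivation of the action formulas by expanding $e_{ji}u_{kl} = [u_{ji}u_{kl}] = \sum_{(s,r)\in\Delta^+}\langle E_{rs}, u_{ji}u_{kl}\rangle\, e_{rs}$ against that coproduct is a legitimate (and clean) alternative to the paper's route, which instead reads the actions off from FRT commutation computations performed in the quotient. The genuine gap is in your final step. The action formulas you obtain from the pairing are identities in $A^+/I$, where $A := \OO_q(\mathrm{SU}_{n+1})$; to use them as reduction rules when showing that the cosets $[u_{ji}]$ span $A^+/\langle G\rangle$, you need the corresponding congruences modulo the right ideal $J := G\,\OO_q(\mathrm{SU}_{n+1})$, which is a priori \emph{smaller} than $I$ --- that $J = I$ is exactly what is being proved. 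So ``invoking the action formulas to kill or reduce wrong adjacencies'' inside $A^+/J$ is circular unless you first re-establish each degree-two identity, e.g.\ $u_{ji}u_{kk} - q^{\delta_{jk}-\delta_{ik}}u_{ji} \in J$ and $u_{ji}u_{j'j} - \nu u_{j'i} \in J$, directly from the FRT relations together with membership of the $u_{kl}$, the $u_{kk}-1$, and the $G_2$, $G_3$ elements in $G$. This is precisely the content of the paper's four-case computation, and it cannot be imported from the pairing.

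Two further points on the same step. First, your claim that ``modulo $G_1$ only all-lower-triangular monomials survive'' is only immediate for monomials whose \emph{leftmost} factor lies in $G_1$, since $J$ is a right ideal; interior diagonal or upper-triangular factors must first be moved leftwards. Second, and more usefully, the ``careful choreography'' you anticipate for higher-degree monomials is unnecessary: $A/J$ is cyclic as a right $A$-module, generated by $[1]$, so it suffices to check that the subspace $\mathbb{C}[1] \oplus \mathrm{span}_{\mathbb{C}}\{[u_{ji}] \,|\, (i,j)\in\Delta^+\}$ is stable under right multiplication by each generator $u_{ab}$. This is a purely degree-two verification (the cases $u_{ab}$ upper-triangular and $u_{ab}$ lower-triangular with $a\neq$ the relevant row index are killed outright by $G_2$ and $G_3$, and the remaining cases produce the action formulas), after which the subspace is a right submodule containing the cyclic generator, hence all of $A/J$, and your dimension count closes the argument. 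This is the paper's organisation, and it replaces your induction on monomial degree by the trivial induction already built into the module structure.
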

\begin{proof} 
It follows from Lemma \ref{lem:dualpairing} that the elements of $G_1$ are contained in the kernel of $T^{(0,1)}$, which is to say, they are contained in $I$. Consider now an element $u_{gf}u_{kl} \in G_2$, it holds that 
\begin{align*}
\langle E_{ji}, u_{gf}u_{kl} \rangle = & \, \langle (E_{ji})_{(1)},u_{gf}\rangle \langle (E_{ji})_{(2)}, u_{kl}. \rangle.
\end{align*}
If follows from Proposition \ref{prop:coidealformula} that $\Delta(E_{ji})$ can be expressed as a sum of tensor products of root vectors. Now Lemma \ref{lem:dualpairing} tells us that $u_{kl}$ pairs trivially with any root vector, and so, $u_{gf}u_{kl}$ must be contained in $I$. Consider next an element  $u_{gf}u_{lk}  \in G_3$, evaluated with a root vector $E_{ji}$:
\begin{align*}
\langle E_{ji}, u_{gf}u_{lk} \rangle = & \, \langle (E_{ji})_{(1)},u_{gf}\rangle \langle (E_{ji})_{(2)}, u_{lk} \rangle.
\end{align*}
It follows from Proposition \ref{prop:coidealformula} that this expression is equal to 
\begin{align*}
q^{-1}\nu \sum_{a = i+1}^{j-1} \langle E_{ai},u_{gf}\rangle \langle E_{ja},u_{lk} \rangle  \langle K_{a-1,i},u_{kk} \rangle 
\end{align*}
which vanishes since we are assuming that $g \neq k$. Thus it holds that $G_3 \subseteq I$.

Let us next show that $G$ generates $I$ as a right $\OO_q(\mathrm{SU}_{n+1})$-module. Note that an isomorphism of vector spaces is given by
\begin{align*}
\OO_q(\mathrm{SU}_{n+1}) \to \OO_q(\mathrm{SU}_{n+1})^+ \oplus \mathbb{C}, & & [a] \mapsto ([a^+],\e(a)),
\end{align*}
which restricts to an isomorphism 
\begin{align*}
\OO_q(\mathrm{SU}_{n+1})/I \to \Lambda^{(0,1)}_q \oplus \mathbb{C}.
\end{align*}
Thus if we can show that, for $J := G\OO_q(\mathrm{SU}_{n+1})$, 
$$
\dim\big(\OO_q(\mathrm{SU}_{n+1})/J\big) = \dim\big(\Lambda_q^{(0,1)}\big) + 1,
$$
then we can conclude that $G$ is a generating set. Note first that since $u_{ii}-1 \in G$,  the coset of any monomial in the diagonal generators is a scalar multiple of the coset of the identity. Moreover, for any monomial of the form $fa$, where $f$ is a monomial in the diagonal generators, and $a \in \OO_q(\mathrm{SU}_{n+1})$, the coset $[fa] = \e(f)[a]$. This means that, if $a = u_{ij}a'$, for some $(i,j) \in \Delta^+$, and $a' \in \OO_q(\mathrm{SU}_{n+1})$, then since $u_{ij} \in G$, we have 
$$
[fu_{ij}a'] = \e(f)[u_{ij}a'] = \e(f)[u_{ij}]a' = 0.
$$
Similarly, if $a = u_{ji}a'$, then we have 
$$
[fu_{ji}a'] = \e(f)[u_{ji}]a'.
$$
Thus it remains to show that the span of the cosets $[u_{ji}]$, for $(i,j) \in \Delta^+$, is closed under the action of the generators of $\OO_q(\mathrm{SU}_{n+1})$. This amounts to showing that every coset $[u_{ji}u_{ab}]$ is a linear combination of the cosets $[u_{ji}]$. There are four types of coset to check. First, we have the case
\begin{align*}
[u_{ji}u_{kk}] =  [u_{kk}u_{ji}] + \mathrm{sign}(j-k)\nu [u_{jk}u_{ki}] = [u_{ji}], & & \textrm{ for } i > k, \textrm{ or } k > j,
\end{align*}
Secondly, we have the case
\begin{align*}
[u_{ji}u_{kk}] = [u_{kk}u_{ji}] = [u_{ji}], ~~~~ \textrm{ for } j  > k > i.
\end{align*}
Finally, we have the case
\begin{align*}
[u_{ji}u_{kj}] = [u_{kj}u_{ji}] - \nu [u_{jj}u_{ki}] = - \nu[u_{ki}].
\end{align*}
Moreover, we see that we have also verified the claimed formulae for the actions of the generators on the basis elements.
\end{proof}

\subsection{Relations of the Maximal Prolongation} \label{subsection:MP}

In this subsection we calculate the degree two relations for the maximal prolongation of the fodc $\Omega^1_q(\mathrm{SU}_{n+1})$, using the presentation of higher relations given in \textsection \ref{subsection:MC}. We begin by deriving a general technical formula.

\begin{lem} \label{lem:MCProduct}
Let $A$ be a Hopf algebra, endowed with a left $A$-covariant fodc $\Omega^1(A)$. For any $x,y \in A^+$, it holds that
\begin{align*}
\omega(xy) = [x^+y_{(1)}] \otimes [y_{(2)}^+] + [y_{(1)}^+] \otimes [xy_{(2)}] +  [x_{(1)}^+y_{(1)}] \otimes [x_{(2)}^+y_{(2)}].
\end{align*}
Moreover, for any set of generators $G$ of the right $A$-ideal $I \subseteq A^+$, it holds that $I^{(2)}$ is generated as a right $A$-ideal by the set of elements
\begin{align} \label{eqn:genRelas}
\big\{ \omega(g) \, | \, \textrm{ for } g \in G \big\}.
\end{align}
\end{lem}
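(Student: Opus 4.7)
The plan is to prove both parts by direct computation, based on the identity
\begin{align*}
\Delta(x) = \sum x_{(1)}^+ \otimes x_{(2)}^+ + x \otimes 1 + 1 \otimes x \qquad \text{for } x \in A^+,
\end{align*}
obtained by applying the decomposition $a = a^+ + \varepsilon(a)1$ to each Sweedler factor of $\Delta(x)$ and simplifying via the counit identities together with $\varepsilon(x) = 0$.

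For the first statement, I would multiply this expansion of $\Delta(x)$ with the analogous expansion of $\Delta(y)$, obtaining a nine-term expression for $\Delta(xy) = \Delta(x)\Delta(y)$. Projecting each tensor factor to $\Lambda^1(A) = A^+/I$ kills the two terms $xy \otimes 1$ and $1 \otimes xy$, leaving seven surviving terms. Independently, I would expand each of the three cosets on the claimed right-hand side by writing $y_{(i)} = y_{(i)}^+ + \varepsilon(y_{(i)})1$ and using the counit identities, together with $x^+ = x$; this produces exactly the same seven terms, so the identity follows by inspection.

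For the second statement, I would first observe that, for $x \in I \subseteq A^+$ and $a \in A^+$, the Leibniz-type formula just established simplifies to $\omega(xa) = \omega(x) \cdot a$ in $\Lambda^1(A) \otimes \Lambda^1(A)$, where $\cdot$ denotes the diagonal right $A$-action. Indeed, since $I$ is a right $A$-ideal, the products $xa_{(1)}$ and $xa_{(2)}$ again lie in $I$, so the first two terms of the expansion vanish after projection; the remaining third term is precisely $\omega(x) \cdot a$. Extending from $a \in A^+$ to $a \in A$ via $a = a^+ + \varepsilon(a)1$ and the linearity of $\omega$ gives $\omega(xa) = \omega(x) \cdot a$ for all $x \in I$ and $a \in A$. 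Since $G$ generates $I$ as a right $A$-ideal, any $x \in I$ admits a finite decomposition $x = \sum_i g_i a_i$ with $g_i \in G$ and $a_i \in A$, so by linearity
\begin{align*}
\omega(x) = \sum_i \omega(g_i a_i) = \sum_i \omega(g_i) \cdot a_i,
\end{align*}
showing that the set in \eqref{eqn:genRelas} generates $I^{(2)}$ as a right $A$-submodule of $\Lambda^1(A) \otimes \Lambda^1(A)$.

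The main obstacle here is simply careful bookkeeping of Sweedler indices and the interaction between the projection $[-]\colon A \to A^+/I$ and the decomposition $a = a^+ + \varepsilon(a)1$; the argument itself is entirely formal and uses no structural input beyond the right $A$-ideal property of $I \subseteq A^+$.
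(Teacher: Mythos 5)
Your argument is correct and follows essentially the same route as the paper's: a direct Sweedler computation built on the augmentation decomposition $a = a^+ + \e(a)1$, followed by the observation that the first two terms of the product formula die when the left factor lies in the right ideal $I$, giving $\omega(ga) = \omega(g)\cdot a$ and hence generation by $\omega(G)$. The only cosmetic difference is that the paper applies the identity $(ab)^+ = a^+b + \e(a)b^+$ directly to $(x_{(1)}y_{(1)})^+$ and lands on the three-term formula at once, whereas you expand $\Delta(x)$ and $\Delta(y)$ separately and match seven terms on each side; the two computations are the same in substance.
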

\begin{proof}
For any $a,b \in A$, we have that $(ab)^+ = a^+b + \e(a)b^+$. From this we see that
\begin{align*}
\omega(xy) =  \, & [(x_{(1)}y_{(1)})^+] \otimes [(x_{(2)}y_{(2)})^+] \\
= \, & [x_{(1)}^+y_{(1)} + \e(x_{(1)})y_{(1)}^+] \otimes [x_{(2)}^+y_{(2)} + \e(x_{(2)})y_{(2)}^+]\\
= \, & [x^+_{(1)}y_{(1)}] \otimes [x^+_{(2)}y_{(2)}] + [xy_{(1)}] \otimes [y_{(2)}^+] + [y_{(1)}^+] \otimes [xy_{(2)}],
\end{align*}
where we have used the fact that $x,y \in I \subseteq A^+$.

We now move on to verifying that the elements in \eqref{eqn:genRelas} generate $I^{(2)}$ as a right $A$-module. Any element of $I$ is a linear combination of elements of the form $ga$, for $g \in G$, $a \in A$. Thus we see that the space of degree two relations is spanned, as a vector space, by elements of the form
\begin{align*}
\omega(ga) = [(ga)_{(1)}^+] \otimes [(ga)_{(2)}^+] = &  \,  [g_{(1)}^+a_{(1)}] \otimes [g_{(2)}^+a_{(2)}] + [ga_{(1)}] \otimes [a_{(2)}^+] + [a_{(1)}^+] \otimes [ga_{(2)}]\\
= & \,  [g_{(1)}^+a_{(1)}] \otimes [g_{(2)}^+a_{(2)}] + [g]a_{(1)} \otimes [a_{(2)}^+] + [a_{(1)}^+] \otimes [g]a_{(2)}\\
=  & \,  ([g_{(1)}^+] \otimes [g_{(2)}^+])a\\
=  & \, \omega(g)a.
\end{align*}
Hence the elements in \eqref{eqn:genRelas} generate $I^{(2)}$ as claimed.
\end{proof}

Using this lemma, and the set of generators $G = G_1 \cup G_2 \cup G_3$ from Lemma \ref{prop:Gens}, we will now produce an explicit description of $\Lambda^{(0,\bullet)}_q$ the quantum exterior algebra of $\Omega^{(0,1)}_q(\mathrm{SU}_{n+1})$. 

\begin{lem} \label{lem:THERELATIONS}
The algebra $\Lambda^{(0,\bullet)}_q$ is isomorphic to the quotient of the tensor algebra of $\Lambda^{(0,1)}_q$ by the right $\OO_q(\mathrm{SU}_{n+1})$-ideal generated by the elements,  
\begin{align*}
  e_{ji} \otimes  e_{ji'} + q e_{ji'} \otimes  e_{ji}, ~~~~~~~  e_{ji} \otimes  e_{j'i} +  q e_{j'i} \otimes  e_{ji},  ~~~~~~~~~~~~ e_{ji} \otimes  e_{ji} =  0,\\
 ~   e_{ji'} \otimes  e_{j'i} + e_{j'i} \otimes  e_{ji'}  + \nu e_{j'i'} \otimes  e_{ji}, ~~~~~~~~~~~~~~~~~~
\end{align*}
where $(i,j),(i',j') \in \Delta^+$, and $j' > j > i' > i$, and by the elements 
\begin{align*}
e_{jk} \otimes  e_{j'k'} + q^{-\delta_{jk'}} e_{j'k'} \otimes  e_{jk}, 
\end{align*}
where  $(j,k), (j',k') \in \Delta^+$,  and  $j' > j$ and $k' > k$. 
\end{lem}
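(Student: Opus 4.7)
My plan is to apply Lemma~\ref{lem:MCProduct}, which reduces the identification of the generating relations of $\Lambda^{(0,\bullet)}_q$ to evaluating $\omega(g)$ for $g$ running over the generating set $G = G_1 \cup G_2 \cup G_3$ of $I$ produced in Proposition~\ref{prop:Gens}, and then expressing each result in the basis $\{e_{ji}\}$ using the right action formulae~\eqref{eqn:quantumaction}.

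First, I would dispose of $G_1$ by a direct computation. For $g = u_{ij}$ with $(i,j) \in \Delta^+$, the coproduct $\Delta(u_{ij}) = \sum_a u_{ia} \otimes u_{aj}$ yields
\[
\omega(u_{ij}) = \sum_{a=1}^{n+1} [u_{ia}^+] \otimes [u_{aj}^+].
\]
Since Proposition~\ref{prop:Gens} implies that $[u_{rs}^+]$ vanishes in $\Lambda^{(0,1)}_q$ unless $r > s$, and since $i < j$, no value of $a$ makes both factors simultaneously non-zero. The same argument handles $u_{kk} - 1$, so $\omega(g) = 0$ for every $g \in G_1$.

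The main work lies in computing $\omega(g)$ for $g \in G_2 \cup G_3$: writing $g = xy$ with $x = u_{ji}$ and $y = u_{kl}$ or $y = u_{lk}$, I would apply the three-term identity of Lemma~\ref{lem:MCProduct}. The two outer terms $[x y_{(1)}] \otimes [y_{(2)}^+]$ and $[y_{(1)}^+] \otimes [x y_{(2)}]$ collapse once one rewrites the products using the quantum matrix algebra relations for $\OO_q(\mathrm{SU}_{n+1})$ and applies~\eqref{eqn:quantumaction} to evaluate the surviving scalars. The double-sum middle term $\sum_{a,b} [u_{ja}^+ u_{kb}] \otimes [u_{ai}^+ u_{bl}]$ requires more care: only those $(a,b)$ for which both tensor factors are non-vanishing modulo $I$ contribute, and each surviving term is then reduced by~\eqref{eqn:quantumaction}, noting that the only generator action producing a new basis element is the raising action $e_{ji} u_{j' j} = \nu e_{j' i}$.

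The principal obstacle will be the bookkeeping of cases. The computation for $g \in G_2 \cup G_3$ splits further according to the relative orderings of the index quadruple $(i,j,k,l)$, and each sub-case produces $q$-factors from both the matrix commutation relations and the module action, which must be combined consistently. I expect the crossing relation $e_{ji'} \otimes e_{j'i} + e_{j'i} \otimes e_{ji'} + \nu e_{j'i'} \otimes e_{ji}$ to be the most delicate to derive, the $\nu$-term arising precisely from the off-diagonal action when the matrix relations force a swap across a lower-triangular generator $u_{j'j}$. Once all $\omega(g)$ have been placed in normal form, I would verify that the collected relations span the same right $\OO_q(\mathrm{SU}_{n+1})$-ideal as the list stated in the lemma, completing the identification.
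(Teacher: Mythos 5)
Your computational plan reproduces the first half of the paper's argument: invoke Lemma \ref{lem:MCProduct}, check that $\omega(g)=0$ for all $g\in G_1\cup G_2$, and reduce each $\omega(g)$ for $g\in G_3$ to one of the five listed relations by a case analysis on the index configuration (the paper organises this by the sign of $(\beta,\gamma)$ and comparability under $\prec$, and, as you anticipate, the $\nu$-term arises precisely in the orthogonal comparable case). Up to this point your approach is sound and coincides with the paper's.

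The gap is in your closing step. Lemma \ref{lem:MCProduct} only tells you that $I^{(2)}$ is generated by $\{\omega(g)\}_{g\in G}$ as a right $\OO_q(\mathrm{SU}_{n+1})$-module, so ``verifying that the collected relations span the same right ideal as the list'' is a tautology once you have matched $\omega(G_3)$ with the listed elements. What still has to be proved --- and what occupies the entire second half of the paper's proof --- is that the \emph{linear span} of the listed elements is already closed under the right diagonal action of $\OO_q(\mathrm{SU}_{n+1})$ on $\Lambda^{(0,1)}_q\otimes\Lambda^{(0,1)}_q$. Only then do the listed elements span $I^{(2)}$ as a vector space, which is the form in which the lemma is used downstream: Theorem \ref{thm:THERELATIONS} needs a \emph{full} set of degree-two relations, and Corollary \ref{cor:BergmannBasis} builds its reduction system from exactly this list. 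The paper checks closure relation-type by relation-type, acting by the strictly lower-triangular generators $u_{pj}$ and $u_{pj'}$ and splitting into the sub-cases $p<j'$, $p=j'$, $p>j'$. This step is not a formality: the remark following Proposition \ref{prop:yamane} points out that for the $\theta$-deformed tangent spaces of $U_q(\mathfrak{sl}_3)$ the image of the chosen generating set under $\omega$ is \emph{not} closed under the right action, and the naive list of relations is then genuinely incomplete (and the calculus has non-classical dimension). Your proposal must therefore include this closure verification, or replace it by an independent dimension count of $I^{(2)}$ such as the dual-pairing argument sketched at the end of Example \ref{eg:SU3.module}.
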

\begin{proof}
It is easily checked that $\omega(g) = 0$, for all $g \in G_1$. For example, 
\begin{align*}
\omega(u_{ij}) = \sum_{a=1}^{n+1} [u^+_{ia}] \otimes [u^+_{aj}] = 0,
\end{align*}
since $[u_{ij}] = 0$, for all $(j,i) \in \Delta^+$. 
For a generator $u_{ji}u_{kl} \in G_2$, it follows from Lemma \ref{lem:MCProduct} that
\begin{align*}
\omega(u_{ji}u_{kl}) =  \sum_{a=1}^{n+1} [u_{ji}u_{ka}] \otimes [u_{al}] + \sum_{a=1}^{n+1} [u_{ka}] \otimes [u_{ji}u_{al}] + \sum_{a,b=1}^{n+1} [u^+_{ja}u_{kb}] \otimes [u^+_{ai}u_{bl}] = 0,
\end{align*}
where the vanishing of each summand can be concluded from the set of generators given in Proposition \ref{prop:Gens}. Thus we see that the given relations are generated by the elements of $G_3^+$.

Let us look at a general element $u_{ji}u_{lk}$ in $G_3^+$. As we will see, each generator gives a non-zero degree two relation, whose form depends on the relationship of the associated pair of root vectors $\alpha_{ij}$ and $\alpha_{kl}$. We group distinct pairs of positive roots into four types as presented in the following table. (We recall from Appendix \ref{app:Lusztigroots} that $(-,-)$ denotes the real inner product of the $A_n$-series root system.)
\begin{center} \label{table:matchingindex}
\begin{tabular}{ |c|c|c| } 
 \hline
 & &\\
\textrm{Type} & $(\beta,\gamma)$   & $\prec$\textrm{ Comparable}  \\
  & &  \\
 \hline
 & & \\
\textrm{(a)} & $> 0$ &  \textrm{ \checkmark}  \\ 
  & & \\
\textrm{(b)} & $< 0$ &  \textrm{ \checkmark}  \\ 
  & & \\
\textrm{(c)} & $= 0$ &  \textrm{ \xmark}  \\ 
  & & \\
\textrm{(d)} & $= 0$ &  \textrm{\checkmark} \\ 
  & & \\
 \hline
\end{tabular}
\end{center}
Moreover, we find it convenient to consider the situation where the two roots coincide as an additional Type (e).

Type (a): ~  We further subdivide this case into two sub-cases. First we consider the situation where $j=l$. We can assume, without loss of generality, that $k < i$, since $u_{ji}u_{jk}$ is a scalar multiple of $u_{jk}u_{ji}$. In this case, it follows from \eqref{eqn:quantumaction} that 
\begin{align*}
\omega(u_{ji}u_{jk}) =  e_{jk} \otimes e_{ji}  + qe_{ji} \otimes e_{jk},
\end{align*}
which gives the first relation. 

Next we consider the situation where  $i=k$. We can assume, without loss of generality, that $l < j$, since $u_{ji}u_{li}$ is a scalar multiple of $u_{li}u_{ji}$. Just as for the previous case, $\omega(u_{ji}u_{li})$ gives the second relation.

Type (b): ~ Explicitly, for our pair of roots $\alpha_{ji}$ and $\alpha_{lk}$ this is the case for which $i=l$, or of $j=k$. However, since $u_{ji}u_{lj} \not \in G$, we need only consider the case of $u_{ji}u_{ik}$. We see
\begin{align*}
\omega(u_{ji}u_{ik}) = q^{-1} e_{ji} \otimes e_{ik} +  e_{ik} \otimes e_{ji},
\end{align*}
which gives us, up to scalar multiple, the fifth relation for the special case of $i'=j$.

Type (c): ~  Explicitly, this is the case of $j>l$ and $i>k$ or of $l >j$ and $k>i$. It follows from the relations of $\OO_q(\mathrm{SU}_{n+1})$ that in the later case
$$
u_{ji}u_{lk} = u_{lk}u_{ji} + \nu u_{jk}u_{li}.
$$
Generators of the form $u_{jk}u_{li}$ are consider below in case $(d)$. Hence, without loss of generality, we can assume that $j>l$ and $i>k$. Making this assumption, we have 
$$
\omega(u_{ji}u_{lk}) = e_{ji} \otimes e_{lk} + e_{lk} \otimes e_{ji},
$$
which gives us the fifth relation for the special case of $j \neq i'$.

Type (d): ~ Explicitly, this is the case of  $l < j$ and $k > i$ or of $l > j$ and $k < i$. Since in both cases the generators $u_{ji}$ and $u_{lk}$ commute, we can, without loss of generality assume that $j > l$ and $k > i$. This gives us the relation 
\begin{align*}
\omega(u_{lk}u_{ji}) = e_{lk} \otimes e_{ji} + e_{ji} \otimes e_{lk} + \nu e_{li} \otimes e_{jk}.
\end{align*} 
which implies the fourth relation.

Type (e):  ~ Finally we treat the case of $(j,i) = (l,k)$. This gives the relation
\begin{align*}
\omega(u_{ji}u_{ji}) = \nu e_{ji} \otimes e_{ji},
\end{align*}
which is to say the third relation.

It remains to show that the linear span of this collection of relations is a right $\OO_q(\mathrm{SU}_{n+1})$-submodule of $\Lambda^{(0,1)}_q \otimes \Lambda^{(0,1)}_q$. Observe that any diagonal generator $u_{kk}$ will act on our relations by a scalar multiple. Moreover, any generator $u_{kl}$, for $k < l$, will act trivially. Thus we need only look at the action of strictly lower triangular generators. We treat each of the five relations on a case by case basis.

1. ~ The only strictly lower triangular generators that act on relations of the first type are of the form $u_{pj}$, for some $(j,p) \in \Delta^+$. Multiplying by $u_{pj}$ gives the element
\begin{align*}
(e_{ji} \otimes e_{ji'} + qe_{ji'} \otimes e_{ji})u_{pj} = & \, \nu (e_{ji} \otimes e_{pi'} + qe_{pi} \otimes e_{ji'} + qe_{ji'} \otimes e_{pi} + q^2 e_{pi'} \otimes e_{ji})\\
= & \, q \nu \left(e_{pi} \otimes e_{ji'} + e_{ji'} \otimes e_{pi} + q e_{pi'} \otimes e_{ji} + q^{-1}e_{ji} \otimes e_{pi'}\right)\\
= & \, q \nu\big(e_{pi} \otimes e_{ji'} + e_{ji'} \otimes e_{pi} + \nu e_{pi'} \otimes e_{ji} \\
& ~~~~~~~ + q^{-1}(e_{pi'} \otimes e_{ji} + e_{ji} \otimes e_{pi'})\big).
\end{align*}
Thus we see that the element lies in the linear span of the given relations.

2. ~ The strictly lower triangular generators that act on relations of the second type are of the form $u_{pj}$ or $u_{pj'}$, for $(j,p), (j',p) \in \Delta^+$. For $u_{pj}$, we see that 
\begin{align*}
(e_{ji} \otimes e_{j'i} + qe_{j'i} \otimes e_{ji})u_{pj} = \nu \Big((1+\theta(p-j')q\nu)e_{pi} \otimes e_{j'i} + q^{1+\delta_{pj'}} e_{j'i} \otimes e_{pi}\Big),
\end{align*}
where $\theta$ is the Heaviside step function. This needs to be further divided into the following three cases, depending on whether $p < j'$, $p=j'$, or $p > j'$. It is then easily checked that for each case we again get a scalar multiple of one of the given relations. The case of the generator $u_{pj'}$ is dealt with similarly.

3. The only non-diagonal generators that act on the relation of the third type are of the form $u_{pj}$, for $(j,p) \in \Delta^+$. For such a generator, we see that 
\begin{align*}
(e_{ji} \otimes e_{ji})u_{pj} = \nu(e_{ji} \otimes e_{pi} + q e_{pi} \otimes e_{ji}).
\end{align*} 
Thus we get a scalar multiple of a relation of the second type.

4. The only strictly lower triangular generators that act on relations of the fourth type are of the form $u_{pj}$ or $u_{pj'}$, for $(p,j), \, (p,j') \in \Delta^+$. Multiplying the relation $e_{j'i'} \otimes e_{ji} +  q^{\delta_{i'j}} e_{ji} \otimes e_{j'i'}$ by the generator $u_{pj}$, we get the expression
\begin{align*}
q^{\delta_{i'j}} \nu\!\left(q^{\delta_{j'p} - \delta_{i'p}}  e_{j'i'} \otimes e_{pi} +   e_{pi} \otimes e_{j'i'} +   \theta(p-j') \nu e_{pi'} \otimes e_{j'i}\right)\!.
\end{align*} 
This needs to be further divided into the following three cases, depending on whether $p < j'$, $p=j'$, or $p > j'$. It is then easily checked that for each case we again get a scalar multiple of one of the given relations. The simpler case of the generator $u_{pj'}$ is dealt with similarly.

5. ~ A relation of the fifth type can be acted on non-trivially by strictly lower triangular generators of the form $u_{pj}$ or $u_{pj'}$, for $(j,p), (j',p) \in \Delta^+$. Multiplying by $u_{pj}$, and dividing by $\nu$, gives the element 
\begin{align*}
e_{pi'} \otimes e_{j'i} + q^{\delta_{pj'}}(e_{j'i} \otimes e_{pi'} + \nu e_{j'i'} \otimes e_{pi}) 
+ \theta(p-j')\nu \left(e_{pi} \otimes e_{j'i'} + \nu e_{pi'} \otimes e_{j'i}\right).
\end{align*}
This needs to be further divided into the following three cases, depending on whether $p < j'$, $p=j'$, or $p > j'$. It is then easily checked that for each case we again get a linear combination of the given relations. The simpler case of the generator $u_{pj'}$ is dealt with similarly.

From this we can conclude that the linear span of the set $\{\omega(g) \, | \, \textrm{ for } g \in G \}$  is closed under the right action of $\OO_q(\mathrm{SU}_{n})$. It now follows from Lemma \ref{lem:MCProduct} that the given relations span the space of degree two relations.
\end{proof}

We will now produce a compact presentation of the relations, given in terms of the \mbox{$A_{n}$-root} system. Interestingly, the pairs of basis elements that fail to commute up to a scalar correspond to pairs of positive roots which are comparable with respect to the partial order $\prec$ on $\Delta$. For the statement of the result, we find it convenient to introduce the notion of the \emph{prime pair} of a pair of orthogonal roots $\{\beta,\gamma\} = \{\alpha_{ab},\alpha_{cd}\}$:
$$
\{\beta',\gamma'\} = \{\alpha_{ab}',\alpha_{cd}'\} := \{\alpha_{ad},\alpha_{cb}\}.
$$
It is important to note that if $\beta$ and $\gamma$ are comparable, and both are positive, then $\beta'$ and $\gamma'$ are also both positive. We will also use the convex order $\leq$ associated to our choice of reduced decomposition of $w_0$, as presented in Appendix \ref{subsection:AnRootSystem}. In particular, we put an ordering on the basis elements of the cotangent space, by setting 
\begin{align*}
e_{\gamma} < e_{\gamma'}, & & \text{ if } \gamma < \gamma',
\end{align*}
with respect to the convex order on $\Delta^+$.

\begin{thm} \label{thm:THERELATIONS}
For all $\beta \leq  \gamma \in \Delta^+$, the relations 
\begin{align*}
e_{\beta} \wedge e_{\gamma} \, = & \,- e_{\gamma} \wedge e_{\beta} - \nu \, e_{\beta'} \wedge e_{\gamma'}, & & \textrm{ for } (\beta,\gamma) = 0 \textrm{ and } \gamma  \prec \beta, \\
e_{\beta} \wedge e_{\gamma} \, =  & \, - q^{(\beta,\gamma)} e_{\gamma} \wedge e_{\beta}, & & \textrm{ otherwise},
\end{align*}
give a full set of relations for the algebra $\Lambda^{(0,\bullet)}_q$.
\end{thm}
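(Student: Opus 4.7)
The plan is to translate the explicit generator-level relations of Lemma \ref{lem:THERELATIONS} into the root-theoretic language of the statement, and to verify that the two presentations match case-by-case. No new calculations beyond those already performed in Lemma \ref{lem:THERELATIONS} are needed; what is required is a combinatorial bookkeeping exercise relating the indices $(i,j)$ to the positive roots $\alpha_{ij}$, the inner product $(-,-)$, and the convex order $\leq$.

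First, I would identify each cotangent basis element with its associated positive root via Corollary \ref{cor:FOBasisElements}: the element $e_{ji}$ is dual to the root vector $E_{ji}$ of weight
\[
\alpha_{ij} = \alpha_i + \alpha_{i+1} + \cdots + \alpha_{j-1}.
\]
Realising $\alpha_{ij}$ as $\varepsilon_i - \varepsilon_j$ in the standard Euclidean model of the $A_n$ root system then gives the elementary identity
\[
(\alpha_{ij}, \alpha_{kl}) = \delta_{ik} + \delta_{jl} - \delta_{il} - \delta_{jk},
\]
so that each inner product becomes an immediate index check. Moreover, the partial order $\prec$ translates into nested containment of intervals, which one can read off from the four indices at a glance.

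The second step is to match each of the five relation families of Lemma \ref{lem:THERELATIONS} with one of the two regimes of the theorem. The first two families involve pairs of positive roots sharing a head or tail index, so $(\beta,\gamma) = 1$, matching the $-q$ coefficient of the generic formula. The third (self-squared) relation falls into the generic regime as well, since $(\beta,\beta) = 2$ and $(1+q^2)\, e_\beta \wedge e_\beta = 0$ forces vanishing. The fifth family splits into the sub-case $j = k'$, yielding inner product $-1$ and hence coefficient $-q^{-1}$, and the sub-case of disjoint indices, yielding inner product $0$ and coefficient $-1$; both lie in the generic regime. The fourth family is exactly the orthogonal-comparable case: $\beta = \alpha_{i'j}$ and $\gamma = \alpha_{ij'}$ are orthogonal under $(-,-)$ but comparable under $\prec$, and the index-swap defining the prime pair gives $\{\beta',\gamma'\} = \{\alpha_{ij}, \alpha_{i'j'}\}$, reproducing the remaining summand $\nu\, e_{j'i'} \wedge e_{ji}$ of the lemma.

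The main bookkeeping burden will be verifying compatibility with the convex order recorded in Appendix \ref{subsection:AnRootSystem}, so that the assignment $\beta \leq \gamma$ matches the direction in which each relation of Lemma \ref{lem:THERELATIONS} is written. This is where the hypothesis $j' > j > i' > i$ in Lemma \ref{lem:THERELATIONS} translates into the clause ``$(\beta,\gamma) = 0$ and $\gamma \prec \beta$'' of the theorem. The only real subtlety is the orthogonal-comparable case, whose orientation matters because of the asymmetric $\nu$-correction term; once the convex order is made explicit and the roles of $\beta,\beta'$ and $\gamma,\gamma'$ are aligned with the indices of the fourth relation, the identification is routine and the theorem follows directly from Lemma \ref{lem:THERELATIONS}.
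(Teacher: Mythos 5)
Your proposal is correct and follows essentially the same route as the paper: the paper's proof is precisely a case-by-case matching of the five relation families of Lemma \ref{lem:THERELATIONS} against the sign/comparability dichotomy on pairs of positive roots, organised by whether $(\beta,\gamma)$ is positive, negative, zero-and-comparable, zero-and-incomparable, or $\beta=\gamma$. You also correctly single out the one genuinely delicate point — aligning the convex-order orientation (and the ordering within the prime pair) with the direction in which the fourth and fifth relations of the lemma are written — which is exactly the step the paper's own proof treats most tersely.
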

\begin{proof}
We establish this presentation by showing that, for each pair of positive roots $\{\beta,\gamma\}$, the relation given above coincides with a corresponding relation in Lemma \ref{lem:THERELATIONS}, and that all relations in Lemma \ref{lem:THERELATIONS} arise in this way. 
Note first that a pair of positive orthogonal roots $\{\beta,\gamma\}$ satisfies $\beta \prec \gamma$ if and only if it is of the form $\{\alpha_{ij'},\alpha_{i'j}\}$, for $i <i', \, j < j'$.  Thus, for this case, the relation given above corresponds to the fourth relation in Lemma \ref{lem:THERELATIONS}.  
The case when $\beta$ and $\gamma$ are positive, orthogonal, and \emph{not} comparable happens precisely when the pair of roots is of the form $\{\alpha_{kj},\alpha_{k'j'}\}$, for $k < k', \, j < j'$. Thus this case corresponds to the fifth relation in Lemma \ref{lem:THERELATIONS}, for $j' \neq k$.

We have $(\beta,\gamma) < 0$ precisely when the pair of roots is of the form $\{\alpha_{jk},\alpha_{kk'}\}$, for $k < k'$. Thus the relation above corresponds to the fifth relation in Lemma \ref{lem:THERELATIONS}, for $j'=k$. 
Next we consider the case when $(\beta,\gamma) > 0$. This happens precisely when the pair of roots is of the form $\{\alpha_{ij},\alpha_{ij'}\}$, for $j < j'$,  or $\{\alpha_{ij},\alpha_{i'j}\}$, for $i < i'$. For both pairs of roots 
$
(\beta,\gamma) = 1
$
meaning that we recover the first and second relations in Lemma \ref{lem:THERELATIONS}. 
Finally, the case $\beta = \gamma$ corresponds to  the third relation in Lemma \ref{lem:THERELATIONS}.
\end{proof}

\begin{cor} \label{cor:BergmannBasis}
For  $k=1, \dots, |\Delta^+|$, a basis of $\Lambda_q^{(0,k)}$ is given by 
\begin{align*}
\Big\{  e_{\gamma_1} \wedge \cdots \wedge e_{\gamma_k} \,|\, \gamma_1 < \cdots < \gamma_k \in \Delta^+ \Big\}. 
\end{align*}
In particular, it holds that
\begin{align*}
\mathrm{dim}\Big(V^{(0,k)}\Big) = \binom{\,|\Delta^+|\,}{k}, & & \textrm{ and } & & \mathrm{dim}\Big(V^{(0,\bullet)}\Big) = 2^{|\Delta^+|}.
\end{align*}
\end{cor}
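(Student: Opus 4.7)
The plan is to establish the basis in two steps: first show that the ordered wedges span each $\Lambda^{(0,k)}_q$, and then deduce linear independence by passing to an associated graded algebra whose dimension is manifestly $2^{|\Delta^+|}$. The dimension formulae will then follow by counting.

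For spanning, I would treat the relations of Theorem \ref{thm:THERELATIONS} as a reduction system on the tensor algebra of $\Lambda^{(0,1)}_q$: each relation is oriented so that a product $e_\gamma \wedge e_\beta$ with $\beta < \gamma$ in the convex order is rewritten as a scalar multiple of $e_\beta \wedge e_\gamma$, possibly corrected by a $\nu$-multiple of the prime-pair product $e_{\beta'} \wedge e_{\gamma'}$. Since the prime pair of a comparable orthogonal pair of positive roots consists of positive roots that are ``more nested'' than the original pair, any well-order on index sequences dominating both the inversion count and the nesting depth causes the rewriting process to terminate. Thus every wedge monomial $e_{\gamma_1} \wedge \cdots \wedge e_{\gamma_k}$ can be expressed as a $\mathbb{C}$-linear combination of the ordered wedges appearing in the statement.

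For linear independence, the plan is to equip $\Lambda^{(0,\bullet)}_q$ with a canonical filtration in which the correction term $\nu \, e_{\beta'} \wedge e_{\gamma'}$ sits strictly below the leading wedges $e_\beta \wedge e_\gamma$ and $e_\gamma \wedge e_\beta$. Passing to the associated graded algebra, the defining relations of Theorem \ref{thm:THERELATIONS} collapse to the purely $q$-antisymmetric identities
$$
e_\beta \wedge e_\gamma + q^{(\beta,\gamma)} e_\gamma \wedge e_\beta = 0, \qquad e_\beta \wedge e_\beta = 0,
$$
which present a standard quantum exterior algebra on $|\Delta^+|$ generators. For such an algebra, an elementary diamond-lemma (or Koszul) verification shows that all overlaps resolve and that the ordered monomials form a basis; thus the associated graded algebra has total dimension $2^{|\Delta^+|}$, with $\binom{|\Delta^+|}{k}$ monomials in weight $k$. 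Since a filtered algebra and its associated graded share the same dimensions in each filtration degree, these counts transfer back to $\Lambda^{(0,\bullet)}_q$, forcing the spanning set of ordered wedges to already be a basis.

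The main obstacle is to verify that the filtration is genuinely compatible with the defining relations, namely that the prime-pair replacement $\{\beta,\gamma\} \mapsto \{\beta',\gamma'\}$ strictly decreases the filtration degree. This is a combinatorial compatibility check whose content is the interplay between the convex order $\leq$ associated to the nice decomposition $\mathbf{j}$ and the root-system partial order $\prec$; it is the only genuinely delicate ingredient, and it is exactly what the canonical filtration referred to in the paper's outline is tailored to make work.
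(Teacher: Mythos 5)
There is a genuine gap in the linear-independence half of your argument. Your plan is to compute $\dim\Lambda^{(0,\bullet)}_q$ by passing to the associated graded algebra of a filtration and observing that the relations of Theorem \ref{thm:THERELATIONS} ``collapse'' there to the $q$-antisymmetric identities of a quantum affine space of dimension $2^{|\Delta^+|}$. But what that collapse actually gives you is only a surjection from the quantum affine space presented by the \emph{leading terms of the chosen generators} onto $\mathrm{gr}^{\mathscr{F}}\big(\Lambda^{(0,\bullet)}_q\big)$: the associated graded is the quotient of the tensor algebra by the leading terms of \emph{all} elements of the relation ideal, and in general linear combinations of the generators whose top terms cancel produce extra leading terms not generated by those of the generators. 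Hence your argument only yields $\dim\Lambda^{(0,\bullet)}_q=\dim\mathrm{gr}^{\mathscr{F}}\leq 2^{|\Delta^+|}$, which is exactly the upper bound your spanning step already provides; it does not force the ordered wedges to be independent. Closing this gap is equivalent to showing that the original relations (with their $\nu$-correction terms) form a Gr\"obner basis, i.e.\ that all overlap ambiguities of the \emph{original} reduction system resolve --- which is precisely the verification your proposal tries to avoid. The elementary overlap check you invoke for the $q$-antisymmetric presentation establishes a basis for that presented algebra, not for $\mathrm{gr}^{\mathscr{F}}$.

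This is also where your route inverts the paper's logic: the paper proves the corollary by applying Bergman's diamond lemma directly to the relations of Theorem \ref{thm:THERELATIONS}, ordering the tensor monomials by degree-lexicographic extension of the convex order and resolving the overlap ambiguities $e_\gamma\otimes e_\beta\otimes e_\alpha$ case by case (the only nontrivial cases coming from the comparable orthogonal pairs and their prime-pair correction terms). Only afterwards, in Lemma \ref{lem:theassgradedlemma}, does it identify the associated graded algebra with the quantum affine space --- and that identification \emph{uses} the dimension count $2^{|\Delta^+|}$ from this corollary as an input to conclude that the listed leading-term relations are a full set. Your spanning argument (termination of the rewriting, using that the prime-pair term is strictly lower in the order) is fine, but to complete the proof you must either carry out the overlap resolution for the original relations or supply an independent lower bound on the dimension (for instance via the quantum PBW theorem and the duality with products of the root vectors $E_\gamma$, in the spirit of Example \ref{eg:SU3.module}).
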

\begin{proof}
We will use Bergman's diamond lemma \cite{BergDiam} to show that the proposed basis is indeed a basis. We first extend the convex order on the basis elements $e_{\gamma}$, for $\gamma \in \Delta^+$, to the degree lexicographic order $\geq$ on the associated basis of the tensor algebra of the cotangent space $\Lambda^{(0,1)}_q$. We see that the relation set given in Theorem \ref{thm:THERELATIONS} gives a compatible reduction system. Explicitly, the leading terms are given by,  for  $\beta, \gamma \in \Delta^+$, 
\begin{align*}
e_{\beta} \otimes e_{\beta}, & &  e_{\gamma} \otimes e_{\beta}, \textrm{ for } (\beta,\gamma) = 0 \textrm{ and } \gamma  \prec \beta, & & e_{\beta} \otimes e_{\gamma} \textrm{ otherwise}.
\end{align*}

Since the reduction system is composed of homogeneous polynomials of degree $2$ there are no inclusion ambiguities. Thus we need only concern ourselves with overlap ambiguities. Such an ambiguity will be of the form 
\begin{align*}
e_{\gamma} \otimes e_{\beta} \otimes e_{\alpha}, & & \textrm{ where } \alpha \leq \beta \leq \gamma.
\end{align*}
Let us write this element as 
\begin{align*}
e_{ja} \otimes e_{kb} \otimes e_{lc}, & & \textrm{ where } j \leq k \leq l.
\end{align*}
Reducing with respect to the leading term $e_{ja} \otimes e_{kb}$, we arrive at the expression
\begin{align*}
- q^{(\alpha_{ja},\alpha_{kb})} e_{kb} \otimes e_{ja} \otimes e_{lc} - \theta(k-j)\theta(a-b) \nu e_{ka} \otimes e_{jb} \otimes e_{lc}.
\end{align*}
Reducing with respect to the leading term $e_{kb} \otimes e_{lc}$, we arrive at the expression
\begin{align*}
- q^{(\alpha_{kb},\alpha_{lc})} e_{ja} \otimes e_{lc} \otimes e_{kb} - \theta(l-k)\theta(b-c) \nu e_{ja} \otimes e_{lb} \otimes e_{kc}.
\end{align*}


To resolve this general ambiguity, we need to consider the four possible cases
\begin{align} \label{eqn:BergmannSpecialCases}
j = k = l, & & j = k < l, & & j < k = l, & & j < k < l. 
\end{align}
For the first case, we see that all three basis elements commute up to a scalar multiple, and so, the ambiguity is clearly resolvable. 


For the second case in \eqref{eqn:BergmannSpecialCases}, we need to further consider the possible relationships between $a,b$, and $c$. When
$
a \leq b \leq c,
$
we see that all three basis elements commute up to a scalar multiple, and so, the ambiguity is clearly resolvable. Whenever $a > b$, we do not have an overlap ambiguity, so this possibility does not need to be considered. For 
\begin{align*}
a = b > c
\end{align*}
reducing $e_{ja} \otimes e_{ja} \otimes e_{lc}$ with respect to the  leading term $e_{ja} \otimes e_{ja}$ gives zero. Reducing with respect to the leading term $e_{ja} \otimes e_{lc}$, we arrive at the expression
$$
- e_{ja} \otimes e_{lc} \otimes e_{ja} - \nu e_{ja} \otimes e_{la} \otimes e_{jc}. 
$$
Reducing this further, we arrive at the expression
$$
e_{lc} \otimes e_{ja} \otimes e_{ja} - q \nu e_{la} \otimes e_{ja} \otimes e_{jc} + q \nu e_{la} \otimes e_{ja} \otimes e_{jc} = e_{lc} \otimes e_{ja} \otimes e_{ja},
$$
and subsequently at zero. Thus the ambiguity can be resolved. Finally, we consider the case 
$$
a < b > c.
$$ 
Reducing with respect to the leading term $e_{ja} \otimes e_{jb}$ we arrive at the expression
$$
-q e_{jb} \otimes e_{ja} \otimes e_{lc}.
$$
Reducing with respect to the leading term $e_{jb} \otimes e_{lc}$ we arrive at the expression
$$
- e_{ja} \otimes e_{lc} \otimes e_{jb} - \nu e_{ja} \otimes e_{lb} \otimes e_{jc}.
$$
The reduced form of both these expressions depends on the relationship between $a$ and $c$. For the case $a=c$, we can reduce both expressions to
$$
-q^2 e_{la} \otimes e_{jb} \otimes e_{ja}.
$$ 
For the case $a < c$, we can reduce both expressions to
$$
-q e_{lc} \otimes e_{jb} \otimes e_{ja} - q\nu e_{lb} \otimes e_{jc} \otimes e_{ja}.
$$ 
Finally, for the case that $a > c$, we can reduce both expressions to
$$
-q e_{lc} \otimes e_{jb} \otimes e_{ja} +  \nu e_{lb} \otimes e_{ja} \otimes e_{jc} - q\nu e_{la} \otimes e_{jb} \otimes e_{jc}.
$$ 
Thus this ambiguity can be resolved.


The third and fourth cases given in \eqref{eqn:BergmannSpecialCases} can be resolved analogously. Thus we can conclude from Bergman's diamond lemma that we have a basis as claimed.
\end{proof}

\subsection{Some Low Rank Examples}

In this subsection, we find it instructive to present some low rank examples. We begin with the simplest case of  $U_q(\mathfrak{sl}_2)$. We then move to the case of $U_q(\mathfrak{sl}_3)$, whose structure, while still quite simple,  exhibits much more interesting behaviour. Finally, we look at the case of $U_q(\mathfrak{sl}_4)$ which exhibits behaviour more characteristic of the general case.

\begin{eg}
For the simplest case of $\OO_q(\mathrm{SU}_2)$, we see that $\Lambda^{(0,1)}_q$ is the one-dimensional vector space spanned by the basis element $e := e _{21} = [u_{21}]$, and its right $\OO_q(\mathrm{SU}_2)$-module structure is determined by  $eu_{11} = q^{-1}e_{11}$, and $eu_{22} = qe_{22}$, with all other generators $u_{ij}$ acting trivially. Since $e \wedge e = 0$, we have $\Lambda^k_q = 0$, for all $k > 1$.
\end{eg}

\begin{eg} \label{eg:SU3.module}
The case of $\OO_q(\mathrm{SU}_3)$ is more instructive. Here the cotangent space $\Lambda^{(0,1)}_q$ is a three-dimensional vector space with basis
\begin{align*}
e_{21} = [u_{21}]~  & & e_{32} = [u_{32}].~ &  & e_{31} = [u_{31}].
\end{align*}
%
The diagonal matrix generators of $\OO_q(\mathrm{SU}_3)$ act on the basis elements by a power of $q$, just as for $\OO_q(\mathrm{SU}_2)$.  
Moreover, we have a single non-diagonal action
\begin{align*}
e_{21}u_{32} = \nu e_{31}.
\end{align*}
In what follows, we find it instructive to present the non-diagonal actions in the form of a graph: Arrange the basis elements in lower triangular form and draw an arrow from one basis element $e$ to another $e'$ if there exists a generator $u_{ji}$ such that $eu_{ji} = \nu e'$. Thus the graph for $U_q(\mathfrak{sl}_3)$ is as follows:
\begin{center}
\begin{tikzpicture}
  \draw (0,0) node[circle,fill,inner sep=2pt] (A) {}
        (0,-1.5) node[circle,fill,inner sep=2pt] (B) {}
        (1.5,-1.5) node[circle,fill,inner sep=2pt] (C) {};
  \draw[->, shorten >=2pt, shorten <=2pt, , >=latex, line width=0.5pt] (A) -- node[right] {} (B);
\end{tikzpicture}
\end{center}

The relations of $\Lambda^{(0,\bullet)}_q$ can be conveniently presented  in terms of the following diagram:
\begin{center}
\begin{tikzpicture}
  \draw (0,0) node[circle,fill,inner sep=2pt] (A) {}
        (0,-1.5) node[circle,fill,inner sep=2pt] (B) {}
        (1.5,-1.5) node[circle,fill,inner sep=2pt] (C) {};
  
  \draw[->, shorten >=2pt, shorten <=2pt, >=latex, line width=0.5pt, blue] (A) -- (B);
  \draw[->, shorten >=2pt, shorten <=2pt, >=latex, line width=0.5pt, blue] (B) -- (C);
  \draw[->, shorten >=2pt, shorten <=2pt, >=latex, line width=0.5pt, blue] (C) -- (A);
  
\end{tikzpicture}.
\end{center}
This gives the relations
\begin{align*}
e_{\beta} \wedge e_{\beta} = 0, & & e_{\beta} \wedge e_{\gamma} = - q e_{\gamma} \wedge e_{\beta},  
\end{align*}
whenever an arrow points from a basis element $e_{\beta}$ to a basis element $e_{\gamma}$. Note that the reversal of the dotted arrow follows from the fact that the inner product of the two roots $\alpha_{12}$ and $\alpha_{23}$ is negative.

Thus we see that $\Lambda^{(0,\bullet)}_q$ is a quantum affine space in the sense of  \cite[I.2]{BG02}, and that the higher orders have classical binomial dimension:
\begin{align*}
\dim\!\left(\Lambda^{(0,2)}_q\right) = 3, & & \dim\!\left(\Lambda^{(0,3)}_q\right) = 1, 
\end{align*}
with all higher forms being zero.

That the commutation relations take such a simple form can be understood as following from the explicit   dimensions of the weight spaces of $\Lambda^{(0,1)}_q \otimes \Lambda^{(0,1)}_q$:
\begin{align*}
\begin{pmatrix}
\,|e_{21} \otimes e_{21}| & |e_{21} \otimes e_{32}| & |e_{21} \otimes e_{31}| \, \\
\,|e_{32} \otimes e_{21}| & |e_{32} \otimes e_{32}| & |e_{32} \otimes e_{31}| \, \\
\,|e_{31} \otimes e_{21}| & |e_{31} \otimes e_{32}| & |e_{31} \otimes e_{31}| \, \\
\end{pmatrix}_{} ~~~~ = ~~~~
\begin{pmatrix}
2 \alpha_1 & \alpha_1 + \alpha_2 & 2\alpha_1 + \alpha_2 \\
\alpha_1 + \alpha_2 & 2\alpha_2 & \alpha_1 + 2 \alpha_2 \\
\, 2 \alpha_1 + \alpha_2 & \alpha_1 + 2 \alpha_2 & 2 \alpha_3 \\
\end{pmatrix}_{.}
\end{align*}
Since the calculus is right $\OO(T^{2})$-covariant, it follows that $I^{(2)}$ is a $U_q(\mathfrak{h})$ submodule of $\Lambda^{(0,1)}_q \otimes \Lambda^{(0,1)}_q$. In other words, $I^{(2)}$ is homogeneous with respect to the $\mathbb{Z}^2$-grading of the tensor product. Thus we can choose a basis of the degree two relations which is of the form
\begin{align*}
e_{ji} \otimes e_{kl} + \lambda e_{kl} \otimes e_{ji}, & & e_{ji} \otimes e_{ji}, &  &\textrm{ where } (j,i),(k,l) \in \Delta^+.
\end{align*}
Thus we see that, for this simple case, the form of the relations is dictated by the dimension of the weight spaces.

We finish with an observation that the commutation relations of the root vectors $E_{ji}$ are intimately connected with the commutation relations of the dual forms $e_{ji}$. For sake of notational convenience, we first relabel the root vectors 
\begin{align*}
X_1 := E_{21} = E_1, & & X_2 := E_{31} = [E_2,E_1]_{q^{-1}}, & & X_3 := E_{32} = E_2.
\end{align*}
For any $x \in I$, we see that 
\begin{align*}
[x_{(1)}] \otimes [x_{(2)}] = & \, \sum_{i,j} \langle X_i,x_{(1)} \rangle \langle X_j,x_{(2)} \rangle  e_i \otimes e_j 
=  \,  \sum_{i,j = 1}^3  \langle X_iX_j, x \rangle e_i \otimes e_j. 
\end{align*}
Separating this into its homogeneous summands we see that the elements
\begin{align} \label{eqn:firstXrels}
\langle X_iX_j, x \rangle e_i \otimes e_j + \langle X_jX_i, x \rangle e_j \otimes e_i, & &  \langle X_i^2,x\rangle e_i \otimes e_i, & &  \text{ ~~~ for } 1 \leq i < j \leq 3,
\end{align}
are all contained in $I^{(2)}$. Now in the linear dual of the ideal $I$, we have the following $q$-commutation relations
\begin{align*}
X_1X_2 = q^{-1} X_2X_1,  & & X_1X_3 = q X_3X_1, & & X_2X_3 = q^{-1}X_3X_2,
\end{align*}
where the first and third relations follow from the quantum Serre relations of $U_q(\mathfrak{sl}_3)$. This allows us to rewrite \eqref{eqn:firstXrels} as 
\begin{align*}
\langle E_i^2,x\rangle e_i, & & \langle X_iX_j, x \rangle(e_i \otimes e_j + q^{\chi_{ij}} e_j \otimes e_i), \textrm{ ~~~ for } 1 \leq i< j \leq 3, 
\end{align*}
where $\chi_{12} = \chi_{23} = 1$ and $\chi_{13} = -1$. Thus we see that the dimension of the space of relations is bounded above by six. This means that $ \omega(G)$ (where $G$ is the generating set given in Proposition \ref{prop:Gens}) and $I^{(2)}$ coincide. This offers an alternative proof for the fact that $\omega(G)$  is closed under the right action of $\OO_q(\mathrm{SU}_3)$, as established in Lemma \ref{lem:THERELATIONS}.
\end{eg}

\begin{eg} \label{eg:exterior.sl4}
Finally, we treat the case of $U_q(\mathfrak{sl}_4)$. The $6$-dimensional cotangent space $\Lambda^{(0,1)}_q$ admits non-zero actions by four non-diagonal matrix generator $u_{ji}$ which we present in the form of a diagram,  just as for $U_q(\mathfrak{sl}_3)$:
\begin{center}
\begin{tikzpicture} [scale=0.85]
\draw (0,0) node[circle,fill,inner sep=1.9pt] (A) {}
(0,-1.5) node[circle,fill,inner sep=1.9pt] (B) {}
(0,-3) node[circle,fill,inner sep=1.9pt] (C) {}
(1.5,-1.5) node[circle,fill,inner sep=1.9pt] (D) {}
(1.5,-3) node[circle,fill,inner sep=1.9pt] (E) {}
(3,-3) node[circle,fill,inner sep=1.9pt] (F) {};
\draw[->, shorten >=2pt, shorten <=2pt, , >=latex, line width=0.4pt] (A) -- node[right] {} (B);
\draw[->, shorten >=2pt, shorten <=2pt, , >=latex, line width=0.4pt] (B) -- node[right] {} (C);
\draw[->, shorten >=2pt, shorten <=2pt, , >=latex, line width=0.4pt] (D) -- node[right] {} (E);
\draw[->, shorten >=2pt, shorten <=2pt, , >=latex, line width=0.3pt, bend right] (A) to (C);
\end{tikzpicture}
\end{center}
We note that as an $\OO_q(\mathrm{SU}_4)$-module $\Lambda^{(0,1)}_q$ decomposes into a sum of three obvious submodules. Moreover, the first two submodules are reducible, and consist of a series of one-dimensional extensions of a one-dimensional submodule.

In this case the algebra $\Lambda^{(0,\bullet)}_q$ is no longer a quantum affine space. This can be seen as a consequence of the fact that the weight space of weight $\alpha_1 + 2\alpha_2 + \alpha_3$ in $\Lambda^{(0,1)}_q \otimes \Lambda^{(0,1)}_q$ is four dimensional. This permits the relation 
\begin{align} \label{eqn:sl4incomparable}
e_{32} \wedge e_{41} = - e_{41} \wedge e_{32} + \nu e_{31} \wedge e_{42}. 
\end{align}
Let us now give a graphic presentation of the relations, just as we did for $U_q(\mathfrak{sl}_3)$ above. For the case when the roots have a non-zero pairing we have the following diagrams
\begin{align*}
\begin{tikzpicture} [scale=0.9]
\draw (0,0) node[circle,fill,inner sep=2pt] (A) {}
(0,-1.5) node[circle,fill,inner sep=2pt] (B) {}
(0,-3) node[circle,fill,inner sep=2pt] (C) {}
(1.5,-1.5) node[circle,fill,inner sep=2pt] (D) {}
(1.5,-3) node[circle,fill,inner sep=2pt] (E) {}
(3,-3) node[circle,fill,inner sep=2pt] (F) {}
(2,-0.5) node[circle] (G) {$(\gamma,\gamma')>0$};
\draw[shorten >=2pt, shorten <=2pt, , >=latex, line width=0.4pt, red] (A) -- node[right] {} (B);
\draw[ shorten >=2pt, shorten <=2pt, , >=latex, line width=0.4pt, red] (B) -- node[right] {} (D);
\draw[ shorten >=2pt, shorten <=2pt, , >=latex, line width=0.4pt, red] (B) -- node[right] {} (C);
\draw[ shorten >=2pt, shorten <=2pt, , >=latex, line width=0.4pt, red] (C) -- node[right] {} (E);
\draw[ shorten >=2pt, shorten <=2pt, , >=latex, line width=0.4pt, red] (D) -- node[right] {} (E);
\draw[ shorten >=2pt, shorten <=2pt, , >=latex, line width=0.4pt, red] (E) -- node[right] {} (F);
\draw[ shorten >=2pt, shorten <=2pt, , >=latex, line width=0.4pt, red, bend right] (A) to (C);
\draw[ shorten >=2pt, shorten <=2pt, , >=latex, line width=0.4pt, red, bend right] (C) to (F);
\end{tikzpicture}
& &
\begin{tikzpicture} [scale=0.9]
\draw (0,0) node[circle,fill,inner sep=2pt] (A) {}
(0,-1.5) node[circle,fill,inner sep=2pt] (B) {}
(0,-3) node[circle,fill,inner sep=2pt] (C) {}
(1.5,-1.5) node[circle,fill,inner sep=2pt] (D) {}
(1.5,-3) node[circle,fill,inner sep=2pt] (E) {}
(3,-3) node[circle,fill,inner sep=2pt] (F) {}
(2.8,-0.5) node[circle] (G) {$(\gamma,\gamma')<0$}
(1.5,-3.5) node[] (Z) {};
\draw[shorten >=2pt, shorten <=2pt, , >=latex, line width=0.4pt, red] (D) -- node[right] {} (A);
\draw[shorten >=2pt, shorten <=2pt, , >=latex, line width=0.4pt, red] (F) -- node[right] {} (D);
\draw[shorten >=2pt, shorten <=2pt, , >=latex, line width=0.4pt, red] (E) -- node[right] {} (A);
\draw[shorten >=2pt, shorten <=2pt, , >=latex, line width=0.4pt, red] (F) -- node[right] {} (B);
\end{tikzpicture}
\end{align*}
where for any two basis elements $e_{\beta}$ and $e_{\gamma}$ connected by a red line, such that $\beta \leq \gamma$,  we have the relation
$$
e_{\beta} \wedge e_{\gamma} = -q e_{\gamma} \wedge e_{\beta}.
$$
When the pairing of roots is zero, we have the incomparable and comparable situations: 
\begin{align*}
\begin{tikzpicture} [scale=0.9]
\draw (0,0) node[circle,fill,inner sep=2pt] (A) {}
(0,-1.5) node[circle,fill,inner sep=2pt] (B) {}
(0,-3) node[circle,fill,inner sep=2pt] (C) {}
(1.5,-1.5) node[circle,fill,inner sep=2pt] (D) {}
(1.5,-3) node[circle,fill,inner sep=2pt] (E) {}
(3,-3) node[circle,fill,inner sep=2pt] (F) {}
(2.9,-0.4) node[circle] (G) {$(\gamma,\gamma')=0$}
(3.3,-0.8) node[circle] (G) { ~~~incomparable};
\draw[-, shorten >=2pt, shorten <=2pt, , >=latex, line width=0.4pt, red] (B) -- node[right] {} (E);
\draw[-, shorten >=2pt, shorten <=2pt, , >=latex, line width=0.4pt, red, bend right] (F) to (A);
\end{tikzpicture}
& &
\begin{tikzpicture} [scale=0.9]
\draw (0,0) node[circle,fill,inner sep=2pt] (A) {}
(0,-1.5) node[circle,fill,inner sep=2pt] (B) {}
(0,-3) node[circle,fill,inner sep=2pt] (C) {}
(1.5,-1.5) node[circle,fill,inner sep=2pt] (D) {}
(1.5,-3) node[circle,fill,inner sep=2pt] (E) {}
(3,-3) node[circle,fill,inner sep=2pt] (F) {}
(2.8,-0.4) node[circle] (G) {$(\gamma,\gamma')=0$}
(3.2,-0.8) node[circle] (G) { comparable};
\draw[-, shorten >=2pt, shorten <=2pt, , line width=0.4pt, red] (C) -- node[right] {} (D);
\draw[-, dashed, shorten >=2pt, shorten <=2pt, , line width=0.4pt, red] (B) -- node[right] {} (E);
\end{tikzpicture}
\end{align*}
In the incomparable case all pairs commute up to scalar multiple, while for the comparable case we have the relation \eqref{eqn:sl4incomparable}, where the dotted line in the diagram joins the two factors of the extra \emph{quantum term} appearing on the right hand side of \eqref{eqn:sl4incomparable}.
\end{eg}

\begin{eg}
In this example we look at the use of Bergman's diamond lemma in Corollary \ref{cor:BergmannBasis} for the case of $U_q(\frak{sl}_4)$. As seen in the previous example, in this case we have a single relation that is not a commutation relation up to scalar multiple, namely the relation 
$$
e_{32} \wedge e_{41} = - e_{41} \wedge e_{32} - \nu e_{42} \otimes e_{31}.
$$
Thus the only overlap ambiguities that are not directly resolvable are the following
\begin{align*}
e_{21} \otimes e_{32} \otimes e_{41}, & & e_{31} \otimes e_{32} \otimes e_{41}, & & e_{32} \otimes e_{41} \otimes e_{41},\\
e_{32} \otimes e_{32} \otimes e_{41}, & & e_{32} \otimes e_{41} \otimes e_{42}, & & e_{32} \otimes e_{41} \otimes e_{43}.
\end{align*}
Considering the first identity, and performing the two possible reductions, we arrive at the two expressions
\begin{align*}
- q^{-1} e_{32} \otimes e_{21} \otimes e_{41},  & & - e_{21} \otimes e_{41} \otimes e_{32} - \nu e_{21} \otimes e_{42} \otimes e_{31}.
\end{align*}
Both these expressions can be further reduced to the single expression 
\begin{align*}
-e_{41} \otimes e_{32} \otimes e_{21} - \nu e_{42} \otimes e_{31} \otimes e_{21}
\end{align*}
meaning that the ambiguity is resolvable. Considering next the third element, and performing the two possible reductions, we arrive at $0$ and the expression
\begin{align*}
- e_{41} \otimes e_{32} \otimes e_{41} - \nu e_{42} \otimes e_{31} \otimes e_{41}.
\end{align*}
This expression can be further reduced to $0$, meaning that the ambiguity is resolvable. The other four ambiguities can be resolved analogously.
\end{eg}


\subsection{A Filtration for the Quantum Exterior Algebra} \label{subsection:filtration}

In this subsection, we introduce a filtration on the quantum exterior algebra and describe its associated graded algebra. Just as for the irreducible case treated in \cite{MTSUK}, the filtration will be used in the next subsection to show that our quantum exterior algebra is a Frobenius algebra. 

We begin by noting that our choice of basis for the cotangent space $\Lambda^{(0,1)}_q$ is indexed by the set $\Delta^+$ of positive roots of $\mathfrak{sl}_{n+1}$. Moreover, our choice of reduced decomposition of the longest element of the Weyl group gives $\Delta^+$ a total order (as explained in Appendix \ref{subsection:AnRootSystem}). Thus, following the approach of Appendix \ref{app:filtration}, we have an associated $\mathcal{Q}^+$-monoid grading on the tensor algebra of $\Lambda^{(0,1)}_q$. Finally, this induces a filtration on the quantum exterior algebra. 

\begin{lem} \label{lem:theassgradedlemma}
The associated graded algebra of the filtration is generated by the elements $e_{\gamma}$, for $\gamma \in \Delta^+$, subject to the relations  
\begin{align} \label{eqn:ass.grad.rels}
e_{\beta} \otimes e_{\beta}, & &  e_{\beta} \otimes e_{\gamma}  = - q^{(\beta,\gamma)} e_{\gamma} \otimes e_{\beta}, & & \textrm{ for } \beta < \gamma \in \Delta^+.
\end{align} 
\end{lem}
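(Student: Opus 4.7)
The plan is to push the relations from Theorem \ref{thm:THERELATIONS} through the $\mathcal{Q}^+$-filtration and read off the top-degree terms. All relations other than the orthogonal comparable case already have the form $e_\beta \wedge e_\gamma = -q^{(\beta,\gamma)} e_\gamma \wedge e_\beta$, and so translate verbatim to the associated graded. The only nontrivial case is
$$
e_\beta \wedge e_\gamma + e_\gamma \wedge e_\beta \,=\, -\nu\, e_{\beta'} \wedge e_{\gamma'}, \qquad (\beta,\gamma) = 0,\ \gamma \prec \beta,
$$
where we need the quantum correction $\nu\, e_{\beta'} \wedge e_{\gamma'}$ to drop out upon passing to the associated graded.

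The key combinatorial claim, then, is that the prime pair $\{\beta',\gamma'\}$ sits in strictly lower $\mathcal{Q}^+$-filtration degree than $\{\beta,\gamma\}$. Writing $\beta = \alpha_{ab}$ and $\gamma = \alpha_{cd}$ so that $\beta' = \alpha_{ad}$ and $\gamma' = \alpha_{cb}$, the orthogonality and comparability hypotheses force $\beta'$ and $\gamma'$ to lie strictly \emph{between} the smaller and larger of $\{\beta,\gamma\}$ with respect to the convex order $\leq$ made explicit in Appendix \ref{subsection:AnRootSystem}. The $\mathcal{Q}^+$-grading introduced in Appendix \ref{app:filtration} is engineered precisely so that such a nesting of positions implies a strict decrease in filtration degree. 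Consequently, in the associated graded algebra the correction term vanishes, and since $(\beta,\gamma) = 0$ gives $-q^{(\beta,\gamma)} = -1$, the resulting identity matches \eqref{eqn:ass.grad.rels} exactly.

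To conclude that \eqref{eqn:ass.grad.rels} is a \emph{full} list of defining relations, I would invoke a dimension count. Let $Q$ denote the quadratic algebra on generators $\{e_\gamma\}_{\gamma \in \Delta^+}$ subject only to \eqref{eqn:ass.grad.rels}; an application of Bergman's diamond lemma, considerably simpler than the one in Corollary \ref{cor:BergmannBasis} since no correction terms are present, shows that a basis of $Q$ is given by the ordered products $e_{\gamma_1} \otimes \cdots \otimes e_{\gamma_k}$ with $\gamma_1 < \cdots < \gamma_k$, so $\dim Q = 2^{|\Delta^+|}$. There is a surjection from $Q$ onto the associated graded algebra by the previous paragraph, and $\dim \Lambda^{(0,\bullet)}_q = 2^{|\Delta^+|}$ by Corollary \ref{cor:BergmannBasis}, so this surjection must be an isomorphism.

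The principal obstacle is the combinatorial claim in the second paragraph: that the prime-pair operation strictly decreases the filtration degree. This is exactly the feature of the convex order associated to the nice decomposition $\mathbf{j}$ that makes the whole approach work, and it explains in hindsight why the subsequent sections are expected to fail for more generic reduced decompositions.
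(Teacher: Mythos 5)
Your overall strategy --- pass the relations of Theorem \ref{thm:THERELATIONS} to the associated graded algebra, observe that only the orthogonal comparable case produces an inhomogeneous relation whose correction term must drop out, and close with a dimension count against $\dim \Lambda^{(0,\bullet)}_q = 2^{|\Delta^+|}$ --- is exactly the paper's proof. However, the combinatorial claim you identify as the crux is false as stated. Writing the orthogonal comparable pair as $\{\beta,\gamma\} = \{\alpha_{ij'},\alpha_{i'j}\}$ with $i<i'<j<j'$, the prime pair is $\{\alpha_{ij},\alpha_{i'j'}\}$, and with respect to the convex order the four roots are arranged as
$$
\alpha_{ij} \;<\; \alpha_{i'j} \;<\; \alpha_{ij'} \;<\; \alpha_{i'j'},
$$
so the primed roots do \emph{not} lie strictly between $\beta$ and $\gamma$: they \emph{straddle} them, one on each side. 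Concretely, for $U_q(\mathfrak{sl}_4)$ the relevant relation is $e_{32}\otimes e_{41} + e_{41}\otimes e_{32} + \nu\, e_{42}\otimes e_{31}$, and in the order $e_{21}<e_{31}<e_{32}<e_{41}<e_{42}<e_{43}$ the primed factors $e_{31}$ and $e_{42}$ sit strictly outside the interval between $e_{32}$ and $e_{41}$. So the ``nesting forces a strict drop in filtration degree'' mechanism you invoke does not exist.

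What is actually needed, and what the paper uses, is only that the two homogeneous components of the relation occupy \emph{distinct} degrees in the free abelian monoid on $\Delta^+$ (the multisets $\{\beta,\gamma\}$ and $\{\beta',\gamma'\}$ differ), so that exactly one of them is the leading term; one must then verify that the degree-lexicographic comparison of these two straddling multisets puts the $q$-commutator component, and not $\nu\, e_{\beta'}\otimes e_{\gamma'}$, in the position whose vanishing is imposed on $\mathrm{gr}^{\mathscr{F}}$. Your argument does not supply this verification, and since it hinges on which end of the total order the lexicographic comparison reads from, it cannot be waved through. If you wish to avoid the convention-chasing, note that your own final step settles it: if $e_{\beta'}\otimes e_{\gamma'}$ were the leading term it would be killed in $\mathrm{gr}^{\mathscr{F}}$, forcing $\dim \mathrm{gr}^{\mathscr{F}} < 2^{|\Delta^+|}$ and contradicting $\dim \mathrm{gr}^{\mathscr{F}} = \dim \Lambda^{(0,\bullet)}_q$ from Corollary \ref{cor:BergmannBasis}. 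The concluding dimension count (surjection from the skew-commutative quadratic algebra onto $\mathrm{gr}^{\mathscr{F}}$, equality of dimensions) is correct and coincides with the paper's.
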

\begin{proof}
For $(\beta,\gamma) = 0$ and $\beta \prec \gamma$, we have the relation
$$
e_{\beta} \otimes e_{\gamma}  + e_{\gamma} \otimes e_{\beta} - \nu \, e_{\beta'} \otimes e_{\gamma'}.
$$
We see that the first two terms have the degree $\beta + \gamma$ and the third term has degree $\beta'+ \gamma'$. Since $\beta + \gamma < \beta'+ \gamma'$ in $\mathcal{Q}^+$, the leading term is 
$$
e_{\beta} \otimes e_{\gamma}  + e_{\gamma} \otimes  e_{\beta} = e_{\beta} \otimes  e_{\gamma}  + q^{(\beta,\gamma)}e_{\gamma} \otimes e_{\beta}.
$$
For two distinct roots $\beta,\gamma$ whose pairing is zero, or which are not comparable, we have the relation
$$
e_{\beta} \otimes e_{\gamma} + q^{(\beta,\gamma)} e_{\gamma} \otimes e_{\beta}.
$$
Since both summands have degree $\beta + \gamma$, the relation is equal to its leading term. Finally, it is clear that $e_{\beta} \otimes e_{\beta}$ is equal to its leading term. These relations imply that $\mathrm{gr}^{\mathscr{F}}$ has dimension at least $2^{|\Delta^+|}$, and since we know that $V^{(0,\bullet)}$ has dimension  $2^{|\Delta^+|}$, we can conclude that we have a full set of relations.
\end{proof}


\begin{eg}
For the special case of $U_q(\mathfrak{sl}_3)$, the order on the cotangent basis elements is given by 
\begin{align*}
e_{(2,1)} ~ < ~  e_{(3,1)} ~ < ~  e_{(3,2)}. 
\end{align*}
Thus for tensor products of order two, we see for example that
\begin{align*}
e_{(2,1)} \otimes e_{(3,1)} ~ < ~ e_{(3,2)} \otimes e_{(2,1)} ~ < ~ e_{(3,1)} \otimes e_{(3,2)}. 
\end{align*}
Moreover, since all the relations are clearly equal to their leading terms, we see that the associated graded algebra $\mathrm{gr}^\mathscr{F}$ is isomorphic to $\Lambda^{(0,\bullet)}_q$.
\end{eg}

\begin{eg}
Let us now look at the more involved case of $U_q(\mathfrak{sl}_4)$. An explicit presentation of the lexicographical order of the basis elements of the cotangent space is given by the diagram
\begin{align*}
\begin{tikzpicture} [scale=0.9]
\draw (0,0.4) node (A) {$e_{(2,1)}$}
(0,-1.5) node (B) {$e_{(3,1)}$}
(0,-3.2) node (C) {$e_{(4,1)}$}
(2.5,-1.5) node (D) {$e_{(3,2)}$}
(2.5,-3.2) node (E) {$e_{(4,2)}$}
(4.8,-3.2) node (F) {$e_{(4,3)}$};
\draw[->, shorten >=2pt, shorten <=2pt, , line width=0.7pt, blue] (B) -- node[right] {} (A);
\draw[->, shorten >=2pt, shorten <=2pt, , line width=0.7pt, blue] (D) -- node[right] {} (B);
\draw[->, shorten >=2pt, shorten <=2pt, , line width=0.7pt, blue] (C) -- node[right] {} (D);
\draw[->, shorten >=2pt, shorten <=2pt, , line width=0.7pt, blue] (E) -- node[right] {} (C);
\draw[->, shorten >=2pt, shorten <=2pt, , line width=0.7pt, blue] (F) -- node[right] {} (E);
\end{tikzpicture}
\end{align*}
where the total order is  the transitive closure of the relations implied by the arrows of the diagram. Consider now the single commutation relation that is not a commutation up to scalar multiple
$$
e_{(3,2)} \otimes e_{(4,1)}  +  e_{(4,1)} \otimes e_{(3,2)} + \nu e_{(4,2)} \otimes e_{(3,1)}.
$$
Since $e_{(3,1)} < e_{(3,2)}$, we see that the leading term is given by 
\begin{align*}
e_{(3,2)} \otimes e_{(4,1)}  + e_{(4,1)} \otimes e_{(3,2)}
\end{align*}
showing that the associated graded algebra is a quantum affine algebra.
\end{eg}

\subsection{The Frobenius and Koszul Properties} \label{section:Frob}

In this subsection we establish some important additional  algebraic properties of the quantum exterior algebra. We use the associated graded algebra $\mathrm{gr}^{\mathscr{F}}$ to prove that the quantum exterior algebra is a Frobenius algebra, and then explicitly describe its Nakayama automorphism. Moreover,  we observe that the quantum exterior algebra is a Koszul algebra. 

A \emph{Frobenius algebra} is an associative algebra $A$ equipped with a non-degenerate bilinear  map
$
B: A \times A \to \mathbb{C}
$
satisfying 
\begin{align} \label{eqn:Frobenius}
B(ab,c) = B(a,bc), & & \textrm{ for all } a,b,c \in A.  
\end{align}

\begin{prop} 
For any linear isomorphism $\iota: \mathrm{gr}^{\mathscr{F}}_{2\rho} \to \mathbb{C}$, the bilinear map 
\begin{align*}
B: \mathrm{gr}^{\mathscr{F}} \otimes \mathrm{gr}^{\mathscr{F}}  \to \mathbb{C}, & & v \otimes w \mapsto \iota(v \wedge w)
\end{align*}
gives $\mathrm{gr}^{\mathscr{F}}$ the structure of a Frobenius algebra.
\end{prop}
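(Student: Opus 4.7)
The associativity property \eqref{eqn:Frobenius} is immediate from the associativity of the wedge product: $B(ab, c) = \iota((a \wedge b) \wedge c) = \iota(a \wedge (b \wedge c)) = B(a, bc)$ for all $a,b,c \in \mathrm{gr}^{\mathscr{F}}$. The substantive step is non-degeneracy, which I plan to establish by an explicit computation in the PBW-type basis arising from the preceding subsection.

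The presentation \eqref{eqn:ass.grad.rels} shows that $\mathrm{gr}^{\mathscr{F}}$ is a ``quantum affine exterior'' algebra: the only relations are the squaring relations $e_\beta \wedge e_\beta = 0$ together with the $q$-commutation relations $e_\beta \wedge e_\gamma = -q^{(\beta,\gamma)} e_\gamma \wedge e_\beta$ for $\beta < \gamma$. Consequently, for each subset $S = \{\gamma_{i_1} < \cdots < \gamma_{i_k}\} \subseteq \Delta^+$ the ordered wedge
$$
e_S := e_{\gamma_{i_1}} \wedge \cdots \wedge e_{\gamma_{i_k}}
$$
is non-zero, and $\{e_S\}_{S \subseteq \Delta^+}$ forms a basis of $\mathrm{gr}^{\mathscr{F}}$ (the cardinality $2^{|\Delta^+|}$ matching the dimension count of Corollary~\ref{cor:BergmannBasis}). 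The squaring relations force $e_S \wedge e_T = 0$ whenever $S \cap T \neq \emptyset$, while the $q$-commutation relations yield $e_S \wedge e_T = c_{S,T}\, e_{S \sqcup T}$ with $c_{S,T} \in \mathbb{C}^\times$ whenever $S \cap T = \emptyset$. In particular, for every $S$ the product $e_S \wedge e_{\Delta^+ \setminus S}$ is a non-zero scalar multiple of $e_{\Delta^+}$, the latter spanning the one-dimensional weight-$2\rho$ component $\mathrm{gr}^{\mathscr{F}}_{2\rho}$.

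To establish non-degeneracy, take any non-zero $v \in \mathrm{gr}^{\mathscr{F}}$ and expand $v = \sum_{S \subseteq \Delta^+} \mu_S\, e_S$. Fix any $S_0$ with $\mu_{S_0} \neq 0$ and set $w := e_{\Delta^+ \setminus S_0}$. If $S \not\subseteq S_0$, then $S$ meets $\Delta^+ \setminus S_0$, whence $e_S \wedge w = 0$; while for $S \subsetneq S_0$ the element $e_S \wedge w$ is a scalar multiple of $e_{S \sqcup (\Delta^+ \setminus S_0)}$, which has weight strictly less than $2\rho$ in $\mathcal{Q}^+$, and therefore maps to zero under $\iota$. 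The only surviving contribution is $S = S_0$, giving
$$
B(v, w) \;=\; \mu_{S_0}\, c_{S_0,\, \Delta^+ \setminus S_0}\, \iota(e_{\Delta^+}) \;\neq\; 0.
$$
Hence the map $\mathrm{gr}^{\mathscr{F}} \to (\mathrm{gr}^{\mathscr{F}})^*$ induced by $B$ has trivial kernel, and finite-dimensionality upgrades this to non-degeneracy, completing the proof. The main obstacle, essentially addressed by passing to the associated graded, is controlling the quantum correction terms appearing in Theorem \ref{thm:THERELATIONS}; in $\mathrm{gr}^{\mathscr{F}}$ these have been pruned away and the subset basis behaves exactly like a classical exterior algebra with regard to the top-form pairing.
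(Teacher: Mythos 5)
Your proof is correct and follows essentially the same route as the paper, which also deduces non-degeneracy from the quantum-affine-space presentation of $\mathrm{gr}^{\mathscr{F}}$ by pairing any element against a complementary monomial to reach the top form $\bigwedge_{\gamma \in \Delta^+} e_{\gamma}$. Your version merely spells out the subset-basis bookkeeping (and the passage from a trivial left radical to full non-degeneracy) that the paper leaves implicit.
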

\begin{proof}
It follows from the relation set given in \eqref{eqn:ass.grad.rels} that, for any $v \in \mathrm{gr}^{\mathscr{F}}$, there exists a $v'$ such that 
\begin{align*}
v \wedge v' = \lambda \bigwedge_{\gamma \in \Delta^+} e_{\gamma}, & & \textrm{ for some } \lambda \in \mathbb{C},
\end{align*}
where the basis elements are ordered according to our choice of convex ordering on $\Delta^+$. Thus $B$ is a non-degenerate bilinear form. Since \eqref{eqn:Frobenius} is clearly satisfied, we see that $B$ gives $\mathrm{gr}^{\mathscr{F}}$ the structure of a Frobenius algebra.
\end{proof}

The corollary below now follows from Bongale's theorem for filtered algebras, as presented in Appendix \ref{app:filtration}.

\begin{cor} \label{cor:Frobenius}
For any linear isomorphism $\iota: \Lambda^{(0,|\Delta^+|)}_q \to \mathbb{C}$, the bilinear map 
\begin{align*}
B: \Lambda^{(0,\bullet)}_q  \otimes  \Lambda^{(0,\bullet)}_q  \to \mathbb{C}, & & v \otimes w \mapsto \iota(v \wedge w)
\end{align*}
gives $\mathrm{gr}^{\mathscr{F}}$ the structure of a Frobenius algebra.
\end{cor}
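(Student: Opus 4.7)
The plan is to verify that the \emph{specific} bilinear form $B(v,w) := \iota(v \wedge w)$ satisfies both Frobenius axioms directly, using the preceding proposition together with Bongale's transfer theorem from Appendix B, rather than merely invoking an abstract existence result. The associativity axiom $B(u \wedge v, w) = B(u, v \wedge w)$ is immediate from the associativity of $\wedge$, since both sides equal $\iota(u \wedge v \wedge w)$.

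For non-degeneracy, the filtration $\mathscr{F}$ from \textsection 3.6 is the essential input. Using the monomial basis of Corollary \ref{cor:BergmannBasis}, one obtains a canonical identification of the underlying vector space of $\Lambda^{(0,\bullet)}_q$ with that of $\mathrm{gr}^{\mathscr{F}}$. Under this identification, the one-dimensional top piece $\Lambda^{(0,|\Delta^+|)}_q$ corresponds to the one-dimensional weight-$2\rho$ component $\mathrm{gr}^{\mathscr{F}}_{2\rho}$ appearing in the preceding proposition, both being spanned by the ordered wedge $\bigwedge_{\gamma \in \Delta^+} e_\gamma$. Consequently, a choice of $\iota$ on either side transports canonically to a choice on the other, so the bilinear form here and the one in the preceding proposition agree under the identification, after passing to leading symbols.

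Given nonzero $v \in \Lambda^{(0,\bullet)}_q$, let $\bar{v} \in \mathrm{gr}^{\mathscr{F}}$ denote its leading symbol with respect to $\mathscr{F}$. By the preceding proposition, there exists $\bar{w} \in \mathrm{gr}^{\mathscr{F}}$ such that $\bar{v} \cdot_{\mathrm{gr}} \bar{w}$ is a nonzero element of $\mathrm{gr}^{\mathscr{F}}_{2\rho}$. Lifting $\bar{w}$ to any representative $w \in \Lambda^{(0,\bullet)}_q$ with the same leading symbol, the wedge product $v \wedge w$ inherits $\bar{v} \cdot_{\mathrm{gr}} \bar{w}$ as its own leading symbol, while all correction terms have strictly smaller filtration degree and hence contribute zero in $\Lambda^{(0,|\Delta^+|)}_q$. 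Applying $\iota$ therefore yields $B(v,w) = \iota(v \wedge w) \neq 0$, giving the non-degeneracy of $B$.

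The main obstacle is the precise comparison between $\wedge$ in $\Lambda^{(0,\bullet)}_q$ and $\cdot_{\mathrm{gr}}$ in $\mathrm{gr}^{\mathscr{F}}$: one must confirm that the discrepancy $v \wedge w - v \cdot_{\mathrm{gr}} w$ lies strictly below the top filtration degree, so that $\iota$ detects only the leading contribution. This is exactly the content of Bongale's transfer mechanism recalled in Appendix B, which is what allows the Frobenius property established for $\mathrm{gr}^{\mathscr{F}}$ in the preceding proposition to be promoted to the explicit Frobenius form $B$ realised via the wedge product in $\Lambda^{(0,\bullet)}_q$.
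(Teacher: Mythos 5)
Your proof is correct and takes essentially the same route as the paper's: both transfer the Frobenius property from $\mathrm{gr}^{\mathscr{F}}$ (established in the preceding proposition) to $\Lambda^{(0,\bullet)}_q$ via Bongale's theorem, using the linear identification of $\mathrm{gr}^{\mathscr{F}}_{2\rho}$ with the one-dimensional top form space $\Lambda^{(0,|\Delta^+|)}_q$. The only difference is that you unpack the leading-symbol argument that underlies Bongale's transfer mechanism, where the paper simply cites the theorem.
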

\begin{proof}
From Bongale's theorem we know that a Frobenius structure is given by 
\begin{align*}
B: \Lambda^{(0,\bullet)}_q  \otimes  \Lambda^{(0,\bullet)}_q  \to \mathbb{C}, & & v \otimes w \mapsto \iota([v \wedge w]_{|\Delta^+|}),
\end{align*}
where $\iota: \mathrm{gr}^{\mathscr{F}}_{2\rho} \cong \mathbb{C}$ is some choice of linear isomorphism, and $\rho$ is the half-sum of positive roots of $\mathfrak{sl}_{n+1}$. The given $B$ now follows from the fact that  $\mathrm{gr}^{\mathscr{F}}_{2\rho}$ and $\Lambda^{(0,|\Delta^+|)}_q$ are obviously linearly isomorphic.
\end{proof}

For a general Frobenius algebra $A$,  there exists an algebra automorphism $\sigma:A \to A$ of $A$, uniquely defined by the identity $B(x,y) = B(y,\sigma(x))$, for all $x,y \in A$. We see that the bilinear form $B$ of a Frobenius algebra is symmetric if and only if $\sigma = \id$.  With a view to describing the Nakayama automorphism of our quantum exterior algebra, we first describe the Nakayama automorphism of $\mathrm{gr}^{\mathscr{F}}$. For the remainder of this subsection, we find it convenient to denote 
\begin{align} 
e_{\widehat{\gamma}} := \bigwedge_{~\gamma \, \neq \, \theta \, \in \Delta^+} e_{\theta} \in \Lambda^{(0,|\Delta^+|-1)}_q,
\end{align}
where the basis elements are ordered according to our choice of convex ordering on $\Delta^+$. By abuse of notation, we also use $e_{\widehat{\gamma}}$ to denote the corresponding product in  $\mathrm{gr}^{\mathscr{F}}$.

\begin{prop} 
The Nakayama automorphism of $\mathrm{gr}^{\mathscr{F}}$ is determined by 
\begin{align*}
\sigma(e_{\gamma}) = (-1)^{|\Delta^+|-1}e_{\gamma}, & &  \textrm{ for any } \gamma \in \Delta^+.
\end{align*}
\end{prop}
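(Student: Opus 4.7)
The plan is to reduce the identity $\sigma(e_\gamma) = (-1)^{|\Delta^+|-1} e_\gamma$ to a single combinatorial statement about the convex order on $\Delta^+$, and to dispatch that statement by a direct calculation in the standard coordinate realisation of the $A_n$ root system.

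First, since $B$ is non-zero only on pairs of complementary top-degree elements, and since $e_\gamma \wedge e_\gamma = 0$ in $\mathrm{gr}^{\mathscr{F}}$, for any $\delta \neq \gamma$ the wedge $e_\gamma \wedge e_{\widehat{\delta}}$ can be commuted, using the quadratic relations of $\mathrm{gr}^{\mathscr{F}}$, into a form containing two adjacent copies of $e_\gamma$ (up to a non-zero scalar), and hence vanishes. By non-degeneracy of $B$, this forces $\sigma(e_\gamma)$ to be a scalar multiple of $e_\gamma$, say $\sigma(e_\gamma) = \lambda_\gamma e_\gamma$, with
\[
\lambda_\gamma \;=\; \frac{B(e_\gamma,\, e_{\widehat{\gamma}})}{B(e_{\widehat{\gamma}},\, e_\gamma)}.
\]

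Next, I would compute both pairings by transporting $e_\gamma$ into its canonical position inside the top monomial $e_{\mathrm{top}} := \bigwedge_{\theta \in \Delta^+} e_\theta$. Using the relations $e_\beta \otimes e_\gamma = -q^{(\beta,\gamma)} e_\gamma \otimes e_\beta$ for $\beta < \gamma$ of Lemma \ref{lem:theassgradedlemma}, and letting $k_\gamma$ and $m_\gamma$ denote the number of positive roots strictly less than and strictly greater than $\gamma$ (so $k_\gamma + m_\gamma = |\Delta^+| - 1$), a routine bookkeeping yields
\[
e_\gamma \wedge e_{\widehat{\gamma}} = (-1)^{k_\gamma} q^{-\sum_{\theta < \gamma}(\theta,\gamma)} e_{\mathrm{top}}, \qquad e_{\widehat{\gamma}} \wedge e_\gamma = (-1)^{m_\gamma} q^{-\sum_{\theta > \gamma}(\gamma,\theta)} e_{\mathrm{top}},
\]
and taking the ratio gives
\[
\lambda_\gamma = (-1)^{|\Delta^+|-1}\, q^{\sum_{\theta > \gamma}(\gamma,\theta) \,-\, \sum_{\theta < \gamma}(\gamma,\theta)}.
\]

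The main obstacle is then to verify that the exponent of $q$ vanishes for every $\gamma \in \Delta^+$. For this, I would pass to the coordinate realisation $\alpha_{ij} = \epsilon_i - \epsilon_j$ of the $A_n$ root system, under which the inner product admits the compact expression $(\alpha_{ij},\alpha_{i'j'}) = \delta_{ii'} + \delta_{jj'} - \delta_{ij'} - \delta_{ji'}$, together with the description of our convex order as the lexicographic order on the index pairs $(i, j)$. The target identity $\sum_{\theta > \gamma}(\gamma, \theta) = \sum_{\theta < \gamma}(\gamma, \theta)$ then decomposes into four elementary signed counts, one for each of the four Kronecker-delta contributions, over the appropriate subsets of $\Delta^+$; each count evaluates in closed form in terms of $n, i, j$, and I expect them to cancel exactly. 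This combinatorial cancellation, which genuinely depends on our particular reduced decomposition of $w_0$, is the only non-routine point in the argument.

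Finally, since the Nakayama automorphism of any Frobenius algebra is an algebra automorphism, and $\mathrm{gr}^{\mathscr{F}}$ is generated by $\{e_\gamma\}_{\gamma \in \Delta^+}$, the values $\sigma(e_\gamma) = (-1)^{|\Delta^+|-1} e_\gamma$ established above determine $\sigma$ on the whole algebra.
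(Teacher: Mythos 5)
Your proposal follows essentially the same route as the paper: reduce to the ratio $B(e_\gamma,e_{\widehat{\gamma}})/B(e_{\widehat{\gamma}},e_\gamma)$, extract the sign $(-1)^{|\Delta^+|-1}$ together with a $q$-exponent $\sum_{\theta>\gamma}(\gamma,\theta)-\sum_{\theta<\gamma}(\gamma,\theta)$, and then kill the exponent by a root-system computation; your preliminary observation that $e_\gamma\wedge e_{\widehat{\delta}}=0$ for $\delta\neq\gamma$ is also how the paper begins (there via a weight argument). The only piece you leave unexecuted is the final cancellation, which you correctly flag as the crux and which the paper settles by showing that for $\gamma=\alpha_{ij}$ both sums evaluate to $j-i-1$, so the Nakayama automorphism is indeed undeformed.
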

\begin{proof}
Note first that, for any $\gamma \in  \Delta^+$, the only basis element that pairs non-trivially with  $e_{\gamma}$ is $e_{\widehat{\gamma}}$. It follows from the relations given in Theorem \ref{thm:THERELATIONS} that 
\begin{align} \label{eqn:Nakayama.1}
B\!\left(e_{\gamma}, e_{\widehat{\gamma}}\right) = (-1)^{|\Delta^+|-1}q^{\chi_{\gamma} }B\left(e_{\widehat{\gamma}}, e_{\gamma}\right)\!,
\end{align}
where we have denoted 
$$
\chi_{\gamma} := -\sum_{\gamma > \theta \in \Delta^+} \, (\gamma,\theta) + \sum_{\gamma < \theta \in \Delta^+} \, (\gamma,\theta) .
$$
Now for an arbitrary positive root $\alpha_{ij} \in \Delta^+$, the positive roots that pair non-trivially with $\alpha_{ij}$ are precisely those of the form 
\begin{align*}
\alpha_{ik}, ~ \alpha_{fj}, ~ \alpha_{ai}, ~ \alpha_{jb}, & & \textrm{ for } (i,k), (f,j), (a,i), (j,b) \in \Delta^+.
\end{align*}
Let us now collect together those roots that are strictly less then $\alpha_{ij}$ with respect to the convex order $\leq$. We then pair them with $\alpha_{ij}$ and sum the resulting scalars to give
\begin{align*}
\sum_{k=i+1,}^{j-1} (\alpha_{ij}, \alpha_{ik}) + \sum_{f=1}^{i-1} (\alpha_{ij},\alpha_{fj}) + \sum_{a=1}^{i-1} (\alpha_{ij},\alpha_{ai}) =  j - i -1,
\end{align*} 
where we have observed that  the last two summands cancel. Following the same procedure for those roots that are strictly greater then $\alpha_{ij}$, we get
\begin{align*}
\sum_{k=j+1}^{n+1} (\alpha_{ij}, \alpha_{ik}) + \sum_{f=i+1}^{j-1} (\alpha_{ij},\alpha_{fj}) + \sum_{b=j+1}^{n+1} (\alpha_{ij},\alpha_{jb}) = j-i-1,
\end{align*} 
where we have observed that the first and the last summands cancel. Thus 
\begin{align*}
\sum_{\gamma < \theta \in \Delta^+} (\gamma,\beta) = \sum_{\gamma > \theta \in \Delta^+} (\gamma,\beta),
\end{align*}
meaning that the identity in \eqref{eqn:Nakayama.1} reduces to 
\begin{align} \label{eqn:Nakayama.2}
B\!\left(e_{\gamma}, e_{\widehat{\gamma}}\right) = (-1)^{|\Delta^+|-1}B\!\left(e_{\widehat{\gamma}}, e_{\gamma}\right)\!.
\end{align}
This identifies the Nakayama automorphism on the algebra generators $e_{\gamma}$, and hence on the whole algebra $\mathrm{gr}^{\mathscr{F}}$. 
\end{proof}

In the following corollary we use our description of the Nakayama automorphism of $\mathrm{gr}^{\mathscr{F}}$ to produce an analogous description of the Nakayama automorphism of $\Lambda^{(0,\bullet)}_q$. 

\begin{cor}  \label{cor:Nakayama}
The Nakayma automorphism of $\Lambda^{(0,\bullet)}_q$ is determined by 
\begin{align*}
\sigma(e_{\gamma}) = (-1)^{|\Delta^+|-1}e_{\gamma}, & & \textrm{ for any } \gamma \in \Delta^+.
\end{align*}
\end{cor}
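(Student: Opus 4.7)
The plan is to lift the computation from the associated graded algebra $\mathrm{gr}^{\mathscr{F}}$ to $\Lambda^{(0,\bullet)}_q$ by means of Bongale's theorem. Since $\sigma$ is an algebra automorphism, it is determined by its values on the generators $e_\gamma$, so it suffices to verify the claimed identity for each $\gamma \in \Delta^+$.

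First I would observe that the Nakayama automorphism necessarily preserves the wedge grading: the form $B$ is non-zero only on pairs whose product lies in top degree $\Lambda^{(0,|\Delta^+|)}_q$, so $\sigma$ sends $e_\gamma$ into $\Lambda^{(0,1)}_q$. Writing $\sigma(e_\gamma) = \sum_{\theta \in \Delta^+} c_\theta e_\theta$, the defining identity $B(e_\gamma, e_{\widehat{\eta}}) = B(e_{\widehat{\eta}}, \sigma(e_\gamma))$, together with the vanishing of $B(e_{\widehat{\eta}}, e_\theta)$ for $\theta \neq \eta$, forces $c_\eta = 0$ whenever $B(e_\gamma, e_{\widehat{\eta}}) = 0$. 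A repeated-factor argument analogous to the one in the previous proposition shows that $B(e_\gamma, e_{\widehat{\eta}}) = 0$ precisely when $\eta \neq \gamma$: indeed the product $e_\gamma \wedge e_{\widehat{\eta}}$ contains the factor $e_\gamma$ twice, and since the corresponding product in $\mathrm{gr}^{\mathscr{F}}$ vanishes (the associated graded is a quantum affine space), and since the top filtered component is one-dimensional, the product in $\Lambda^{(0,\bullet)}_q$ must vanish as well. Hence $\sigma(e_\gamma) = \lambda_\gamma e_\gamma$ with $\lambda_\gamma := B(e_\gamma, e_{\widehat{\gamma}})/B(e_{\widehat{\gamma}}, e_\gamma)$.

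It remains to evaluate $\lambda_\gamma$. By the Bongale construction recalled in Appendix \ref{app:filtration}, the Frobenius form $B$ on $\Lambda^{(0,\bullet)}_q$ is obtained from $B_{\mathrm{gr}}$ on $\mathrm{gr}^{\mathscr{F}}$ by lifting through the identification of top-degree components $\mathrm{gr}^{\mathscr{F}}_{2\rho} \cong \Lambda^{(0,|\Delta^+|)}_q$. Since both $e_\gamma \wedge e_{\widehat{\gamma}}$ and $e_{\widehat{\gamma}} \wedge e_\gamma$ already lie in the one-dimensional top wedge degree, the ratio $\lambda_\gamma$ coincides with the analogous ratio computed in $\mathrm{gr}^{\mathscr{F}}$. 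The preceding proposition established that the latter equals $(-1)^{|\Delta^+|-1}$, completing the argument.

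The main subtlety is the compatibility step in the final paragraph: one must verify that the quantum correction terms appearing in the reduction of $e_\gamma \wedge e_{\widehat{\gamma}}$ and $e_{\widehat{\gamma}} \wedge e_\gamma$ in $\Lambda^{(0,\bullet)}_q$ lie in strictly lower filtration degree, and therefore do not contribute to the top-degree scalar. This compatibility is built into the construction of the filtration $\mathscr{F}$ in \textsection \ref{subsection:filtration} and is precisely the content of Lemma \ref{lem:theassgradedlemma}; once invoked, the corollary follows formally from the preceding proposition without further computation.
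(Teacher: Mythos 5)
Your overall strategy---show $\sigma(e_{\gamma})$ is a scalar multiple of $e_{\gamma}$, then transport the scalar from $\mathrm{gr}^{\mathscr{F}}$ to $\Lambda^{(0,\bullet)}_q$ via the one-dimensionality of the top component and Bongale's construction---is the same as the paper's, and your treatment of the diagonal case is sound: since $\gamma + \sum_{\gamma \neq \theta \in \Delta^+}\theta = 2\rho$, both $e_{\gamma}\wedge e_{\widehat{\gamma}}$ and $e_{\widehat{\gamma}}\wedge e_{\gamma}$ lie in $\mathscr{F}_{2\rho}$, their classes in $\mathrm{gr}^{\mathscr{F}}_{2\rho}$ are the corresponding graded products, and the ratio $(-1)^{|\Delta^+|-1}$ carries over.

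The gap is in your proof that $B(e_{\gamma}, e_{\widehat{\eta}})=0$ for $\eta\neq\gamma$. Vanishing of a product in the associated graded algebra does \emph{not} imply vanishing in the filtered algebra: it only says the product drops to strictly lower filtration degree. Concretely, $e_{\gamma}\wedge e_{\widehat{\eta}}$ lies in $\mathscr{F}_{\mu}$ with $\mu = 2\rho+\gamma-\eta$. When $\mu < 2\rho$ in the degree lexicographic order the conclusion does follow (even without the repeated-factor observation, since the top form is not contained in $\mathscr{F}_{<2\rho}$). But when $\gamma$ has greater height than $\eta$ one has $\mu > 2\rho$, and then a nonzero scalar multiple of the top form already lies in $\mathscr{F}_{2\rho}\subseteq \mathscr{F}_{<\mu}$, so its class in $\mathrm{gr}^{\mathscr{F}}_{\mu}$ vanishes no matter what the scalar is; the one-dimensionality of the top filtered component gives you no contradiction, and your argument cannot conclude that the product is zero. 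The correct (and simpler) argument, which is the one the paper uses, exploits the genuine $\mathcal{Q}^+$-grading of $\Lambda^{(0,\bullet)}_q$ by $U_q(\mathfrak{h})$-weight (the relations of Theorem~\ref{thm:THERELATIONS} are weight-homogeneous): the element $e_{\gamma}\wedge e_{\widehat{\eta}}$ is a top wedge-degree form of weight $2\rho+\gamma-\eta\neq 2\rho$, while the one-dimensional space $\Lambda^{(0,|\Delta^+|)}_q$ is concentrated in weight $2\rho$, so the product is zero. With that substitution the rest of your argument closes.
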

\begin{proof}
For any $\gamma \neq \beta \in \Delta^+$, we see that the product $e_{\gamma} \wedge e_{\widehat{\beta}}$ has weight not equal to $2\rho$, and hence it is zero. For the product $e_{\gamma} \wedge e_{\widehat{\gamma}}$, we see that since the space of $(|\Delta^+|-1)$-forms is one-dimensional, there exists a scalar $c$ such that 
$
e_{\gamma} \wedge e_{\widehat{\gamma}} = c e_{\widehat{\gamma}} \wedge e_{\gamma}.
$
Moving to the associated graded algebra, we see that this implies the relation
$
e_{\gamma} \wedge e_{\widehat{\gamma}}  = c e_{\widehat{\gamma}} \wedge e_{\gamma},
$
where, by abuse of notation, we have not differentiated notationally between the product of $\Lambda^{(0,\bullet)}_q$ and the product of $\mathrm{gr}^{\mathscr{F}}$. Now we already know from the presentation of the Nakayama automorphism of $\mathrm{gr}^{\mathscr{F}}$ that 
$$
e_{\gamma} \wedge e_{\widehat{\gamma}} = (-1)^{|\Delta^+|-1} e_{\widehat{\gamma}} \wedge e_{\gamma}.
$$
Thus we can conclude that $c = (-1)^{|\Delta^+|-1}$.
\end{proof}

Thus we see that, just as in the classical case, the Frobenius structure of $\mathrm{gr}^{\mathscr{F}}$ and $\Lambda^{(0,\bullet)}_q$ is \emph{graded symmetric}, which is to say, for any two homogeneous elements $v$ and $w$ we have
$$
B(v,w) = (-1)^{|v||w|}B(w,v).
$$
Indeed, when $|\Delta^+|$ is odd, both Frobenius algebras are symmetric.

We finish by observing that  $\Lambda^{(0,\bullet)}_q$ is also a Koszul algebra. Recall that a Koszul algebra is a $\mathbb{Z}_{\geq 0}$-graded algebra admitting  a linear minimal graded free resolution. We refer the reader to the standard text \cite{Leonid.Quadratic} for more details on Koszul algebras.

\begin{prop}
The algebra $\Lambda^{(0,\bullet)}_q$ is a Koszul algebra.
\end{prop}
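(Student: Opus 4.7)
The plan is to invoke Priddy's theorem, which asserts that any quadratic algebra admitting a PBW basis (in the sense of Priddy) is automatically Koszul. By Theorem \ref{thm:THERELATIONS}, $\Lambda^{(0,\bullet)}_q$ is manifestly a quadratic algebra: it is generated by $\{e_{\gamma}\,|\,\gamma\in\Delta^+\}$ with relations all homogeneous of degree two. Hence it suffices to exhibit a Priddy PBW basis, or equivalently, to show that the quadratic relations admit a quadratic Gr\"obner basis with respect to a suitable monomial order.

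First I would adopt the convex order on $\Delta^+$ associated to the reduced decomposition of $w_0$, extended to the degree-lexicographic order on the tensor algebra of $\Lambda^{(0,1)}_q$, exactly as in the proof of Corollary \ref{cor:BergmannBasis}. The leading terms of the relations from Theorem \ref{thm:THERELATIONS} are precisely the elements $e_{\beta}\otimes e_{\beta}$ together with the $e_{\gamma}\otimes e_{\beta}$ (or $e_{\beta}\otimes e_{\gamma}$) identified there. The normal monomials with respect to this leading-term set are the ordered products $e_{\gamma_1}\wedge\cdots\wedge e_{\gamma_k}$ with $\gamma_1<\cdots<\gamma_k$, which form a linear basis of $\Lambda^{(0,\bullet)}_q$ by Corollary \ref{cor:BergmannBasis}. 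The confluence of the reduction system, i.e.\ the resolvability of all overlap ambiguities, has already been verified in the Bergman diamond-lemma argument carried out in the proof of that corollary. Consequently the quadratic relations constitute a quadratic Gr\"obner basis, yielding a Priddy PBW basis, and Koszulity then follows directly from Priddy's theorem (see \cite{Leonid.Quadratic}).

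As an alternative route, one could argue via deformation: by Lemma \ref{lem:theassgradedlemma}, the associated graded algebra $\mathrm{gr}^{\mathscr{F}}$ is a skew (or ``colored'') exterior algebra, namely a cocycle twist of the classical exterior algebra on $|\Delta^+|$ generators, and such twisted exterior algebras are well known to be Koszul. Since $\mathrm{gr}^{\mathscr{F}}$ and $\Lambda^{(0,\bullet)}_q$ share the Hilbert series $(1+t)^{|\Delta^+|}$ (Corollary \ref{cor:BergmannBasis}), the Polishchuk--Positselski Koszul-deformation criterion lifts Koszulity from $\mathrm{gr}^{\mathscr{F}}$ to $\Lambda^{(0,\bullet)}_q$. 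In either approach the main obstacle would ordinarily be establishing the Gr\"obner/PBW property — i.e.\ checking that every overlap ambiguity between leading monomials reduces to a common form — but this has already been done inside the proof of Corollary \ref{cor:BergmannBasis}, so essentially no new computation is required.
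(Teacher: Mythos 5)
Your proposal is correct and takes essentially the same route as the paper, which simply observes that $\Lambda^{(0,\bullet)}_q$ is a PBW-algebra in the sense of Priddy and then invokes Priddy's theorem from \cite{Leonid.Quadratic}. Your additional justification of the PBW property via the quadratic Gr\"obner basis already established in the diamond-lemma argument of Corollary \ref{cor:BergmannBasis} is exactly the content the paper leaves implicit in the word ``clearly''.
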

\begin{proof}
The algebra $\Lambda^{(0,\bullet)}_q$ is clearly a PBW-algebra in the sense of Priddy \cite[\textsection 4.1]{Leonid.Quadratic}. Thus it follows from  Priddy's theorem \cite[Theorem 3.1]{Leonid.Quadratic} that it is Koszul.
\end{proof}


\section{The Heckenberger--Kolb Dolbeault Complex for the Quantum Grassmannians}

In this section we examine the restriction of the dc $\Omega_q^{(0,\bullet)}(\mathrm{SU}_{n+1})$ to the quantum Grassmannians and show that we recover the anti-holomorphic Heckenberger--Kolb dc. The embedding of the higher Heckenberger--Kolb forms into the higher $\OO_q(\mathrm{SU}_{n+1})$-forms is a delicate issue, and necessitates the introduction of some general results  in \textsection \ref{section:generalHigherEmbeddings}.

\subsection{Some Preliminaries on Quantum Homogeneous Spaces}  \label{subsection:QHSpacesPrelims}

In this preliminary subsection, we recall the definition of a quantum homogeneous space, and the associated generalisations of the fundamental theorem of Hopf modules. 

Let $A$ be a Hopf algebra. We say that a left coideal subalgebra $B \sseq A$ is a \emph{quantum homogeneous $A$-space} if $A$ is faithfully flat as a right $B$-module and $B^+A = AB^+$. In \cite[Theorem 1]{Tak} it was shown that, for the Hopf algebra surjection $\pi_B:A \to A/B^+A$, with its associated right $\pi_B(A)$-coaction 
$
\Delta_{R,\pi_B} := (\id \otimes \pi_B) \circ \Delta: A \to A \otimes \pi_B(A),
$ 
the space of coinvariants 
$$
A^{\co(\pi_B(A))} := \Big\{b \in A \,|\, \Delta_{R,\pi_B}(b) = b \otimes 1 \Big\}
$$ 
is equal to $B$. We call $\pi_B(A)$ the \emph{isotropy Hopf algebra} of $B$.

For any quantum homogeneous space $B = A^{\co(\pi_B(A))}$, we define  $^A_B\mathrm{Mod}_B$ to be the category whose  objects are  left \mbox{$A$-comodules} \mbox{$\DEL_L:\mathcal{F} \to A \otimes \mathcal{F}$}, endowed with a $B$-bimodule structure such that   $\DEL_L(bfc) = \Delta(b)\DEL_L(f)\Delta(c)$,  for all  $f \in \mathcal{F}, \, b,c \in B$, and whose morphisms  are left $A$-comodule, $B$-bimodule, maps. We call this category the \emph{category of $(A,B)$-relative Hopf modules}.

Let ${}^{\pi_B \,}\!\mathrm{Mod}_B$ denote the category  whose objects are given by left $\pi_B(A)$-comodules $\Delta_L: V \to \pi_B(A) \otimes V$, endowed with a right $B$-module structure satisfying the identity $\Delta_L(vb) = v_{(-1)}\pi_B(b_{(1)}) \otimes v_{(0)}b_{(2)}$, for all $v \in V, \, b \in B$, and whose morphisms are left $\pi_B(A)$-comodule maps and right $B$-module maps.

Consider the functor $\Phi:{}^A_B\mathrm{Mod}_B \to {}^{\pi_B \,}\mathrm{Mod}_B$, given by $\Phi(\mathcal{F}) := \mathcal{F}/B^+\mathcal{F}$, where the left $\pi_B(A)$-comodule structure of $\Phi(\mathcal{\F})$ is given by 
$
\Delta_L[f] := \pi_B(f_{(-1)})\otimes [f_{(0)}],
$
with square brackets denoting the coset of an element in $\Phi(\mathcal{\F})$. In the other direction, we define a functor $\Psi: {}^{\pi_B \,}\mathrm{Mod}_B \to {}^A_B\mathrm{Mod}_B$ using the cotensor product $\square_{\pi_B(A)}$, which we find convenient to denote by $\square_{\pi_B}$. Setting $\Psi(V) := A \,\square_{\pi_B} V$, where the left $A$-comodule structure of $\Psi(V)$ is defined on the first tensor factor, the right $B$-module structure is the diagonal one, and if $\gamma$ is a morphism in ${}^{\pi_B\,}\mathrm{Mod}_B$, then $\Psi(\gamma) := \id \otimes \gamma$. 

An adjoint equivalence of categories between~${}^A_B\mathrm{Mod}_B$ and~${}^{\pi_B \,}\mathrm{Mod}_B$ is given by the functors $\Phi$ and $\Psi$, and the unit natural isomorphism
\begin{align*}
\unit: \F \to \Psi \circ \Phi(\F), & & f \mapsto f_{(-1)} \otimes [f_{(0)}].
\end{align*}
We call this \emph{Takeuchi's equivalence}. The \emph{dimension} $\mathrm{dim}(\F)$ of an object $\F \in {}^A_B\mathrm{Mod}_B$ is the vector space dimension of $\Phi(\F)$. 
As established in \cite[\textsection 1]{Tak}, for any $\mathcal{F} \in {}^A_B \mathrm{Mod}$, an isomorphism in the category ${}^A_A\mathrm{Mod}$ is given by 
\begin{align} \label{eqn:theALPHAmap}
\alpha_{\mathcal{F}}: A \otimes_B \mathcal{F} \to A \otimes \Phi(\mathcal{F}), & & a \otimes f \mapsto af_{(-1)} \otimes [f_{(0)}].
\end{align}

Consider next the full subcategory  $^A_B\textrm{Mod}_0$  of ${}^A_B\mathrm{Mod}_B$ consisting of those  objects $\F$ which satisfy $B^+\F = \F B^+$. The corresponding full subcategory ${}^{\pi_B\,}\mathrm{Mod}_0$ of ${}^{\pi_B\,}\mathrm{Mod}_B$ consists of those objects with the trivial right $B$-action, and is of course equivalent to the category of left $\pi_B(A)$-comodules. Both sub-categories come equipped with evident monoidal structures. Indeed,  Takeuchi's equivalence restricts to a monoidal equivalence between the subcategories, see \cite[\textsection 4]{MMF2} for further details.

\subsection{Some Remarks on Quantum Homogeneous Tangent Spaces} \label{subsection:remarksQHTS} 

In this subsection we recall the generalisation of the results of \textsection \ref{subsection:TangentSpaces} from Hopf algebras to quantum homogeneous spaces.

Let $A$ be a Hopf algebra, and $W \subseteq A^{\circ}$ a Hopf subalgebra of $A^{\circ}$, such that 
$$
B := \, {}^W\!A = \Big\{b \in A \,|\, b_{(1)} \langle w, b_{(2)}\rangle = \e(w)b, \textrm{ for all } w \in W \Big\}
$$
is a quantum homogeneous $A$-space, and denote by $B^{\circ}$ its dual coalgebra. A \emph{tangent space} for $B$ is a subspace $T \subseteq B^{\circ}$ such that $T \oplus \mathbb{C}1$ is a right coideal of $B^{\circ}$ and $\mathrm{ad}(W)T \subseteq T$. For any tangent space $T$, a right $B$-ideal of $B^{+} := B \cap \mathrm{ker}(\e)$ is given by 
\begin{align*}
I := \big\{ x \in B^+ \,|\, X(x) = 0, \textrm{ for all } X \in T \big\},
\end{align*}
meaning that the quotient $V^1(B) := B^+/I$ is naturally an object in the category ${}^{\pi_B}\mathrm{Mod}_B$. We call $V^1(B)$ the \emph{cotangent space} of $T$. Consider now the object 
\begin{align*}
\Omega^1(B) := A \square_{\pi_B} V^1(B).
\end{align*}
If $\{X_i\}_{i=1}^n$ is a basis for $T$, and $\{e_i\}_{i=1}^n$ is the dual basis of $V^1(B)$, then the map 
\begin{align*}
\exd: A \to \Omega^1(B), & & a \mapsto \sum_{i=1}^n (X_i \triangleright a) \otimes e_i
\end{align*}
is a derivation, and the pair $(\Omega^1(B),\exd)$ is a left $A$-covariant fodc over $B$. This gives a bijective correspondence between isomorphism classes of finite-dimensional tangent spaces and finitely-generated left $A$-covariant fodc \cite{HKTangent}.

Let $T_A$ be a finite-dimensional tangent space for the Hopf algebra $A$, with associated fodc $\Omega^1(A)$. The restriction of $\Omega^1(A)$ to $B$ is clearly a left $A$-covariant fodc, which we denote by $\Omega^1(B)$. Moreover, as shown in \cite[Corollary 9]{HKTangent}, the corresponding tangent space is given by $\mathrm{ad}(W)T_A|_B \subseteq B^{\circ}$. We note that the obvious map from $V^1(B)$, the cotangent space of $\Omega^1(B)$, to  $\Lambda^1(A)$, the cotangent space  of $\Omega^1(A)$, is injective if and only if $\mathrm{ad}(W)T_A|_B  = T_A|_B $. Finally, let $\{X_i\}_{i =1}^n$ be a basis for $T_A$, and denote by $\{e_i\}_{i=1}^n$ the corresponding dual basis of $\Lambda^1(A)$. Assume that a basis of the tangent space of the restricted fodc is given by the elements $\{X_i\}_{i \in J}$, for some subset $J \subseteq \{1, \dots, n\}$, where by abuse of notation, we do not distinguish notationally between an element $X_i \in T_A$ and its restriction to an element of $B^{\circ}$. It now follows that a basis for the image of $V^1(B)$ in $\Lambda^1(A)$ is given by $\{e_i\}_{i \in J}$.

\subsection{Lusztig Differential Calculi for the $A$-Series Quantum Flag Manifolds}

For $S$ a proper subset of the set of simple roots $\Pi$,  consider the Hopf subalgebra 
\begin{align*}
U_q(\mathfrak{l}_S) := \Big< K_i, E_j, F_j \,|\, i = 1, \ldots, n; \, j \in S \Big> \subseteq U_q(\mathfrak{sl}_{n+1}).
\end{align*} 
We denote $S^c := \Pi \backslash S$, and represent $S^c$ graphically by coloured nodes in the Dynkin diagram of $\mathfrak{sl}_{n+1}$. In our description of quantum tangent spaces below, we find it convenient to consider the sets of weights
\begin{align*}
\Delta_S^{+}:= (\mathbb{Z}_{\geq 0} \, S) \cap \Delta^{+}, & & \overline{\Delta^{+}_S} := \Delta^{+} \setminus \Delta_S^{+},
\end{align*}
where as usual $\Delta^+$ denotes the set of positive roots of $\mathfrak{sl}_{n+1}$.
Consider also the coideal subalgebra of $U_q(\mathfrak{l}_S)$-invariants
\begin{align*}
\O_q\big(\mathrm{SU}_{n+1}/L_S\big) := {}^{U_q(\mathfrak{l}_S)}\O_q(\mathrm{SU}_{n+1}),
\end{align*} 
with respect to the natural left $U_q(\mathfrak{sl}_{n+1})$-module structure on $\OO_q(\mathrm{SU}_{n+1})$. 
As is well-known, $\OO_q(\mathrm{SU}_{n+1}/L_S)$ is a quantum homogeneous space (see \cite[\textsection 5.4]{GAPP} for a detailed explanation of this fact in the notation of this paper). We call $\OO_q(\mathrm{SU}_{n+1}/L_S)$ the {\em quantum flag manifold} associated to $S$. 

When $S^c$ consists of a single simple root $\alpha_r$, we call the corresponding quantum flag manifold the \emph{quantum $r$-plane Grassmannian}, and denote it by $\O_q(\mathrm{Gr}_{n+1,r})$. 
It follows immediately from the definition of the algebra $\OO_q(\mathrm{SU}_{n+1}/L_S)$ that it contains as subalgebras those quantum Grassmannians $\OO_q(\mathrm{Gr}_{n+1,r})$ such that $\alpha_r \in S^c$. Moreover, it can be shown these subalgebras generate $\OO_q(\mathrm{SU}_{n+1}/L_S)$ as an algebra.

Consider now an arbitrary algebra $A$ endowed with a dc $\Omega^{\bullet}(A)$. For any subalgebra $B \subseteq A$, the sub-$B$-bimodule of $\Omega^{\bullet}(A)$ generated by elements of the form $\exd b$, for $b \in B$, forms a dc over $B$. We call this dc the \emph{restriction}  of $\Omega^{\bullet}(A)$ to $B$. Clearly, we have an analogous notion of restriction for fodc. 


\begin{defn}
For a choice of simple roots $S \subseteq \Pi$, we denote by 
$$
\Big(\Omega^{(0,\bullet)}_q(\mathrm{SU}_{n+1}/L_S), ~ \adel \Big)
$$ 
the restriction to  $\OO_q(\mathrm{SU}_{n+1}/L_S)$ of the dc $\Omega^{(0,\bullet)}_q(\mathrm{SU}_{n+1})$, and call it the \emph{Lusztig dc} of $\OO_q(\mathrm{SU}_{n+1}/L_S)$. Moreover, we denote by $T^{(0,1)}_S$, and $V^{(0,1)}_S$, the associated tangent space, and cotangent space, respectively. 
\end{defn}

In the following subsection we will investigate these dc for the special case of the quantum Grassmannians and show that they coincide with the celebrated Heckenberger--Kolb dc. Following this, we examine in greater detail the Lusztig dc of the full quantum manifold and prove a direct $q$-deformation of the classical Borel--Weil theorem. The remaining quantum flag manifolds will be treated in a subsequent work.

\subsection{The Heckenberger--Kolb FODC for the Quantum Grassmannians} \label{subsection:FOLusztigHK}

We say that a dc  over a quantum homogeneous $A$-space $B$ is \emph{irreducible} if it is simple as an $(A,B)$-relative Hopf module. Take any simple root $\alpha_r \in \Pi$, let $S^c = \{\alpha_r\}$, and denote the corresponding quantum Grassmannian by $\OO_q(\mathrm{Gr}_{n+1,r})$. In \cite{HK} Heckenberger and Kolb showed that  $\OO_q(\mathrm{Gr}_{n+1,r})$  admits precisely two non-isomorphic left $\OO_q(\mathrm{SU}_{n+1})$-covariant finite-dimensional irreducible fodc. We call them the \emph{holomorphic}, and \emph{anti-holomorphic}, \emph{Heckenberger--Kolb fodc}, and denote them respectively by 
\begin{align*}
\Omega^{(1,0)}_q(\mathrm{Gr}_{n+1,r}), & & \Omega^{(0,1)}_q(\mathrm{Gr}_{n+1,r}).
\end{align*}
Their respective tangent spaces are given by 
\begin{align*}
T_{\alpha_r^c}^{(1,0)} := \mathrm{ad}(U_q(\mathfrak{l}_S))F_r, & &   T_{\alpha_r^c}^{(0,1)}  := \mathrm{ad}(U_q(\mathfrak{l}_S))E_r.
\end{align*}
where to lighten notation, we have denoted $\alpha_r^c := \Pi \backslash \{\alpha_r\}$. Denoting
\begin{align*}
 \overline{\Delta^{+}_S} := \Big\{ \alpha_i + \cdots + \alpha_r + \cdots + \alpha_j \,|\, 1 \leq i \leq r \leq j \leq n  \Big\},
\end{align*}
it was shown in \cite[\textsection 3]{HKdR} that
\begin{align} \label{eqn:HKTangent}
T_{\alpha_r^c}^{(0,1)} = \mathrm{span}_{\mathbb{C}}\Big\{E_{\gamma} ~ | ~ \gamma \in  \overline{\Delta^{+}_S} \Big\} \subseteq \OO_q(\mathrm{Gr}_{n+1,r})^{\circ},
\end{align}
with an analogous identity for $T_{\alpha_r^c}^{(1,0)}$.  Moreover, as shown in \cite[\textsection 3.3]{HKdR}, the maximal prolongations  of the two Heckenberger--Kolb fodc have classical dimension, that is
\begin{align*}
\mathrm{dim}\!\left(\Omega^{(0,k)}_q(\mathrm{Gr}_{n+1,r})\right) = \mathrm{dim}\!\left(\Omega^{(k,0)}_q(\mathrm{Gr}_{n+1,r})\right) = \binom{(n+1-r)r}{k}
\end{align*}
for all $k = 0, \dots, (n+1-r)r$. We now observe that our fodc over $\OO_q(\mathrm{SU}_{n+1})$ restricts to the anti-holomorphic Heckenberger--Kolb dc.  

\begin{prop} \label{prop:HKFOEmbedding}
The Lusztig fodc of the quantum Grassmannian $\OO_q(\mathrm{Gr}_{n+1,r})$, which is to say, the restriction of $\Omega^{(0,1)}_q(\mathrm{SU}_{n+1)})$ to $\OO_q(\mathrm{Gr}_{n+1,r})$, is the anti-holomorphic Hecken-berger--Kolb fodc, for all $\alpha_r \in \Delta^+$. Moreover, the image of $V^{(0,1)}_{\alpha_r^c}$ in $\Lambda^{(0,1)}_q$ is spanned by the basis elements $e_{\gamma}$, for $\gamma \in \overline{\Delta^+_S}$.
\end{prop}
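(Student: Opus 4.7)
The plan is to invoke the general machinery of \textsection \ref{subsection:remarksQHTS}, which identifies the tangent space of the restriction of a fodc to a quantum homogeneous space $B = {}^W A$ as $\mathrm{ad}(W) T_A|_B$. Applied to $A = \OO_q(\mathrm{SU}_{n+1})$, $W = U_q(\mathfrak{l}_S)$ with $S = \alpha_r^c$, and $T_A = T^{(0,1)}$, the task reduces to identifying $\mathrm{ad}(U_q(\mathfrak{l}_S))T^{(0,1)}|_B$ with the anti-holomorphic Heckenberger--Kolb tangent space $T^{(0,1)}_{\alpha_r^c} = \mathrm{ad}(U_q(\mathfrak{l}_S))E_r$; once the two tangent spaces agree, the corresponding fodc coincide by the bijective correspondence recalled in \textsection \ref{subsection:remarksQHTS}.

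The main step is a direct decomposition argument. I would split
\begin{align*}
T^{(0,1)} = \mathrm{span}_{\mathbb{C}}\{E_\gamma : \gamma \in \overline{\Delta^+_S}\} \, \oplus \, \mathrm{span}_{\mathbb{C}}\{E_\beta : \beta \in \Delta^+_S\}.
\end{align*}
The first summand is precisely $T^{(0,1)}_{\alpha_r^c}$ by \eqref{eqn:HKTangent}, and is therefore already $\mathrm{ad}(U_q(\mathfrak{l}_S))$-invariant. For the second summand, each $E_\beta$ with $\beta \in \Delta^+_S$ lies in $U_q(\mathfrak{l}_S)^+$; since $U_q(\mathfrak{l}_S)$ is a Hopf subalgebra closed under its own adjoint action and the adjoint action preserves $\ker(\varepsilon)$, one gets $\mathrm{ad}(U_q(\mathfrak{l}_S))\mathrm{span}\{E_\beta : \beta \in \Delta^+_S\} \subseteq U_q(\mathfrak{l}_S)^+$. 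The defining identity $b_{(1)}\langle w,b_{(2)}\rangle = \e(w)b$ of $B = {}^{U_q(\mathfrak{l}_S)}\OO_q(\mathrm{SU}_{n+1})$ then forces this second piece to vanish upon restriction to $B$. Combining the two contributions yields $\mathrm{ad}(U_q(\mathfrak{l}_S))T^{(0,1)}|_B = T^{(0,1)}_{\alpha_r^c}$, establishing the first claim.

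For the second assertion, the same decomposition shows that in fact $T^{(0,1)}|_B = T^{(0,1)}_{\alpha_r^c}$, without applying $\mathrm{ad}(U_q(\mathfrak{l}_S))$; hence the condition $\mathrm{ad}(U_q(\mathfrak{l}_S))T^{(0,1)}|_B = T^{(0,1)}|_B$ from \textsection \ref{subsection:remarksQHTS} holds automatically. By the criterion recorded there, this is equivalent to injectivity of the natural map $V^{(0,1)}_{\alpha_r^c} \to \Lambda^{(0,1)}_q$, and dualising the basis $\{E_\gamma : \gamma \in \overline{\Delta^+_S}\}$ of the restricted tangent space identifies its image with $\mathrm{span}\{e_\gamma : \gamma \in \overline{\Delta^+_S}\}$. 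The only mildly subtle point throughout is verifying the twin facts that $\mathrm{ad}(U_q(\mathfrak{l}_S))U_q(\mathfrak{l}_S)^+ \subseteq U_q(\mathfrak{l}_S)^+$ and that $U_q(\mathfrak{l}_S)^+$ annihilates $B$; both are routine Hopf-algebraic verifications, so no genuine obstacle remains once \eqref{eqn:HKTangent} is cited.
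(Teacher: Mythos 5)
Your proposal is correct and follows essentially the same route as the paper: both restrict $T^{(0,1)}$ to the Grassmannian by observing that the root vectors $E_\beta$ with $\beta\in\Delta^+_S$ lie in $U_q(\mathfrak{l}_S)$ and hence annihilate $\OO_q(\mathrm{Gr}_{n+1,r})^+$, then identify the surviving span $\{E_\gamma : \gamma\in\overline{\Delta^+_S}\}$ with $T^{(0,1)}_{\alpha_r^c}$ via \eqref{eqn:HKTangent} and its $\mathrm{ad}(U_q(\mathfrak{l}_S))$-invariance, and finally invoke the injectivity criterion of \textsection\ref{subsection:remarksQHTS} to read off the basis of the image of $V^{(0,1)}_{\alpha_r^c}$. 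The only cosmetic caveat is that the identification of the first summand with $T^{(0,1)}_{\alpha_r^c}$ holds after restriction to $\OO_q(\mathrm{Gr}_{n+1,r})^{\circ}$ (where \eqref{eqn:HKTangent} is stated), not as subspaces of $U_q(\mathfrak{sl}_{n+1})$, but this is exactly how the paper proceeds as well.
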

\begin{proof}
For any $\gamma \in \Delta^+_S$, the root vector $E_{\gamma}$ is a linear combination of elements of $U_q(\mathfrak{l}_S)$. Thus we have that $\langle E_{\gamma}, b \rangle = 0$, for any $b \in \OO_q(\mathrm{Gr}_{n+1,r})^+$, meaning that the image of $E_{\gamma}$ in $\OO_q(\mathrm{Gr}_{n+1,r})^{\circ}$ is zero. We now see that the restriction of $T^{(0,1)}$ (which we recall was defined in Corollary \ref{cor:tangentspace}) to the quantum Grassmannian, is spanned by the cosets, in $\OO_q(\mathrm{Gr}_{n+1,r})^{\circ}$, of the tangent vectors $E_{\gamma}$, for $\gamma \in \overline{\Delta^+_S}$.  From the remarks at the end of \textsection \ref{subsection:remarksQHTS}, we see that the tangent space of the restriction fodc is given by 
$$
\mathrm{ad}(U_q(\mathfrak{l}_S))\Big\{E_{\gamma} \,|\, \gamma \in \overline{\Delta^+_S}\Big\} = \mathrm{ad}(U_q(\mathfrak{l}_S))T^{(0,1)}_{\alpha_r^c} = T^{(0,1)}_{\alpha_r^c},
$$
where in the penultimate identity we have used the description of the Heckenberger--Kolb tangent space given in \eqref{eqn:HKTangent}, and in the last identity we have used the fact since $T^{(0,1)}_{\alpha_r^c}$ is a tangent space, it must be closed under the action of $U_q(\mathfrak{l}_S)$. Finally, it now follows from the discussion at the end of \textsection \ref{subsection:remarksQHTS}, that the  elements $e_{\gamma}$, for $\gamma \in \overline{\Delta^+_S}$, for a basis of $V^{(0,1)}_{\alpha_r^c}$ in $\Lambda^{(0,1)}_q$.  
\end{proof}

\begin{eg}
For the case of $A_3$, we have three simple roots $\{\alpha_1, \, \alpha_2, \, \alpha_3\}$ to choose from. For the choice $S = \{\alpha_2, \alpha_3\}$, we get the quantum projective $3$-space $\OO_q(\mathbb{C}\mathrm{P}^3)$, whose tangent space is given explicitly by
$$
T_{\alpha_1^c}^{(0,1)} = \Big\{E_3E_2E_1, \, E_2E_1, \, E_1\Big\} \subseteq \OO_q(\mathbb{C}\mathrm{P}^3)^{\circ},
$$
where we have used the fact that the tangent space is contained in the dual of $\OO_q(\mathbb{C}\mathrm{P}^3)$, to simplify the Lusztig root vectors. We can represent this pictorially as
\begin{align*}
\begin{tikzpicture}[scale=1.2]
  \filldraw (0,0) circle (0.08) node[below=0.15cm] {};
  \draw (1,0) circle (0.08) node[below=0.15cm] {};
  \draw (2,0) circle (0.08) node[below=0.15cm] {};
  \filldraw (0,1) circle (0.08) node[left=0.15cm] {};
  \draw (1,1) circle (0.08) node[right=0.15cm] {};
  \filldraw (0,2) circle (0.08) node[above=0.15cm] {};
\end{tikzpicture}
& & 
\begin{tikzpicture}[scale=1.2]
  \draw[thick] (0.08,1) -- (0.92,1);
    \draw[thick] (1.08,1) -- (1.92,1);
  \filldraw (0,1) circle (0.08) node[below] {};
  \draw (1,1) circle (0.08) node[below] {};
  \draw (2,1) circle (0.08) node[below] {};
  \draw (0,0) node {};
\end{tikzpicture}
\end{align*}
where the first diagram has coloured nodes when the corresponding basis element of $\Lambda^{(0,1)}_q$ is contained in the cotangent space of $\OO_q(\mathbb{C}\mathrm{P}^3)$, and the  second diagram is the coloured Dynkin diagram corresponding to the choice of simple crossed root $S = \{\alpha_1\}$. The choice $S = \{\alpha_1, \alpha_2\}$ gives another copy of $\OO_q(\mathbb{C}\mathrm{P}^3)$, and it admits a similar description. Finally, the choice $S = \{\alpha_1, \alpha_3\}$ gives the first quantum Grassmannian which is not a quantum projective space, that is to say, $\OO_q(\mathrm{Gr}_{4,2})$. Its  tangent space is given explicitly by 
\begin{align*}
T_{\alpha_2^c}^{(0,1)} := \Bigg\{ \begin{array}{ll}
E_1E_2, & E_2, \\
E_3E_1E_2, & E_3E_2
\end{array}\Big\} \subseteq \OO_q(\mathrm{Gr}_{4,2})^{\circ}.
\end{align*}
We can represent this pictorially as
\begin{align*}
\begin{tikzpicture}[scale=1.2]
  \filldraw (0,0) circle (0.08) node[below=0.15cm] {};
  \filldraw (1,0) circle (0.08) node[below=0.15cm] {};
  \draw (2,0) circle (0.08) node[below=0.15cm] {};
  \filldraw (0,1) circle (0.08) node[left=0.15cm] {};
  \filldraw (1,1) circle (0.08) node[right=0.15cm] {};
  \draw (0,2) circle (0.08) node[above=0.15cm] {};
\end{tikzpicture}
& & 
\begin{tikzpicture}[scale=1.2]
  \draw[thick] (0.08,1) -- (1,1);
    \draw[thick] (1.08,1) -- (1.92,1);
  \draw (0,1) circle (0.08) node[below] {};
  \filldraw (1,1) circle (0.08) node[below] {};
  \draw (2,1) circle (0.08) node[below] {};
  \draw (0,0) node {};
\end{tikzpicture}
\end{align*}
where  again the first diagram has coloured nodes when the corresponding basis element is contained in the cotangent space of $\OO_q(\mathrm{Gr}_{4,2})$, and the second diagram is the crossed Dynkin diagram corresponding to the choice of simple crossed root.
\end{eg}

\begin{eg} \label{eg:counterFODC}
Let $\alpha_r \in \Pi$, and let $\OO_q(\mathrm{Gr}_{n+1,r})$ be the associated quantum Grassmannian. We note that the subspace $T:= \mathbb{C}E_r$ is a tangent space for $\mathcal{O}_q(\mathrm{SU}_{n+1})$. The restriction of the associated fodc $\Gamma$ to $\OO_q(\mathrm{Gr}_{n+1,r})$ has as its tangent space
$$
T^{(0,1)}_{\alpha_r^c} = \mathrm{ad}(U_q(\mathfrak{l}_S))E_r,
$$
which is to say, $\Gamma$ restricts to the Heckenberger--Kolb anti-holomorphic fodc. In particular, we see that for $n > 2$, 
$$
\mathrm{dim}(T^{(0,1)}_{\alpha_r^c} ) > \mathrm{dim}(T),
$$
meaning that we cannot have an embedding of cotangent spaces.
\end{eg}

\subsection{Some General Lemmas} \label{section:generalHigherEmbeddings}

In this subsection we make some general observations so as to help elucidate the delicate issue of embedding higher forms on a quantum homogeneous space $B \subseteq A$ into higher forms on the Hopf algebra $A$ itself. We begin with a simple technical observation that we find instructive to highlight as a lemma.

\begin{lem} \label{lem:CATMixUp}
Take an object $\mathcal{F} \in {}^A_B\mathrm{Mod}$, and an object $\mathcal{M} \in {}^A_A\mathrm{Mod}$, which we consider as an object in ${}^A_B\mathrm{Mod}$ with respect to the obvious forgetful functor from ${}^A_A\mathrm{Mod}$ to ${}^A_B\mathrm{Mod}$. For a morphism $\xi: \mathcal{F} \to \mathcal{M}$  in the category ${}^A_B\mathrm{Mod}$, which we also assume to be an algebra map, we consider the linear map 
\begin{align*}
\widehat{\xi}: \Phi(\mathcal{F}) \to F(\mathcal{M}), & & [f] \mapsto [\xi(f)].
\end{align*}
If $\widehat{\xi}$ is an embedding then $\xi$ is also an embedding.
\end{lem}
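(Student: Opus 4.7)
The plan is to reduce injectivity of $\xi$ to injectivity of $\widehat{\xi}$ by comparing the two natural isomorphisms produced by Takeuchi's equivalence on $\mathcal{F}$ and the fundamental theorem of Hopf modules on $\mathcal{M}$. First I would check that $\widehat{\xi}$ is actually well-defined. Since $\xi$ is a $B$-bimodule map, it sends $B^+\mathcal{F}$ into $B^+\mathcal{M}$; using the inclusion $B^+ \subseteq A^+$, this is contained in $A^+\mathcal{M}$, which is precisely what is needed for the induced map $\Phi(\mathcal{F}) = \mathcal{F}/B^+\mathcal{F} \to \mathcal{M}/A^+\mathcal{M} = F(\mathcal{M})$ to make sense.

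Next I would assemble a commutative square. On $\mathcal{F}$, the unit of Takeuchi's equivalence gives a linear isomorphism $\unit_{\mathcal{F}}: \mathcal{F} \to A \,\square_{\pi_B} \Phi(\mathcal{F})$ sending $f \mapsto f_{(-1)} \otimes [f_{(0)}]$, which we may further embed into $A \otimes \Phi(\mathcal{F})$. On $\mathcal{M} \in {}^A_A\mathrm{Mod}_A$, the fundamental theorem of two-sided Hopf modules supplies an isomorphism $\unit_{\mathcal{M}}: \mathcal{M} \to A \otimes F(\mathcal{M})$ given by $m \mapsto m_{(-1)} \otimes [m_{(0)}]$. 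Using that $\xi$ is a left $A$-comodule map, for any $f \in \mathcal{F}$ one computes
\begin{align*}
\unit_{\mathcal{M}}(\xi(f)) = \xi(f)_{(-1)} \otimes [\xi(f)_{(0)}] = f_{(-1)} \otimes [\xi(f_{(0)})] = (\id \otimes \widehat{\xi})\big(f_{(-1)} \otimes [f_{(0)}]\big),
\end{align*}
so the square with vertical arrows $\unit_{\mathcal{F}}, \unit_{\mathcal{M}}$, top arrow $\xi$, and bottom arrow $\id \otimes \widehat{\xi}: A \otimes \Phi(\mathcal{F}) \to A \otimes F(\mathcal{M})$ commutes.

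Finally, assume $\widehat{\xi}$ is injective. Tensor products over $\mathbb{C}$ being exact, the map $\id \otimes \widehat{\xi}$ is then injective on the full tensor product $A \otimes \Phi(\mathcal{F})$, and hence remains injective on the subspace $\unit_{\mathcal{F}}(\mathcal{F}) = A \,\square_{\pi_B} \Phi(\mathcal{F})$. Since $\unit_{\mathcal{F}}$ and $\unit_{\mathcal{M}}$ are bijections, the commutativity of the square forces $\xi$ to be injective as well.

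There is no real obstacle here beyond bookkeeping; the only point to handle carefully is that $\Phi$ and $F$ are \emph{different} functors (quotients by $B^+$ versus $A^+$), which is exactly why the hypothesis that the natural map $\widehat{\xi}$ between them is an embedding is nontrivial. The algebra-map assumption on $\xi$ is not used in the argument above, though it is natural in the intended applications to quantum exterior algebras.
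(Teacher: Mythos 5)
Your proposal is correct and follows essentially the same route as the paper: well-definedness of $\widehat{\xi}$ from the $B$-bimodule property together with $B^+ \subseteq A^+$, then the commuting square built from the unit of Takeuchi's equivalence on $\mathcal{F}$ and the unit of the fundamental theorem of Hopf modules on $\mathcal{M}$, from which injectivity of $\xi$ follows. Your side observation that the algebra-map hypothesis is not actually used in this argument is also accurate; it is needed only in the subsequent lemma about images being subalgebras.
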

\begin{proof}
Since $\xi$ is by assumption a left $B$-module map we have that 
$$
\xi(B^+\mathcal{F}) \subseteq B^+\xi(\mathcal{F}) \subseteq A^+\mathrm{M}.  
$$
Thus we see that $\widehat{\xi}$ is well defined. Commutativity of the diagram of $B$-module maps
\begin{align*}
\xymatrix{ 
\mathcal{F}  \ar[rrrr]^{\unit_B} \ar_{\xi}[d] & & & & \ar[d]^{\id \otimes \widehat{\xi}}  A \square_{\pi_B} \Phi(\mathcal{F}) \\       
\mathcal{M}  \ar[rrrr]_{\unit_A ~~~ }  & & & & A \otimes F(\mathcal{M}),
}
\end{align*}
follows directly from the formulae for $\unit_A$ and $\unit_B$. Thus we see that if $\widehat{\xi}$ is injective, then  $\xi$ is also injective.
\end{proof}

We note that the converse to this result is not true. Explicitly, if $\xi$ is an embedding then it is not necessarily true that $\widehat{\xi}$ is an embedding. An example  is given by Example \ref{eg:counterFODC}.

\begin{lem} \label{lem:theMIXUPembedding}
Let $B \subseteq A$ be a quantum homogeneous space, $\Omega^{\bullet}(A)$ a dc over $A$, and $\Omega^{\bullet}(B)$ a dc over $B$. Let $p:\Omega^{\bullet}(B) \to \Omega^{\bullet}(A)$ be a morphism in ${}^A_B\mathrm{Mod}$ that is also an algebra map, and assume that $A p(\Omega^1(B))$ is a right $A$-submodule of $\Omega^{1}(A)$.  Then the image of the function 
\begin{align*}
\widehat{p}: \Phi(\Omega^{\bullet}(B)) \to F(\Omega^{\bullet}(A)),
\end{align*}
is a subalgebra of $ F(\Omega^{\bullet}(A))$. 
\end{lem}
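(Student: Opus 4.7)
The plan is to use the explicit formula for multiplication in $F(\Omega^{\bullet}(A))$ coming from the monoidal structure on $F$, namely $[m] \cdot [n] = [m\, S(n_{(-1)})\, n_{(0)}]$. Applied to $m = p(\omega_1)$ and $n = p(\omega_2)$, and using that $p$ is a left $A$-comodule map, this rewrites as $[p(\omega_1)\, S((\omega_2)_{(-1)})\, p((\omega_2)_{(0)})]$. To show that this class lies in $\mathrm{im}(\widehat p)$, the strategy is to prove that the bracketed element belongs to $A \cdot p(\Omega^{\bullet}(B))$, and then invoke the identity $[am] = \e(a)[m]$ which holds in $F(\Omega^{\bullet}(A))$ for all $a \in A$ and $m \in \Omega^{\bullet}(A)$ since $A^+ \Omega^{\bullet}(A)$ is killed in the quotient.

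The central intermediate step is the containment
\[
p(\Omega^{\bullet}(B)) \cdot A \,\subseteq\, A \cdot p(\Omega^{\bullet}(B)),
\]
which I would prove by induction on degree. The degree-zero case reduces to $B \cdot A \subseteq A$, immediate since $p|_B$ is the inclusion $B \hookrightarrow A$ (a consequence of $p$ being a unital algebra map and, in degree zero, a left $B$-module map). The degree-one case is exactly the standing hypothesis that $A p(\Omega^1(B))$ is a right $A$-submodule of $\Omega^1(A)$. For the inductive step in degree $k$, I would use that $\Omega^k(B) = \Omega^1(B) \cdot \Omega^{k-1}(B)$ (since the calculus is generated in degrees zero and one), combine this with the algebra-map property to write $p(\Omega^k(B)) = p(\Omega^1(B)) \cdot p(\Omega^{k-1}(B))$, and then apply the inductive hypotheses in degrees $k-1$ and $1$ successively to propagate an $A$-factor through both $p(\Omega^{k-1}(B))$ and $p(\Omega^1(B))$ to the left.

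Given this containment, $A \cdot p(\Omega^{\bullet}(B))$ is a sub-$A$-bimodule of $\Omega^{\bullet}(A)$ and, since $p$ is an algebra map, it is also closed under the product of $\Omega^{\bullet}(A)$, hence forms a subalgebra. In particular, $p(\omega_1)\, S((\omega_2)_{(-1)})\, p((\omega_2)_{(0)}) \in A \cdot p(\Omega^{\bullet}(B))$, and reducing modulo $A^+\Omega^{\bullet}(A)$ via the identity $[am] = \e(a)[m]$ expresses the class as a scalar combination of elements of the form $[p(\eta)]$ for $\eta \in \Omega^{\bullet}(B)$. This is precisely an element of $\mathrm{im}(\widehat p)$, closing the argument.

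The main obstacle I anticipate is the inductive step: the standing hypothesis only controls behaviour in degree one, and one must carefully combine it with the algebra-map property of $p$ to propagate the required bimodule condition through all higher degrees without introducing factors that stray outside $A \cdot p(\Omega^{\bullet}(B))$.
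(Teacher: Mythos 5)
Your proposal is correct and follows essentially the same route as the paper's proof: the same monoidal multiplication formula $[m][n]=[mS(n_{(-1)})\wedge n_{(0)}]$, the same inductive argument (using generation of the calculus in degrees zero and one together with the algebra-map property of $p$) showing that $Ap(\Omega^{\bullet}(B))$ is a right $A$-submodule, and the same final reduction via $[a\omega]=\e(a)[\omega]$ identifying $F(Ap(\Omega^{\bullet}(B)))$ with $\mathrm{im}(\widehat{p})$. The only cosmetic difference is that you factor $\Omega^{k}(B)=\Omega^{1}(B)\wedge\Omega^{k-1}(B)$ where the paper uses $\Omega^{k+1}(B)=\Omega^{k}(B)\wedge\Omega^{1}(B)$.
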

\begin{proof}
We have assumed that $Ap(\Omega^1(B))$ is a right $A$-submodule of $\Omega^1(A)$. Let us also assume that  $Ap(\Omega^k(B))$ is a right $A$-submodule of $\Omega^1(A)$. Then 
\begin{align*}
Ap(\Omega^{k+1}(B))A = &  Ap(\Omega^{k}(B)) \wedge p(\Omega^1(B))A \\
 \subseteq  &  Ap(\Omega^{k}(B))A  \wedge p(\Omega^1(B)) \\
 \subseteq &  Ap(\Omega^{k}(B))  \wedge p(\Omega^1(B))\\
  = &  A p(\Omega^{k+1}(B)).
\end{align*}
Thus it follows from an inductive argument that $Ap(\Omega^{\bullet}(B))$ is a right $A$-submodule of the dc $\Omega^{\bullet}(A)$. 

Let us next note that, for any $\omega \in \Omega^{\bullet}(B)$, and any $a \in A$, it holds that 
$$
[ap(\omega)] = \e(a)[p(\omega)] \in \mathrm{im}(\widehat{p}). 
$$
Thus, we see that $F(Ap(\Omega^{\bullet}(B)))$ is equal to the image of $\widehat{p}$.

Since $\Omega^{\bullet}(A)$ is an algebra object in the the category ${}^A_A\mathrm{Mod}_A$, its image under $F$ is an algebra object in $\mathrm{Mod}_A$. Explicitly, the multiplication $m$ is given by 
\begin{align} \label{eqn:monoidalmult}
m\big([\omega] \otimes [\nu]\big) = [\omega S(\nu_{(-1)}) \wedge \nu_{(0)}],
\end{align}
where we have used the monoidal structure map given in \eqref{eqn:generalINVERSEMonMult}. We claim that the image of $\widehat{p}$ is a subalgebra of $F\big(\Omega^{\bullet}(A)\big)$. To show this consider the case where $\omega, \nu \in \Omega^{\bullet}(B)$. Since $Ap(\Omega^{\bullet}(B))$ is a right $A$-submodule of $\Omega^{\bullet}(A)$, we have that 
$$
p(\omega) S(\nu_{(-1)}) \wedge p(\nu_{(0)}) \in A\Omega^{\bullet}(B).
$$
Hence the corresponding coset in $F(\Omega^{\bullet}(A))$ is contained in the subspace $F(A\Omega^{\bullet}(B))$, which is to say, it is contained in the image of $\widehat{p}$. Thus we have a subalgebra as claimed.
\end{proof}

\subsection{Embedding the Higher-Order Forms}

As we saw in \textsection \ref{subsection:FOLusztigHK}, the restriction of our fodc over $\OO_q(\mathrm{SU}_{n+1})$ to the quntum Grassmannians coincides with the Heckenberger--Kolb fodc. We will now extend this result to higher forms. However, it is important to note that in general, an inclusion of fodc does not imply an inclusion of higher-order forms. This is illustrated in Example \ref{eg:counterMP} below. 

\begin{lem} \label{lem:RightAmodule}
The subspace $\OO_q(\mathrm{SU}_{n+1})\Omega^{(0,1)}_q(\mathrm{Gr}_{n+1,r})$ is a right $\OO_q(\mathrm{SU}_{n+1})$-submodule of $\Omega^{(0,1)}_q(\mathrm{SU}_{n+1})$.
\end{lem}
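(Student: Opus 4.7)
The plan is to realise $A\cdot\Omega^{(0,1)}_q(\mathrm{Gr}_{n+1,r})$, viewed as a subspace of $\Omega^{(0,1)}_q(\mathrm{SU}_{n+1}) \cong A\otimes \Lambda^{(0,1)}_q$, as $A \otimes W$, where I write $A := \OO_q(\mathrm{SU}_{n+1})$ and $W := \mathrm{span}_\mathbb{C}\{e_\gamma \mid \gamma \in \overline{\Delta^+_S}\}$ for the image in $\Lambda^{(0,1)}_q$ of the Grassmannian cotangent space coming from Proposition~\ref{prop:HKFOEmbedding}. The key point will be that $W$ is itself a right $A$-submodule of $\Lambda^{(0,1)}_q$, which renders $A \otimes W$ stable under the diagonal right $A$-action on $A \otimes \Lambda^{(0,1)}_q$.

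First I would verify right $A$-stability of $W$ directly from Proposition~\ref{prop:Gens}. Apart from the diagonal scalar actions, the only nonzero action of a generator on a basis element is $e_{ji} \cdot u_{j'j} = \nu\, e_{j'i}$ for $(j,j') \in \Delta^+$. If $e_{ji} \in W$, that is $i \leq r < j$ (taking $S^c = \{\alpha_r\}$), then $j' > j > r \geq i$ forces $\alpha_{ij'} \in \overline{\Delta^+_S}$, so $e_{j'i} \in W$. Combined with the diagonal formula $(a \otimes e) \cdot c = ac_{(1)} \otimes e \cdot c_{(2)}$ for the right $A$-action on $A \otimes \Lambda^{(0,1)}_q$, this shows that $A \otimes W$ is a right $A$-submodule of $\Omega^{(0,1)}_q(\mathrm{SU}_{n+1})$.

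Secondly I would identify $A \cdot \mathcal{F}$ with $A \otimes W$, where $\mathcal{F} := \Omega^{(0,1)}_q(\mathrm{Gr}_{n+1,r})$ and $B := \OO_q(\mathrm{Gr}_{n+1,r})$. The multiplication map $\mu:A\otimes_B \mathcal{F} \to \Omega^{(0,1)}_q(\mathrm{SU}_{n+1})$, $a \otimes_B \omega \mapsto a\omega$, should fit into a commutative diagram
\[
\xymatrix{
A \otimes_B \mathcal{F} \ar[rr]^-{\mu} \ar[d]_-{\alpha_\mathcal{F}}^-{\cong} & & \Omega^{(0,1)}_q(\mathrm{SU}_{n+1}) \ar[d]^-{\cong} \\
A \otimes V^{(0,1)}_{\alpha_r^c} \ar@{^{(}->}[rr]^-{\id \otimes \iota} & & A \otimes \Lambda^{(0,1)}_q,
}
\]
whose left vertical is the $\alpha$-isomorphism of \eqref{eqn:theALPHAmap}, whose right vertical is the fundamental theorem isomorphism of \textsection\ref{subsection:TangentSpaces}, and where $\iota$ denotes the cotangent embedding of Proposition~\ref{prop:HKFOEmbedding}. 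Commutativity reduces to the identity $(a\omega)_{(-1)} \otimes [(a\omega)_{(0)}]_\Lambda = a\omega_{(-1)} \otimes [\omega_{(0)}]_\Lambda$ in $A \otimes \Lambda^{(0,1)}_q$, together with the fact that $\iota$ is induced by the inclusion $B^+ \hookrightarrow A^+$. Reading off the image of the bottom row then gives $A \cdot \mathcal{F} = A \otimes W$, and the first step delivers the lemma.

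The main obstacle will lie in the commutativity of this diagram: one has to keep careful track of two distinct quotient constructions, namely $[\,\cdot\,]_V$ modulo $B^+\mathcal{F}$ on the Grassmannian side and $[\,\cdot\,]_\Lambda$ modulo $A^+ \Omega^{(0,1)}_q(\mathrm{SU}_{n+1})$ on the $A$-Hopf module side, and verify that $\iota$ intertwines them on elements of $\mathcal{F}$. Once this compatibility is established, combining it with the right $A$-stability of $A \otimes W$ from the first step immediately yields the claim.
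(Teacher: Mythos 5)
Your proposal is correct and follows essentially the same route as the paper: the paper likewise first deduces from the explicit action formulae in Proposition~\ref{prop:Gens} and the identification of the image of $V^{(0,1)}_{\alpha_r^c}$ in Proposition~\ref{prop:HKFOEmbedding} that this image is a right $\OO_q(\mathrm{SU}_{n+1})$-submodule of $\Lambda^{(0,1)}_q$, and then invokes the isomorphism $\OO_q(\mathrm{SU}_{n+1})\Omega^{(0,1)}_q(\mathrm{Gr}_{n+1,r}) \cong \OO_q(\mathrm{SU}_{n+1}) \otimes V^{(0,1)}_{\alpha_r^c}$ from Takeuchi's theory to conclude. Your index check ($i \leq r < j < j'$ forces $e_{j'i} \in W$) and the compatibility diagram merely spell out details the paper leaves implicit.
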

\begin{proof}
It follows from the description of the right $\OO_q(\mathrm{SU}_{n+1})$-module structure of $\Lambda^{(0,1)}_q$ given in \eqref{eqn:quantumaction}, and the description of the image of $V^{(0,1)}_{\alpha_r^c}$ in $\Lambda^{(0,1)}_q$ given in Proposition \ref{prop:HKFOEmbedding}, that $V^{(0,1)}_{\alpha_r^c}$ is a right $\OO_q(\mathrm{SU}_{n+1})$-submodule of  $\Lambda^{(0,1)}_q$. It follows from the discussions of \cite[\textsection 1]{Tak} (see also \cite[Lemma 2.5]{MMF2}) that
\begin{align*}
\OO_q(\mathrm{SU}_{n+1})\Omega^{(0,1)}_q(\mathrm{Gr}_{n+1,r}) \cong \OO_q(\mathrm{SU}_{n+1}) \otimes V^{(0,1)}_{\alpha_r^c}.
\end{align*}
Thus we see that we have a right sub-module.
\end{proof}

\begin{prop} \label{prop:HKEmbedding}
The unique dga map $p$ from $\Omega^{(0,\bullet)}_q(\mathrm{Gr}_{n+1,r})$ to $\Omega^{(0,\bullet)}_q(\mathrm{SU}_{n+1})$ induces an isomorphism between the Lusztig dc and the Heckenberger--Kolb dc of the quantum Grassmannians. 
\end{prop}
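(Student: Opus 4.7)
The plan is to pass through Takeuchi's equivalence, reducing the claim to a dimension count on graded components in $\Lambda^{(0,\bullet)}_q$. First observe that surjectivity of $p$ onto the Lusztig dc is automatic: the image is a sub-dga of $\Omega^{(0,\bullet)}_q(\mathrm{SU}_{n+1})$ containing $\OO_q(\mathrm{Gr}_{n+1,r})$ together with $\adel b$ for each $b \in \OO_q(\mathrm{Gr}_{n+1,r})$, which is the defining generating set of $\Omega^{(0,\bullet)}_q(\mathrm{SU}_{n+1}/L_S)$. Thus only injectivity of $p$ remains to be established.

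By Lemma \ref{lem:CATMixUp}, it suffices to show that the induced map
$$\widehat{p}:\Phi\big(\Omega^{(0,\bullet)}_q(\mathrm{Gr}_{n+1,r})\big) \longrightarrow \Lambda^{(0,\bullet)}_q$$
is injective. Lemma \ref{lem:RightAmodule} verifies the hypothesis of Lemma \ref{lem:theMIXUPembedding}, so that the image of $\widehat{p}$ is a subalgebra $\mathcal{S} \subseteq \Lambda^{(0,\bullet)}_q$. Since $\widehat{p}$ is an algebra map whose source is generated in degree one, and since Proposition \ref{prop:HKFOEmbedding} identifies the degree-one image as $\mathrm{span}_{\mathbb{C}}\{e_\gamma \,|\, \gamma \in \overline{\Delta^+_S}\}$, the subalgebra $\mathcal{S}$ is exactly the subalgebra of $\Lambda^{(0,\bullet)}_q$ generated by this family.

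I next claim that $\mathcal{S}_k$ has dimension $\binom{r(n+1-r)}{k}$. Recalling that $\overline{\Delta^+_S}$ consists of the positive roots $\alpha_{ij}$ with $1 \leq i \leq r < j \leq n+1$, one checks that if two such roots are orthogonal and comparable then their prime pair, in the sense introduced before Theorem \ref{thm:THERELATIONS}, again lies in $\overline{\Delta^+_S}$, since the defining inequalities on the indices are preserved by swapping the second components. Consequently, applying the relations of Theorem \ref{thm:THERELATIONS} to a product of elements $e_\gamma$ with $\gamma \in \overline{\Delta^+_S}$ never introduces a basis element outside this family. Hence the Bergmann reduction of Corollary \ref{cor:BergmannBasis} restricts to $\mathcal{S}$, producing a basis of $\mathcal{S}_k$ given by the monomials $e_{\gamma_1} \wedge \cdots \wedge e_{\gamma_k}$ with $\gamma_1 < \cdots < \gamma_k$ all in $\overline{\Delta^+_S}$, which yields the claimed dimension.

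Since the Heckenberger--Kolb dc has classical dimension $\binom{r(n+1-r)}{k}$ in degree $k$ (as recalled in \textsection \ref{subsection:FOLusztigHK}), the map $\widehat{p}$ is a surjection between graded vector spaces of equal finite dimension in each degree, hence a graded isomorphism. Injectivity of $p$ then follows from Lemma \ref{lem:CATMixUp}, completing the proof. The main delicate point is the stability of $\overline{\Delta^+_S}$ under the prime-pair operation; without it, the quantum correction terms of Theorem \ref{thm:THERELATIONS} would obstruct the restriction of the Bergmann basis to $\mathcal{S}$, and the dimension count would fail.
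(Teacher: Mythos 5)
Your proof is correct and follows essentially the same route as the paper: reduce to injectivity of $\widehat{p}$ via Lemma \ref{lem:CATMixUp}, use Lemmas \ref{lem:RightAmodule} and \ref{lem:theMIXUPembedding} to see the image is a subalgebra containing the $e_\gamma$ for $\gamma \in \overline{\Delta^+_S}$, and conclude by a dimension count against the known classical dimension of the Heckenberger--Kolb calculus. The only difference is that you determine the image of $\widehat{p}$ exactly (via the correct observation that $\overline{\Delta^+_S}$ is stable under the prime-pair operation, so the Bergman reduction stays inside the subfamily), whereas the paper's argument is slightly more economical: since the wedge monomials $e_{\gamma_1}\wedge\cdots\wedge e_{\gamma_k}$ with $\gamma_i \in \overline{\Delta^+_S}$ are already linearly independent in $\Lambda^{(0,k)}_q$ by Corollary \ref{cor:BergmannBasis}, the lower bound on $\dim(\mathrm{im}\,\widehat{p})$ alone forces injectivity, and the prime-pair stability is not needed.
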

\begin{proof}
It follows from Lemma \ref{lem:theMIXUPembedding}, and Lemma \ref{lem:RightAmodule} above, that the image of $\widehat{p}$ is a subalgebra of $F\big(\Omega^{(0,\bullet)}_q(\mathrm{SU}_{n+1})\big)$. It was shown in Proposition \ref{prop:HKFOEmbedding} that, for all $\gamma \in \overline{\Delta^+_S}$, the basis element $e_{\gamma}$ is in the image of $\widehat{p}$. Hence the image of $\widehat{p}$ contains all basis elements of the form
\begin{align*}
\Big\{ e_{\gamma_1} \wedge \cdots \wedge e_{\gamma_k} \,|\, \gamma_1 < \cdots < \gamma_k \in \overline{\Delta^+_S} \Big\}. 
\end{align*}
This means that the image of $\widehat{p}$ has dimension at least as large as the quantum exterior algebra of the Heckenberger--Kolb dc, meaning that $\widehat{p}$ must be an embedding. Lemma \ref{lem:CATMixUp} now implies that the dga map $p$ from the Heckenberger--Kolb dc to the Lusztig dc is an embedding. Since the Heckenberger--Kolb dc is the maximal prolongation of its space of $1$-forms, $p$ must also be surjective, and so, it is an isomorphism.
\end{proof}

We finish with an example that demonstrates why one needs to be careful when considering maximal prolongations, even in the case of covariant fodc over quantum homogeneous spaces.

\begin{eg} \label{eg:counterMP}
For the special case of $\OO_q(\mathrm{SU}_2)$, we have a single non-empty subset $S$ of $\Pi$, namely $\Pi = \{\alpha_1\}$ itself.  The associated quantum flag manifold is the celebrated \emph{quantum projective line} $\OO_q(\mathbb{CP}^1)$, also known as the \emph{Podle\'s sphere} and denoted by $\OO_q(S^2)$. (In the literature $\OO_q(S^2)$ is often called the \emph{standard Podle\'s sphere} to  distinguish it from the other members of the one parameter family Podle\'s $q$-spheres, see \cite[\textsection 4.5]{KSLeabh} for details.) The Podle\'s sphere is the simplest example of a quantum Grassmannian, and its Heckenberger--Kolb dc is the \emph{Podle\'s dc}, originally discovered in \cite{PodlesCalc}, where it was classified with respect to  a natural set of requirements. Indeed, the Heckenberger--Kolb construction and  classification theorem can be considered as a vast generallisation of the work of Podle\'s.

Consider now the following subspace of $U_q(\mathfrak{sl}_2)$:
\begin{align*}
T := \mathrm{span}_{\mathbb{C}}\{FK,\, E\}.
\end{align*}
It follows directly from the coproduct formulae of $U_q(\mathfrak{sl}_2)$ that $T$ is a tangent space for $\OO_q(\mathrm{SU}_2)$, and we denote the associated fodc by $\Gamma^1$. Moreover, we see that $T$ is closed under the left action of $U_q(\mathfrak{h})$, meaning that the tangent space of the restriction of $\Gamma^1$ to the Podle\'s sphere $\OO_q(S^2)$ is spanned by the cosets of $E$ and $FK$ in $\OO_q(S^2)^{\circ}$. The associated fodc is of course the Podle\'s fodc. Let us denote by $\{f,e\}$ the dual basis of the Podle\'s cotangent space $V^1$. Then it holds that 
$$
\Omega^{2}_q(S^2) = \OO_q(S^2) f \wedge e,
$$
and that all other higher forms are zero, as explained for example in \cite[\textsection 1]{Maj}.

Let us now consider $\Gamma^\bullet$ the maximal prolongation of $\Gamma^1$. We note that the  ideal $I \subseteq \OO_q(\mathrm{SU}_2)^+$ corresponding to $T$ contains the elements $u^+_{11}$ and $u_{22}^+$. Identifying the cotangent space of $\Gamma^1$ with $V^1$ through the canonical embedding, we have the associated relations
\begin{align*}
\omega(u^+_{11}) = [u_{12}] \otimes [u_{21}] = qf \otimes e, & & \omega(u^+_{22}) = [u_{21}] \otimes [u_{12}] = qe  \otimes f.
\end{align*}
It follows that $\Gamma^2 = 0$ and hence that the image of $\Omega^{2}_q(S^2)$ in $\Gamma^\bullet$ is zero. 
\end{eg}


\section{The Lusztig DC for  the Full Quantum Flag Manifolds}

In this section we look at the Lusztig dc for the full quantum flag manifolds, and we see that it is a direct $q$-deformation of the anti-holomorphic subcomplex of the classical $A$-series full quantum flag manifold. We present the dc as a direct sum of line modules over $\OO_q(\mathrm{F}_{n+1})$. Notably, we show that the right $\OO_q(\mathrm{F}_{n+1})$-module structure is not given by the Hopf algebra counit, as is the case for the Heckenberger--Kolb calculi. Thus it lives outside the subcategory of those relative Hopf modules for which the simple monoidal version of Takeuchi's equivalence applies.

\subsection{Some Details on Full Quantum Flag Manifolds} \label{section:QFM.Intro}

We begin by recalling some well-known results about the $A$-series full quantum flag manifolds. This is mainly to fix notation, and  we refer the reader to \cite[\textsection 5]{GAPP} for a detailed exposition for this material in the notation of this paper.

The quantum flag manifold $\OO_q(\mathrm{F}_{n+1})$ associated to the choice of simple nodes $S = \varnothing$ is called the \emph{full quantum flag manifold} of $\OO_q(\mathrm{SU}_{n+1})$. In this case the isotropy Hopf algebra is isomorphic to $\OO(\mathbb{T}^{n})$, which is to say the group Hopf algebra of the weight lattice $\mathcal{P} \cong  \mathbb{Z}^{n}$. Thus the simple objects in ${}^{A}_{B}\mathrm{Mod}_0$ are labelled by  elements $\lambda$ of the weight lattice $\mathcal{P}$. As explained in \cite[Example 5.5]{GAPP}, this means that we have a $\mathcal{P}$-algebra grading
$$
\OO_q(\mathrm{SU}_{n+1}) \cong \bigoplus_{\lambda \in \mathcal{P}} \EE_{\lambda}.
$$
Moreover, the grading is \emph{strong}, which is to say, 
\begin{align*}
\EE_{\lambda} \otimes_{\OO_q(\mathrm{F}_{n+1})} \EE_{\mu} \cong \EE_{\lambda}\EE_{\mu} = \EE_{\lambda + \mu}, & & \textrm{ for all } \lambda, \, \mu \in \mathcal{P},
\end{align*}
where the first isomorphism is given by multiplication. Thus we see that $\EE_{\lambda}$ is an invertible bimodule with inverse $\EE^{-1}_{\lambda} = \EE_{-\lambda}$. Motivated by classical geometry, we follow the terminology of \cite{BeggsMajid:Leabh} and call an invertible bimodule in  ${}^{A}_{B}\mathrm{Mod}_0$  a \emph{relative line module}, or simple a \emph{line module}.

As shown in \cite[\textsection Theorem 4.1]{Stok}, a set of algebra generators for $\OO_q(\mathrm{SU}_{n+1})$ is given by 
\begin{align} \label{eqn:QPHSGens}
z^{\varpi_x}_{i} := c^{\varpi_x}_{f_i,v_{N_x}}, & & \overline{z}_i^{\,\varpi_x} := c^{-w_0(\varpi_x)}_{v_i,f_{N_x}} & & \textrm{ for } i = 1, \dots, N_x, \textrm{ and } x = 1, \dots, n,
\end{align}
where $N_x :=  \dim(V_{\varpi_x})$, and $\{v_i\}_{i=1}^{N_{x}}$, and $\{f_i\}_{i=1}^{N_{x}}$, are a choice of dual weight bases of $V_{\varpi_x}$, and $V_{-w_0(\varpi_x)}$ respectively, such that $v_{N_x}$ is a highest weight vector of $V_{\varpi_x}$.  
For any $\lambda = \sum_{i=1}^{n} a_i \varpi_i \in \mathcal{P}^+$, we also find it convenient to introduce the notation 
\begin{align*}
z^{\lambda} := (z^{\varpi_1}_{N_1})^{a_1} \cdots \, (z^{\varpi_{n}}_{N_n})^{a_{n}}.
\end{align*}
Finally, we observe that the $\mathcal{P}$-grading of $\OO_q(\mathrm{SU}_{n+1})$ is determined by $|z^{\varpi_x}_i| = \varpi_x$, and $|\overline{z}^{\,\varpi_x}_i| = -\varpi_x$, for all $i = 1, \dots, N_x$. Explicitly, for $\lambda \in \mathcal{P}^+$, the line module $\EE_{\lambda}$  is generated as a left $\OO_q(\mathrm{F}_{n+1})$-module by the subspace $z^{\lambda}U_q(\frak{sl}_{n+1})$.


\subsection{A Higher Order Extension of Majid's Framing Theorem}

As we saw from Example \ref{eg:counterFODC}, the contangent space of a covariant first-order differential calculus over a quantum homogeneous space $B \subseteq A$ does not necessarily embed into the cotangent space of the Hopf algebra $A$. Necessary and sufficient conditions for this to happen were discussed at the end of \textsection \ref{subsection:remarksQHTS}. It was proved by Majid in \cite[Theorem 2.1]{Maj} that right $\pi_B(A)$-covariance of the fodc on $\Omega^1(A)$  guarantees an embedding. While this requirement is not necessary, it is quite natural from a categorical point of view, and it is this version that we now extend to higher forms.

Consider the category ${}^A_A\mathrm{Mod}^{\pi_B}$ whose objects are left $A$-Hopf modules $\mathcal{M}$ endowed with a right \mbox{$\pi_B(A)$-comodule} structure $\Delta_R:\mathcal{M} \to \mathcal{M} \otimes \pi_B(A)$ giving $\mathcal{M}$ the structure of an \mbox{$A$-$\pi_B(A)$-bicomodule}, and for which $\Delta_R(am) = \Delta_{R,\pi_B}(a)\Delta_R(m)$, for all $a \in A, m \in \mathcal{M}$. 

As observed in \cite{GAPP}, we can extend the fundamental theorem of Hopf modules to an equivalence between ${}^A_A\mathrm{Mod}^{\pi_B}$ and $\mathrm{Mod}^{\pi_B}$, the category of right $\pi_B(A)$-comodules: For $\mathcal{M} \in {}^A_A\mathrm{Mod}^{\pi_B}$, we endow $F(\mathcal{M})$ with the right $\pi_B(A)$-coaction $[m] \mapsto [m_{(0)}] \otimes m_{(1)}$. In the other direction, for some $(V,\Delta_R) \in \mathrm{Mod}^{\pi_B}$, we endow $A \otimes V$ with the right $\pi_B(A)$-comodule tensor product $\Delta_{R,\pi_B} \otimes \Delta_R$.

We also have an equivalence between ${}^A_A\mathrm{Mod}^{\pi_B}$ and the category of relative Hopf modules ${}^A_B\mathrm{Mod}$. This was observed in \cite[Proposition 3.4]{GAPP} (in the more general setting of principal pairs of quantum homogeneous spaces). This equivalence builds on Takeuchi's equivalence for left $B$-modules \cite[Theorem 2]{Tak}, which was later generalised to Schneider's equivalence for principal comodule algebras \cite[ Theorem I]{Schneider90}. First we define a functor 
\begin{align*}
A \otimes_B - : {}^A_B\mathrm{Mod} \to {}^A_A\mathrm{Mod}^{\pi_B},  & &  \FF \mapsto A \otimes_B \FF,
\end{align*}
where the left $A$-module structure of $A \otimes_B \FF$ is given by the first tensor factor, the left $A$-comodule structure is the tensor product coaction $\Delta \otimes \Delta_L$, and the right $\pi_B(A)$-comodule structure is given by $\Delta_{R,\pi_B} \otimes \mathrm{id}$. The functor acts on morphisms as $\id  \otimes -$. In the other direction, we have the functor $\mathrm{co}(-)$ which maps $\mathcal{M}$ to its space of coinvariants 
$$
\mathcal{M}^{\co(\pi_B)} := \big\{ m \in \mathcal{M} \, | \, \Delta_R(m) = m \otimes 1 \big\}, 
$$
and restricts the domain of morphisms.

Finally, just as for any Hopf algebra, the antipode $S$ of $\pi_B(A)$ gives an equivalence 
\begin{align*}
\mathrm{S}: \mathrm{Mod}^{\pi_B} \to {}^{\pi_B}\mathrm{Mod}, & & \mathrm{S^{-1}}: {}^{\pi_B}\mathrm{Mod}\to \mathrm{Mod}^{\pi_B}. 
\end{align*}

Combing these equivalences with Takeuchi's relative Hopf module equivalence gives us the following diagram in the category of all categories:
\[
\xymatrix{
{}^A_A\mathrm{Mod}^{\pi_B}  \ar@<-0.5ex>[rrrr]_{F} \ar@<-0.5ex>[d]_{\mathrm{co}(-)} & & & & \ar@<-0.5ex>[llll]_{A \otimes -} \ar@<0.5ex>[d]^{\mathrm{S}^{-1}}  \mathrm{Mod}^{\pi_B} \\
{}^A_B\mathrm{Mod}  \ar@<-0.5ex>[u]_{A \otimes_B -} \ar@<-0.5ex>[rrrr]_{\Phi} & & &  &  {}^{\pi_B}\mathrm{Mod}  \ar@<-0.5ex>[llll]_{\Psi} \ar@<0.5ex>[u]^{\mathrm{S}}
}
\]
We will now look at some natural isomorphisms associated to this diagram. 

\begin{prop}
A natural isomorphism between the functor $A \otimes_B -$ and the functor $(A \otimes -) \circ \mathrm{S} \circ \Phi$ is given by the components  
\begin{align} \label{eqn:NatTrans1}
\alpha_{\F}: A \otimes_B \F \to A \otimes \mathrm{S}(\Phi(\F)), & & a \otimes f \mapsto af_{(-1)} \otimes [f_{(0)}],
\end{align}
where $\mathcal{F}$ is an object in ${}^A_B\mathrm{Mod}$.
\end{prop}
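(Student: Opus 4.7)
The underlying linear map $a\otimes f\mapsto af_{(-1)}\otimes[f_{(0)}]$ coincides with that of \eqref{eqn:theALPHAmap}, so well-definedness on $A\otimes_B\mathcal{F}$, left $A$-linearity, left $A$-colinearity, and bijectivity (with inverse induced by $a\otimes[f]\mapsto aS(f_{(-1)})\otimes_B f_{(0)}$) can all be quoted directly. What remains is to verify that $\alpha_{\mathcal{F}}$ intertwines the right $\pi_B$-coactions, so that the isomorphism in ${}^A_A\mathrm{Mod}$ lifts to one in ${}^A_A\mathrm{Mod}^{\pi_B}$, together with naturality.

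On the source $A\otimes_B\mathcal{F}$, the coaction $\Delta_{R,\pi_B}\otimes\id$ sends $a\otimes f$ to $a_{(1)}\otimes f\otimes\pi_B(a_{(2)})$, so $(\alpha_{\mathcal{F}}\otimes\id)$ of this reads $a_{(1)}f_{(-1)}\otimes[f_{(0)}]\otimes\pi_B(a_{(2)})$. On the target $A\otimes\mathrm{S}(\Phi(\mathcal{F}))$ the coaction is the tensor product of $\Delta_{R,\pi_B}$ with the antipode-twisted right $\pi_B$-coaction $[f]\mapsto[f_{(0)}]\otimes S(\pi_B(f_{(-1)}))$ coming from the left coaction on $\Phi(\mathcal{F})$. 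Applying this to $\alpha_{\mathcal{F}}(a\otimes f)=af_{(-1)}\otimes[f_{(0)}]$ and expanding $\Delta(af_{(-1)})$ yields
\[
a_{(1)}(f_{(-1)})_{(1)}\otimes[(f_{(0)})_{(0)}]\otimes\pi_B\!\bigl(a_{(2)}(f_{(-1)})_{(2)}S((f_{(0)})_{(-1)})\bigr).
\]

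The heart of the argument is now a coassociativity manipulation. The four factors $(f_{(-1)})_{(1)},\,(f_{(-1)})_{(2)},\,(f_{(0)})_{(-1)},\,(f_{(0)})_{(0)}$ together form the $4$-tensor $(\Delta\otimes\Delta_L)\Delta_L(f)$, which by coassociativity of $\Delta_L$ equals $(\id\otimes\Delta\otimes\id)\Delta_L^{(2)}(f)$. Writing $\Delta_L^{(2)}(f)=g_1\otimes g_2\otimes g_3$ with $g_2\in A$ the middle factor, this identifies $(f_{(-1)})_{(2)}\otimes(f_{(0)})_{(-1)}$ with $\Delta(g_2)=(g_2)_{(1)}\otimes(g_2)_{(2)}$. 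The antipode axiom then collapses $(f_{(-1)})_{(2)}S((f_{(0)})_{(-1)})=(g_2)_{(1)}S((g_2)_{(2)})=\varepsilon(g_2)1$, and the counit axiom reduces $\varepsilon(g_2)\,g_1\otimes g_3$ back to $\Delta_L(f)=f_{(-1)}\otimes f_{(0)}$. The whole expression becomes $a_{(1)}f_{(-1)}\otimes[f_{(0)}]\otimes\pi_B(a_{(2)})$, agreeing with the source-side computation and establishing right $\pi_B$-colinearity.

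Naturality is then immediate: for any morphism $\phi:\mathcal{F}\to\mathcal{G}$ in ${}^A_B\mathrm{Mod}$, left $A$-colinearity of $\phi$ gives $\phi(f)_{(-1)}\otimes\phi(f)_{(0)}=f_{(-1)}\otimes\phi(f_{(0)})$, so both legs of the naturality square send $a\otimes f$ to $af_{(-1)}\otimes[\phi(f_{(0)})]$. The main technical obstacle is the Sweedler bookkeeping in the coaction check: one must see that the mixed expression $(f_{(-1)})_{(2)}S((f_{(0)})_{(-1)})$, whose two halves come from independent applications of $\Delta$ and $\Delta_L$, fits the $x_{(1)}S(x_{(2)})$ antipode pattern only after re-assembling the full $4$-tensor via coassociativity as the coproduct of the middle factor of $\Delta_L^{(2)}(f)$.
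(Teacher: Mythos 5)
Your proposal is correct and follows essentially the same route as the paper: quote the ${}^A_A\mathrm{Mod}$ isomorphism from \eqref{eqn:theALPHAmap}, verify right $\pi_B(A)$-colinearity by the coassociativity/antipode collapse of $f_{(-2)}S(f_{(-1)})$, and note naturality from left $A$-colinearity of morphisms. The only difference is that you spell out the Sweedler reindexing (the $4$-tensor and the middle-factor coproduct) that the paper compresses into the shorthand $f_{(-3)}\otimes f_{(-2)}\otimes f_{(-1)}\otimes f_{(0)}$.
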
 
\begin{proof}
Recall from \eqref{eqn:theALPHAmap} that \eqref{eqn:NatTrans1} gives an isomorphism in the category ${}^A_A\mathrm{Mod}$. We will now show that this is also a morphism of right $\pi_B(A)$-comodules, which is to say, we will show that the following diagram commutes:
\begin{align*}
\xymatrix{ 
A \otimes_B \mathcal{F}  \ar[rrrr]^{\Delta_{R,\pi_B} \otimes  \, \mathrm{id}} \ar[d]_{\alpha_{\FF}} & & & & \ar[d]^{\alpha_{\FF} \otimes \mathrm{id}}  A \otimes_B \mathcal{F} \otimes \pi_B(A) \\       
A \otimes \mathrm{S}(\Phi(\mathcal{F}))  \ar[rrrr]_{\Delta_{R,\pi_B} \otimes \Delta_{R}}  & & & & A \otimes \mathrm{S}(\Phi(\mathcal{F})) \otimes \pi_B(A)
}
\end{align*}
This follows from the calculation 
\begin{align*}
(\Delta_{R,\pi_B} \otimes \Delta_R)(\alpha_{\FF}(a \otimes f)) = & \, \big(\Delta_{R,\pi_B} \otimes  \, \Delta_R)(af_{(-1)} \otimes [f_{(0)}]\big) \\
= & \, a_{(1)}f_{(-3)} \otimes [f_{(0)}] \otimes \pi_B\big(a_{(2)} f_{(-2)} S(f_{(-1)})\big)\\
= & \, a_{(1)} f_{(-1)} \otimes [f_{(0)}] \otimes \pi_B(a_{(2)})\\
= & \, (\alpha_{\FF} \otimes \mathrm{id}) \circ (\Delta_{R,\pi_B} \otimes \mathrm{id})(a \otimes f).
\end{align*}
It is now readily confirmed that the family of isomorphisms  
$$
\Big\{ \alpha_{\FF} \,|\, \FF \in {}^A_B\mathrm{Mod}^{\pi_B}\Big\}
$$ 
give a natural isomorphism. 
\end{proof}

From this follows a number of other natural isomorphisms. We highlight the following which will be a key ingredient in the higher order extension of Majid's framing theorem below. We omit the proof, which is clear.

\begin{cor} \label{cor:thesecondNatIso}
A natural isomorphism between the two functors $F \circ (A \otimes_B - )$ and $\mathrm{S} \circ \Phi$ is given by the components 
\begin{align*}
\beta_{\F}: F(A \otimes_B \F) \to \mathrm{S}(\Phi(\F)), & & [a \otimes f] \mapsto \e(a)[f],
\end{align*}
where $\mathcal{F} \in {}^A_B\mathrm{Mod}$.
\end{cor}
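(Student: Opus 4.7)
The plan is to deduce this corollary directly from the preceding proposition by applying the monoidal functor $F$ to $\alpha_{\F}$ and composing with the counit of the fundamental theorem of two-sided Hopf modules (recalled in \textsection \ref{subsection:TangentSpaces}). The map $\beta_{\F}$ should simply be the composition of these two isomorphisms, so naturality and bijectivity come for free.

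More explicitly, I would first apply $F$ to the natural isomorphism $\alpha_{\F}$ from the previous proposition to obtain an isomorphism
$$
F(\alpha_{\F}): F(A \otimes_B \F) \to F\big(A \otimes \mathrm{S}(\Phi(\F))\big),  \quad [a \otimes f] \mapsto [af_{(-1)} \otimes [f_{(0)}]].
$$
Here one must check the map is well defined on cosets, but this follows since $\alpha_{\F}$ is a morphism of left $A$-modules (indeed an object in ${}^A_A\mathrm{Mod}^{\pi_B}$). Next, I would invoke the counit $\counit$ of the two-sided Hopf module equivalence applied to $V := \mathrm{S}(\Phi(\F))$ to obtain the isomorphism
$$
\counit_{\mathrm{S}(\Phi(\F))}:  F\big(A \otimes \mathrm{S}(\Phi(\F))\big) \to \mathrm{S}(\Phi(\F)), \quad [a \otimes v] \mapsto \e(a) v.
$$

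Composing, I find
$$
\big(\counit_{\mathrm{S}(\Phi(\F))} \circ F(\alpha_{\F})\big)[a \otimes f] = \e(af_{(-1)}) \, [f_{(0)}] = \e(a)\e(f_{(-1)}) \, [f_{(0)}] = \e(a) \, [f],
$$
which is precisely $\beta_{\F}$. Being the composite of two isomorphisms, $\beta_{\F}$ is itself an isomorphism, and naturality follows immediately from the naturality of $\alpha_{\F}$ (established in the preceding proposition) together with the naturality of $\counit$.

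I do not expect any serious obstacle: the argument is formal. The only technical points to verify carefully are well-definedness of $F(\alpha_{\F})$ on the quotient $F(A \otimes_B \F) = (A \otimes_B \F)/B^+(A \otimes_B \F)$, and that the resulting map genuinely takes values in $\mathrm{S}(\Phi(\F))$ rather than in $\Phi(\F)$ itself; both points reduce to tracking the Hopf module structures as laid out in the equivalences recalled just above the statement.
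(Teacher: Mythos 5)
Your argument is correct and is precisely the formal deduction the paper has in mind (it omits the proof as ``clear''): compose $F(\alpha_{\F})$ with the counit of the extended fundamental theorem of Hopf modules for ${}^A_A\mathrm{Mod}^{\pi_B}$, and naturality and bijectivity follow from those of $\alpha_{\F}$ and the counit. The only slip is notational: the quotient defining $F(A \otimes_B \F)$ is by $A^+(A \otimes_B \F)$, not $B^+(A \otimes_B \F)$ (the latter is the quotient defining $\Phi$); in any case well-definedness is automatic since $F$ is a functor and $\alpha_{\F}$ a morphism in its domain category.
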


In the following corollary, we now apply this natural isomorphism to the question of embedding higher differential forms.

\begin{cor} \label{cor:CovariantHiggherEmbedding}
Let $B \subseteq A$ be a quantum homogeneous space, $\Omega^{\bullet}(A)$ a dc in the category ${}^A_A\mathrm{Mod}^{\pi_B}$, and $\Omega^{\bullet}(B) \in {}^A_B\mathrm{Mod}$ the restriction to a dc on $B$. Then the map
\begin{align*}
i: \mathrm{S} \circ \Phi(\Omega^{\bullet}(B)) \to F(\Omega^{\bullet}(A)), & & [\omega] \mapsto [\omega]
\end{align*}
is a monomorphism in the category $\mathrm{Mod}^{\pi_B}$. Hence, we have that 
\begin{align*}
\Omega^{\bullet}(B) \cong A \, \square_{\pi_B} i(\mathrm{S} \circ \Phi(\Omega^{\bullet}(B))).
\end{align*}
\end{cor}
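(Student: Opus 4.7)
My plan is to realise $i$ as the composition of morphisms arising from the two equivalences of categories set up above, all but one of which are isomorphisms. The decisive point is that the right $\pi_B(A)$-covariance of $\Omega^{\bullet}(A)$ forces $\Omega^{\bullet}(B)$ to lie inside $\mathrm{co}(\Omega^{\bullet}(A))$; without this covariance the chain of identifications collapses, as illustrated by Example \ref{eg:counterMP}.

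First I would verify that $\Omega^{\bullet}(B) \subseteq \mathrm{co}(\Omega^{\bullet}(A))$. Since $B = \mathrm{co}(A)$ by definition and $\exd$ commutes with the right $\pi_B(A)$-coaction on $\Omega^{\bullet}(A)$ by the assumed covariance, both $B$ and $\exd B$ lie in the subalgebra $\mathrm{co}(\Omega^{\bullet}(A))$, and hence so does the sub-dga $\Omega^{\bullet}(B)$ they generate. This yields an inclusion $\Omega^{\bullet}(B) \hookrightarrow \mathrm{co}(\Omega^{\bullet}(A))$ in ${}^A_B\mathrm{Mod}$.

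Next I would apply the exact functor $A \otimes_B -$ to this inclusion and post-compose with the counit isomorphism $A \otimes_B \mathrm{co}(\Omega^{\bullet}(A)) \xrightarrow{\sim} \Omega^{\bullet}(A)$ of the equivalence, producing an injective morphism $\mu: A \otimes_B \Omega^{\bullet}(B) \hookrightarrow \Omega^{\bullet}(A)$, $a \otimes \omega \mapsto a\omega$, in ${}^A_A\mathrm{Mod}^{\pi_B}$. The exact functor $F$ preserves this injectivity, and pre-composing with the inverse of the natural isomorphism $\beta_{\Omega^{\bullet}(B)}: F(A \otimes_B \Omega^{\bullet}(B)) \xrightarrow{\sim} \mathrm{S}(\Phi(\Omega^{\bullet}(B)))$ from Corollary \ref{cor:thesecondNatIso} yields exactly the map $i$, since $\beta^{-1}([\omega]) = [1 \otimes \omega]$ and $F(\mu)([1 \otimes \omega]) = [\omega]$. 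This exhibits $i$ as a monomorphism in $\mathrm{Mod}^{\pi_B}$.

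The concluding isomorphism $\Omega^{\bullet}(B) \cong A \square_{\pi_B} i(\mathrm{S}\circ\Phi(\Omega^{\bullet}(B)))$ then follows from the unit isomorphism $\Omega^{\bullet}(B) \cong A \square_{\pi_B} \Phi(\Omega^{\bullet}(B))$ of Takeuchi's equivalence, after using the bijection $i \circ \mathrm{S}$ to identify $\Phi(\Omega^{\bullet}(B))$ with its image $i(\mathrm{S}\circ \Phi(\Omega^{\bullet}(B))) \subseteq F(\Omega^{\bullet}(A))$. The only real obstacle is Step 1; everything after it is a transparent diagram chase through the two categorical equivalences.
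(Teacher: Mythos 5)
Your proof is correct and follows essentially the same route as the paper's: include $\Omega^{\bullet}(B)$ into the right $\pi_B(A)$-coinvariants of $\Omega^{\bullet}(A)$, apply $A \otimes_B -$ followed by the (co)unit isomorphism onto $\Omega^{\bullet}(A)$, apply $F$, and identify the source with $\mathrm{S}\circ\Phi(\Omega^{\bullet}(B))$ via the natural isomorphism of Corollary \ref{cor:thesecondNatIso}. The only difference is that you make explicit what the paper leaves implicit, namely the justification of the first inclusion (covariance of $\exd$ plus multiplicativity of the coaction) and the faithful-flatness/equivalence arguments guaranteeing that the functors preserve injectivity.
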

\begin{proof}
Note first that $\Omega^{\bullet}(B)$ is contained in the right coinvariant forms of $\Omega^{\bullet}(A)$, giving us  an embedding 
$$
\Omega^{\bullet}(B) \hookrightarrow (\Omega^{\bullet}(A))^{\co(\pi_B)}.
$$
Operating on this inclusion by the functor $A \otimes_B -$ gives us the inclusion
\begin{align} \label{eqn:firstinclusion}
A \otimes_B \Omega^{\bullet}(B) \hookrightarrow A \otimes_B (\Omega^{\bullet}(A))^{\co(\pi_B)}.
\end{align}
Considering next the inverse of the unit $\unit$ of the equivalence between ${}^A_A\mathrm{Mod}^{\pi_B}$ and ${}^A_B\mathrm{Mod}$, we see that
\begin{align*}
\unit^{-1}:A \otimes_B (\Omega^{\bullet}(A)^{\co(\pi_B)}) \cong \Omega^{\bullet}(A), & & a \otimes \omega \mapsto a\omega.
\end{align*}
Combining this with the inclusion in  \eqref{eqn:firstinclusion}, we see that
\begin{align*}
A \otimes_B \Omega^{\bullet}(B) \hookrightarrow \Omega^{\bullet}(A), & & a \otimes \omega \mapsto a\omega.
\end{align*} 
Operating on this by the functor $F$, we get the inclusion 
\begin{align*}
F(A \otimes_B \Omega^{\bullet}(B)) \hookrightarrow F(\Omega^{\bullet}(A)).
\end{align*}
Finally, combining this with the appropriate component of the natural isomorphism given in Corollary \ref{cor:thesecondNatIso}, we get the claimed inclusion 
\end{proof}


\subsection{The Lusztig Differential Calculus of $\OO_q(\mathrm{F}_{n+1})$}

In this subsection we look at the restriction of $\Omega^{(0,1)}_q(\mathrm{SU}_{n+1})$ to the $A$-series full quantum flag manifolds, observing that they have classical dimension. Moreover, we also describe the maximal prolongation of this fodc as a direct sum of relative line modules.

\begin{lem} \label{lem:higherfullflagforms}
The embedding 
$
V^{(0,\bullet)}_{\varnothing} \hookrightarrow \Lambda^{(0,\bullet)}_q
$
is an isomorphism of $\OO_q(\mathbb{T}^n)$-comodules. Hence the dc $\Omega^{(0,\bullet)}_q(\mathrm{F}_{n+1})$ has classical dimension and it holds that 
\begin{align} \label{eqn:higherfullflagforms}
\Omega^{(0,\bullet)}_q(\mathrm{F}_{n+1}) ~ = ~ \OO_q(\mathrm{SU}_{n+1}) \square_{\,\pi_B} \Lambda^{(0,\bullet)}_q ~ \cong ~ \Omega^{(0,\bullet)}_q(\mathrm{SU}_{n+1})^{\mathrm{co}(\OO(\mathbb{T}^{n})}.
\end{align}
Moreover, by transference of structure, this isomorphism endows $V^{(0,\bullet)}_{\varnothing}$ with a right $\OO_q(SU_{n+1})$-module structure.
\end{lem}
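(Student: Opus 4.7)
The plan is to reduce to the first-order statement, establish that case by a transitivity argument involving the Heckenberger--Kolb calculi, and then bootstrap to higher forms using that $\Lambda^{(0,\bullet)}_q$ is generated in degree one.

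\emph{First-order case.} The embedding $V^{(0,1)}_{\varnothing} \hookrightarrow \Lambda^{(0,1)}_q$ is well defined because $T^{(0,1)}$ is $\mathrm{ad}(U_q(\mathfrak{h}))$-invariant: each root vector $E_{\gamma}$ is a weight vector (Corollary \ref{cor:FOBasisElements}), so the criterion recalled at the end of \textsection \ref{subsection:remarksQHTS} applies. For surjectivity, I will exploit, for each simple root $\alpha_r$, the chain of inclusions $\OO_q(\mathrm{Gr}_{n+1,r}) \subseteq \OO_q(\mathrm{F}_{n+1}) \subseteq \OO_q(\mathrm{SU}_{n+1})$. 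Since the restriction of $\Omega^{(0,1)}_q(\mathrm{SU}_{n+1})$ to the quantum Grassmannian is obtained by successively restricting first to $\OO_q(\mathrm{F}_{n+1})$ and then to $\OO_q(\mathrm{Gr}_{n+1,r})$, the Heckenberger--Kolb embedding of Proposition \ref{prop:HKFOEmbedding} factors as $V^{(0,1)}_{\alpha_r^c} \hookrightarrow V^{(0,1)}_{\varnothing} \hookrightarrow \Lambda^{(0,1)}_q$. Hence the image of $V^{(0,1)}_{\varnothing}$ contains $\mathrm{span}_{\mathbb{C}}\{e_{\gamma} : \gamma \in \overline{\Delta^+_{\Pi\setminus\{\alpha_r\}}}\}$. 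Since every positive root involves some simple root with a positive coefficient,
\begin{align*}
\bigcup_{r=1}^{n} \overline{\Delta^+_{\Pi\setminus\{\alpha_r\}}} = \Delta^+,
\end{align*}
so the image exhausts $\Lambda^{(0,1)}_q$.

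\emph{Extension to higher forms.} Because $\Omega^{(0,\bullet)}_q(\mathrm{SU}_{n+1})$ is the maximal prolongation of a right $\OO(\mathbb{T}^n)$-covariant fodc, it itself lies in ${}^A_A\mathrm{Mod}^{\pi_B}$. Corollary \ref{cor:CovariantHiggherEmbedding} then produces a monomorphism $i : V^{(0,\bullet)}_{\varnothing} \hookrightarrow \Lambda^{(0,\bullet)}_q$ of $\OO(\mathbb{T}^n)$-comodules extending the degree-one embedding. Using $V^{(0,1)}_{\varnothing} = \Lambda^{(0,1)}_q$ from the previous step, together with faithful flatness of $\OO_q(\mathrm{SU}_{n+1})$ over $\OO_q(\mathrm{F}_{n+1})$, we have $\OO_q(\mathrm{SU}_{n+1}) \cdot \Omega^{(0,1)}_q(\mathrm{F}_{n+1}) = \Omega^{(0,1)}_q(\mathrm{SU}_{n+1})$, which trivially verifies the submodule hypothesis of Lemma \ref{lem:theMIXUPembedding}. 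Hence the image of $i$ is a subalgebra of $\Lambda^{(0,\bullet)}_q$; as it already contains all of $\Lambda^{(0,1)}_q$ (which generates $\Lambda^{(0,\bullet)}_q$ as an algebra), we conclude that $i$ is surjective, giving the claimed isomorphism $V^{(0,\bullet)}_{\varnothing} \cong \Lambda^{(0,\bullet)}_q$.

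\emph{The two displayed isomorphisms and the right module structure.} Substituting $V^{(0,\bullet)}_{\varnothing} \cong \Lambda^{(0,\bullet)}_q$ into the cotensor presentation from Corollary \ref{cor:CovariantHiggherEmbedding} yields the first isomorphism $\Omega^{(0,\bullet)}_q(\mathrm{F}_{n+1}) = \OO_q(\mathrm{SU}_{n+1}) \square_{\,\pi_B} \Lambda^{(0,\bullet)}_q$, and in particular gives classical dimension via Corollary \ref{cor:BergmannBasis}. The second isomorphism follows from the general identification of $A \, \square_{\pi_B} V$ with the right $\pi_B(A)$-coinvariants of the diagonal coaction on $A \otimes V$, applied to the fundamental-theorem decomposition $\Omega^{(0,\bullet)}_q(\mathrm{SU}_{n+1}) \cong \OO_q(\mathrm{SU}_{n+1}) \otimes \Lambda^{(0,\bullet)}_q$. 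The right $\OO_q(\mathrm{SU}_{n+1})$-module structure on $V^{(0,\bullet)}_{\varnothing}$ is then obtained by transporting the structure of $\Lambda^{(0,\bullet)}_q$ along the isomorphism established in Step~2. The main obstacle throughout is the first-order surjectivity: the Heckenberger--Kolb detour avoids the need to exhibit explicit elements of $\OO_q(\mathrm{F}_{n+1})^+$ pairing dually with every Lusztig root vector.
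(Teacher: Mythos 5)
Your proposal is correct and follows essentially the same route as the paper: the embedding $V^{(0,\bullet)}_{\varnothing} \hookrightarrow \Lambda^{(0,\bullet)}_q$ comes from right $\OO(\mathbb{T}^n)$-covariance via Corollary \ref{cor:CovariantHiggherEmbedding}, degree-one surjectivity is obtained by observing that the quantum Grassmannians sit inside $\OO_q(\mathrm{F}_{n+1})$ and their Heckenberger--Kolb cotangent spaces jointly exhaust the basis $\{e_{\gamma}\}_{\gamma \in \Delta^+}$, and surjectivity in higher degrees follows from the subalgebra argument of Lemma \ref{lem:theMIXUPembedding} together with the fact that $\Lambda^{(0,\bullet)}_q$ is generated in degree one. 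Your explicit verification of the right-submodule hypothesis and the union $\bigcup_r \overline{\Delta^+_{\Pi\setminus\{\alpha_r\}}} = \Delta^+$ merely spell out steps the paper leaves implicit.
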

\begin{proof}
We recall from Corollary \ref{cor:FOBasisElements} that the fodc $\Omega^{(0,1)}_q(\mathrm{SU}_{n+1})$ is right $\OO_q(\mathbb{T}^n)$-covariant. Thus it follows from Corollary \ref{cor:CovariantHiggherEmbedding} that we have an embedding in the category $\mathrm{Mod}^{\pi_B}$
\begin{align} \label{eqn:fullembeddingofForms}
V^{(0,\bullet)}_{\varnothing} \hookrightarrow \Lambda^{(0,\bullet)}_q,
\end{align}
which is to say, an embedding of $\OO(\mathbb{T}^n)$-modules. It follows from the description of the Heckenberger--Kolb tangent space given in \textsection \ref{subsection:FOLusztigHK} that every basis element of $T^{(0,1)}$ pairs non-trivially with some element of some quantum Grassmannian. Now every quantum Grassmannian $\OO_q(\mathrm{Gr}_{n+1,r})$ is contained as a subalgebra of $\OO_q(\mathrm{F}_{n+1})$. Thus we see that the embedding of $V^{(0,1)}_{\varnothing}$ in $\Lambda^{(0,1)}_q$ is in fact an isomorphism, and hence that the dc $\Omega^{(0,\bullet)}_q(\mathrm{F}_{n+1})$ has classical dimension. Moreover, following the argument of Proposition \ref{prop:HKEmbedding}, we can show that the image of $V^{(0,\bullet)}_{\varnothing}$  is a subalgebra of $\Lambda^{(0,\bullet)}_q$. Thus we see that \eqref{eqn:fullembeddingofForms} is in fact an isomorphism. The identities in \eqref{eqn:higherfullflagforms} now follows directly, as does the induced $\OO_q(SU_{n+1})$-module structure on $V^{(0,\bullet)}_{\varnothing}$.
\end{proof}

For the case of full quantum flag manifolds, we recall that the isotropy Hopf algebra is $\mathcal{O}(\mathbb{T}^n)$. This means that any finite-dimensional comodule of the isotropy Hopf algebra is one-dimensional, and hence invertible. Equivalently, every relative Hopf module $\mathcal{F} \in {}^A_B\mathrm{Mod}_0$ decomposes into a direct sum of line modules. The following proposition describes the decompositions of our space of $k$-forms into line modules.

\begin{prop} \label{prop:fodcEE}
We have an isomorphism, in the category ${}^A_B\mathrm{Mod}$,
\begin{align} \label{eqn:formlineIso}
\Omega^{(0,k)}_q(\mathrm{F}_{n+1})  \cong \bigoplus_{\gamma_1  < \cdots < \gamma_k} \EE_{\gamma_1 + \cdots + \gamma_k}, 
\end{align}
where summation is over ordered $k$-tuples of positive roots of $\mathfrak{sl}_{n+1}$. 
\end{prop}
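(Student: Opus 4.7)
The plan is to combine the cotensor-product description of $\Omega^{(0,\bullet)}_q(\mathrm{F}_{n+1})$ from Lemma \ref{lem:higherfullflagforms} with the explicit basis of $\Lambda^{(0,\bullet)}_q$ from Corollary \ref{cor:BergmannBasis}, and then exploit the fact that the isotropy Hopf algebra is just the coordinate algebra of a torus, so that every finite-dimensional right $\OO(\mathbb{T}^n)$-comodule splits as a direct sum of one-dimensional weight spaces.

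First, by Lemma \ref{lem:higherfullflagforms} we have the isomorphism in ${}^A_B\mathrm{Mod}$
\begin{align*}
\Omega^{(0,k)}_q(\mathrm{F}_{n+1}) \ \cong \ \OO_q(\mathrm{SU}_{n+1}) \, \square_{\pi_B} \, \Lambda^{(0,k)}_q,
\end{align*}
where the right-hand side carries the left $\OO_q(\mathrm{SU}_{n+1})$-coaction via the first tensor factor and its $\OO_q(\mathrm{F}_{n+1})$-bimodule structure through the diagonal action. Second, Corollary \ref{cor:BergmannBasis} provides the basis
\begin{align*}
\big\{\, e_{\gamma_1} \wedge \cdots \wedge e_{\gamma_k} \,\big|\, \gamma_1 < \cdots < \gamma_k \in \Delta^+\,\big\}
\end{align*}
of $\Lambda^{(0,k)}_q$. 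By Corollary \ref{cor:FOBasisElements}, each $e_\gamma$ is a $U_q(\mathfrak{h})$-weight vector of weight~$\gamma$, and so, dualising to a right coaction, each basis element above is a $\pi_B(A) = \OO(\mathbb{T}^n)$-weight vector of weight $\gamma_1 + \cdots + \gamma_k$.

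Third, since every one-dimensional right $\OO(\mathbb{T}^n)$-comodule is simple, $\Lambda^{(0,k)}_q$ splits in $\mathrm{Mod}^{\pi_B}$ as the direct sum of these one-dimensional weight spaces $\mathbb{C}\cdot e_{\gamma_1}\wedge\cdots\wedge e_{\gamma_k}$, indexed by the ordered $k$-tuples $\gamma_1 < \cdots < \gamma_k$ of positive roots. Because cotensor product commutes with direct sums, this yields
\begin{align*}
\OO_q(\mathrm{SU}_{n+1}) \, \square_{\pi_B} \, \Lambda^{(0,k)}_q \ \cong \ \bigoplus_{\gamma_1 < \cdots < \gamma_k} \OO_q(\mathrm{SU}_{n+1}) \, \square_{\pi_B} \, \mathbb{C}_{\gamma_1+\cdots+\gamma_k},
\end{align*}
and by the very definition of the $\mathcal{P}$-grading in \textsection \ref{section:QFM.Intro}, the summand $\OO_q(\mathrm{SU}_{n+1}) \, \square_{\pi_B} \, \mathbb{C}_{\lambda}$ is precisely the line module $\EE_\lambda$. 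Assembling these identifications delivers the isomorphism \eqref{eqn:formlineIso}.

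The only real subtlety is bookkeeping for the right $\OO_q(\mathrm{F}_{n+1})$-module structure: the decomposition above is manifestly one of left $\OO_q(\mathrm{SU}_{n+1})$-comodules and left $\OO_q(\mathrm{F}_{n+1})$-modules, but the diagonal right $\OO_q(\mathrm{F}_{n+1})$-action on $\OO_q(\mathrm{SU}_{n+1}) \, \square_{\pi_B} \, \Lambda^{(0,k)}_q$ genuinely mixes the tensor factors, as flagged in the introduction to this section. To see that it nevertheless preserves each summand, I would note that right multiplication in the graded algebra $\OO_q(\mathrm{SU}_{n+1}) = \bigoplus_\mu \EE_\mu$ sends $\EE_\lambda$ into $\bigoplus_\mu \EE_{\lambda+\mu}$, and then check that under $\alpha_{\Omega^{(0,k)}_q(\mathrm{F}_{n+1})}$ from \eqref{eqn:theALPHAmap} the diagonal action on the cotensor side corresponds to ordinary right multiplication on the graded side, forcing the weight-$(\gamma_1+\cdots+\gamma_k)$ component to stay a bimodule summand. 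This right-action compatibility is the one point requiring care; everything else is a direct reading off of the weight decomposition.
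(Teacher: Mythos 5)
Your main argument is correct and is essentially the paper's own proof: decompose $\Lambda^{(0,k)}_q$ into the one-dimensional $\OO(\mathbb{T}^n)$-weight spaces spanned by the basis elements $e_{\gamma_1}\wedge\cdots\wedge e_{\gamma_k}$ of weight $\gamma_1+\cdots+\gamma_k$, pass the direct sum through the cotensor product, and identify $\OO_q(\mathrm{SU}_{n+1})\,\square_{\pi_B}\,\mathbb{C}_\lambda$ with $\EE_\lambda$. That is exactly what the paper does, and it suffices because the statement only asserts an isomorphism in ${}^A_B\mathrm{Mod}$, i.e.\ of left $A$-comodules and left $B$-modules.

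Your final paragraph, however, is both unnecessary and wrong. You claim that the diagonal right $\OO_q(\mathrm{F}_{n+1})$-action preserves each summand, ``forcing the weight-$(\gamma_1+\cdots+\gamma_k)$ component to stay a bimodule summand.'' The paper explicitly records the opposite in the example following the proposition: the decomposition \eqref{eqn:formlineIso} is \emph{not} a decomposition of right $\OO_q(\mathrm{F}_{n+1})$-modules. Concretely, for $b = u_{11}u_{32}u_{33}\cdots u_{n+1,n+1}\in\OO_q(\mathrm{F}_{n+1})$ one has $e_{21}b = q^{-1}\nu\, e_{31}$, which moves the $\EE_{\alpha_1}$ summand into the $\EE_{\alpha_1+\alpha_2}$ summand; the compatibility condition in ${}^{\pi_B}\mathrm{Mod}_B$ is $\Delta_L(vb) = v_{(-1)}\pi_B(b_{(1)})\otimes v_{(0)}b_{(2)}$, and since the coproduct of $b\in B$ does not land in $B\otimes B$, weight-zero elements of $B$ can and do shift weights. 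Only the coarser ``column'' decomposition survives as a right-module (bimodule) decomposition. So you should simply delete the last paragraph: the statement lives in ${}^A_B\mathrm{Mod}$ precisely because the finer claim you tried to prove is false.
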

\begin{proof}
It follows from the fact that $\Lambda^{(0,\bullet)}_q$ is an $\OO(\mathbb{T}^n)$-comodule algebra, that every basis element is a weight vector. Taken together with Corollary \ref{cor:FOBasisElements}, we now see that 
\begin{align*}
|e_{\gamma_1} \wedge \cdots \wedge e_{\gamma_k}| = \gamma_1 + \cdots + \gamma_k.
\end{align*}
It now follows that 
\begin{align} 
\Omega^{(0,k)}_q(\mathrm{F}_{n+1})  \cong  & \, \bigoplus_{\gamma_1  < \cdots < \gamma_k} \OO_q(\mathrm{SU}_{n+1}) \square_{\pi_B} \mathbb{C} e_{\gamma_1} \wedge \cdots \wedge e_{\gamma_k} \\
 \cong & \bigoplus_{\gamma_1  < \cdots < \gamma_k} \EE_{\gamma_1 + \cdots + \gamma_k}, 
\end{align}
where summation is over ordered $k$-tuples of positive roots of $\mathfrak{sl}_{n+1}$.
\end{proof}

As we will see in the next subsection, the Lusztig dc does not live in subcategory ${}^A_B\mathrm{Mod}_0$. This means that  we do not have access to the convenient monoidal form of Takeuchi's equivalence, as treated in \cite{MMF2}. This means that in order to describe forms twisted by line modules, we need some new tools. 

\begin{lem}
Let $B \subseteq A$ be a quantum homogeneous space, and $\F,\mathcal{G} \in {}^A_B\mathrm{Mod}_B$. Then an isomorphism in the category ${}^{\pi_B}\mathrm{Mod}$ is given by 
\begin{align*}
\psi:\Phi(\F \otimes_B \mathcal{G}) \to \Phi(\F) \otimes_B \mathcal{G}, & & [f \otimes g] \mapsto [f] \otimes g.
\end{align*}
\end{lem}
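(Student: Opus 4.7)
The plan is to establish well-definedness of $\psi$, to identify both sides as the same quotient of $\F \otimes_B \mathcal{G}$, and finally to check compatibility with the left $\pi_B(A)$-coactions.

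First, I would verify that $[f \otimes g] \mapsto [f] \otimes g$ descends to the stated quotient. Two relations must be respected: the $\otimes_B$-relation $fb \otimes g = f \otimes bg$ on the source corresponds exactly to $[fb] \otimes g = [f]b \otimes g = [f] \otimes bg$ on the target (using the right $B$-action on $\Phi(\F)$ inherited from $\F$); and the quotient by $B^+(\F \otimes_B \mathcal{G})$ is respected because the left $B$-action on $\F \otimes_B \mathcal{G}$ factors through the first tensorand, so $b^+(f \otimes g) = b^+ f \otimes g \mapsto [b^+f] \otimes g = 0$.

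Second, I would show $\psi$ is bijective by observing that both sides are naturally the \emph{same} quotient of $\F \otimes_B \mathcal{G}$. Since the left $B$-action on $\F \otimes_B \mathcal{G}$ lives entirely on the first factor, the subspace $B^+(\F \otimes_B \mathcal{G})$ coincides with the image of $B^+\F \otimes_B \mathcal{G}$ in $\F \otimes_B \mathcal{G}$. On the other hand, applying the right-exact functor $- \otimes_B \mathcal{G}$ to the exact sequence $B^+\F \hookrightarrow \F \twoheadrightarrow \Phi(\F)$ shows that $\Phi(\F) \otimes_B \mathcal{G}$ is precisely the quotient of $\F \otimes_B \mathcal{G}$ by that same image. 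Hence $\psi$ is a linear isomorphism.

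Third, I would check that $\psi$ intertwines the left $\pi_B(A)$-coactions. On the source, the coaction is inherited from the diagonal $A$-coaction on $\F \otimes_B \mathcal{G}$, giving $[f \otimes g] \mapsto \pi_B(f_{(-1)}g_{(-1)}) \otimes [f_{(0)} \otimes g_{(0)}]$. On the target, the coaction is the diagonal one $[f] \otimes g \mapsto \pi_B(f_{(-1)}g_{(-1)}) \otimes [f_{(0)}] \otimes g_{(0)}$. The only subtlety here is to verify this diagonal coaction is well-defined on the $\otimes_B$-relation, which is a short calculation exploiting the $(\pi_B(A),B)$-compatibility $\Delta_L([f]b) = [f]_{(-1)}\pi_B(b_{(1)}) \otimes [f]_{(0)}b_{(2)}$ for objects of ${}^{\pi_B}\mathrm{Mod}_B$ together with the $A$-comodule algebra compatibility on $\mathcal{G}$. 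Granted this, intertwining by $\psi$ is immediate from the definitions. I expect this last coaction well-definedness check to be the only mildly technical point; otherwise the proof is formal.
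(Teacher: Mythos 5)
Your proposal is correct and follows essentially the same route as the paper: the paper verifies well-definedness via the two relations $[fb]\otimes g = [f]\otimes bg$ and $[bf]\otimes g = \e(b)[f]\otimes g$, and then exhibits the explicit inverse $[f]\otimes g \mapsto [f\otimes g]$, which is just the concrete form of your observation that both sides are the same quotient of $\F \otimes_B \mathcal{G}$. Your additional check of the $\pi_B(A)$-coaction compatibility is a detail the paper leaves implicit, but it is the right thing to verify.
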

\begin{proof}
For $f \in \F, \, g \in \mathcal{G}$, and $b \in B$, we note that
\begin{align*}
[fb] \otimes g = [f] \otimes bg,  & & [bf] \otimes g = \e(b)[f] \otimes g.
\end{align*}
From this we can conclude that $\psi$ is well-defined. An inverse is given by 
\begin{align*}
\psi^{-1}:\Phi(\F) \otimes_B \mathcal{G} \to \Phi(\F \otimes_B \mathcal{G}), & & [f] \otimes g \mapsto [f \otimes g],
\end{align*}
which is confirmed to be well-defined in an analogous manner.
\end{proof}

\begin{lem} \label{lem:rightAMonoidalTakeuchi}
Let $A,B, \F$, and $\mathcal{G}$ be as in the lemma above, and moreover, assume that $\Phi(\F)$ has a right $A$-action extending its right $B$-action. Then an isomorphism in the category ${}^{\pi_B}\mathrm{Mod}$ is given by 
\begin{align*}
\phi: \Phi(\F \otimes_B \mathcal{G}) \to \Phi(\F) \otimes \Phi(\mathcal{G}), & & [f \otimes g] \mapsto [f]g_{(-1)} \otimes [g_{(0)}].
\end{align*}
\end{lem}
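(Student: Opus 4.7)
The plan is to factor $\phi$ through the isomorphism $\psi: \Phi(\F \otimes_B \mathcal{G}) \to \Phi(\F) \otimes_B \mathcal{G}$ of the preceding lemma, reducing to showing that the map
$$
\phi': \Phi(\F) \otimes_B \mathcal{G} \longrightarrow \Phi(\F) \otimes \Phi(\mathcal{G}), \qquad [f] \otimes g \longmapsto [f]g_{(-1)} \otimes [g_{(0)}],
$$
is an isomorphism in ${}^{\pi_B}\mathrm{Mod}$. Indeed $\phi = \phi' \circ \psi$, and as $\psi$ is already a left $\pi_B(A)$-comodule isomorphism, it suffices to work with $\phi'$.

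First I would check that $\phi'$ is well-defined with respect to the tensor relation $[f]b \otimes g = [f] \otimes bg$ for $b \in B$. Using the coaction--action identity $\Delta_L(bg) = b_{(1)}g_{(-1)} \otimes b_{(2)}g_{(0)}$ on $\mathcal{G}$, the identity $[b_{(2)}g_{(0)}] = \e(b_{(2)})[g_{(0)}]$ in $\Phi(\mathcal{G})$, and associativity of the right $A$-action on $\Phi(\F)$ extending the right $B$-action, both $\phi'([f]b \otimes g)$ and $\phi'([f] \otimes bg)$ reduce to $[f]bg_{(-1)} \otimes [g_{(0)}]$.

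Second, I would introduce the candidate inverse
$$
\phi'^{-1}: \Phi(\F) \otimes \Phi(\mathcal{G}) \longrightarrow \Phi(\F) \otimes_B \mathcal{G}, \qquad [f] \otimes [g] \longmapsto [f]S(g_{(-1)}) \otimes g_{(0)},
$$
modelled on the monoidal structure map $\mu$ from \eqref{eqn:generalINVERSEMonMult}. Since $\Phi(\mathcal{G}) = \mathcal{G}/B^+\mathcal{G}$, well-definedness amounts to verifying that replacing $g$ by $bg$ with $b \in B^+$ produces zero:
$$
[f]S(g_{(-1)})S(b_{(1)}) \otimes b_{(2)}g_{(0)} = [f]S(g_{(-1)})S(b_{(1)})b_{(2)} \otimes g_{(0)} = \e(b)\,[f]S(g_{(-1)}) \otimes g_{(0)} = 0,
$$
where we exploit $b_{(2)} \in B$ (as $B$ is a left coideal subalgebra of $A$, $\Delta(B) \subseteq A \otimes B$) to move $b_{(2)}$ across $\otimes_B$, and then invoke the antipode identity in $A$. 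The mutual inverse identities $\phi' \circ \phi'^{-1} = \id$ and $\phi'^{-1} \circ \phi' = \id$ then follow directly from coassociativity $g_{(-1)} \otimes g_{(0)(-1)} \otimes g_{(0)(0)} = g_{(-1)(1)} \otimes g_{(-1)(2)} \otimes g_{(0)}$ together with $g_{(-1)(1)}S(g_{(-1)(2)}) = \e(g_{(-1)})$ and $\e(g_{(-1)})g_{(0)} = g$.

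The final step, verifying that $\phi'$ respects the left $\pi_B(A)$-coactions, is the main technical obstacle. I would invoke the natural extension from $B$ to $A$ of the defining compatibility $\Delta_L([v]b) = [v]_{(-1)}\pi_B(b_{(1)}) \otimes [v]_{(0)}b_{(2)}$ of the category ${}^{\pi_B}\mathrm{Mod}_B$, combined with $\pi_B$ being an algebra map, to expand the tensor-product $\pi_B(A)$-coaction on $\Phi(\F) \otimes \Phi(\mathcal{G})$ applied to $[f]g_{(-1)} \otimes [g_{(0)}]$, and match it with $(\id \otimes \phi') \Delta_L([f] \otimes g)$ via iterated coassociativity of $\Delta_L$. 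The careful bookkeeping through the resulting four-fold iterated coaction of $g$ is where the bulk of the work lies; once completed, one concludes that $\phi = \phi' \circ \psi$ is the desired $\pi_B(A)$-comodule isomorphism.
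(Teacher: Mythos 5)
Your proposal is correct and follows essentially the same route as the paper: the paper likewise identifies $\Phi(\F \otimes_B \mathcal{G})$ with $\Phi(\F) \otimes_B \mathcal{G}$ via the preceding lemma, writes down the same candidate inverse $[f] \otimes [g] \mapsto [f]S(g_{(-1)}) \otimes g_{(0)}$, and declares the well-definedness and comodule checks routine. Your version simply fills in the details (the antipode cancellation using $\Delta(B) \subseteq A \otimes B$, and the coassociativity computations) that the paper omits.
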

\begin{proof}
Just as in the previous lemma, it is routine to check that $\phi$ is well defined as a morphism in the category ${}^{\pi_B}\mathrm{Mod}$. Consider next the map
\begin{align*}
\phi^{-1}: \Phi(\F) \otimes \Phi(\mathcal{G}) \to \Phi(\F) \otimes_B \mathcal{G}, & & [f] \otimes [g] \mapsto [f]S(g_{(-1)}) \otimes g_{(0)}.
\end{align*}
Using the previous lemma to identify $\Phi(\F \otimes_B \mathcal{G})$ and $\Phi(\F) \otimes_B \mathcal{G}$, we see that $\phi^{-1}$ does indeed define an inverse to $\phi$, and hence we have an isomorphism.
\end{proof}

Using the isomorphism given above, together with the right $\OO_q(\mathrm{SU}_{n+1})$-module structure of $V^{(0,1)}_{\varnothing}$ given in Lemma \ref{lem:higherfullflagforms}, we can now describe the space of forms twisted by a line module as a direct sum of line modules. 

\begin{prop} \label{prop:omegaEE}
For any $\lambda \in \mathcal{P}$, we have an isomorphism, in the category ${}^A_B\mathrm{Mod}$,
\begin{align} \label{eqn:twisted:formlineIso}
\Omega^{(0,k)}_q(\mathrm{F}_{n+1}) \otimes_{\OO_q(\mathrm{F}_{n+1})} \EE_{\lambda} \cong \bigoplus_{\gamma_1  < \cdots < \gamma_k} \EE_{\gamma_1 + \cdots + \gamma_k + \lambda}, 
\end{align}
where summation is over ordered $k$-tuples of positive roots of $\mathfrak{sl}_{n+1}$.  
\end{prop}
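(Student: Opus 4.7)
The plan is to reduce the statement to a computation in the category of $\pi_B(A) = \OO(\mathbb{T}^n)$-comodules via Takeuchi's equivalence, and then decompose into weight spaces. Since the Lusztig dc is not in ${}^A_B\mathrm{Mod}_0$, we cannot apply the monoidal form of Takeuchi's equivalence used in the Heckenberger--Kolb setting, and instead must rely on Lemma \ref{lem:rightAMonoidalTakeuchi}.

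First, recall from Lemma \ref{lem:higherfullflagforms} that $\Phi\big(\Omega^{(0,k)}_q(\mathrm{F}_{n+1})\big) \cong V^{(0,k)}_{\varnothing}$, and that this comodule carries a right $\OO_q(\mathrm{SU}_{n+1})$-action extending its right $\OO_q(\mathrm{F}_{n+1})$-action (obtained by transport of structure from the identification $V^{(0,k)}_\varnothing \cong \Lambda^{(0,k)}_q$). This is precisely the hypothesis needed to invoke Lemma \ref{lem:rightAMonoidalTakeuchi} with $\mathcal{F} = \Omega^{(0,k)}_q(\mathrm{F}_{n+1})$ and $\mathcal{G} = \EE_\lambda$, yielding an isomorphism of $\OO(\mathbb{T}^n)$-comodules
\begin{align*}
\Phi\!\left(\Omega^{(0,k)}_q(\mathrm{F}_{n+1}) \otimes_{\OO_q(\mathrm{F}_{n+1})} \EE_\lambda\right) \cong V^{(0,k)}_{\varnothing} \otimes \Phi(\EE_\lambda).
\end{align*}

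Next, I would decompose the right-hand side into weight spaces. By Corollary \ref{cor:FOBasisElements} together with the fact that $\Lambda^{(0,\bullet)}_q$ is an $\OO(\mathbb{T}^n)$-comodule algebra, the basis element $e_{\gamma_1} \wedge \cdots \wedge e_{\gamma_k}$ of $V^{(0,k)}_\varnothing$ is an $\OO(\mathbb{T}^n)$-weight vector of weight $\gamma_1 + \cdots + \gamma_k$. Moreover, $\EE_\lambda$ is a line module, so $\Phi(\EE_\lambda)$ is one-dimensional of weight $\lambda$. The tensor product $V^{(0,k)}_\varnothing \otimes \Phi(\EE_\lambda)$ therefore decomposes as a direct sum of one-dimensional $\OO(\mathbb{T}^n)$-comodules indexed by strictly ordered $k$-tuples $\gamma_1 < \cdots < \gamma_k$ of positive roots, each of weight $\gamma_1 + \cdots + \gamma_k + \lambda$.

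Finally, Takeuchi's equivalence $\Psi \circ \Phi \cong \mathrm{id}$ on ${}^A_B\mathrm{Mod}$ produces the stated isomorphism in ${}^A_B\mathrm{Mod}$:
\begin{align*}
\Omega^{(0,k)}_q(\mathrm{F}_{n+1}) \otimes_{\OO_q(\mathrm{F}_{n+1})} \EE_\lambda \, \cong \, A\, \square_{\pi_B}\!\big(V^{(0,k)}_\varnothing \otimes \Phi(\EE_\lambda)\big) \, \cong \!\bigoplus_{\gamma_1 < \cdots < \gamma_k}\! \EE_{\gamma_1 + \cdots + \gamma_k + \lambda},
\end{align*}
where the last isomorphism uses the definition of the line modules $\EE_\mu$ as the cotensor products $A \,\square_{\pi_B}\, \mathbb{C}_\mu$ with the one-dimensional $\OO(\mathbb{T}^n)$-comodule of weight $\mu$.

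The only real subtlety is ensuring that Lemma \ref{lem:rightAMonoidalTakeuchi} genuinely applies, i.e.\ that the extended right $\OO_q(\mathrm{SU}_{n+1})$-action on $V^{(0,k)}_\varnothing$ is compatible with the one arising from $\Omega^{(0,k)}_q(\mathrm{F}_{n+1})$ through Takeuchi's functor; this is automatic from the construction in Lemma \ref{lem:higherfullflagforms}, where the extension was obtained by transference of structure from the isomorphism $V^{(0,k)}_\varnothing \cong \Lambda^{(0,k)}_q$. Everything else is bookkeeping with weight vectors.
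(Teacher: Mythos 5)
Your proof is correct and follows essentially the same route as the paper: both apply Lemma \ref{lem:rightAMonoidalTakeuchi} to split off $\Phi(\EE_\lambda)$, then use the weight decomposition of $V^{(0,k)}_{\varnothing}$ (Proposition \ref{prop:fodcEE} in the paper's phrasing) together with additivity of weights and Takeuchi's equivalence. Your explicit verification that the extended right $\OO_q(\mathrm{SU}_{n+1})$-action from Lemma \ref{lem:higherfullflagforms} supplies the hypothesis of Lemma \ref{lem:rightAMonoidalTakeuchi} is a point the paper leaves implicit, but it is the same argument.
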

\begin{proof}
It follows from Lemma \ref{lem:rightAMonoidalTakeuchi} that we have an isomorphism 
\begin{align*}
\Phi\!\left(\Omega^{(0,1)}_q(\mathrm{F}_{n+1}) \otimes_{\OO_q(\mathrm{F}_{n+1})} \EE_{\lambda}\right) \to \Phi(\Omega^{(0,1)}_q(\mathrm{F}_{n+1})) \otimes \Phi(\EE_{\lambda}).
\end{align*}
The claimed isomorphism in \eqref{eqn:twisted:formlineIso} now follows from Takeuchi's equivalence, Proposition \ref{prop:fodcEE} and the additivity of weights with respect to tensor products.
\end{proof}

\subsection{The Full Lusztig DC and the Sub-Category ${}^A_B\mathrm{Mod}_0$}

It follows from \cite[Proposition 3.4]{HKdR} that all the Heckenberger--Kolb anti-holomorphic fodc are contained in the subcategory ${}^A_B\mathrm{Mod}_0$. Thus in particular, the Podle\'s fodc is contained in this subcategory. However, as the following proposition shows, the Podle\'s sphere is the only full quantum flag manifold for which this happens. 

\begin{prop}
For $n \geq 2$, the fodc $\Omega^{(0,1)}_q(\mathrm{F}_{n+1})$ is not an object in ${}^A_B\mathrm{Mod}_0$.
\end{prop}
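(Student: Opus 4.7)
The strategy is to reduce the claim, via Takeuchi's equivalence, to the statement that $V^{(0,1)}_{\varnothing} = \Phi(\Omega^{(0,1)}_q(\mathrm{F}_{n+1}))$ carries a non-trivial right $\OO_q(\mathrm{F}_{n+1})$-action, since the subcategory ${}^{\pi_B}\mathrm{Mod}_0$ corresponding to ${}^A_B\mathrm{Mod}_0$ consists precisely of those modules on which $B$ acts through the counit (as recalled in \textsection \ref{subsection:QHSpacesPrelims}). So my plan reduces to falsifying this triviality.

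The reduction is cleanly supplied by Lemma \ref{lem:higherfullflagforms}: the embedding $\Omega^{(0,1)}_q(\mathrm{F}_{n+1}) \hookrightarrow \Omega^{(0,1)}_q(\mathrm{SU}_{n+1})$ is a map of $\OO_q(\mathrm{F}_{n+1})$-bimodules, and the induced identification $V^{(0,1)}_{\varnothing} \cong \Lambda^{(0,1)}_q$ intertwines the right $\OO_q(\mathrm{F}_{n+1})$-action on $V^{(0,1)}_{\varnothing}$ with the restriction to $\OO_q(\mathrm{F}_{n+1})$ of the right $\OO_q(\mathrm{SU}_{n+1})$-action on $\Lambda^{(0,1)}_q$ computed in Proposition \ref{prop:Gens}. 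Thus I need only exhibit a single element $b \in \OO_q(\mathrm{F}_{n+1})^+$ with $e_{ji} \cdot b \neq 0$ in $\Lambda^{(0,1)}_q$ for some basis element $e_{ji}$.

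For the construction, I note that by \eqref{eqn:quantumaction} the only generators acting non-trivially are the diagonal ones and the strictly lower triangular ones $u_{j'j}$, and the latter carry non-zero $\mathcal{P}$-weight. They must therefore be paired with diagonal generators whose column indices compensate. A natural candidate for $n \geq 2$ is
\begin{align*}
b := u_{32}\, u_{33}\, u_{11}\, \prod_{k=4}^{n+1} u_{kk}.
\end{align*}
The column indices $\{2,3,1,4,\ldots,n+1\}$ form a permutation of $\{1,\ldots,n+1\}$, so the total $\mathcal{P}$-weight is $\sum_j w_j = 0$ (the sum of all weights of the fundamental representation of $\mathfrak{sl}_{n+1}$), placing $b$ in $\OO_q(\mathrm{F}_{n+1})$. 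The factor $u_{32}$ forces $\e(b) = 0$, so $b \in \OO_q(\mathrm{F}_{n+1})^+$. Iterating \eqref{eqn:quantumaction} gives
\begin{align*}
e_{21} \cdot b \,=\, \nu\, e_{31} u_{33}\, u_{11}\, \textstyle\prod_{k \geq 4} u_{kk} \,=\, \nu \cdot q \cdot q^{-1} \cdot e_{31} \,=\, \nu\, e_{31} \,\neq\, 0,
\end{align*}
so the right action is non-trivial, as required.

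The main obstacle is the existence of $b$: one must simultaneously satisfy three constraints (non-trivial right action, vanishing counit, and zero $\mathcal{P}$-weight), which is why the condition $n \geq 2$ enters in an essential way. Only once the rank is at least two is there sufficient room to absorb the weight of a single off-diagonal generator through disjoint diagonal generators whose induced $q$-shifts on $e_{21}$ cancel out. For $n=1$ no such $b$ exists, which is consistent with the Podle\'s fodc genuinely belonging to ${}^A_B\mathrm{Mod}_0$.
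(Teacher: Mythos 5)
Your proposal is correct and takes essentially the same approach as the paper: both reduce, via Takeuchi's equivalence, to exhibiting a counit-zero, weight-zero element of $\OO_q(\mathrm{F}_{n+1})$ that acts non-trivially on $e_{21}$, and the paper's witness $u_{11}u_{32}u_{33}u_{44}\cdots u_{n+1,n+1}$ is, up to reordering the factors, the same element you construct. The resulting nonzero multiple of $e_{31}$ differs only by an immaterial power of $q$.
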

\begin{proof}
Consider the element  
$$
b := u_{11}u_{32}u_{33}u_{44} \cdots u_{n+1,n+1} \in \OO_q(\mathrm{SU}_{n+1}).
$$
Its weight is given by the sum of the weights occurring in $V_{\varpi_n}$, which is to say, it is zero. Thus we see that  $b \in \OO_q(\mathrm{F}_{n+1})$.  

Acting on $e_{21} \in V^{(0,1)}_{\varnothing}$ by $b$, we get the non-zero element
\begin{align*}
e_{21}b = e_{21}u_{11}u_{32}u_{33} \cdots u_{n+1,n+1}  
= \, &  q^{-1} \nu e_{31},
\end{align*}
where the identity $e_{21}u_{32} = \nu e_{31}$ makes sense because $n \geq 2$ by assumption. Thus we see that $b$ does not act on $e_{21}$ by $\e(b) = 0$, meaning that $V^{(0,1)}_{\varnothing}$ is not contained in the subcategory ${}^{\pi_B}\mathrm{Mod}_0$. It now follows from the subcategory equivalence between ${}^{\pi_B}\mathrm{Mod}_0$ and ${}^A_B\mathrm{Mod}_0$ that the fodc is not contained in the subcategory ${}^A_B\mathrm{Mod}_0$, for $n \geq 2$.
\end{proof}

\subsection{Some Examples and Observations}

In this subsection we treat some special cases of the results of the previous section. Moreover, we make some observations about the fact that the Lusztig dc of $\OO_q(\mathrm{F}_{n+1})$ are not contained in the  category $\mathrm{Mod}_0$.

\begin{eg}
In addition to being the simplest example of a quantum Grassmannian, the Podle\'s sphere $\OO_q(S^2)$ is also the simplest full quantum-flag manifold. As shown in \cite{HK}, and independently in \cite{Maj}, we have an $\OO_q(S^2)$-bimodule decomposition 
$$
\Omega^1_q(S^2) \cong \Gamma^{(1,0)} \oplus \Gamma^{(0,1)},
$$
generalising the decomposition of the complexified $1$-forms of $S^2$ into holomorphic and anti-holomorphic forms. As a comparison of tangent spaces shows, $\Gamma^{(0,1)}$ coincides with  $\Omega^{(0,1)}_q(S^2)$.

Recall that the weight lattice of $\mathfrak{sl}_2$ is isomorphic to $\mathbb{Z}$, meaning that we can label the line modules as $\EE_{k}$, for $k \in \mathbb{Z}$. The root lattice corresponds to the sublattice $2\mathbb{Z}$, and  we have that $\alpha_1 = 2\varpi_1$. From this we see that 
$$
\Omega^{(0,1)}_q(S^2) \cong \EE_{2}
$$
This reproduces Majid's quantum frame bundle description of the anti-holomorphic one-forms of the Podle\'s calculus \cite[Theorem 3.1]{Maj}. 
\end{eg}

\begin{eg}
It follows from the description of the module structure of $\Lambda^{(0,1)}_q$ that the decomposition in Proposition \ref{prop:fodcEE} is not a decomposition as right $\OO_q(\mathrm{F}_{n+1})$-modules. However, the \emph{column} decomposition 
\begin{align*}
\Omega^{(0,1)}_q(\mathrm{F}_{n+1}) \cong \bigoplus_{i=1}^{n} M_i, & & \textrm{ where } M_i := \bigoplus_{j=i}^{n} \EE_{\alpha_i + \, \cdots \, + \alpha_{j}},
\end{align*} 
is a decomposition in the category ${}^A_B\mathrm{Mod}_B$. We note that the only irreducible summand is $\EE_n$. Finally, it is interesting to note that 
$$
\bigoplus_{i=1}^{n} \EE_{\alpha_i \, + \cdots \, + \alpha_{n}}
$$
is a sub-bimodule of the fodc, and that it is an object in the sub-category ${}^A_B\mathrm{Mod}_0$. 
\end{eg}

\begin{eg}
For $\OO_q(\mathrm{F}_3)$, the $1$-forms are given explicitly by
\begin{align*}
\Omega^{(0,1)}_q(\mathrm{F}_3) \cong  \EE_{\alpha_1} \oplus \EE_{\alpha_2} \oplus \EE_{\alpha_1 + \alpha_2} \cong M_1 \oplus M_2.
\end{align*} 
For the degree two forms we see that 
\begin{align*}
\Omega^{(0,2)}_q(\mathrm{F}_3) \cong  \EE_{\alpha_1 + \alpha_2} \oplus  \EE_{2\alpha_1 + \alpha_2} \oplus  \EE_{\alpha_1 + 2\alpha_2}. 
\end{align*}
Finally, the space of top forms is given by the single line module
$$
\Omega^{(0,3)}_q(\mathrm{F}_3) \cong \EE_{2(\alpha_1 + \alpha_2)} = \EE_{2\rho},
$$
where $\rho$ is the half-sum of positive roots of $\mathfrak{sl}_3$. It follows from the formula for the $\OO_q(\mathrm{F}_3)$-module structure of $V^{(0,1)}_{\varnothing}$ that the space of top forms is contained in the subcategory ${}^A_B\mathrm{Mod}_0$. We note that the space of top forms is non-trivial, which is to say we do not have isomorphism $\EE_0 \cong \Omega^{(0,3)}(\mathrm{F}_3)$, or in other words, the dc is not \emph{orientable}. In the classical case, a K\"ahler manifold whose space of top anti-holomorphic forms is a trivial line module is called a \emph{Calabi--Yau manifold}. While every flag manifold is a K\"ahler manifold, {\emph no} flag manifold is a Calabi--Yau manifold.
\end{eg}

\begin{eg}
For the general $\OO_q(\mathrm{F}_{n+1})$ case, the space of top anti-holomorphic forms is given by
$$
\Omega^{(0,|\Delta^+|)}_q(\mathrm{F}_{n+1}) \cong \EE_{2\rho} \in {}^A_B\mathrm{Mod}_0.
$$
Again we see that the dc is not orientable, generalising the fact that $\mathrm{F}_{n+1}$ is not a Calabi--Yau manifold.
\end{eg}

\section{A Borel--Weil Theorem for the Full Quantum Flag Manifolds}

In this section we construct  a direct quantum group generalisation of the classical Borel--Weil theorem for the $A$-series full flag manifold. This gives, for the first time, a noncommutative differential geometric realisation of all the type-$1$ finite-dimensional irreducible representations of $U_q(\mathfrak{sl}_{n+1})$. This approach can easily be adapted to treat the quantum Grassmannians $\OO_q(\mathrm{Gr}_{n+1,m})$, whereupon it reproduces the quantum Borel--Weil theorem given in \cite{BwGrassDM} and \cite{CDOBBW}. We note that the existence of a tractable fodc $\Omega^{(0,1)}_q(\mathrm{SU}_{n+1})$ on the Hopf algebra $\OO_q(\mathrm{SU}_{n+1})$ allows for a significant simplification of these earlier proofs.

\subsection{Connections and Quantum Principal Bundles}

For $\Omega^1$ a fodc over an algebra $B$, and $\mathcal{F}$ a left $B$-module, a \emph{left  connection} for $\F$ is a $\mathbb{C}$-linear map 
$
\nabla:\mathcal{F} \to \Omega^1 \otimes_B \F
$
satisfying 
\begin{align} 
\nabla(bf) = \exd b \otimes f + b \nabla f, & & \textrm{ for all } b \in B, f \in \F.
\end{align}
The difference of two  connections $\nabla - \nabla'$ is a left $B$-module map. Any connection can be extended to a map $\nabla: \Omega^\bullet \otimes_B \mathcal{F} \to   \Omega^\bullet \otimes_B \mathcal{F}$ by the formula 
\begin{align*}
\nabla(\omega \otimes f) =   \exd \omega \otimes f + (-1)^{|\omega|} \, \omega \wedge \nabla f,
\end{align*}
where $f \in \F$, and $\omega$ is a homogeneous element of $\Omega^{\bullet}$ of degree  $|\omega|$. 
The \emph{curvature} of an connection is the left $B$-module map $\nabla^2: \mathcal{F} \to \Omega^2 \otimes_B
\mathcal{F}$. An connection is said to be {\em flat} if $\nabla^2 = 0$. Since $\nabla^2(\omega \otimes f) =
\omega \wedge \nabla^2(f)$, an connection is flat if and only if  the pair $(\Omega^\bullet \otimes_B \F,
\nabla)$ is a cochain complex. 

A \emph{left bimodule connection} on $\F$ is a pair $(\nabla,\sigma)$ where $\nabla$ is a left connection and $\sigma: \F \otimes_B \Omega^1(B) \rightarrow \Omega^1(B) \otimes_B \F $ is a bimodule map satisfying
\begin{align}\label{eqn:BiC}
 \sigma (f\otimes db)= \nabla (fb)-\nabla(f)b.
\end{align}
Note that in the commutative case $\sigma$ 
reduces to the standard flip map.

We now briefly recall Brzezinski and Majid's theory of quantum principal bundles, a framework for constructing  connections. To keep the preliminaries to a minimum, we restrict to the quantum homogeneous space case. For more details, we direct the interested reader to \cite[\textsection 5]{BeggsMajid:Leabh} or to  \cite[\textsection 2]{CDOBBW}. A \emph{homogeneous quantum principal bundle} is a pair $(B,\Omega^1(A))$ consisting of a quantum homogeneous space $B \subseteq A$, and a left $A$-covariant, right $\pi_B(A)$-covariant fodc $\Omega^1(A)$ over $A$. A \emph{principal connection} for $(B,\Omega^1(A))$ is a left $A$-module, right $\pi_B(A)$-comodule, map 
$$
\Pi: \Omega^1(A) \to \Omega^1(A),
$$
satisfying $\Pi^2 = \Pi$, and $\mathrm{ker}(\Pi) = A\Omega^1(B)A$, where $\Omega^1(B)$ is the restriction of $\Omega^1(A)$ to a fodc on $B$. We are interested in principal connections because they allow us to construct connections for any relative Hopf module. Again, to keep preliminaries to a minimum, we will restrict our attention to the subcategory ${}^A_B\mathrm{Mod}_0$. Any simple  object $\mathcal{F}$ in this subcategory is isomorphic to a subobject of $A$ (see \cite[Appendix A.3]{GAPP} for details). Now if the principal connection is \emph{strong}, that is to say, if $(\id - \Pi) \circ \exd B \subseteq \Omega^1(B)A$, then an connection for $\F \subseteq A$ is given by
\begin{align*}
\nabla: \F \to \Omega^1(B) \otimes_B  \F, & & f \mapsto  j\!\left(\sum_i (\id - \Pi)(\exd f) \right)\!,
\end{align*}
where $j: \Omega^1(B)\F  \to \Omega^1(B) \otimes_B \F$ is the inverse to the obvious  multiplication map from $\Omega^1(B) \otimes_B \F$ to $\Omega^1(B)\F$. (Faithful flatness guarantees that the multiplication map  is invertible). We call $\nabla$ the connection \emph{associated} to $\Pi$. If $\Pi$ is additionally assumed to be a left $A$-comodule map, then every associated connection will be a left $A$-comodule map.

\subsection{Holomorphic Structures and Principal Connections}

Since the Lusztig fodc on the Hopf algebra $\OO_q(\mathrm{SU}_{n+1})$ is left $\OO(\mathbb{T}^n)$-covariant we have a homogeneous quantum principal bundle. We start by showing that the zero map is a strong principal connection for the bundle.

\begin{lem}
We have the equality
\begin{align} \label{eqn:StrongId}
\Omega^{(0,1)}_q(\mathrm{SU}_{n+1}) = \OO_q(\mathrm{SU}_{n+1}) \Omega^{(0,1)}_q(\mathrm{F}_{n+1})  = \, \Omega^{(0,1)}_q(\mathrm{F}_{n+1})\OO_q(\mathrm{SU}_{n+1}).
\end{align}
Hence the zero map on $\Omega^{(0,1)}_q(\mathrm{SU}_{n+1})$ is a principal connection, and it is the unique principal connection for the bundle. 
\end{lem}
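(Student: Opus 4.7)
The plan is to establish both equalities by applying Takeuchi's equivalence, in its left- and right-handed versions, to the relative Hopf module $\Omega^{(0,1)}_q(\mathrm{F}_{n+1}) \subseteq \Omega^{(0,1)}_q(\mathrm{SU}_{n+1})$, leveraging the identification $V^{(0,1)}_{\varnothing} \cong \Lambda^{(0,1)}_q$ already proved in Lemma \ref{lem:higherfullflagforms}. The statements about the zero principal connection then follow as immediate formal consequences of the two equalities.

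For the first equality I would apply the natural isomorphism $\alpha_\F$ of \eqref{eqn:theALPHAmap} to $\F = \Omega^{(0,1)}_q(\mathrm{F}_{n+1})$, yielding
\[
\alpha: \OO_q(\mathrm{SU}_{n+1}) \otimes_{\OO_q(\mathrm{F}_{n+1})} \Omega^{(0,1)}_q(\mathrm{F}_{n+1}) \xrightarrow{\sim} \OO_q(\mathrm{SU}_{n+1}) \otimes V^{(0,1)}_{\varnothing} = \Omega^{(0,1)}_q(\mathrm{SU}_{n+1}),
\]
where the final identification is Lemma \ref{lem:higherfullflagforms}. It then suffices to verify that $\alpha$ coincides with the multiplication map $a \otimes \omega \mapsto a\omega$. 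For $\omega = \adel b'$ with $b' \in B^+$, the formula $\alpha(a \otimes \omega) = a\omega_{(-1)} \otimes [\omega_{(0)}]$ expands to $a\, b'_{(1)} \otimes [b'_{(2)} \otimes [b'_{(3)}^+]]$; applying the counit $\counit:[c \otimes v] \mapsto \e(c)v$ of Takeuchi's equivalence and then the Hopf identity $(\id \otimes \e) \circ \Delta = \id$, this collapses to $a\adel b' = a\omega$, as required. Surjectivity of $\alpha$ then delivers the first equality.

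The second equality will follow by a symmetric right-handed argument. Viewed with its right $\OO_q(\mathrm{SU}_{n+1})$-action and right $\pi_B$-coaction, $\Omega^{(0,1)}_q(\mathrm{SU}_{n+1})$ is an $(A,\pi_B)$-right Hopf module whose $\pi_B$-coinvariants are $\Omega^{(0,1)}_q(\mathrm{F}_{n+1})$ by Lemma \ref{lem:higherfullflagforms}. Faithful flatness of $\OO_q(\mathrm{SU}_{n+1})$ over $\OO_q(\mathrm{F}_{n+1})$ then lets us invoke the right-module side of Schneider's fundamental theorem for Hopf--Galois extensions to conclude that the right multiplication map
\[
\Omega^{(0,1)}_q(\mathrm{F}_{n+1}) \otimes_{\OO_q(\mathrm{F}_{n+1})} \OO_q(\mathrm{SU}_{n+1}) \xrightarrow{\sim} \Omega^{(0,1)}_q(\mathrm{SU}_{n+1})
\]
is an isomorphism, whence the second equality. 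The hardest part will be justifying this right-handed variant cleanly, since the preliminaries state Takeuchi's equivalence only in its left-handed form; however, it follows from the symmetry inherent in the Hopf--Galois structure of $\OO_q(\mathrm{F}_{n+1}) \subseteq \OO_q(\mathrm{SU}_{n+1})$ and can be verified by an argument strictly parallel to the one for $\alpha$.

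Finally, the two equalities together give $\OO_q(\mathrm{SU}_{n+1}) \Omega^{(0,1)}_q(\mathrm{F}_{n+1}) \OO_q(\mathrm{SU}_{n+1}) = \Omega^{(0,1)}_q(\mathrm{SU}_{n+1})$, so the zero map on $\Omega^{(0,1)}_q(\mathrm{SU}_{n+1})$ vacuously satisfies the defining conditions of a principal connection (idempotency, left $A$-linearity, right $\pi_B$-comodule equivariance, and the required kernel condition). For uniqueness, any principal connection $\Pi'$ is an idempotent whose kernel is forced to equal $A \Omega^{(0,1)}_q(\mathrm{F}_{n+1}) A = \Omega^{(0,1)}_q(\mathrm{SU}_{n+1})$, so its image vanishes and $\Pi' = 0$.
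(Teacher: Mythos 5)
Your proposal is correct in substance, and for the first equality it coincides with the paper's argument: both reduce $\OO_q(\mathrm{SU}_{n+1})\,\Omega^{(0,1)}_q(\mathrm{F}_{n+1}) = \Omega^{(0,1)}_q(\mathrm{SU}_{n+1})$ to the isomorphism $\alpha_{\F}$ of \eqref{eqn:theALPHAmap} combined with the identification $V^{(0,1)}_{\varnothing}\cong\Lambda^{(0,1)}_q$ from Lemma \ref{lem:higherfullflagforms}. For the second equality you take a genuinely different route. The paper argues by hand, following \cite[Lemma 5.7]{CDOBBW}: the generators $z^{\varpi_x}_i$ of \eqref{eqn:QPHSGens} are highest weight vectors and hence $\adel$-closed, so $z^{\varpi_x}_i\adel b = \adel(z^{\varpi_x}_i b)$, and the identity \eqref{eqn:theDetID} then pushes the coefficient back to the right so that $z^{\varpi_x}_i\adel b \in \Omega^{(0,1)}_q(\mathrm{F}_{n+1})\OO_q(\mathrm{SU}_{n+1})$; the dual generators $\overline{z}^{\,\varpi_x}_i$ are handled similarly, and one concludes for arbitrary $a$ by induction on word length. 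You instead invoke the right-handed structure theorem for Hopf modules over a faithfully flat Hopf--Galois extension (Schneider's Theorem I in \cite{Schneider90}), applied to $\Omega^{(0,1)}_q(\mathrm{SU}_{n+1})$ viewed as a right $\OO_q(\mathrm{SU}_{n+1})$-module with compatible right $\pi_B$-coaction. This does work: right $\OO(\mathbb{T}^n)$-covariance (Corollary \ref{cor:FOBasisElements}) makes $\Omega^{(0,\bullet)}_q(\mathrm{SU}_{n+1})$ a right $\pi_B(A)$-comodule \emph{algebra}, which is precisely the compatibility $\Delta_R(\omega a)=\Delta_R(\omega)\Delta_{R,\pi_B}(a)$ required of a right Hopf module, and Lemma \ref{lem:higherfullflagforms} identifies the coinvariants with $\Omega^{(0,1)}_q(\mathrm{F}_{n+1})$. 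Your argument is more conceptual and would apply verbatim to any right-covariant calculus whose coinvariant forms coincide with the restricted calculus, whereas the paper's is elementary and self-contained. The one place your write-up undersells the work is the claim that the right-handed version is ``strictly parallel'' to the argument for $\alpha$: the first equality is essentially free because $\Omega^{(0,1)}_q(\mathrm{SU}_{n+1})=\OO_q(\mathrm{SU}_{n+1})\otimes\Lambda^{(0,1)}_q$ is free as a left module by construction, while the second is the substantive strongness statement and genuinely needs either the explicit generator computation or the full force of faithfully flat Hopf--Galois descent; you should also record that faithful flatness on the side required by the right-module version of Schneider's theorem holds here (it does, since the antipode is bijective and $B^+A=AB^+$, cf.\ \cite{Tak}). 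Your treatment of existence and uniqueness of the principal connection agrees with the paper's.
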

\begin{proof}
It follows from \cite[\textsection 2.3]{Tak} (see also the discussion in \cite[\textsection 2.3]{MMF2}) that
\begin{align*}
\OO_q(\mathrm{SU}_{n+1})\Omega^{(0,1)}_q(\mathrm{F}_{n+1}) = &  \,  \OO_q(\mathrm{SU}_{n+1})\Big(\OO_q(\mathrm{SU}_{n+1}) \square_{\OO(\mathbb{T}^n)} V_{\varnothing}^{(0,1)}\Big) \\
                                                = & \, \OO_q(\mathrm{SU}_{n+1}) \otimes V_{\varnothing}^{(0,1)},
\end{align*}
which gives us the first identity in \eqref{eqn:movingatotheright}.

To prove the second identity, we follow the approach of \cite[Lemma 5.7]{CDOBBW}. Note first that the Lusztig fodc is generated as a right $\OO_q(\mathrm{F}_{n+1})$-module by those elements of the form $\adel b$, for $b \in \OO_q(\mathrm{F}_{n+1})$. Thus we just need to show that, for any $a \in \OO_q(\mathrm{SU}_{n+1})$, 
\begin{align} \label{eqn:movingatotheright}
a \adel b \in \Omega^{(0,1)}_q(\mathrm{F}_{n+1}) \OO_q(\mathrm{SU}_{n+1}).
\end{align}
Consider the set of algebra generators $z_{i}^{\varpi_x}$ of $\OO_q(\mathrm{SU}_{n+1})$ presented in \eqref{eqn:QPHSGens}. Since $z_{i}^{\varpi_x}$ is a highest weight vector with respect to the left action of $U_q(\mathfrak{sl}_{n+1})$ on $\OO_q(\mathrm{SU}_{n+1})$, it follows from \eqref{eqn:tangent.exd} that it is contained in the kernel  of $\adel$, the exterior derivative of our fodc on $\OO_q(\mathrm{SU}_{n+1})$. Thus, for any $b \in \OO_q(\mathrm{F}_{n+1})$, we see that  
\begin{align} \label{eqn:firststep}
z_{i}^{\varpi_x} \adel b = \adel(z_{i}^{\varpi_x} b) - \adel(z_{i}^{\varpi_x})b  = \adel(z_{i}^{\varpi_x} b).
\end{align}
Consider next the element
\begin{align*}
(S \otimes \id) \circ \Delta(z^{\varpi_x}_{N_x}) = \sum_{a=1}^{N_x} S(c^{\varpi_x}_{f_{N_x},v_a}) \otimes c^{\varpi_x}_{f_a,v_{N_x}} =  \sum_{a=1}^{N_x} S(c^{\varpi_x}_{f_{N_x},v_a}) \otimes z^{\varpi_x}_{a} \in \EE_{-\varpi_x} \otimes \EE_{\varpi_x},
\end{align*}
where the fact that $S(z^{\varpi_x}_{f_{N_x},v_a})$ is an element of $\EE_{-\varpi_x}$ follows from the calculation
\begin{align*}
K_i \triangleright S(c^{\varpi_x}_{f_{N_x},v_a}) = S(c^{\varpi_x}_{K_i f_{N_x},v_a}) = q^{-(\alpha_i,\varpi_{x})} S(c^{\varpi_x}_{f_{N_x},v_a}).
\end{align*}
Multiplying both sides of the identity gives us the new identity
\begin{align} \label{eqn:theDetID}
\e(z^{\varpi_x}_{N_x}) = 1 = \sum_{a=1}^{N_x} S(z^{\varpi_x}_{f_{N_x},v_a}) z^{\varpi_x}_{a}.
\end{align}
Inserting this into \eqref{eqn:firststep} gives us the expression 
\begin{align*}
z_{i}^{\varpi_x} \adel b =  \sum_{a=1}^{N_x}  \adel(z_{i}^{\varpi_x} bS(z^{\varpi_x}_{f_{N_x},v_a})z^{\varpi_x}_{a}) = \sum_{a=1}^{N_x}  \adel(z_{i}^{\varpi_x} b S(z^{\varpi_x}_{f_{N_x},v_a}))z^{\varpi_x}_{a}.
\end{align*}
From the fact that our $\mathcal{P}$-grading is an algebra grading, we can conclude that 
\begin{align*}
z_{i}^{\varpi_x} b S(z^{\varpi_x}_{f_{N_x},v_a}) \in \EE_{0} = \OO_q(\mathrm{F}_{n+1}).
\end{align*}
Thus we see that \eqref{eqn:movingatotheright} is satisfied for $a = z^{\varpi_x}_i$. Consider next the dual generator $\overline{z}^{\varpi_x}_{i}$, and note that 
\begin{align*}
\overline{z}^{\varpi_x}_{i} \adel b = \adel(\overline{z}^{\varpi_x}_{i}b) - \adel(\overline{z}^{\varpi_x}_{i}) b.
\end{align*}
An analogous application of the identity in \eqref{eqn:theDetID} shows us that this element is again contained in $\Omega^{(0,1)}_q(\mathrm{F}_{n+1})\OO_q(\mathrm{SU}_{n+1})$, which is to say, \eqref{eqn:movingatotheright} is again satisfied. Thus the second identity in \eqref{eqn:StrongId} is satisfied.  It is now clear that the zero map is a strong principal connection, and moreover, that the zero map is the unique principal connection for the bundle.
\end{proof}

The following corollary explicitly describes the connection associated to $\Pi$ for a relative Hopf submodule of $\OO_q(\mathrm{SU}_{n+1})$. It is a direct consequence of the various constructions involved, and so, we omit it. 

\begin{prop}
For any simple relative Hopf module $\F \in {}^{A}_{B} \mathrm{Mod}_0$, considered as a sub-object of $\OO_q(\mathrm{SU}_{n+1})$, the connection associated to $\Pi$ is given explicitly by
\begin{align*}
\adel_{\F}: \F \to \Omega^{(0,1)}_q(\mathrm{F}_{n+1}) \otimes_{\OO_q(\mathrm{F}_{n+1})} \F, & & f \mapsto j \! \left(\sum_{\gamma \, \in \,\Delta^+} (E_{\gamma} \triangleright f) \otimes e_{\gamma} \right)\!.
\end{align*}
\end{prop}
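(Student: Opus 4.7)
The plan is a direct unwinding of the various constructions involved, combined with the tangent space formula \eqref{eqn:tangent.exd} for the exterior derivative. Since the zero map is the unique principal connection on the homogeneous quantum principal bundle $(\OO_q(\mathrm{F}_{n+1}),\Omega^{(0,1)}_q(\mathrm{SU}_{n+1}))$, as established in the previous lemma, the general formula for the connection associated to $\Pi$ on a simple object $\F \in {}^A_B\mathrm{Mod}_0$, viewed as a subobject of $\OO_q(\mathrm{SU}_{n+1})$, simplifies to $\adel_\F(f) = j(\adel f)$, where $j$ denotes the inverse of the canonical multiplication map $\Omega^{(0,1)}_q(\mathrm{F}_{n+1}) \otimes_{\OO_q(\mathrm{F}_{n+1})} \F \to \Omega^{(0,1)}_q(\mathrm{F}_{n+1})\F$.

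Next, I would substitute in the explicit description of the tangent space for $\OO_q(\mathrm{SU}_{n+1})$. By Corollary~\ref{cor:tangentspace} the root vectors $\{E_\gamma\}_{\gamma \in \Delta^+}$ form a basis of $T^{(0,1)}$, and by Lemma~\ref{lem:dualpairing} the corresponding dual basis of $\Lambda^{(0,1)}_q$ is precisely $\{e_\gamma\}_{\gamma \in \Delta^+}$. Specialising the general formula \eqref{eqn:tangent.exd} to this basis, the exterior derivative takes the form
$$
\adel a \, = \, \sum_{\gamma \in \Delta^+} (E_\gamma \triangleright a) \otimes e_\gamma, \qquad a \in \OO_q(\mathrm{SU}_{n+1}).
$$
Restricting to $f \in \F$ and applying $j$ then yields the claimed expression for $\adel_\F(f)$.

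The only subtlety that needs to be addressed is that $\adel f$ lies in the subspace $\Omega^{(0,1)}_q(\mathrm{F}_{n+1})\F$ on which $j$ is defined. This follows from the identity $\Omega^{(0,1)}_q(\mathrm{SU}_{n+1}) = \Omega^{(0,1)}_q(\mathrm{F}_{n+1})\OO_q(\mathrm{SU}_{n+1})$ established in the preceding lemma, together with the strongness of the principal connection $\Pi=0$, which ensures that $(\id - \Pi) \circ \adel$ lands in the appropriate sub-bimodule when applied to elements of $\F$. No serious obstacle is anticipated, since the assertion is essentially a repackaging of the definitions of principal connection, associated connection, and the tangent space presentation of $\adel$.
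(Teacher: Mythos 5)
Your proposal is correct and is precisely the ``direct consequence of the various constructions'' that the paper invokes (the paper in fact omits the proof for this reason): with $\Pi = 0$ the associated connection reduces to $f \mapsto j(\adel f)$, and substituting the tangent-space formula \eqref{eqn:tangent.exd} with the basis $\{E_\gamma\}$ and dual basis $\{e_\gamma\}$ gives the stated expression. Your remark that strongness of the zero connection guarantees $\adel f \in \Omega^{(0,1)}_q(\mathrm{F}_{n+1})\OO_q(\mathrm{SU}_{n+1})$, so that $j$ applies, is exactly the one point worth making explicit.
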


Since the zero map is obviously an $\OO_q(\mathrm{SU}_{n+1})$-module map, the following corollary now follows directly from \cite[Theorem 4.1]{Bimodule.AC.ROB}. 

\begin{cor}
For every relative Hopf module $\F \in {}^A_B \mathrm{Mod}_0$, the associated connection $\adel_{\F}: \F \to \Omega^{(0,1)}_q(\mathrm{F}_{n+1}) \otimes_{\OO_q(\mathrm{F}_{n+1})} \F$ is a bimodule connection.
\end{cor}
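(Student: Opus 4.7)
The plan is to deduce the result from a general mechanism that upgrades a principal connection which is additionally a right $A$-module map to a bimodule connection on every associated relative Hopf module. The key observation is that the unique principal connection on our bundle $(\OO_q(\mathrm{F}_{n+1}), \Omega^{(0,1)}_q(\mathrm{SU}_{n+1}))$, identified in the preceding lemma, is the zero map, which is tautologically both left and right $\OO_q(\mathrm{SU}_{n+1})$-linear. Applying \cite[Theorem 4.1]{Bimodule.AC.ROB} to this principal connection should then yield the claim immediately.

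To unpack what this produces, for a simple $\F \in {}^A_B\mathrm{Mod}_0$ realised as a subobject of $\OO_q(\mathrm{SU}_{n+1})$, I expect the generalised braiding to take the form
\begin{align*}
\sigma: \F \otimes_B \Omega^{(0,1)}_q(\mathrm{F}_{n+1}) \to \Omega^{(0,1)}_q(\mathrm{F}_{n+1}) \otimes_B \F, & & f \otimes \omega \mapsto j(f\omega),
\end{align*}
where $j$ is the inverse of the obvious multiplication map. This formula is meaningful because the preceding lemma gives the identification $\Omega^{(0,1)}_q(\mathrm{SU}_{n+1}) = \Omega^{(0,1)}_q(\mathrm{F}_{n+1})\OO_q(\mathrm{SU}_{n+1})$, so the element $f\omega$ admits the required factorisation; faithful flatness of $\OO_q(\mathrm{SU}_{n+1})$ over $\OO_q(\mathrm{F}_{n+1})$ ensures that $j$ is well defined, and that the assignment descends to $\otimes_B$.

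Verification of the defining identity $\sigma(f \otimes \adel b) = \adel_\F(fb) - \adel_\F(f)b$ should then proceed by applying the graded Leibniz rule $\adel(fb) = \adel(f)b + f\adel(b)$ inside the ambient calculus $\Omega^{(0,1)}_q(\mathrm{SU}_{n+1})$, projecting via $j$, and observing that the defining projector $\id - \Pi$ of the associated connection is simply the identity since $\Pi = 0$. The only step requiring care, and the reason that the right $A$-linearity of $\Pi$ is genuinely used in \cite[Theorem 4.1]{Bimodule.AC.ROB}, is the check that $\sigma$ is a genuine $B$-bimodule map; this hinges on $\Pi$ commuting with right multiplication by elements of $A$, a requirement that is vacuous in our setting. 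Consequently, no real obstacle arises, and the bimodule connection structure is automatic once the zero principal connection has been recognised as right $A$-linear.
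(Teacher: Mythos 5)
Your argument is correct and is essentially the paper's own: the paper likewise observes that the zero principal connection is trivially a right $\OO_q(\mathrm{SU}_{n+1})$-module map and deduces the corollary directly from \cite[Theorem 4.1]{Bimodule.AC.ROB}. Your additional unpacking of the braiding $\sigma(f \otimes \omega) = j(f\omega)$, which relies on the factorisation $\Omega^{(0,1)}_q(\mathrm{SU}_{n+1}) = \Omega^{(0,1)}_q(\mathrm{F}_{n+1})\OO_q(\mathrm{SU}_{n+1})$ from the preceding lemma, is consistent with what that theorem produces but is not needed for the citation to go through.
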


We will now follow the representation theoretic argument given in \cite[\textsection 3.2]{HVBQFM} and establish uniqueness and flatness for relative line module connections.

\begin{lem}
For each line module $\EE_{\lambda}$, the connection is the unique  left $\OO_q(\mathrm{SU}_{n+1})$-covariant connection $\adel_{\EE_{\gamma}}$. Moreover,  the connection is flat.
\end{lem}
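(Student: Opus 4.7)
The plan is to reduce both statements to a Schur-style argument in the category ${}^A_B\mathrm{Mod}$, using the fact that we now understand the right $\OO_q(\mathrm{F}_{n+1})$-module structure of the Lusztig dc twisted by a line module (Proposition \ref{prop:omegaEE}), together with the fact that line modules with distinct weights are pairwise non-isomorphic.

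First I would settle uniqueness. If $\adel'$ is any other left $\OO_q(\mathrm{SU}_{n+1})$-covariant connection on $\EE_\lambda$, then the difference $\adel' - \adel_{\EE_\lambda}$ is a left $\OO_q(\mathrm{F}_{n+1})$-linear map (standard). Because the zero map on $\Omega^{(0,1)}_q(\mathrm{SU}_{n+1})$ is a left $A$-comodule map, the associated $\adel_{\EE_\lambda}$ is a left $A$-comodule map by the remark at the end of \textsection 6.1; since $\adel'$ is assumed to be one as well, the difference is a morphism in ${}^A_B\mathrm{Mod}$. Hence uniqueness reduces to showing
\[
\mathrm{Hom}_{{}^A_B\mathrm{Mod}}\!\left(\EE_\lambda, \ \Omega^{(0,1)}_q(\mathrm{F}_{n+1}) \otimes_{\OO_q(\mathrm{F}_{n+1})} \EE_\lambda\right) = 0.
\]
By Proposition \ref{prop:omegaEE}, the codomain decomposes as $\bigoplus_{\gamma \in \Delta^+} \EE_{\gamma + \lambda}$. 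Since two relative line modules $\EE_\mu, \EE_\nu$ are isomorphic in ${}^A_B\mathrm{Mod}$ if and only if $\mu = \nu$, and since $\gamma \in \Delta^+$ implies $\gamma + \lambda \neq \lambda$, every summand is non-isomorphic to $\EE_\lambda$, hence the Hom space vanishes.

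For flatness, the curvature $\adel_{\EE_\lambda}^2 \colon \EE_\lambda \to \Omega^{(0,2)}_q(\mathrm{F}_{n+1}) \otimes_{\OO_q(\mathrm{F}_{n+1})} \EE_\lambda$ is, as always, left $\OO_q(\mathrm{F}_{n+1})$-linear, and it inherits left $\OO_q(\mathrm{SU}_{n+1})$-covariance from $\adel_{\EE_\lambda}$ and from the fact that $\adel$ on $\Omega^{(0,\bullet)}_q(\mathrm{SU}_{n+1})$ is a left $A$-comodule map. Thus $\adel_{\EE_\lambda}^2$ is again a morphism in ${}^A_B\mathrm{Mod}$. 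Applying the degree-two case of Proposition \ref{prop:omegaEE}, the target splits as $\bigoplus_{\gamma_1 < \gamma_2} \EE_{\gamma_1 + \gamma_2 + \lambda}$, indexed over ordered pairs of positive roots. Since $\gamma_1 + \gamma_2 \neq 0$ for any two positive roots, no summand is isomorphic to $\EE_\lambda$, and the same Schur-style argument forces $\adel_{\EE_\lambda}^2 = 0$.

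Thus, the main bookkeeping step is the categorical reduction; the only slightly delicate point to verify carefully will be the left $A$-covariance of the curvature, which amounts to unwinding that the principal connection $\Pi = 0$ is a left $A$-comodule map and that $\adel$ on $\Omega^{(0,\bullet)}_q(\mathrm{SU}_{n+1})$ is a left $A$-comodule map by construction. After that, the argument is essentially a direct weight count.
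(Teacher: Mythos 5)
Your proposal is correct and follows essentially the same route as the paper: both reduce uniqueness and flatness to the vanishing of morphisms from $\EE_{\lambda}$ into the decompositions $\bigoplus_{\gamma}\EE_{\gamma+\lambda}$ and $\bigoplus_{\gamma_1<\gamma_2}\EE_{\gamma_1+\gamma_2+\lambda}$ of Proposition \ref{prop:omegaEE}, using that the difference of connections and the curvature are module maps and that line modules of distinct weights admit no non-zero morphisms between them. The paper phrases this via Takeuchi's equivalence (comparing $\Phi$-images), which is the same Schur-style weight count you carry out directly in ${}^A_B\mathrm{Mod}$.
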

\begin{proof}
We know from Proposition \ref{prop:omegaEE} that 
\begin{align*}
\Phi\!\left(\Omega^{(0,1)}_q(\mathrm{F}_{n+1}) \otimes_{\OO_q(\mathrm{F}_{n+1})} \EE_{\lambda}\right) \cong \bigoplus_{\gamma \in \Delta^+} \Phi(\EE_{\gamma + \lambda}).
\end{align*}
Thus we see that the only morphism 
\begin{align*}
\Phi(\EE_{\lambda}) \to \Phi\!\left(\Omega^{(0,1)}_q(\mathrm{F}_{n+1}) \otimes_{\OO_q(\mathrm{F}_{n+1})} \EE_{\lambda}\right)\!.
\end{align*}
is the zero morphism. Since the difference of two connections is a left $\OO_q(\mathrm{F}_{n+1})$-module map, it now follows from Takeuchi's equivalence that $\adel_{\EE_{\lambda}}$ is the unique left $\OO_q(\mathrm{SU}_{n+1})$-covariant connection on $\EE_{\lambda}$. (For more details see \cite[Proposition 3.4]{HVBQFM}.)

Proposition \ref{prop:omegaEE} also implies that the only morphism 
$$
\Phi(\EE_{\lambda}) \to \Phi\!\left(\Omega^{(0,2)} \otimes_{\OO_q(\mathrm{F}_{n+1})} \EE_{\lambda}\right)
$$
is the zero map. Thus, since curvature is a left $\OO_q(\mathrm{F}_{n+1})$-module map, we can similarly conclude that $\adel_{\EE_{\lambda}}$ is  flat.  (For more details see \cite[Proposition 3.3]{HVBQFM}.)
\end{proof}

This means that we have a complex
$$
\adel_{\EE_{\lambda}}: \Omega^{(0,\bullet)}_q(\mathrm{F}_{n+1}) \otimes_{\OO_q(\mathrm{F}_{n+1})} \mathcal{E}_{\lambda} \to   \Omega^{(0,\bullet)}_q(\mathrm{F}_{n+1}) \otimes_{\OO_q(\mathrm{F}_{n+1})} \mathcal{E}_{\lambda},
$$
which is a direct $q$-deformation of the Dolbeault anti-holomorphic complex of the line modules over the $A$-series full flag manifolds. We denote the cohomology groups of the complex by $H^{(0,\bullet)}_{\adel}(\EE_{\lambda})$.

\subsection{A Borel--Weil Theorem for $\OO_q(\mathrm{F}_{n+1})$}

In this subsection we establish the main result of this section, namely a noncommutative  Borel--Weil theorem for the full quantum flag manifold $\OO_q(\mathrm{F}_{n+1})$. This gives a noncommutative differential geometric realisation of all the irreducible type-1 finite-dimensional representations of $U_q(\mathfrak{sl}_{n+1})$.

\begin{thm} \label{thm:BW}
For the full quantum flag manifold $\OO_q(\mathrm{F}_{n+1})$, it holds that:
\begin{enumerate}
 \item $H^0_{\adel}(\mathcal{E}_{\lambda})$ is an irreducible $U_q(\mathfrak{sl}_{n+1})$-module of highest weight $-w_0(\lambda)$, for all \mbox{$\lambda \in \mathcal{P}^+$}, 
 \item $H^0_{\adel}(\mathcal{E}_{\lambda}) = 0$, for all $\lambda \in \mathcal{P} \, \backslash \, \mathcal{P}^+$.
\end{enumerate}
\end{thm}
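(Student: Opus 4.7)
The plan is to directly compute $H^0_{\adel}(\EE_\lambda) = \ker \adel_{\EE_\lambda}$ using the Peter--Weyl decomposition of $\OO_q(\mathrm{SU}_{n+1})$.

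First, using the explicit formula $\adel_{\EE_\lambda}(f) = j\!\big(\sum_{\gamma \in \Delta^+}(E_\gamma \triangleright f) \otimes e_\gamma\big)$ together with the linear independence of the basis $\{e_\gamma\}$ of $\Lambda^{(0,1)}_q$, I would establish
$$
H^0_{\adel}(\EE_\lambda) \; = \; \big\{\, f \in \EE_\lambda \,:\, E_\gamma \triangleright f = 0 \text{ for all } \gamma \in \Delta^+ \,\big\},
$$
realising $H^0_{\adel}(\EE_\lambda)$ as the space of $U_q(\mathfrak{sl}_{n+1})$-highest-weight vectors in $\EE_\lambda$ with respect to the left action $\triangleright$.

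Next, I would apply the Peter--Weyl decomposition $\OO_q(\mathrm{SU}_{n+1}) \cong \bigoplus_{\mu \in \mathcal{P}^+} V_\mu^* \otimes V_\mu$, presented in terms of matrix coefficients $c^\mu_{f,v}$. A direct calculation shows that $\triangleright$ acts on the $V_\mu$ factor and the commuting right action $\triangleleft$ on the $V_\mu^*$ factor, and by tracking the grading on the generators $z^{\varpi_x}_{N_x} = c^{\varpi_x}_{f_{N_x}, v_{N_x}}$ one identifies $\EE_\lambda$ with the subspace where $v \in V_\mu$ has weight $\lambda$. The vanishing condition from the first step then forces $v$ to be a highest-weight vector of $V_\mu$, which combined with $v \in (V_\mu)_\lambda$ gives $\mu = \lambda$ and in particular $\lambda \in \mathcal{P}^+$; this immediately proves part $(2)$. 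For part $(1)$ with $\lambda \in \mathcal{P}^+$, the surviving space is $V_\lambda^* \otimes \mathbb{C} v_\lambda^+ \cong V_\lambda^*$. The right action $\triangleleft$ preserves $\EE_\lambda$ and commutes with $\adel$ by left $\OO_q(\mathrm{SU}_{n+1})$-covariance of the calculus, so it descends to the natural right $U_q(\mathfrak{sl}_{n+1})$-action on $V_\lambda^*$; twisting to a left action via the antipode realises this as the standard dual of $V_\lambda$, which is irreducible of highest weight $-w_0(\lambda)$.

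The main technical obstacle is the rigorous justification of the first step: linear independence of $\{e_\gamma\}$ is transparent in $\OO_q(\mathrm{SU}_{n+1}) \otimes \Lambda^{(0,1)}_q$, but the connection $\adel_{\EE_\lambda}$ actually takes values in $\Omega^{(0,1)}_q(\mathrm{F}_{n+1}) \otimes_{\OO_q(\mathrm{F}_{n+1})} \EE_\lambda$. Combining Lemma \ref{lem:higherfullflagforms} (which identifies the cotangent factor of the full quantum flag manifold with $\Lambda^{(0,1)}_q$) with the fact that $j$ inverts multiplication, one can promote vanishing in this tensor product to vanishing of the lifted expression in $\OO_q(\mathrm{SU}_{n+1}) \otimes \Lambda^{(0,1)}_q$, whereupon linear independence applies directly.
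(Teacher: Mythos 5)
Your proposal is correct and follows essentially the same route as the paper's proof: identify $H^0_{\adel}(\EE_{\lambda})$ with the space of highest-weight vectors of weight $\lambda$ via the explicit formula for $\adel$, then read the answer off the Peter--Weyl decomposition. The only difference is that you spell out the linear-independence justification for passing from $\Omega^{(0,1)}_q(\mathrm{F}_{n+1}) \otimes_{\OO_q(\mathrm{F}_{n+1})} \EE_{\lambda}$ to $\OO_q(\mathrm{SU}_{n+1}) \otimes \Lambda^{(0,1)}_q$, and the antipode twist identifying the surviving space with $V_{\lambda}^{\vee}$, both of which the paper leaves implicit.
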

\begin{proof}
It follows from the construction of $\adel$, as presented in \textsection \ref{subsection:TangentSpaces},  that for any $e \in \EE_{\lambda}$, we have $\overline{\partial}(e)=0$ if and only if 
\begin{align} \label{eqn:HWBW}
E_{\gamma} \triangleright e = 0, & & \textrm{ for all } \gamma \in \Delta^+,
\end{align}
which is to say, if and only if $e$ is a highest weight vector, of weight $\lambda$.

If $\lambda \in \mathcal{P}/\mathcal{P}^+$, then it is clear that no such element exists, meaning that  $H^0_{\adel}(\mathcal{E}_{\lambda}) = 0$. For $\lambda \in \mathcal{P}^+$, recalling the Peter--Weyl decomposition of $\OO_q(\mathrm{SU}_{n+1})$, the only highest weight elements of $\OO_q(\mathrm{SU}_{n+1})$ of weight $\lambda$ are of the form 
$$
v \otimes v_{\lambda} \in V_{\lambda}^{\vee} \otimes V_{\lambda} \cong C(V_{\lambda}),
$$
where $v_{\lambda}$ is a highest weight vector of $V_{\lambda}$. Thus we see that  $H^0_{\adel}(\mathcal{E}_{\lambda})$ is an irreducible $U_q(\mathfrak{sl}_{n+1})$-module of highest weight $-w_0(\lambda)$, for all $\lambda \in \mathcal{P}^+$.
\end{proof}

Restricting to the case of the trivial line module $\EE_0$, we get the following immediate corollary.

\begin{cor}
The Lusztig fodc $\Omega^{(0,1)}_q(\mathrm{F}_{n+1})$ is \emph{connected}, which is to say 
$$
\mathrm{ker}\Big(\adel: \OO_q(\mathrm{F}_{n+1}) \to \Omega^{(0,1)}_q(\mathrm{F}_{n+1}) \Big) = \mathbb{C}1.
$$
\end{cor}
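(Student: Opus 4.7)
The plan is to derive this as an immediate application of Theorem \ref{thm:BW}, specialised to the trivial weight $\lambda = 0$. First I would identify $\mathcal{O}_q(\mathrm{F}_{n+1})$ with the trivial relative line module $\EE_0$, observing that the differential $\adel$ acting on $\mathcal{O}_q(\mathrm{F}_{n+1})$ agrees with $\adel_{\EE_0}$, so that
$$
\mathrm{ker}\Big(\adel:\mathcal{O}_q(\mathrm{F}_{n+1}) \to \Omega^{(0,1)}_q(\mathrm{F}_{n+1})\Big) \,=\, H^{0}_{\adel}(\EE_0).
$$

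Next I would invoke Theorem \ref{thm:BW}(1) with $\lambda = 0 \in \mathcal{P}^+$. Since $-w_0(0) = 0$, the cohomology group $H^{0}_{\adel}(\EE_0)$ is the irreducible type-$1$ $U_q(\mathfrak{sl}_{n+1})$-module of highest weight $0$, that is, the one-dimensional trivial representation. Thus $H^{0}_{\adel}(\EE_0)$ is one-dimensional.

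Finally, since $\adel$ is a derivation and is $\mathbb{C}$-linear, it holds that $\adel(1) = 0$, so $\mathbb{C}1 \subseteq \ker(\adel)$. Comparing dimensions, the inclusion $\mathbb{C}1 \subseteq H^{0}_{\adel}(\EE_0)$ must be an equality, which gives the claim. There is no real obstacle here; the entire content has been absorbed into the Borel--Weil statement, and the only thing to verify is the trivial but necessary identification of $\mathcal{O}_q(\mathrm{F}_{n+1})$ with $\EE_0$ so that the $\lambda=0$ case of Theorem \ref{thm:BW} is directly applicable.
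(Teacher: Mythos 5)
Your argument is correct and is exactly the paper's route: the corollary is stated there as an immediate consequence of Theorem \ref{thm:BW} by restricting to the trivial line module $\EE_0$, so that $H^0_{\adel}(\EE_0)$ is the irreducible module of highest weight $-w_0(0)=0$, namely $\mathbb{C}1$. Your added remarks identifying $\ker(\adel)$ with $H^0_{\adel}(\EE_0)$ and noting $\adel(1)=0$ are just the (correct) unpacking of what the paper leaves implicit.
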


We note that, by contrast, the fodc on $\OO_q(\mathrm{SU}_{n+1})$ is not connected. However, this can be rectified by the addition of the anti-holomorphic forms, corresponding to the negative root vectors. This will be addressed in future work.

\subsection{Quantum Homogeneous Coordinate Rings}

Classically, for each $\lambda \in \mathcal{P}^+$, the line bundle $\EE_{\lambda}$ is positive, or equivalently ample. We denote by $S_{\lambda}[\mathrm{F}_{n+1}]$ the homogeneous coordinate ring  of the associated complex projective Kodaira embedding of $\mathrm{F}_{n+1}$. This ring admits a direct generalisation to the quantum group setting: For $\lambda = \sum_{i=1}^{n} \lambda_i \varpi_i$, the \emph{quantum homogeneous $\lambda$-coordinate ring} is the subalgebra 
\begin{align*}
S_{q,\lambda}[\mathrm{F}_{n+1}] := \Big\langle z^{\lambda} U_q(\mathfrak{sl}_{n+1})  \Big \rangle \subseteq \OO_q(\mathrm{SU}_{n+1}).
\end{align*}
These algebras have appeared many times in the literature, see for example  \cite{Resh.Lak,Soilbelman.flag,TaftTowber}. Moreover they constitute an important class of examples for noncommutative projective algebraic geometry \cite{RigalZadun, Rosen.NCS}, and for quantum cluster algebras \cite{GrL2011, GrL2014}.

We will now use the quantum Borel--Weil theorem established above to give a novel noncommutative differential geometric realisation of the quantum homogeneous coordinate rings, generalising the classical holomorphic section presentation of the ring. The proof directly generalises the proof of the analogous statement for the irreducible quantum quantum flag manifolds given in \cite{BwGrassDM,CDOBBW}, and so, we omit it.

\begin{prop} \label{prop:HHCR}
For the full quantum flag manifold $\OO_q(\mathrm{F}_{n+1})$, and any $\lambda \in \mathcal{P}^+$, 
\begin{align} \label{eqn:HCR}
S_{q,\lambda}[\mathrm{F}_{n+1}] \cong \bigoplus_{k \in \mathbb{Z}_{\geq 0}} H^0_{\adel}(\mathcal{E}_{k\lambda}),
\end{align}
where each summand on the right-hand side is considered as a subspace of $\OO_q(\mathrm{SU}_{n+1})$.
\end{prop}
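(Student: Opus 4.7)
The plan is to show that both sides of \eqref{eqn:HCR} coincide as subspaces of $\OO_q(\mathrm{SU}_{n+1})$, by exploiting the fact that $z^{k\lambda}$ is a highest-weight vector of weight $k\lambda$ for the left $U_q(\mathfrak{sl}_{n+1})$-action, together with the Borel--Weil Theorem~\ref{thm:BW} which identifies $H^0_{\adel}(\EE_{k\lambda})$ as the corresponding irreducible $U_q$-module.

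First I would apply formula \eqref{eqn:tangent.exd} to a line module and note that, for any $f \in \EE_{k\lambda}$, the condition $\adel f = 0$ is equivalent to $E_\gamma \triangleright f = 0$ for every $\gamma \in \Delta^+$, that is, $f$ being a left $U_q(\mathfrak{sl}_{n+1})$-highest-weight vector. Each generator $z^{\varpi_x}_{N_x} = c^{\varpi_x}_{f_{N_x},v_{N_x}}$ is annihilated by every $E_i$ under the left regular action (since $v_{N_x}$ is highest in $V_{\varpi_x}$), and by the module-algebra property applied with $\Delta(E_i) = E_i \otimes K_i + 1 \otimes E_i$ and $\Delta(K_i) = K_i \otimes K_i$, the product $z^{k\lambda}$ is itself a left highest-weight vector, now of weight $k\lambda$. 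Hence $z^{k\lambda} \in H^0_{\adel}(\EE_{k\lambda})$, and Theorem~\ref{thm:BW} together with the fact that the right $U_q$-action commutes with the left one (and so preserves the space of left highest-weight vectors) gives the explicit identification $H^0_{\adel}(\EE_{k\lambda}) = z^{k\lambda}\, U_q(\mathfrak{sl}_{n+1})$.

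The multiplicative core of the argument is the identity $H^0_{\adel}(\EE_\lambda) \cdot H^0_{\adel}(\EE_{\ell\lambda}) = H^0_{\adel}(\EE_{(\ell+1)\lambda})$ for all $\ell \geq 0$. The inclusion $\subseteq$ follows from the same coproduct calculation as above, applied to a general product. For the reverse inclusion, observe that the left-hand side is a non-zero right $U_q$-submodule of the right-hand side (non-zero because it contains $z^\lambda \cdot z^{\ell\lambda}$, which is a non-zero scalar multiple of $z^{(\ell+1)\lambda}$), and irreducibility of $H^0_{\adel}(\EE_{(\ell+1)\lambda})$ from Theorem~\ref{thm:BW} forces equality. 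Iterating gives $\bigl(H^0_{\adel}(\EE_\lambda)\bigr)^{k} = H^0_{\adel}(\EE_{k\lambda})$, and since $S_{q,\lambda}[\mathrm{F}_{n+1}]$ is by definition the subalgebra generated by $H^0_{\adel}(\EE_\lambda) = z^\lambda U_q(\mathfrak{sl}_{n+1})$, this yields $S_{q,\lambda}[\mathrm{F}_{n+1}] = \bigoplus_{k\geq 0} H^0_{\adel}(\EE_{k\lambda})$; the sum is direct because its summands lie in pairwise distinct components of the $\mathcal{P}$-grading of $\OO_q(\mathrm{SU}_{n+1})$.

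The main obstacle is the non-vanishing and correct placement of $z^\lambda \cdot z^{\ell\lambda}$ inside the Cartan summand of the relevant Clebsch--Gordan decomposition. Concretely, one must rule out the possibility that this product lies entirely in a proper $U_q$-subrepresentation of $V_\lambda\otimes V_{\ell\lambda}$. This is settled by passing through Peter--Weyl: multiplication of matrix coefficients corresponds to the dual of comultiplication, so the product of the two highest-weight vectors is non-zero precisely because the Cartan embedding $V_{(\ell+1)\lambda}\hookrightarrow V_\lambda \otimes V_{\ell\lambda}$ sends the highest-weight vector to a non-zero tensor, a standard fact for the type-$1$ representations of $U_q(\mathfrak{sl}_{n+1})$. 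The overall scheme is the direct analogue of the argument used in \cite{BwGrassDM,CDOBBW} for the quantum Grassmannian case, made appreciably simpler by the availability of the Lusztig calculus on the total space $\OO_q(\mathrm{SU}_{n+1})$, which provides the clean tangent-vector description of $\adel$ used in the first step.
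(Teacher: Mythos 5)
Your proof is correct and is essentially the argument the paper has in mind: the paper omits the proof, deferring to the Grassmannian case of \cite{BwGrassDM,CDOBBW}, and your route --- using Theorem \ref{thm:BW} to identify $H^0_{\adel}(\EE_{k\lambda})$ with the irreducible right $U_q(\mathfrak{sl}_{n+1})$-module generated by the left highest-weight vector $z^{k\lambda}$, then deducing $H^0_{\adel}(\EE_{\lambda})\cdot H^0_{\adel}(\EE_{\ell\lambda}) = H^0_{\adel}(\EE_{(\ell+1)\lambda})$ from the module-algebra property, the non-vanishing of $z^{\lambda}z^{\ell\lambda}$, and irreducibility, with directness coming from the $\mathcal{P}$-grading --- is precisely that standard argument. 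No gaps.
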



\section{Outside the Nice  Setting} \label{section:Nonnice}

The reduced decompositions $\mathbf{j}$ and $\mathbf{j}'$ (see \ref{appendix.opposite} for the definition of $\mathbf{j}'$) are distinguished, among the reduced decompositions of the longest element of the Weyl group $S_{n+1}$, as those that are \emph{nice} in the sense of Littlemann. The notion of a nice decomposition is defined for all Weyl groups, and is given in terms of crystal graphs for finite-dimensional $U_q(\mathfrak{g})$-modules. See \cite{LittleCrystal} for more details.

In this section we consider some low-dimensional examples of Lusztig's root vectors for non-nice decompositions, or more precisely, for their associated commutation classes. Explicitly, we examine when the span of the root vectors gives a quantum tangent space for the full quantum flag manifold. Since the case of $U_q(\mathfrak{sl}_2)$ is trivial, and for $U_q(\mathfrak{sl}_3)$ both decompositions are nice, we start with the case of $U_q(\mathfrak{sl}_4)$, treating its $6$ non-nice commutation classes in detail. We then treat the case of $U_q(\mathfrak{sl}_5)$, examining those non-nice commutation classes, from a total of $62$ such classes, that are one or two braid moves from our choice of nice class.

\subsection{The Opposite Involution, Reduced Decompositions, and DC}

In this subsection we look at the action of the opposite involution on the differential calculi associated to reduced decompositions of $w_0$ the longest element of the Weyl group. In particular, we will show that the dimension of the maximal prolongation of a fodc is invariant under the opposite involution, as defined in Appendix \ref{appendix.opposite}.

\begin{lem}
For any reduced decomposition $\mathbf{i}$ of $w_0$, the space of positive root vectors $\mathfrak{n}^+_{\mathbf{i}}$ is a left, or a right, tangent space if and only if $\mathfrak{n}^+_{\mathbf{i}'}$ is a left, respectively right, tangent space. 
\end{lem}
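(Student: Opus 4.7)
The strategy is to exhibit a single Hopf algebra automorphism that carries $\mathfrak{n}^+_{\mathbf{i}}$ onto $\mathfrak{n}^+_{\mathbf{i}'}$, and then to observe that the tangent space conditions (annihilation of $1$, and the right coideal / left coideal property) are preserved by any such automorphism.

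First, I would invoke the non-trivial Dynkin diagram automorphism of $A_n$, namely $\alpha_i \mapsto \alpha_{n+1-i}$. As is standard, this lifts to a Hopf algebra automorphism $\theta: U_q(\mathfrak{sl}_{n+1}) \to U_q(\mathfrak{sl}_{n+1})$ determined by
\begin{align*}
E_i \mapsto E_{n+1-i}, & & F_i \mapsto F_{n+1-i}, & & K_i \mapsto K_{n+1-i},
\end{align*}
for all $i = 1,\dots,n$. At the level of the Weyl group, $\theta$ induces the involution $s_i \mapsto s_{n+1-i}$, which by definition sends $\mathbf{i}$ to its opposite $\mathbf{i}'$.

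Next, I would verify that $\theta$ intertwines Lusztig's braid operators in the sense that $\theta \circ T_{s_i} = T_{s_{n+1-i}} \circ \theta$, for all $i = 1, \dots, n$. This is immediate from the fact that $\theta$ is a Hopf algebra automorphism that permutes the simple generators accordingly, and that the braid operators $T_{s_i}$ are defined by universal polynomial formulae in $E_i, F_i, K_i$ and their inverses. Applying this intertwining relation iteratively along the reduced decomposition $\mathbf{i}$ then yields
\begin{align*}
\theta(E_\beta^{\mathbf{i}}) \, = \, E_{\theta(\beta)}^{\mathbf{i}'}, & & \textrm{ for all } \beta \in \Delta^+,
\end{align*}
where $E_\beta^{\mathbf{i}}$ denotes the Lusztig root vector for $\beta$ with respect to $\mathbf{i}$. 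Since $\theta$ permutes $\Delta^+$ bijectively, we obtain the equality of subspaces $\theta(\mathfrak{n}^+_{\mathbf{i}}) = \mathfrak{n}^+_{\mathbf{i}'}$.

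Finally, because $\theta$ is a Hopf algebra automorphism of $U_q(\mathfrak{sl}_{n+1})$, it commutes with the coproduct and counit. Hence the conditions defining a tangent space, namely $X(1) = 0$ for all $X$ in the span and the condition that $\mathfrak{n}^+_{\mathbf{i}} \oplus \mathbb{C}1$ be a right (respectively left) coideal of $U_q(\mathfrak{sl}_{n+1})$, transfer directly along $\theta$ to the analogous conditions for $\mathfrak{n}^+_{\mathbf{i}'} \oplus \mathbb{C}1$. This proves the claimed equivalence, in both the left and right cases simultaneously. The only real content is Step 2, the intertwining of $\theta$ with the braid operators; once that is in hand, the tangent space property is automatic from the Hopf-automorphism nature of $\theta$.
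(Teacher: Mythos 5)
Your proposal is correct and follows essentially the same route as the paper: your automorphism $\theta$ is exactly the paper's opposite involution $\overline{w_0}$, your intertwining step $\theta \circ T_i = T_{i'} \circ \theta$ reproduces the proof of the paper's Proposition \ref{prop:wzerontonprime} (which establishes $\overline{w_0}(\mathfrak{n}^+_{\mathbf{i}}) = \mathfrak{n}^+_{\mathbf{i}'}$), and the final transfer of the coideal condition via the Hopf-automorphism property is precisely the paper's one-line argument.
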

\begin{proof}
This follows directly from Proposition \ref{prop:wzerontonprime}, and the fact that $\overline{w_0}$ is a Hopf algebra automorphism of $\OO_q(\mathrm{SU}_{n+1})$.
\end{proof}

The following lemma now describes, for the tangent space case, the relationship between the corresponding ideals.

\begin{lem}
Assume that $\mathfrak{n}^+_{\mathbf{i}}$ is a left, or a right, tangent space, and let us denote by $I \subseteq \OO_q(\mathrm{SU}_{n+1})^+$ the corresponding ideal. Then $\mathfrak{n}^+_{\mathbf{i}'}$ is a left, respectively right, tangent space with corresponding ideal $\overline{w_0}^{\,*}(I)$.
\end{lem}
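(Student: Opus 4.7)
The plan is to deduce the description of the ideal corresponding to $\mathfrak{n}^+_{\mathbf{i}'}$ from the duality between $\overline{w_0}$, viewed as an involutive Hopf algebra automorphism of $U_q(\mathfrak{sl}_{n+1})$, and the induced involutive Hopf algebra automorphism $\overline{w_0}^{\,*}$ of $\OO_q(\mathrm{SU}_{n+1})$. Writing the dual pairing by angular brackets, this duality is characterised by $\langle \overline{w_0}(X), a \rangle = \langle X, \overline{w_0}^{\,*}(a) \rangle$. In particular, since $\overline{w_0}$ is a bialgebra map, $\overline{w_0}^{\,*}$ is an algebra map, and since $\overline{w_0}$ preserves the counit, $\overline{w_0}^{\,*}$ preserves the augmentation ideal $\OO_q(\mathrm{SU}_{n+1})^+ = \ker(\e)$.

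First I would use Proposition \ref{prop:wzerontonprime} to write every element of $\mathfrak{n}^+_{\mathbf{i}'}$ as $\overline{w_0}(X)$ for some $X \in \mathfrak{n}^+_{\mathbf{i}}$. Denoting by $I'$ the ideal corresponding to $\mathfrak{n}^+_{\mathbf{i}'}$ by formula \eqref{eqn:theideal}, this yields
\begin{align*}
I' = \big\{ a \in \OO_q(\mathrm{SU}_{n+1})^+ \,\big|\, \overline{w_0}(X)(a) = 0, \text{ for all } X \in \mathfrak{n}^+_{\mathbf{i}} \big\}.
\end{align*}
Applying the pairing identity, the condition $\overline{w_0}(X)(a) = 0$ becomes $X(\overline{w_0}^{\,*}(a)) = 0$. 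Hence $a \in I'$ if and only if $\overline{w_0}^{\,*}(a) \in I$, which is to say $I' = (\overline{w_0}^{\,*})^{-1}(I) = \overline{w_0}^{\,*}(I)$, with the last equality following from the involutivity of $\overline{w_0}^{\,*}$.

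As a consistency check, one verifies that $\overline{w_0}^{\,*}(I)$ is indeed a right ideal of $\OO_q(\mathrm{SU}_{n+1})^+$, but this is immediate since $\overline{w_0}^{\,*}$ is an algebra automorphism preserving $\OO_q(\mathrm{SU}_{n+1})^+$. The result is essentially a formal consequence of Hopf duality; the only point requiring care is the direction of the dualisation, and fortunately the involutivity of $\overline{w_0}$ makes the distinction between $\overline{w_0}^{\,*}$ and its inverse immaterial. Thus no serious obstacle is anticipated.
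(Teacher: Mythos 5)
Your proposal is correct and follows essentially the same route as the paper: both arguments reduce the claim to the pairing identity $\langle \overline{w_0}(X), a\rangle = \langle X, \overline{w_0}^{\,*}(a)\rangle$ together with Proposition \ref{prop:wzerontonprime}, the only cosmetic difference being that you package the two inclusions as a single chain of equivalences using the involutivity of $\overline{w_0}^{\,*}$, whereas the paper verifies each inclusion separately.
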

\begin{proof}
Let us assume that $y \in I$, which is to say, let us assume that 
\begin{align}
\big\langle X, y \big\rangle = 0,  & & \textrm{ ~~~~~~~~~~~~\, for all } X \in \mathfrak{n}^+_{\textbf{i}}.
\end{align}
Equivalently, $y \in I$ if and only if
\begin{align*}
\Big\langle \overline{w_0}(X), \overline{w_0}^*(y)\Big\rangle = 0,  & & \textrm{ for all }  X \in \mathfrak{n}^+_{\textbf{i}}.
\end{align*}
Thus, denoting by $I' \subseteq \OO_q(\mathrm{SU}_{n+1})^+$ the ideal corresponding to the tangent space
$
\mathfrak{n}^+_{\mathbf{i}'} = \overline{w_0}(\mathfrak{n}^+_{\mathbf{i}}),
$
we see that $\overline{w_0}^*(y) \in I'$, and so $\overline{w_0}^*(I) \subseteq I'$. In the other direction, we see that $z \in I'$ if and only if
\begin{align*}
0 = \Big\langle \overline{w_0}(X), z\Big\rangle = \Big\langle X, \overline{w_0}^*(z)\Big\rangle, & & \textrm{ for all } X \in \mathfrak{n}^+_{\mathbf{i}}.
\end{align*}
Thus, since this is equivalent to having $\overline{w_0}^*(z) \in I$, we see that we have established the opposite inclusion, and hence equality.
\end{proof}

\begin{lem}
Let $\mathbf{i}$ be a reduced decomposition of the longest element of the Weyl group such that $\mathfrak{n}^+_{\mathbf{i}}$ is a right or a left tangent space, and denote by $\Lambda_{q,\mathbf{i}}^{\bullet}$ the associated quantum exterior algebra. Then, denoting by $\Lambda_{q,\mathbf{i}'}^{\bullet}$ the quantum exterior algebra of the quantum tangent space $\mathfrak{n}^+_{\mathbf{i}'} = \overline{w_0}(\mathfrak{n}^+_{\mathbf{i}})$, an algebra isomorphism $\Lambda^{\bullet}_{q,\mathbf{i}} \to \Lambda^{\bullet}_{q,\mathbf{i}’}$ is determined by
\begin{align*}
[a_1] \wedge \cdots \wedge [a_k] ~ \mapsto ~ [\overline{w_0}^*(a_1)] \wedge \cdots \wedge [\overline{w_0}^*(a_k)], & & \textrm{ for } k \in \mathbb{Z}_{\geq 0}.
\end{align*}
\end{lem}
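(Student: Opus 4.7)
The plan is to construct the claimed isomorphism in stages, using the Hopf algebra automorphism $\overline{w_0}^*$ of $\OO_q(\mathrm{SU}_{n+1})$ as the basic building block. First I would observe that, being a counit-preserving algebra automorphism, $\overline{w_0}^*$ restricts to an automorphism of the augmentation ideal $\OO_q(\mathrm{SU}_{n+1})^+$. The previous lemma tells us that $\overline{w_0}^*(I) = I'$, where $I, I'$ denote the ideals classifying the fodc associated respectively to $\mathbf{i}$ and $\mathbf{i}'$. Hence $\overline{w_0}^*$ descends to a well-defined linear isomorphism $\Lambda^1_{q,\mathbf{i}} \cong \Lambda^1_{q,\mathbf{i}'}$ on cotangent spaces.

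Next, I would extend this to a graded algebra homomorphism of tensor algebras \mbox{$\mathcal{T}(\Lambda^1_{q,\mathbf{i}}) \to \mathcal{T}(\Lambda^1_{q,\mathbf{i}'})$} by applying $\overline{w_0}^*$ tensor-factor by tensor-factor. The central point is to descend to the quantum exterior algebras, and for this I would verify that the degree-two relation space $I^{(2)}_{\mathbf{i}}$ maps into $I^{(2)}_{\mathbf{i}'}$. Recall from \textsection \ref{subsection:MC} that $I^{(2)}_{\mathbf{i}}$ is generated as a right $\OO_q(\mathrm{SU}_{n+1})$-module by elements of the form $\omega(x) = [x_{(1)}^+] \otimes [x_{(2)}^+]$ for $x \in I$. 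Since $\overline{w_0}^*$ is a Hopf algebra map, it commutes with the coproduct, and together with its compatibility with the $(\,\cdot\,)^+$-operation this yields
\begin{align*}
(\overline{w_0}^* \otimes \overline{w_0}^*)(\omega(x)) = \omega(\overline{w_0}^*(x)) \in I^{(2)}_{\mathbf{i}'},
\end{align*}
since $\overline{w_0}^*(x) \in I'$. Hence the tensor algebra map descends to a graded algebra homomorphism $\Phi: \Lambda^{\bullet}_{q,\mathbf{i}} \to \Lambda^{\bullet}_{q,\mathbf{i}'}$ of precisely the claimed form.

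To conclude that $\Phi$ is an isomorphism, I would run the identical construction using the inverse automorphism $(\overline{w_0}^*)^{-1}$, which is itself a Hopf algebra automorphism sending $I'$ back to $I$. The two resulting maps are manifestly mutually inverse on generators, and hence on the whole graded algebras. The only genuinely non-trivial input is the previous lemma identifying the ideals, combined with the coproduct-compatibility of $\overline{w_0}^*$; the rest of the argument is essentially a bookkeeping exercise, so I do not anticipate a substantial obstacle.
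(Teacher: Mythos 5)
Your proposal is correct and follows essentially the same route as the paper's own proof: descend $\overline{w_0}^*$ to the cotangent spaces via the ideal identification from the previous lemma, extend to the tensor algebras, use the Hopf-algebra property of $\overline{w_0}^*$ to show $\omega(x) \mapsto \omega(\overline{w_0}^*(x))$ so that the degree-two relation spaces correspond, and invoke the inverse automorphism for bijectivity. No gaps.
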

\begin{proof}
Note first that we have a linear isomorphism
\begin{align*}
\rho: \Lambda^1_{q,\mathbf{i}} \to \Lambda^1_{q,\mathbf{i}’}, & & [a] \mapsto [\overline{w_0}^*(a)].
\end{align*}
We can uniquely extend this to an algebra isomorphism 
$$
\rho: \mathcal{T}\big(\Lambda^1_{q,\mathbf{i}}\big) \to \mathcal{T}\big(\Lambda^1_{\mathbf{i}'}\big),
$$
where $\mathcal{T}(\Lambda^1_{q,\mathbf{i}})$, and $\mathcal{T}(\Lambda^1_{q,\mathbf{i}'})$, denote the tensor algebra of $\Lambda^1_{q,\mathbf{i}})$, and $\Lambda^1_{q,\mathbf{i}'})$, respectively. For any element $y \in I$, we now see that  
\begin{align*}
\rho\big([y_{(1)}] \otimes [y_{(2)}]\big) =  [\overline{w_0}^*(y_{(1)})] \otimes [\overline{w_0}^*(y_{(2)})] = [\overline{w_0}^*(y)_{(1)}] \otimes [\overline{w_0}^*(y)_{(2)}] \in (\overline{w_0}^*(I))^{(2)},
\end{align*}
where we have used the fact that $\overline{w_0}^*$ is a Hopf algebra map. An analogous calculation in the opposite direction then confirms that the inverse $\rho^{-1}$ maps $(\overline{w_0}^*(I))^{(2)}$ to $I^{(2)}$. Thus $\rho$ descends to an algebra isomorphism between $\Lambda^{\bullet}_{q,\mathbf{i}}$ and $\Lambda^{\bullet}_{q,\mathbf{i}’}$ as claimed.
\end{proof}

The following proposition now follows directly. In particular, it means that one need only check the classical dimension requirement for $\mathbf{i}$ or $\mathbf{i}'$.

\begin{prop}
A reduced decomposition $\mathbf{i}$ gives a tangent space whose associated quantum exterior algebra has classical dimension if and only if the same is true for $\mathbf{i}'$.
\end{prop}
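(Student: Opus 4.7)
The proposition is essentially an immediate consequence of the preceding lemma, so my plan is to invoke it directly rather than to set up new machinery. The preceding lemma constructs an algebra isomorphism
$$
\rho: \Lambda^{\bullet}_{q,\mathbf{i}} \longrightarrow \Lambda^{\bullet}_{q,\mathbf{i}'}, \qquad [a_1] \wedge \cdots \wedge [a_k] \mapsto [\overline{w_0}^*(a_1)] \wedge \cdots \wedge [\overline{w_0}^*(a_k)],
$$
whenever $\mathfrak{n}^+_{\mathbf{i}}$ is a tangent space. The first step of the proof is to observe that $\rho$ respects the natural grading on the quantum exterior algebras, since it is defined degree-wise as the $k$-fold tensor power of the linear isomorphism $\Lambda^1_{q,\mathbf{i}} \to \Lambda^1_{q,\mathbf{i}'}$ descended through the classifying ideals. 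Hence $\rho$ restricts to a linear isomorphism $\Lambda^k_{q,\mathbf{i}} \cong \Lambda^k_{q,\mathbf{i}'}$ for each $k \in \mathbb{Z}_{\geq 0}$.

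The second step is to translate this into a statement about dimensions. Classical dimension for a fodc arising from $\mathbf{i}$ means that
$$
\dim \Lambda^{\bullet}_{q,\mathbf{i}} = 2^{|\Delta^+|},
$$
or equivalently, by the grading, that $\dim \Lambda^k_{q,\mathbf{i}} = \binom{|\Delta^+|}{k}$ for every $k$. The degree-wise isomorphism from the first step shows that these dimensions are the same for $\mathbf{i}$ and $\mathbf{i}'$. Combined with the earlier lemma stating that $\mathfrak{n}^+_{\mathbf{i}}$ is a tangent space if and only if $\mathfrak{n}^+_{\mathbf{i}'}$ is, the biconditional in the proposition follows.

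There is no real obstacle here; the content has all been done in the preceding two lemmas. The only point to verify carefully is that $\rho$ is grading-preserving, which is transparent from its construction as the descent of the tensor-power map on $\mathcal{T}(\Lambda^1_{q,\mathbf{i}})$. I would therefore present the proof as a two-sentence argument citing the previous lemma.
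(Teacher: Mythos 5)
Your proposal is correct and matches the paper's intent exactly: the paper gives no separate argument, stating only that the proposition "follows directly" from the preceding lemmas, and the content you supply (the graded algebra isomorphism $\rho$ induced by $\overline{w_0}^*$ together with the lemma that $\mathfrak{n}^+_{\mathbf{i}}$ is a tangent space if and only if $\mathfrak{n}^+_{\mathbf{i}'}$ is) is precisely what that remark is implicitly appealing to. Your observation that $\rho$ is degree-preserving, hence identifies each $\Lambda^k_{q,\mathbf{i}}$ with $\Lambda^k_{q,\mathbf{i}'}$, is the only point that needed checking, and it is handled correctly.
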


\subsection{The Case of $U_q(\mathfrak{sl}_4)$} \label{subsection:sl4}

For the quantised enveloping algebra  $U_q(\mathfrak{sl}_4)$, the Weyl group is $S_4$, and its longest element has $8$ commutation classes of reduced decompositions. We present them in diagrammatic form, each node being a choice of representative from a commutation class and an edge between two nodes indicating that the two decompositions differ by a braid move:

\begin{center}
\begin{tikzpicture}
\node  (C0) at (-2,1) {$321323$};
    \node (A1) at (0,2) {$321232$};
    \node (A2) at (2,2) {$312132$};
    \node (A3) at (4,2) {$123212$};
    \node (B1) at (0,0) {$232123$};
    \node (B2) at (2,0) {$231213$};
    \node (B3) at (4,0) {$212321$};
    \node (C1) at (6,1) {$123121$.};
    \draw (A1) -- (A2) -- (A3);
    \draw (B1) -- (B2) -- (B3);
    \draw (A3) -- (C1);
    \draw (B3) -- (C1);
    \draw (C0) -- (A1);
    \draw (C0) -- (B1);
\end{tikzpicture}
\end{center}

Clearly, operating on a  reduced decomposition by the opposite automorphism gives a well-defined map between commutation classes. We present this action in the following diagram, where a blue arrow between two decompositions denotes that the automorphism maps one decomposition to another.

\begin{center}
\begin{tikzpicture}
\node (C0) at (-2,1) {$321323$};
\node (A1) at (0,2) {$321232$};
\node (A2) at (2,2) {$312132$};
\node (A3) at (4,2) {$123212$};
\node (B1) at (0,0) {$232123$};
\node (B2) at (2,0) {$231213$};
\node (B3) at (4,0) {$212321$};
\node (C1) at (6,1) {$123121$};
\draw (A1) -- (A2) -- (A3);
\draw (B1) -- (B2) -- (B3);
\draw (A3) -- (C1);
\draw (B3) -- (C1);
\draw (C0) -- (A1);
\draw (C0) -- (B1);
\draw [blue, line width=0.01pt, <->, out=120, in=60, looseness=8] (A2) to (A2);
\draw [blue, <->, line width=0.01pt, out=-120, in=-60, looseness=8] (B2) to (B2);
\draw [blue, <->, line width=0.01pt, out=90, in=90, looseness=0.9] (A1) to (A3);
\draw [blue, <->, line width=0.01pt, out=-90, in=-90, looseness=0.9] (B1) to (B3);
\draw [blue, <->, line width=0.01pt] (C0) -- (C1);
\end{tikzpicture}
\end{center}

We can calculate the associated spaces of root vectors using the formulae given in Appendix \ref{app:Lusztigroots}. The set of $6$ roots vectors associated to each decomposition always contains the simple root vectors $E_1, E_2$, and $E_3$. We present the other three non-simple root vectors in the table below, where for notational brevity we have denoted $[-,-]_+ := [-,-]_{q}$ and $[-,-]_- := [-,-]_{q^{-1}}$.

The formulae in Example \ref{eg:singlecomm} show us how the coproduct operates on the length two commutators.  A direct calculation shows that the coproduct of $[[E_2,E_3]_-,E_1]_-$ is equal to 
\begin{align*}
[[E_2,E_3]_-,E_1]_{-} \otimes K_1K_2K_3 + q^{-1}\nu E_1 \otimes [E_2,E_3]_-K_1 + q^{-1}\nu E_3 \otimes [E_2,E_1]_-\\
~~~~~~  + ~~ q^{-2}\nu^2 E_1E_3 \otimes E_2K_1  +1 \otimes [[E_2,E_3]_-,E_1]_-, ~~~~~~~~~~~~~~~~~~
\end{align*}
and moreover, that the coproduct of $[[E_1,E_2]_-,E_3]_+$ is equal to 
\begin{align*}
 [[E_1,E_2]_-,E_3]_{+}  \otimes K_1K_2K_3 + q^{-1}\nu [E_2,E_3]_+ \otimes E_1K_3 + \nu  [E_1,E_2]_- \otimes E_3 K_1K_2 \\
 ~~~~~~ + (q^{-1} - 1) \nu E_2 \otimes E_1E_3  + 1 \otimes [[E_1,E_2]_-,E_3]_+. ~~~~~~~~~~~~~~~~~~
\end{align*}

\bigskip

\begin{center}
\begin{tabular}{ |c|c|c| } 
 \hline
&   \\
 \textrm{Decomposition} & \textrm{Non-Simple Root Vectors}   \\
   &   \\
 \hline
   &   \\
$321323$ & ~~ $[E_3,E_2]_-, ~~ [E_2,E_1]_-, ~~ [[E_3,E_2]_-,E_1]_-$ \\ 
   &    \\
$321232$ & ~~ $[E_1,E_2]_-, ~~ [E_3,E_2]_-, ~~ [[E_3,E_2]_-,E_1]_-$  \\ 
   &    \\
 $232123$ & ~~ $[E_2,E_1]_-, ~~ [E_2,E_3]_-, ~~[[E_3,E_2]_-,E_1]_-$ \\ 
    &    \\
 $213213$  & ~~ $[E_2,E_1]_-, ~~ [E_2,E_3]_-, ~~ [[E_2,E_3]_-,E_1]_-$  \\ 
   &   \\
$132132$ & ~~ $[E_1,E_2]_-, ~~ [E_3,E_2]_-, ~~ [[E_1,E_2]_-,E_3]_+$  \\ 
    &    \\
 $123212$ & ~~ $[E_1,E_2]_-, ~~ [E_3,E_2]_-, ~~ [[E_1,[E_2,E_3]_-]_-$  \\ 
    &   \\
 $212321$ & ~~ $[E_2,E_1]_-,  ~~ [E_2,E_3]_-, ~~ [[E_1,E_2]_-,E_3]_-$  \\ 
 &   \\
 $123121$ & ~~ $[E_1,E_2]_-, ~~ [E_2,E_3]_-, ~~ [[E_1,E_2]_-,E_3]_-$  \\ 
  &    \\
 \hline
\end{tabular}
\end{center}

Together with the action of the opposite automorphism, this allows us to determine which spaces of positive root vectors give tangent spaces for the full quntum flag manifold $\OO_q(\mathrm{F}_4)$. In this higher rank situation, we find that $T \oplus \mathbb{C} 1$ is no longer always a subcoalgebra. For some cases it is a right coideal of $\OO_q(\mathrm{F}_4)^{\circ}$, and for some cases it is a left coideal.  This means that the corresponding fodc is either left, or respectively right, covariant. We present these results in the following diagram, building on the graphical presentation of the reduced decompositions given above:

\begin{center}
\begin{tikzpicture}
\node  (C0) at (-2,1) {Two-Sided};
    \node (A1) at (0,2) {Left};
    \node (A2) at (2,2) {Right};
    \node (A3) at (4,2) {Left};
    \node (B1) at (0,0) {Right};
    \node (B2) at (2,0) {Left};
    \node (B3) at (4,0) {Right};
    \node (C1) at (6,1) {Two-Sided.};
    \draw (A1) -- (A2) -- (A3);
    \draw (B1) -- (B2) -- (B3);
    \draw (A3) -- (C1);
    \draw (B3) -- (C1);
    \draw (C0) -- (A1);
    \draw (C0) -- (B1);
\end{tikzpicture}
\end{center}

Finally, we address the question of the dimension of the maximal prolongations of the fodc associated to the different commutation classes of reduced decompositions. The following result states that those that have classical dimension are precisely those that give two-sided coideals. The proof is a direct calculation, analogous to the proof of Lemma \ref{lem:THERELATIONS}, and so, it is omitted.

\begin{prop} \label{prop:MPsl4}
For the Drinfeld--Jimbo quantised enveloping algebra $U_q(\mathfrak{sl}_4)$, the commutation classes of $\mathbf{j}$ and $\mathbf{j}'$ are the only commutation classes of reduced decompositions of $w_0$, the longest element of the Weyl group $S_{n+1}$, whose associated tangent spaces have dc with classical total degree.
\end{prop}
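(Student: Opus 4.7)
The plan is a finite case-by-case verification, exploiting the symmetry $\mathbf{i} \leftrightarrow \mathbf{i}'$ to halve the workload. By the preceding proposition, the classical-dimension property is invariant under the opposite involution $\overline{w_0}$. Inspecting the second diagram of \textsection \ref{subsection:sl4}, the six non-nice commutation classes split into two $\overline{w_0}$-fixed classes (represented by $312132$ and $231213$) and two $\overline{w_0}$-orbits of length two (represented by $321232$ and $232123$). Thus it suffices to check these four representatives and show in each case that the maximal prolongation has dimension strictly greater than $2^{|\Delta^+|} = 64$.

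For each representative I would proceed as follows. First, read off the three non-simple root vectors from the table in \textsection \ref{subsection:sl4}, so that the tangent space $T = \mathfrak{n}^+_{\mathbf{i}}$ has an explicit basis of six weight vectors. Second, compute the coproducts of these six vectors using Lemma \ref{lem:coproductformula} (together with the two explicit formulae following it, which handle the double commutators $[[E_2,E_3]_{q^{-1}},E_1]_{q^{-1}}$ and $[[E_1,E_2]_{q^{-1}},E_3]_q$). This gives an explicit presentation of the right coideal $T \oplus \mathbb{C}1 \subseteq \OO_q(\mathrm{SU}_4)^\circ$ (or left coideal, in the cases where the fodc is right covariant), from which one identifies a generating set $G$ for the associated ideal $I \subseteq \OO_q(\mathrm{SU}_4)^+$, entirely analogous to the list $G_1 \cup G_2 \cup G_3$ of Proposition \ref{prop:Gens}. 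Third, apply Lemma \ref{lem:MCProduct} to $\omega(g)$ for each $g \in G$ in order to extract the degree-two relations of the quantum exterior algebra $\Lambda^{\bullet}_{q,\mathbf{i}}$, and compute the resulting dimensions weight-space by weight-space, using that right $\OO(T^3)$-covariance forces the relation space $I^{(2)}$ to be homogeneous with respect to the $\mathbb{Z}^3$-grading on $\Lambda^{(0,1)}_{q,\mathbf{i}} \otimes \Lambda^{(0,1)}_{q,\mathbf{i}}$.

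The expected outcome, for each of the four representatives, is that at least one weight subspace of $\Lambda^{(0,1)}_{q,\mathbf{i}} \otimes \Lambda^{(0,1)}_{q,\mathbf{i}}$ fails to acquire a relation analogous to the comparable-pair identity $e_\beta \wedge e_\gamma + e_\gamma \wedge e_\beta + \nu e_{\beta'} \wedge e_{\gamma'} = 0$ of Theorem \ref{thm:THERELATIONS}; equivalently, some anti-commutation up to scalar that must hold classically no longer follows from the Maurer--Cartan relations for the non-nice ordering. One then bounds $\dim \Lambda^{(0,2)}_{q,\mathbf{i}}$ below by $\binom{6}{2}+1 = 16$, and hence $\dim \Lambda^{\bullet}_{q,\mathbf{i}} > 64$, by exhibiting a single such surviving product. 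A slicker packaging of the same information is via an associated graded argument, as in \textsection \ref{subsection:filtration}: choose the total order on $\Delta^+$ induced by the convex order of $\mathbf{i}$, form the $\mathcal{Q}^+$-filtration of the tensor algebra of $\Lambda^{(0,1)}_{q,\mathbf{i}}$, and observe that the leading terms of the relation set computed above produce a quadratic algebra strictly larger than the corresponding quantum affine algebra.

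The main obstacle is bookkeeping rather than conceptual: the coproduct of a double $q$-commutator such as $[[E_1,E_2]_{q^{-1}},E_3]_{q^{-1}}$ mixes several different weights, and the pairing with products $u_{ab}u_{cd}$ generates numerous cross terms whose cancellation (or non-cancellation) is what ultimately distinguishes the nice classes from the non-nice ones. A practical simplification is to identify, for each non-nice $\mathbf{i}$, a single pair of positive roots $(\beta,\gamma)$ that is adjacent in the convex order of $\mathbf{i}$ but whose corresponding cotangent elements would, under a classical-dimension hypothesis, have to satisfy a specific relation; then verify by evaluating $\omega(g)$ on one carefully chosen generator $g \in G$ that this relation does not appear, which already suffices to exceed $2^{|\Delta^+|}$.
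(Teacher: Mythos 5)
The paper never writes this proof out: it records only that the result follows from ``a direct calculation, analogous to the proof of Lemma \ref{lem:THERELATIONS}'', so your case-by-case plan is the same in spirit as what the authors intend. Your reduction to four representatives is correct and a genuine economy: by the invariance of the classical-dimension property under the opposite involution, it suffices to treat the two fixed classes $312132$ and $231213$ together with one representative from each of the orbits $\{321232,123212\}$ and $\{232123,212321\}$, and your reading of the table of non-simple root vectors and of the one-sided covariance of the resulting fodc is accurate.

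There are, however, two gaps in how you propose to finish. First, the inference ``one weight subspace of $\Lambda^{(0,1)}_{q,\mathbf{i}}\otimes\Lambda^{(0,1)}_{q,\mathbf{i}}$ misses an expected relation, hence $\dim\Lambda^{(0,2)}_{q,\mathbf{i}}\geq 16$, hence $\dim\Lambda^{\bullet}_{q,\mathbf{i}}>64$'' is not valid: a quadratic algebra on six generators with a $16$-dimensional degree-two component can still have total dimension $64$ --- and can certainly still have total degree $6$, which is what the statement literally asserts --- if the higher graded components compensate. To conclude you must control the whole Hilbert series, e.g.\ by a diamond-lemma computation as in Corollary \ref{cor:BergmannBasis} or by your associated-graded packaging, but carried out for the \emph{complete} relation set rather than a single relation. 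Second, and more seriously, Lemma \ref{lem:MCProduct} only says that $\omega(G)$ \emph{generates} $I^{(2)}$ as a one-sided module; the degree-two relation space is the module closure of $\omega(G)$, and the Remark following Proposition \ref{prop:yamane} warns precisely that for non-nice-type tangent spaces this closure is strictly larger than the linear span of $\omega(G)$. A product that ``survives'' at the level of $\omega(G)$ may therefore be killed by the closure. Indeed the paper's own rank-two evidence in \textsection 7.5 points in the opposite direction from your expected outcome: there the non-nice-type choice produces ideal generators of the form $u_{ab}u_{cd}-\lambda u_{ef}$, hence \emph{extra} degree-two relations, and the calculus collapses to $\dim\Lambda^{(0,2)}=1<3$ with vanishing top forms. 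For the non-nice $\mathfrak{sl}_4$ classes the analogous failure mode --- a dimension drop and a loss of top degree --- is the one to expect, in which case your search for a single surviving product would find nothing; you would instead need to exhibit a classically nonvanishing product that is forced to zero, and then still account for the full count.
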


We can represent the above proposition diagrammatically as follows:
\begin{center}
\begin{tikzpicture}
\node  (C0) at (-3,1) {Class};
    \node (A1) at (-1,2) {Non-Class};
    \node (A2) at (2,2) {Non-Class};
    \node (A3) at (5,2) {Non-Class};
    \node (B1) at (-1,0) {Non-Class};
    \node (B2) at (2,0) {Non-Class};
    \node (B3) at (5,0) {Non-Class};
    \node (C1) at (7,1) {Class};
    \draw (A1) -- (A2) -- (A3);
    \draw (B1) -- (B2) -- (B3);
    \draw (A3) -- (C1);
    \draw (B3) -- (C1);
    \draw (C0) -- (A1);
    \draw (C0) -- (B1);
\end{tikzpicture}
\end{center}
where \emph{Class}, and \emph{Non-Class}, indicate whether or not the maximal prolongation of the fodc associated to the reduced decomposition has classical, or non-classical, dimension respectively. Thus we again see that the nice decompositions are confirmed as exceptional amongst the $8$ commutation classes.

\subsection{The Case of $U_q(\mathfrak{sl}_5)$}  \label{subsection:sl5}

In this subsection we treat the rank four case of $U_q(\mathfrak{sl}_5)$. For this example the number of reduced decompositions rises to $62$. A full table of the reduced decomposition classes can be found in \cite[Figure 3]{Ziegler.Bruhat}. Here we content ourselves with examining the sub-graph consisting of the reduced decomposition classes generated by the decomposition $4321432434$ and at most two braid moves:
\begin{align*}
\begin{tikzpicture} [scale=1.1]
  \draw (0.7,0) node (A) {4321432434}
            (3,2) node(B) {4321432343}
            (3,0) node(C) {4321343234}
            (3,-2) node(D) {3432132434}
            (6,3) node(E) {4321432434}
            (6,1.5) node(F) {4321432434}
            (6,0) node(G) {4321432434}
            (6,-1.5) node(H) {4321432434}
            (6,-3) node(I) {4321432434}
            (8,0) node {};
  \draw[shorten >=2pt, shorten <=2pt, , >=latex, line width=0.7pt] (A) -- node[right] {} (B);
  \draw[ shorten >=2pt, shorten <=2pt, , >=latex, line width=0.7pt] (A) -- node[right] {} (C);
  \draw[ shorten >=2pt, shorten <=2pt, , >=latex, line width=0.7pt] (A) -- node[right] {} (D);
  \draw[ shorten >=2pt, shorten <=2pt, , >=latex, line width=0.7pt] (B) -- node[right] {} (E);
  \draw[ shorten >=2pt, shorten <=2pt, , >=latex, line width=0.7pt] (B) -- node[right] {} (G);
  \draw[ shorten >=2pt, shorten <=2pt, , >=latex, line width=0.7pt] (C) -- node[right] {} (F);
  \draw[ shorten >=2pt, shorten <=2pt, , >=latex, line width=0.7pt] (C) -- node[right] {} (H);
  \draw[ shorten >=2pt, shorten <=2pt, , >=latex, line width=0.7pt] (D) -- node[right] {} (G);
  \draw[ shorten >=2pt, shorten <=2pt, , >=latex, line width=0.7pt] (D) -- node[right] {} (I);
\end{tikzpicture}
\end{align*}
A direct calculation shows, that for certain decompositions, the associated positive root vectors form neither a left nor a right tangent space, a phenomenon that does not occur for lower ranks. We now present in diagrammatic form the tangent space characterisation of the commutation classes: 
\begin{align*}
\begin{tikzpicture} [scale=1.1]
  \draw (0.7,0) node (A) {\textrm{Two-Sided}}
            (3,2) node(B) {\textrm{Left}}
            (3,0) node(C) {\textrm{Neither}}
            (3,-2) node(D) {\textrm{Right}}
            (6,3) node(E) {\textrm{Right}}
            (6,1.5) node(F) {\textrm{Neither}}
            (6,0) node(G) {\textrm{Neither}}
            (6,-1.5) node(H) {Neither}
            (6,-3) node(I) {Left}
            (8,0) node {};
  \draw[shorten >=2pt, shorten <=2pt, , >=latex, line width=0.7pt] (A) -- node[right] {} (B);
  \draw[ shorten >=2pt, shorten <=2pt, , >=latex, line width=0.7pt] (A) -- node[right] {} (C);
  \draw[ shorten >=2pt, shorten <=2pt, , >=latex, line width=0.7pt] (A) -- node[right] {} (D);
  \draw[ shorten >=2pt, shorten <=2pt, , >=latex, line width=0.7pt] (B) -- node[right] {} (E);
  \draw[ shorten >=2pt, shorten <=2pt, , >=latex, line width=0.7pt] (B) -- node[right] {} (G);
  \draw[ shorten >=2pt, shorten <=2pt, , >=latex, line width=0.7pt] (C) -- node[right] {} (F);
  \draw[ shorten >=2pt, shorten <=2pt, , >=latex, line width=0.7pt] (C) -- node[right] {} (H);
  \draw[ shorten >=2pt, shorten <=2pt, , >=latex, line width=0.7pt] (D) -- node[right] {} (G);
  \draw[ shorten >=2pt, shorten <=2pt, , >=latex, line width=0.7pt] (D) -- node[right] {} (I);
\end{tikzpicture}
\end{align*}
Just as for $U_q(\mathfrak{sl}_4)$, we get a dual picture by operating by the opposite automorphism. We will not treat the remaining $44$ cases.  Instead, in the next section we make a conjecture and pose some natural questions.

\subsection{Some Conjectures}

We finish with some conjectures, identifying interesting future directions of research. Motivated by the observations of \textsection \ref{subsection:sl4}  and \textsection \ref{subsection:sl5}, we conjecture that the Lusztig fodc, and the \emph{dual} fodc associated to the reduced decomposition $\mathbf{j}'$, are distinguished by having tangent spaces $T$ for which $T \oplus \mathbb{C}1$ is a coideal of $\OO_q(\mathrm{SU}_{n+1})^{\circ}$.

\begin{conj} \label{conj:coideal}
The two reduced decompositions $\mathbf{j}$ and $\mathbf{j}'$, of the longest element of the Weyl group of $\mathfrak{sl}_{n+1}$, are the only reduced decompositions for which the span of the positive root vectors is a right and left tangent space. 
\end{conj}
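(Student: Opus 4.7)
The plan is to establish the two directions separately. The ``if'' direction is essentially already in hand: $\mathbf{j}$ gives a tangent space by Corollary \ref{cor:tangentspace}, and $\mathbf{j}'$ follows from the opposite-automorphism arguments of \textsection \ref{subsection:sl4}, together with the fact that $\overline{w_0}^{\,*}$ is a Hopf algebra automorphism interchanging left and right coideals. Hence the substance of the proof lies in showing that no other reduced decomposition $\mathbf{i}$ of $w_0$ produces a span $T_{\mathbf{i}}^+$ for which $T_{\mathbf{i}}^+ \oplus \mathbb{C}1$ is simultaneously a left and a right coideal of $U_q(\mathfrak{sl}_{n+1})^{\circ}$.

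First I would reformulate the two coideal conditions in terms of the Levendorskii--Soibelman-type coproduct formulas for Lusztig's positive root vectors. For $\mathbf{j}$ these take the closed form of Proposition \ref{prop:coidealformula}; for a general $\mathbf{i}$, an analogous expansion holds whose summands are products of root vectors $E_{\gamma}^{\mathbf{i}}$ with $\gamma$ earlier than $\beta$ in the convex order determined by $\mathbf{i}$. The left and right coideal conditions then translate into purely combinatorial requirements: every such product must already reduce, modulo the PBW relations, to an element of $T_{\mathbf{i}}^+$ in the appropriate tensor slot. The key subtlety is that unlike $\mathbf{j}$ and $\mathbf{j}'$, for other decompositions these products involve genuine two-term monomials $E_{\gamma_1}^{\mathbf{i}} E_{\gamma_2}^{\mathbf{i}}$ which typically fail to lie in $T_{\mathbf{i}}^+$.

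Next, I would exploit the braid graph structure on the set of reduced decompositions of $w_0$. Commutation moves $s_is_j = s_js_i$ (for $|i-j|\geq 2$) leave the root vectors unchanged, so only 3-braid moves $s_is_{i+1}s_i \leftrightarrow s_{i+1}s_is_{i+1}$ are substantive; each such move replaces exactly one root vector (within a rank-two subsystem of type $A_2$) by another. The strategy is to show, via a direct calculation in the $A_2$ sub-pattern and Lemma \ref{lem:coproductformula}, that applying a 3-braid move to $\mathbf{j}$ (or $\mathbf{j}'$) always destroys at least one of the two coideal properties — concretely, the replaced root vector acquires an ``incorrect'' summand in its coproduct that is not absorbed by the restriction of the remaining root vectors. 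Paired with an analogous obstruction persisting under further moves, this would localise the two-sided coideal property to the commutation classes of $\mathbf{j}$ and $\mathbf{j}'$.

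The main obstacle is controlling the combinatorial explosion for large $n$: the braid graph is not a tree, so a one-step analysis of 3-braid moves is insufficient — one must rule out the possibility that a sequence of moves reintroduces the coideal property elsewhere. I expect the cleanest route is an induction on $n$, where the inductive hypothesis controls the restriction to the $\mathfrak{sl}_n$ subsystem, combined with a careful analysis of the special r\^ole played by the terminal simple reflection $s_n$ in \eqref{eqn:nicereduceddecomp}. Littelmann's characterisation of nice decompositions via crystal data \cite{LittleCrystal} should provide the correct intrinsic invariant distinguishing $\mathbf{j}$ and $\mathbf{j}'$, and translating that characterisation into a coproduct-level statement is where I expect the real work to lie.
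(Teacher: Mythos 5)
The statement you are trying to prove is stated in the paper as a \emph{conjecture} (Conjecture \ref{conj:coideal}): the authors offer no proof, only the supporting evidence of the explicit computations for $U_q(\mathfrak{sl}_4)$ and $U_q(\mathfrak{sl}_5)$ in \textsection \ref{subsection:sl4} and \textsection \ref{subsection:sl5}, where the non-nice commutation classes are checked case by case to give only one-sided tangent spaces (or none at all). So there is no ``paper proof'' to compare against, and your proposal should be judged on its own terms as a proof of an open statement --- which it is not. What you have written is a research programme: every step that would constitute the actual mathematical content is flagged as expected rather than established.

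Concretely, the gaps are these. First, the reduction of the two coideal conditions to ``purely combinatorial requirements'' via Levendorskii--Soibelman-type coproduct expansions is not available in the clean form you assume: for a general reduced decomposition $\mathbf{i}$ the coproduct of $E^{\mathbf{i}}_{\beta}$ is only known to lie in a filtration piece governed by the convex order, and the summands involve genuine length-$\geq 2$ monomials whose membership (or not) in $T^+_{\mathbf{i}}$ modulo the PBW straightening relations is exactly the question, not a reformulation of it. Second, your central claim --- that a single $3$-braid move applied to $\mathbf{j}$ destroys at least one of the two coideal properties, and that an obstruction then ``persists under further moves'' --- is unproved and is in fact delicate: the paper's own $\mathfrak{sl}_4$ and $\mathfrak{sl}_5$ tables show that classes adjacent to the nice ones retain \emph{one} of the two coideal properties, and for $\mathfrak{sl}_5$ some classes two moves away regain a one-sided property after losing both; so there is no monotone obstruction along paths in the braid graph, and you give no mechanism to rule out a distant class being two-sided. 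Third, the appeal to Littelmann's crystal-theoretic characterisation of nice decompositions is purely aspirational; no bridge from crystal data to coproduct data is constructed. Your proposal is a reasonable sketch of how one might attack the conjecture, but it does not prove it, and neither does the paper.
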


From the discussions of \textsection \ref{subsection:sl4}  and \textsection \ref{subsection:sl5}, we see that the attribute of giving a two-sided, left, or right tangent space, or not giving a tangent spaces at all, separates equivalence classes of reduced decompositions into four distinct non-empty families. It is natural to ask if these divisions admit a natural combinatorical description in one of the many equivalent formulations of reduced decomposition classes, such as rhombic tilings \cite{Elnitsky,RhomBottSam} or pseudoline arrangements \cite{Valtr}. This gives the following problem.

\begin{faidhb}
For the Drinfeld--Jimbo quantum group $U_q(\mathfrak{sl}_{n+1})$, determine those reduced decompositions of the longest elements of the Weyl group for which the associated space of positive root vectors  is a right or a left quantum tangent space.
\end{faidhb}

Motivated by the low-order maximal prolongation results presented so far in this subsection, and by Conjecture \ref{conj:coideal}, we make the following second conjecture.

\begin{conj}
The two reduced decompositions $\mathbf{j}$ and $\mathbf{j}'$, of the longest element of the Weyl group of $\mathfrak{sl}_{n+1}$, are the only reduced decompositions for which the associated space of positive root vectors  forms a tangent space whose associated fodc has a maximal prolongation of classical dimension. 
\end{conj}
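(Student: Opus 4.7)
The plan is to separate the conjecture into existence---which is the content of \textsection 3 for $\mathbf{j}$, transported to $\mathbf{j}'$ via the opposite automorphism $\overline{w_0}$---and uniqueness, which is the substantive claim. I would attack uniqueness in two stages. First, obtain a general version of Proposition \ref{prop:coidealformula} expressing the coproduct of the Lusztig root vector $E_{\gamma}^{\mathbf{i}}$, for an arbitrary reduced decomposition $\mathbf{i}$ of $w_0$, as a sum of tensor products of root vectors from $\mathbf{i}$ weighted by torus elements. This would convert the coideal condition into a purely combinatorial criterion on the pseudoline arrangement (or equivalent rhombic tiling) associated to $\mathbf{i}$. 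Second, for those $\mathbf{i}$ passing that criterion, use the filtration of \textsection \ref{subsection:filtration} to translate the classical-dimension requirement on $\Lambda^{(0,\bullet)}_{q,\mathbf{i}}$ into the requirement that its associated graded algebra be the quantum affine space on $|\Delta^+|$ generators dictated by the convex order of $\mathbf{i}$, and then by direct analysis of $\omega(g)$ for generators $g$ of the tangent-space ideal, of the style used in Lemma \ref{lem:THERELATIONS}, show that this forces $\mathbf{i}$ to be nice.

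In parallel I would pursue an inductive reduction, exploiting Proposition \ref{prop:HKEmbedding} and the restriction machinery of \textsection \ref{subsection:remarksQHTS}. Any reduced decomposition $\mathbf{i}$ of $w_0 \in S_{n+1}$ restricts naturally to reduced decompositions on every proper parabolic Levi subgroup, and one expects non-niceness at rank $n$ to be inherited by at least one such restriction. Combined with the explicit $A_3$ and $A_4$ computations of \textsection \ref{subsection:sl4} and \textsection \ref{subsection:sl5} as the base of the induction, this would reduce the general case to a finite list of ``minimally bad'' configurations to be dispatched by direct computation. The interaction between these two strategies is important: the combinatorial criterion from stage one should, in good cases, be exactly what the restriction argument detects at the lower rank.

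The main obstacle, by a wide margin, is the uniformity of the coproduct formula in stage one. For the nice decomposition the nested $q$-commutators $[E_{j-1},[\ldots,E_i]\ldots]_{q^{-1}}$ admit the clean recursion of Lemma \ref{lem:coproductformula}, which is exactly what lets us identify the relevant summands on the right-hand tensor factor; for a general $\mathbf{i}$ the Lusztig root vector is a nested $q$-commutator of non-adjacent simple generators, and its coproduct produces a combinatorial thicket of mixed terms whose membership in the span of the other Lusztig root vectors is not a priori clear. A subtler difficulty is that the target dimension $2^{|\Delta^+|}$ is a delicate numerical count, so ruling out an accidental cancellation conspiring to produce classical dimension for some exotic non-nice $\mathbf{i}$ will likely require a conceptual characterization of niceness---perhaps via compatibility of the convex order with the $\mathrm{ad}$-action of $U_q(\mathfrak{h})$ on $T^{(0,1)}_{\mathbf{i}}$---rather than purely case-by-case reasoning.
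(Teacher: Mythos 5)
The statement you are addressing is posed in the paper as a \emph{conjecture}, and the paper offers no proof of it: the only supporting material is the explicit low-rank evidence of \textsection \ref{subsection:sl4} and \textsection \ref{subsection:sl5}, culminating in Proposition \ref{prop:MPsl4} for $U_q(\mathfrak{sl}_4)$ (whose proof is itself omitted as a direct calculation), together with the companion Conjecture \ref{conj:coideal} on the coideal property. Your text is likewise a research programme rather than a proof --- you list the unresolved obstacles yourself --- so there is no completed argument on either side to compare; what can be assessed is whether the plan is sound and where, as written, it would break down.

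Two concrete difficulties beyond the ones you name. First, the inductive reduction via parabolic restriction is not available in the form you describe: deleting letters from a reduced decomposition of $w_0 \in S_{n+1}$ does not in general yield a reduced decomposition of the longest element of a parabolic subgroup, and the root vectors $E^{\mathbf{i}}_{\gamma}$ attached to roots supported on a sub-root-system need not be Lusztig root vectors for any decomposition of the parabolic longest word. The restriction machinery of \textsection \ref{subsection:remarksQHTS} acts on the fodc side, via the $\mathrm{ad}(U_q(\mathfrak{l}_S))$-closure of the restricted tangent space, and Examples \ref{eg:counterFODC} and \ref{eg:counterMP} show that both cotangent dimensions and maximal prolongations can jump under restriction, so non-niceness at rank $n$ being ``inherited'' by a lower rank is exactly what needs to be proved, not an input. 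Second, your stage-one coideal criterion cannot by itself force niceness: already for $U_q(\mathfrak{sl}_4)$ all eight commutation classes give (at least one-sided) tangent spaces, yet six of the eight have maximal prolongations of non-classical dimension, and for $U_q(\mathfrak{sl}_5)$ some classes give two-sided or one-sided coideals while still presumably failing the dimension count. Hence the entire weight of uniqueness falls on the second stage --- a Bergman/filtration analysis of $\omega(g)$ for an arbitrary convex order --- and this is precisely the step for which no uniform mechanism is known; the delicate point you flag, ruling out an accidental relation count of $\binom{|\Delta^+|+1}{2}$ for an exotic $\mathbf{i}$, remains entirely open. As it stands the proposal identifies the right obstructions but does not overcome them, and the statement retains its status as a conjecture.
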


Another natural question to ask is whether the approach of this paper can be extended to the $B, C$, and $D$-series, or even to the exceptionals. This gives the second problem of the paper.

\begin{faidhb}
Extend the approach of the present paper to the general Drinfeld--Jimbo setting $U_q(\mathfrak{g})$, for $\mathfrak{g}$ a complex semisimple Lie algebra.
\end{faidhb}

This problem can be expected to be quite challenging. Outside the $A$-series setting, there exist various formulae for the number of reduced decompositions of the longest element of the Weyl group. For example, there is Stanley's well-known work \cite{Stanley}. However, very little is even known about the number of commutation classes of reduced decompositions. 


\subsection{A General Class of Tangent Spaces for $U_q(\mathfrak{sl}_3)$}

In this final subsection, we will examine in more detail the case of $U_q(\mathfrak{sl}_3)$. Classically, $\mathfrak{sl}_3$ is of course the first $A$-series Lie algebra with a non-simple root vector. While there are no non-nice reduced decompositions for the rank $2$ case, it is still possible to consider other, more general deformations of the commutator bracket $[E_2,E_1]$. 

\begin{prop}
For $\theta \in \mathbb{C}$, the coproduct of $[E_2,E_1]_{\theta} = E_2E_1 - \theta E_1E_2$ is given by 
$$
[E_2,E_1]_{\theta} \otimes K_1K_2 + (q^{-1} - \theta) E_2 \otimes E_1K_2  +  (1-q^{-1}\theta) E_1 \otimes E_2K_1 + 1 \otimes [E_2,E_1]_{\theta}.
$$
Thus, for all $\theta \in \mathbb{C}$, the subspace
$$
T_{\theta} := \mathrm{span}_{\mathrm{C}}\Big\{E_1, \, E_2, \, [E_2,E_1]_{\theta} \Big \} \subseteq \OO_q(\mathrm{SU}_3)^{\circ},
$$
is a quantum tangent space for $\OO_q(\mathrm{F}_3)$. 
\end{prop}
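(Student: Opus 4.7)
The plan is to verify, one by one, the defining axioms of a quantum tangent space for the quantum homogeneous space $\OO_q(\mathrm{F}_3) \subseteq \OO_q(\mathrm{SU}_3)$, as recalled in \textsection\ref{subsection:remarksQHTS}, namely: (i) $X(1) = 0$ for every $X \in T_\theta$; (ii) $T_\theta \oplus \mathbb{C}1$ is a right coideal of $\OO_q(\mathrm{F}_3)^{\circ}$; (iii) $\mathrm{ad}(U_q(\mathfrak{h}))T_\theta \subseteq T_\theta$, since for the full flag manifold the isotropy Hopf algebra corresponds to $U_q(\mathfrak{h})$.

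First I would establish the coproduct formula by expanding
$$\Delta([E_2,E_1]_\theta) = \Delta(E_2)\Delta(E_1) - \theta \Delta(E_1)\Delta(E_2)$$
using $\Delta(E_i) = E_i \otimes K_i + 1 \otimes E_i$ and the commutation rules $K_iE_j = q^{a_{ij}}E_jK_i$, with $a_{12} = a_{21} = -1$. The cross terms $E_2 \otimes K_2E_1$ and $E_1 \otimes K_1E_2$ produce scalars $q^{-1}$, which combine with the terms already carrying coefficient $\theta$ to yield exactly the claimed formula. This step is a direct computation along the lines of Example~\ref{eg:singlecomm}.

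The coideal condition (ii) is then immediate from inspection: every first tensor factor appearing in the stated coproduct, namely $[E_2,E_1]_\theta$, $E_2$, $E_1$, and $1$, lies in $T_\theta \oplus \mathbb{C}1$. Together with the elementary coproducts $\Delta(E_i) = E_i \otimes K_i + 1 \otimes E_i$, this shows $\Delta(T_\theta \oplus \mathbb{C}1) \subseteq (T_\theta \oplus \mathbb{C}1) \otimes \OO_q(\mathrm{SU}_3)^{\circ}$, which descends to the required coideal condition in $\OO_q(\mathrm{F}_3)^{\circ}$ under the restriction map. Condition (i) follows from $\varepsilon(E_i) = 0$, which gives $\varepsilon([E_2,E_1]_\theta) = 0$, and condition (iii) follows because $E_1$, $E_2$, and $[E_2,E_1]_\theta$ are each weight vectors (of weights $\alpha_1$, $\alpha_2$, and $\alpha_1+\alpha_2$ respectively), so the adjoint action of $U_q(\mathfrak{h})$ multiplies them by scalars.

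There is no genuine obstacle here: the entire argument is a direct coproduct calculation together with elementary observations. The only point to mention carefully is that $T_\theta$ is genuinely three-dimensional for every $\theta \in \mathbb{C}$, which is clear from the linear independence of $E_1$, $E_2$, and $E_2E_1 - \theta E_1 E_2$ in $U_q(\mathfrak{sl}_3)$, and that all three elements descend to linearly independent functionals on $\OO_q(\mathrm{F}_3)$, which follows from the dual pairing calculation in Lemma~\ref{lem:dualpairing} for the $\theta = q^{-1}$ case and an obvious continuity argument in~$\theta$.
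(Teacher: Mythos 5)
Your proof is correct and is exactly the routine verification the paper intends: the paper states this proposition without proof, and the direct expansion of $\Delta(E_2)\Delta(E_1)-\theta\,\Delta(E_1)\Delta(E_2)$ using $K_2E_1=q^{-1}E_1K_2$ and $K_1E_2=q^{-1}E_2K_1$ yields precisely the displayed coefficients, after which the right-coideal, counit, and $\mathrm{ad}(U_q(\mathfrak{h}))$-invariance conditions are read off just as you describe. The only cosmetic remark is that the linear independence of the restrictions to $\OO_q(\mathrm{F}_3)^{\circ}$ is better justified by a direct pairing (e.g.\ with $u_{31}$ and $u_{21}u_{32}$) than by a ``continuity argument,'' but this does not affect the validity of the proof.
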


We denote the maximal prolongation of the corresponding left $\OO_q(\mathrm{SU}_3)$-covariant fodc by $\Omega^{(0,\bullet)}_{q,\theta}$. Just as for the   Lusztig tangent spaces, we can now read off the module structure of the associated tangent space $\Lambda^1$.

\begin{cor}
All non-zero actions of non-diagonal generators $u_{lk}$ on the basis elements $e_{ji}$ are given by 
\begin{align*}
e_{21}\,u_{32} = (q - \theta)e_{31} , & & e_{32}\, u_{21} = (q^{-1} - \theta)e_{31},
\end{align*}
with the diagonal actions the same as for the Lusztig fodc.
\end{cor}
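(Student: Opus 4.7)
The plan is to mirror the proof of Proposition \ref{prop:Gens} in the deformed setting, replacing the Lusztig coproduct of $E_{31}$ with the coproduct of $[E_2,E_1]_\theta$ supplied by the preceding proposition.

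First I would verify that $\{[u_{21}], [u_{32}], [u_{31}]\}$ is the dual basis of $\Lambda^1$ to $\{E_1, E_2, [E_2,E_1]_\theta\}$. The two simple-root pairings are standard. For the third, a direct evaluation using $\Delta(u_{rs}) = \sum_a u_{ra}\otimes u_{as}$ gives
\[
\langle [E_2,E_1]_\theta, u_{rs}\rangle = \sum_a \langle E_2,u_{ra}\rangle\langle E_1,u_{as}\rangle - \theta \sum_a \langle E_1,u_{ra}\rangle\langle E_2,u_{as}\rangle = \delta_{r3}\delta_{s1},
\]
since the only non-vanishing contribution is $\langle E_2,u_{32}\rangle\langle E_1,u_{21}\rangle = 1$ and the $\theta$-term vanishes. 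In particular the three cosets are linearly independent, so $[u_{31}]$ is dual to $[E_2,E_1]_\theta$.

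Next I would observe that the diagonal actions $e_\beta \cdot u_{kk}$ are controlled by the weight of $e_\beta$ under the left $U_q(\mathfrak{h})$-action. Since the group-like summand $K_1K_2$ in the first term of $\Delta([E_2,E_1]_\theta)$ coincides with the group-like part of $\Delta(E_{31})$ in the Lusztig case, the weight of $[u_{31}]$ is unchanged and the diagonal actions are identical to those in Proposition \ref{prop:Gens}. The non-diagonal actions are then computed via
\[
\langle X,\, u_{ji}u_{lk}\rangle = \langle \Delta(X),\, u_{ji}\otimes u_{lk}\rangle, \qquad X\in T_\theta.
\]
For strictly upper-triangular $u_{lk}$, each term in $\Delta(X)$ places either a group-like element or a strictly-lower-triangular root vector in the second tensor slot, each of which pairs trivially with $u_{lk}$; hence $e_\beta\cdot u_{lk}=0$. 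For strictly lower-triangular $u_{lk}$, the only non-trivial contributions come from the two cross terms $(q^{-1}-\theta)E_2\otimes E_1K_2$ and $(1-q^{-1}\theta)E_1\otimes E_2K_1$ in $\Delta([E_2,E_1]_\theta)$, evaluated respectively on $u_{32}\otimes u_{21}$ and $u_{21}\otimes u_{32}$. Using the standard pairing values $\langle K_1,u_{22}\rangle = q$ and $\langle K_2,u_{11}\rangle = 1$, these yield the scalars $(1-q^{-1}\theta)q = q-\theta$ and $(q^{-1}-\theta)\cdot 1 = q^{-1}-\theta$, giving the two claimed formulas. All seven remaining lower-triangular actions are shown to vanish by direct pairing computations: in each case either the required $K$-factor annihilates $u_{lk}$ or a simple root vector fails to pair with the relevant matrix entry.

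The proof is therefore essentially a careful bookkeeping exercise, and the main potential pitfall is merely fixing the correct convention for the Hopf pairing values $\langle K_i, u_{jj}\rangle$. This convention can be pinned down by requiring consistency with the Lusztig specialisation $\theta = q^{-1}$, which must recover the identity $e_{21}\cdot u_{32} = \nu e_{31}$ of Proposition \ref{prop:Gens} (and which indeed produces $q-q^{-1} = \nu$ and $q^{-1}-q^{-1}=0$ under the present formulas, consistent with the Lusztig case having only one non-zero non-diagonal action).
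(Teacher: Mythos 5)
Your proposal is correct and is exactly the computation the paper has in mind: the corollary is stated without proof as a direct read-off from the coproduct of $[E_2,E_1]_{\theta}$ via the dual pairing, precisely mirroring the method of Proposition \ref{prop:Gens}, and your evaluation of the cross terms (including the values $\langle K_1,u_{22}\rangle = q$ and $\langle K_2,u_{11}\rangle = 1$) reproduces the two non-zero actions and the vanishing of the rest. The specialisation check at $\theta=q^{-1}$, recovering $e_{21}u_{32}=\nu e_{31}$ and $e_{32}u_{21}=0$ from Proposition \ref{prop:Gens}, is a sound way to confirm the pairing conventions.
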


Just as in Example \ref{eg:SU3.module} we can present this in diagrammatic form 
\begin{center}
\begin{tikzpicture}
  \draw (0,0) node[circle,fill,inner sep=2pt] (A) {}
        (0,-1.5) node[circle,fill,inner sep=2pt] (B) {}
        (1.5,-1.5) node[circle,fill,inner sep=2pt] (C) {};
  \draw[->, shorten >=2pt, shorten <=2pt, , >=latex, line width=0.7pt] (A) -- node[right] {} (B);
    \draw[->, shorten >=2pt, shorten <=2pt, , >=latex, line width=0.7pt] (C) -- node[right] {} (B);
\end{tikzpicture}
\end{center}
where the basis elements are arranged  in lower triangular form and we have drawn an arrow from one basis element $e$ to another $e'$ if there exists a generator $u_{ij}$ such that $eu_{ij}$ is a scalar multiple of $e'$.

\begin{prop} \label{prop:yamane}
For any $\theta \neq q^{\pm 1}$, it holds that 
$
e_{21} \wedge e_{32}  = - \theta \, e_{32} \wedge e_{21}
$
with all other products of generators equal to zero. In other words,  
$$
\Omega^{(0,2)}_{q,\theta} = \OO_q(\mathrm{SU}_3) \otimes e_{21} \wedge e_{32} = \OO_q(\mathrm{SU}_3) \otimes e_{32} \wedge e_{21} 
$$
and all higher forms are trivial. In particular, for $\theta \neq q^{\pm 1}$, the dc has non-classical dimension.
\end{prop}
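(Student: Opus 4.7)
The plan is to establish the quadratic relations of $\Lambda^{(0,\bullet)}_{q,\theta}$ by computing $\omega(x) = [x^+_{(1)}] \otimes [x^+_{(2)}]$ for a suitable collection of elements $x \in I_\theta \subseteq \OO_q(\mathrm{SU}_3)^+$. First I would verify, by a direct pairing calculation against $T_\theta$, that $I_\theta$ contains the strict upper triangular generators $u_{ij}$ (for $i<j$), the elements $u_{kk}-1$, the three squares $u_{ji}^2$ for $(i,j)\in\Delta^+$, the off-diagonal products $u_{21}u_{31}$, $u_{31}u_{21}$, $u_{32}u_{31}$, $u_{31}u_{32}$, and the two twisted elements $u_{21}u_{32}-(q-\theta)u_{31}$ and $u_{32}u_{21}-(q^{-1}-\theta)u_{31}$, whose corrections encode the action formulae of the preceding corollary. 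Applying Lemma \ref{lem:MCProduct} to $u_{ji}^2$ produces $\omega(u_{ji}^2) = (q+q^{-1})\,e_{ji}\otimes e_{ji}$, yielding $e_{ji}\wedge e_{ji}=0$ for each $(i,j)\in \Delta^+$; the same lemma applied to either twisted generator produces $e_{21}\wedge e_{32} + \theta\,e_{32}\wedge e_{21} = 0$, the claimed commutation relation.

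The hard part will be the vanishing of every quadratic product containing $e_{31}$. Direct application of Lemma \ref{lem:MCProduct} to $u_{21}u_{31}$ gives
\[
\omega(u_{21}u_{31}) = e_{21}\otimes e_{31} + (q+q^{-1}-\theta)\,e_{31}\otimes e_{21} \in I_\theta^{(2)},
\]
a single linear relation on the two-dimensional weight-$(2\alpha_1+\alpha_2)$ subspace of $\Lambda^{(0,1)}_\theta \otimes \Lambda^{(0,1)}_\theta$. To obtain a second, independent relation I would exploit the fact (a direct consequence of Lemma \ref{lem:MCProduct}) that $I_\theta^{(2)}$ is a right $\OO_q(\mathrm{SU}_3)$-submodule of $\Lambda^{(0,1)}_\theta \otimes \Lambda^{(0,1)}_\theta$ under the diagonal action: acting on the known element $e_{21}\otimes e_{21}\in I_\theta^{(2)}$ by $u_{32}$ and expanding via $\Delta u_{32}$ yields
\[
(e_{21}\otimes e_{21})\cdot u_{32} = (q-\theta)\bigl(q\,e_{31}\otimes e_{21} + e_{21}\otimes e_{31}\bigr) \in I_\theta^{(2)}.
\]
The hypothesis $\theta\neq q$ is used to cancel the factor $(q-\theta)$, and combining the resulting relation with the one from $\omega(u_{21}u_{31})$ produces $(q^{-1}-\theta)\,e_{31}\otimes e_{21}\in I_\theta^{(2)}$; the second hypothesis $\theta\neq q^{-1}$ then forces both $e_{31}\otimes e_{21}$ and $e_{21}\otimes e_{31}$ into $I_\theta^{(2)}$. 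A fully symmetric argument, applied to $\omega(u_{32}u_{31})$ together with the action of $u_{21}$ on $e_{32}\otimes e_{32}=0$, will kill the remaining products $e_{31}\otimes e_{32}$ and $e_{32}\otimes e_{31}$.

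To conclude, I would use that $I_\theta^{(2)}$ is a $U_q(\mathfrak{h})$-submodule of $\Lambda^{(0,1)}_\theta \otimes \Lambda^{(0,1)}_\theta$, hence decomposes into weight spaces; the only weight component surviving in the quotient is the $\alpha_1+\alpha_2$ one, on which the single relation from the first paragraph cuts the dimension from two down to exactly one. The surviving direction is spanned by $e_{32}\wedge e_{21}$, giving $\Omega^{(0,2)}_{q,\theta} = \OO_q(\mathrm{SU}_3)\otimes(e_{32}\wedge e_{21})$ via the Takeuchi-style freeness of the left $A$-covariant calculus. A direct check using $e_{ji}\wedge e_{ji}=0$ together with the vanishing of every product containing $e_{31}$ shows $\Lambda^{(0,k)}_{q,\theta}=0$ for $k\geq 3$. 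The total dimension $1+3+1=5$ then differs from the classical value $2^{|\Delta^+|}=8$, yielding the non-classical-dimension conclusion.
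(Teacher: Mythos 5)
Your proposal is correct and follows essentially the same route as the paper's proof: the same generating set (including the twisted generators $u_{21}u_{32}-(q-\theta)u_{31}$ and $u_{32}u_{21}-(q^{-1}-\theta)u_{31}$), the same relations from $\omega$, and the same key step of acting on $e_{21}\otimes e_{21}$ by $u_{32}$ (and symmetrically on $e_{32}\otimes e_{32}$ by $u_{21}$) to force all products involving $e_{31}$ to vanish, with $\theta\neq q^{\pm 1}$ used exactly where the paper uses it. Your closing weight-space argument, showing that the $\alpha_1+\alpha_2$ component of $I_\theta^{(2)}$ is only one-dimensional, makes explicit the lower bound on $\dim\Lambda^{(0,2)}_{q,\theta}$ that the paper leaves as ``follows immediately,'' which is a welcome addition rather than a divergence.
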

\begin{proof}
A generating set for the ideal $I \subseteq \O_q(\mathrm{SU}_3)^+$ of the fodc is given by  the union of $G_1$ and $G_2$ (as defined in Proposition \ref{prop:Gens}) and the sets
\begin{align*}
G_{3} ~~:= & \, \{u_{21}u_{21}, u_{31}u_{31}, u_{32}u_{32}, u_{21}u_{31}, u_{32}u_{31}\}, \\
~~ G_{3,\theta}  := & \, \{u_{21}u_{32} - (q-\theta)u_{31}, \, u_{32}u_{21} - (q^{-1} - \theta)u_{31}\}.
\end{align*}
Just as for the Lusztig calculi, the generators contained  in $G_1$ and $G_2$ give the zero relation. From the elements of $G_{3}$ we get the relations
\begin{align}\label{eqn:Yamanerels1}
  e_{21} \wedge e_{21} = e_{32} \wedge e_{32} =  e_{31} \wedge e_{31} = 0,  & &  e_{21} \otimes e_{31} = - (q+q^{-1}-\theta) e_{31} \otimes e_{21},
\end{align}
while from the elements of $G_{3,\theta}$ we get the relations
\begin{align} \label{eqn:Yamanerels2}
e_{31} \wedge e_{32} = - (q + q^{-1} - \theta) e_{32} \wedge e_{31},  & & e_{21} \wedge e_{32}  = - \theta e_{32} \wedge e_{21}.
\end{align}
Consider now the right action of the element $u_{32}$ on $e_{21} \otimes e_{21}$:
\begin{align*}
(e_{21} \otimes e_{21})u_{32} = & \, (q-\theta)(q e_{31} \otimes e_{21} + e_{21} \otimes e_{31}),
\end{align*}
meaning that we get the relation
$
e_{21} \wedge e_{31} = -q e_{31} \wedge e_{21}.
$
Combining this with the second relation in  in \eqref{eqn:Yamanerels1}, we get that 
$$
e_{21} \wedge e_{31} = e_{31} \wedge e_{21} = 0.
$$
Similarly, acting on $e_{32} \otimes e_{32}$ by $u_{21}$ and combining with the  first relation in \eqref{eqn:Yamanerels2} allows us to conclude that $e_{31} \wedge e_{32}$ and $e_{32} \wedge e_{31}$ are also zero.  The claimed description of the degree two and higher forms now follows immediately.
\end{proof}

\begin{remark}
It is important to note that, in the proof of Proposition \ref{prop:yamane}, the set of relations produced from our choice of generating set is not closed under the right action of $\OO_q(\mathrm{SU}_3)$. Hence, they do not give a complete set of relations. We contrast this with the proof of Lemma \ref{lem:THERELATIONS}, where the image of the chosen generating set under $\omega$ is a right $\OO_q(\mathrm{SU}_{n+1})$-module of $\Lambda^{(0,1)}_q$.
\end{remark}

\appendix

\section{The $A$-Series root vectors} \label{app:Lusztigroots} 

In this appendix we give necessary  preliminaries concerning the Drinfeld--Jimbo quantum group $U_q(\mathfrak{sl}_{n+1})$, Lusztig's braid group action, and the root vectors associated to a reduced decomposition of the longest element of the $A$-series Weyl group. We also explicitly derive, for sake of completeness, the root vectors associated to the reduced decomposition  discussed in \textsection 3.1. 

\subsection{The $A_{n}$ Root System} \label{subsection:AnRootSystem}

In this subsection, so as to set notation, we recall some basic definitions and results about the $A_n$-root system associated to the special linear Lie algebra $\mathfrak{sl}_{n+1}$. Let $\{\e_i\}_{i=1}^{n+1}$ be the standard basis of $\mathbb{R}^{n+1}$,  and endow it with its canonical Euclidean structure. The root system $A_{n}$ is the pair $(V,\Delta)$, where $V$ is the subspace of $\mathbb{R}^{n+1}$ given by 
$$
V := \bigg \{ (x_1, \dots, x_{n+1}) \in \mathbb{R}^{n+1} \,|\, \sum_{i=1}^{n+1} x_i = 0 \bigg\},
$$
endowed with the restriction of the standard Euclidean inner product $(-,-)$ of $\mathbb{R}^{n+1}$, and $\Delta$, the set of roots, is the finite set 
$$
\Delta := \Big\{\alpha_{ij} := \e_i - \e_j \,|\, 1 \leq i \neq j \leq n+1 \Big\}.
$$
We adopt the usual notation 
\begin{align*}
\langle \beta,\gamma\rangle := \frac{(\beta,\gamma)}{(\beta,\beta)}, & & \textrm{ for } \beta, \gamma \in \Delta^+.
\end{align*}
We choose the standard  subset of positive roots
$$
\Delta^+ := \Big\{\alpha_{ij} := \e_i - \e_j \,|\, 1 \leq i < j \leq n+1 \Big\},
$$
and we denote by $\mathcal{Q}^+$ the $\mathbb{Z}_{\geq 0}$-span of $\Delta^+$. The associated set of simple roots is then given by
$$
\Pi := \Big\{\alpha_i := \alpha_{i,i+1} = \e_i - \e_{i+1} \,|\, i = 1, \dots, n\Big\},
$$
and the associated \emph{Cartan matrix} is given by $(a_{ij}) := (\langle \alpha_i,\alpha_j\rangle)$.

The \emph{standard  partial order} $\prec$ on $\Delta$ is defined by $\beta \prec \gamma$ if $\gamma - \beta \in \mathcal{Q}^+$. We denote the associated set of fundamental weights by $\{\varpi_1, \dots, \varpi_{n}\}$,  and write ${\mathcal P}^+$ for the $\mathbb{Z}_{\geq 0}$-span of the fundamental weights. 

We recall that the Weyl group of the root system $A_n$ is the symmetric group $S_{n+1}$. Denote by  $\{s_i \,|\, 1 \leq i \leq n\}$ the generating set of simple transpositions. These generators satisfy the \emph{quadratic relations} $s^2_i = 1$, the \emph{commutation relations} $s_is_j = s_js_i$, for \mbox{$|i - j| > 1$}, and the \emph{braid relations} $s_is_{i+1}s_i = s_{i+1}s_is_{i+1}$, for $1 \leq i \leq n - 1$. The \emph{length} $\ell(w)$ of an element $w \in W$ is the smallest non-negative integer $l$ for which there exists an expression $w = s_{i_1}s_{i_2} \cdots s_{i_l}$. Moreover, any such expression is called a \emph{reduced decomposition}. We define an equivalence relation on the set of reduced decompositions by setting $\mathbf{i} \sim \mathbf{i}'$ if $\mathbf{i}'$ can be obtained from $\mathbf{i}$ through a series of commutation relations. We call the associated equivalence classes \emph{commutation classes}.

In addition to the partial order $\succ$, we will also be interested in certain total orders known as convex orders \cite{papi}: A \emph{convex order} for $\Delta^+$ is a total order such that for any $\beta,\gamma \in \Delta^+$ satisfying $\beta + \gamma \in \Delta^+$ and $\beta < \gamma$, we necessarily have that $\beta < \beta + \gamma < \gamma$. As shown in \cite{papi}, convex orderings correspond to reduced decompositions of the longest element $w_0$ of the Weyl group. For a reduced decomposition $w_0 = s_{i_1} \cdots s_{i_d}$, with $d := |\Delta^+|$, we write
\begin{align*}
\beta_1 := \alpha_{i_1}, & &  \beta_k := s_{i_1} \cdots s_{i_{k-1}}(\alpha_{i_k}), & & \textrm{ for } k = 2, \dots , d.
\end{align*}
We have that  $\Delta^+ = \{\beta_1, \dots, \beta_d\}$ and a convex order $<$ on $\Delta^+$ is given by $\beta_1 < \cdots  < \beta_d$. Throughout the paper we deal with $\mathbf{j}$  the reduced decomposition
\begin{align*}
w_0 = (s_ns_{n-1} \cdots s_1)(s_ns_{n-1} \cdots s_{2}) \cdots (s_ns_{n-1})s_n.
\end{align*}
The associated convex order is given by 
\begin{align*}
\alpha_{ij} < \alpha_{kl}, & & \textrm{ whenever } j > l, \textrm{ or whenever } j=l \textrm{ and } i > k.
\end{align*} 
This order will be used in \textsection \ref{subsection:filtration}.

\begin{eg}
For the root system $A_3$, the convex order associated to the decomposition 
$$
w_0 = (s_3 s_2 s_1) (s_3 s_2) s_3
$$
can be represented graphically as follows: Identify the positive roots with the strictly upper triangular root vectors in $\mathfrak{sl}_{4}$. Then we have the diagram
\begin{align*}
\begin{tikzpicture}[scale=1.45]
\draw (0,2) node {$\bullet$};
\draw (1,2) node {$\bullet$};
\draw (2,2) node {$\bullet$};
\draw (1,1) node {$\bullet$};
\draw (2,1) node {$\bullet$};
\draw (2,0) node {$\bullet$};
\draw [->, shorten >=5pt, shorten <=5pt,line width=1pt, blue] (1,2) -- (0,2);
\draw [->, shorten >=5pt, shorten <=5pt,line width=1pt, blue] (1,1) -- (1,2);
\draw [->, shorten >=5pt, shorten <=5pt,line width=1pt, blue] (2,2) -- (1,1);
\draw [ ->, shorten <= 5pt, shorten >= 5pt,line width=1pt, blue] (2,1) -- (2,2);
\draw [->, shorten >=5pt, shorten <=5pt,line width=1pt, blue] (2,0) -- (2,1);
\end{tikzpicture}
\end{align*}
where an arrow pointing from one node to another denotes that the domain is greater than the range. The total order is then given by the transitive closure of these inequalities.
\end{eg}

\subsection{The Drinfeld--Jimbo Quantised Enveloping Algebra $U_q(\mathfrak{sl}_{n+1})$}

Here we recall the definition of the Hopf algebra $U_q(\mathfrak{sl}_{n+1})$, following the conventions of \cite[\textsection 7]{KSLeabh}.  For any $q \in \bR$, such that $q \neq -1,0,1$,  the \emph{Drinfeld--Jimbo quantised enveloping} $U_q(\mathfrak{sl}_{n+1})$ is the noncommutative associative algebra generated by the elements $E_i, F_i, K_i$, and $K^{-1}_i$, for $ i=1, \ldots, n$, subject to the relations 
\begin{align*}
 K_iE_j = q^{a_{ij}} E_j K_i, ~~~~ K_iF_j= q^{-a_{ij}} F_j K_i, ~~~~ K_i K_j = K_j K_i, ~~~~ K_iK_i^{-1} = K_i^{-1}K_i = 1,\\
 E_iF_j - F_jE_i = \d_{ij}\frac{K_i - K\inv_{i}}{q-q^{-1}}, ~~~~~~~~~~~~~~~~~~~~~~~~~~~~~~~~~~~~~~~~~
\end{align*}
where $\nu := q-q^{-1}$, and the \emph{quantum Serre relations} 
\begin{align*}
E_{i \pm 1}^2E_i - [2]_q E_{i \pm 1}E_{i}E_{i \pm 1} + E_i E^2_{i \pm 1} = 0,  ~~~~~~~ F_{i \pm 1}^2F_i - [2]_q F_{i \pm 1}F_{i}F_{i \pm 1} + F_i F^2_{i \pm 1} = 0, \\
 E_{i}E_i  = E_jE_i, ~~~~~~~ F_iF_j = F_jF_i, ~~~~~~~ \textrm{ for } |i - j| > 1.~~~~~~~~~~~~~~~~~~~~~~~ 
\end{align*}
The formulae 
\begin{align*}
\DEL(K_i) = K_i \oby K_i, & & \DEL(E_i) = E_i \oby K_i + 1 \oby E_i, & & \DEL(F_i) = F_i \oby 1 + K_i\inv \oby F_i, 
\end{align*}
and $\e(E_i) = \e(F_i) = 0$, $\e(K_i) = 1$, define a Hopf algebra structure on $U_q(\mathfrak{sl}_{n+1})$ satisfying 
\begin{align*}
S(E_i) = - E_iK_i\inv, ~~ S(F_i) = -K_iF_i, ~~~~ S(K_i) = K_i\inv.
\end{align*}
We denote by $U_q(\mathfrak{h})$ the subalgebra of $U_q(\mathfrak{sl}_{n+1})$ generated by the elements $K_i,K^{-1}_i$, for $i=1, \dots, n$. For any $\lambda \in \mathcal{P}^+$, we denote by $V_{\lambda}$ the associated finite-dimensional irreducible type-$1$ representation of $U_q(\mathfrak{sl}_{n+1})$. For any $U_q(\mathfrak{sl}_{n+1})$-module $V$, an element $v \in V$ is said to be a \emph{weight vector} if, for some $\mathrm{wt}(v) \in \mathcal{P}^+$, we have $K_i v = q^{(\alpha_i,\mathrm{wt}(v))} v$, for all $i=1,\ldots, n$. We call $\mathrm{wt}(v)$ the \emph{weight} of $v$.

\subsection{The Quantum Coordinate Algebra $\OO_q(\mathrm{SU}_{n+1})$}

Let $V$ be a finite-dimensional \linebreak $U_q(\mathfrak{sl}_{n+1})$-module, $v \in V$, and $f \in V^*$, the linear dual of $V$. Consider the linear functional $c^{\textrm{\tiny $V$}}_{f,v}:U_q(\mathfrak{sl}_{n+1}) \to \bC$ defined by $c^{\textrm{\tiny $V$}}_{f,v}(X) := f\big(X(v)\big)$. 
%
The {\em coordinate ring} of $V$ is the subspace
\begin{align*}
C(V) := \text{span}_{\mathbb{C}}\!\big\{ c^{\textrm{\tiny $V$}}_{f,v} \,| \, v \in V, \, f \in V^*\big\} \sseq U_q(\mathfrak{sl}_{n+1})^*.
\end{align*}
It is easily checked that $C(V)$ is contained in $U_q(\mathfrak{sl}_{n+1})^\circ$, the Hopf dual of $U_q(\mathfrak{sl}_{n+1})$, and moreover that a Hopf subalgebra of $U_q(\mathfrak{sl}_{n+1})^\circ$ is given by 
\begin{align} \label{eqn:PeterWeyl}
\O_q(\mathrm{SU}_{n+1}) := \bigoplus_{\lambda \in \mathcal{P}^+} C(V_{\lambda}).
\end{align}
We call $\O_q(\mathrm{SU}_{n+1})$ the {\em quantum coordinate algebra of $\mathrm{SU}_{n+1}$}. Moreover, we call the decomposition of $\OO_q(\mathrm{SU}_{n+1})$ given in \eqref{eqn:PeterWeyl} the \emph{Peter--Weyl decomposition} of $\OO_q(\mathrm{SU}_{n+1})$. We denote by $\langle -,-\rangle: U_q(\frak{sl}_{n+1}) \times \OO_q(\mathrm{SU}_{n+1}) \to \mathbb{C}$ the pairing given by evaluation.

When $V = V_{\lambda}$, for $\lambda \in \mathcal{P}^+$, and $f \in V_{\lambda}^*$, $v \in V_{\lambda}$, we find it convenient to $c^{\lambda}_{f,v}$ for the associated coordinate functional.  For any choice of weight basis $\{e_i\}_{i=1}^{n}$ of $V_{\varpi_n}$, the associated \emph{ matrix coefficients } are 
\begin{align} \label{eqn:matrixgens}
u_{ij} := c^{\varpi_n}_{e^i,e_j},  & & \textrm{ where } \{e^i\}_{i=1}^{n} \textrm{ is the dual basis of } V^*_{\varpi_n}. 
\end{align}
Note that $\OO_q(\mathrm{SU}_{n+1})$ is generated as an algebra by the elements $u_{ij}$. We fix, for once and for all, a weight basis satisfying
\begin{align} \label{eqn:dualpairing}
\langle E_i, u_{i+1,i} \rangle = 1, & & \langle F_i, u_{i,i+1} \rangle = 1, & & \langle K^{\pm}_j, u_{ii} \rangle = q^{\pm(\delta_{j+1,i} -\delta_{ij})},
\end{align}
and all other pairings of generators are zero. It can be shown that the algebra $\OO_q(\mathrm{SU}_{n+1})$ is generated by the elements $u_{ij}$ subject to the relations
\begin{flalign*}
  u_{ij}u_{i'j}  = q u_{i'j}u_{ij}, & ~~~~~~   u_{ij}u_{ij'} = qu_{ij'}u_{ij},   \\                 
   u_{ij'}u_{i'j} = u_{i'j}u_{ij'}, & ~~~~~~   u_{ij}u_{i'j'} = u_{i'j'}u_{ij} + (q-q^{-1})u_{ij'}u_{i'j},  
\end{flalign*}
for $i,i',j,j' = 1, \dots, n+1$, such that $i < i', j<j'$, along with the quantum determinant relation which we omit. The Hopf algebra structure on $\OO_q(\mathrm{SU}_{n+1})$ is determined by $\DEL(u_{ij}) :=  \sum_{a=1}^{n+1} u_{ia} \oby u_{aj}$ and $\e(u_{ij}) := \d_{ij}$.

\subsection{Lusztig's Root Vectors}

In this subsection we recall Lusztig's braid group action on $U_q(\mathfrak{sl}_{n+1})$, and the space of positive root vectors  associated to a choice of a reduced decomposition of the longest element of the Weyl group. See  \cite{LusztigLeabh}, or \cite[\textsection 6.2]{KSLeabh}, for a more detailed presentation. Throughout, we will use twisted commutators, that is to say, for any algebra $A$ and some constant $c \in \mathbb{C}$, we denote
$
[a,b]_{c} := ab - c ba,
$
for $a,b \in A$.

To every $i=1, \ldots, n$, there corresponds an algebra automorphism $T_i$ of $U_q(\mathfrak{sl}_{n+1})$ which acts on the generators, for $j = 1, \dots, n$, as
\begin{flalign*}
&  T_i(K_j) = K_j K_i^{-a_{ij}}, ~~~~~~~~~~~~~ T_i(E_i) = -F_iK_i, ~~~~~~~~~~~~~  T_i(F_i)= -K_i^{-1}E_i,  \\
&  T_{k\pm 1}(E_k) = - [E_{k\pm1},E_k]_{q^{-1}}, ~~~~~~~~~  T_{i}(E_k) = E_{k}, ~~  \textrm{ if } |i - k| > 1,\\
&   T_{k\pm 1}(F_k) = - [F_k, F_{k\pm1}]_{q}, ~~~~~~~~\,~~~~  T_{i}(F_k) = F_{k}, ~\,\,~ \textrm{ if } |i - k| > 1. 
\end{flalign*}
The mapping $s_i \rightarrow T_i$, for $i = 1, \dots, n$, determines a homomorphism of the braid group $B_{n+1}$ into the group of algebra automorphisms of $U_q(\mathfrak{sl}_{n+1})$. We associate, to any choice of reduced decomposition $w_0 = s_{i_1}\cdots s_{i_{d}}$ of the longest element of the Weyl group, the following  elements
\begin{align*}
E_{\beta_k} : = T_{i_1}T_{i_2} \cdots T_{i_{k-1}}(E_{i_k}), & & \textrm{ for } k = 1, \dots, d.
\end{align*}
We call any such element a \emph{root vector} of $U_q(\mathfrak{sl}_{n+1})$. The set of all root vectors is linearly independent and we have an associated quantum generalisation of the classical Poincar\'{e}--Birkoff--Witt theorem \cite[\textsection 6.2.3]{KSLeabh}. For a reduced decomposition $\mathbf{i}$, we denote the vector space spanned by the associated space of positive root vectors by $\mathfrak{n}^+_{\mathbf{i}}$.

\subsection{The Explicit Calculation}

In this subsection we give an explicit description of the root vectors associated to the longest word decomposition given in \textsection 3.1. We note first that it follows from the discussions in \cite[1.8(d)]{LusztigJAMS} that 
\begin{align*} \label{eqn:BraidActionFormulae}
T_{k,k\pm 1}(E_{k}) = - E_{k \pm 1}.
\end{align*}

We find it convenient to introduce some notation. First we write
\begin{align*}
E_{ji} := [\cdots [E_{j-1},E_{j-2}]_{q^{-1}}, E_{j-3}]_{q^{-1}}, \cdots, E_i]_{q^{-1}}.
\end{align*}
Then, for any $i \leq j$, we denote
\begin{align*}
T_{ji} := T_j \circ T_{j-1} \circ \cdots \circ T_i.
\end{align*}
The following more general formulae can now be derived from this lemma. 

\begin{lem} \label{lem:formulabis}
For any $(i,j) \in \Delta^+$, it holds that
\begin{enumerate}
\item $T_{n,i+1}(E_i) = (-1)^{n-i} E_{n+1,i}$,
\item $T_{n,i-1}(E_{ji}) = (-1)^{j-i+1} E_{j-1,i-1}$.
\end{enumerate}
\end{lem}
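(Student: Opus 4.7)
The plan is to prove both parts by induction, exploiting three ingredients: the algebra‐homomorphism property of the braid operators $T_k$, the recursive structure of the nested $q$-commutators $E_{j,i} = [E_{j,i+1}, E_i]_{q^{-1}}$ (equivalently, as I will verify, $E_{j,i} = [E_{j-1}, E_{j-1,i}]_{q^{-1}}$), and the fact that $T_k(E_\ell) = E_\ell$ whenever $|k-\ell|>1$. Before the main induction it will be convenient to establish the ``opposite-nested'' presentation $E_{j,i} = [E_{j-1}, E_{j-1,i}]_{q^{-1}}$; this is proved by a short induction on $j-i$ using the commutation $[E_{j-1}, E_k]=0$ for $k \leq j-3$.

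For Part (1), I would induct on $n-i$. The base case $n = i$ makes $T_{n,i+1}$ an empty composition, so $T_{n,i+1}(E_i) = E_i = E_{n+1,i}$, consistent with the sign $(-1)^0$. For the inductive step, assume $T_{n-1,i+1}(E_i) = (-1)^{n-1-i} E_{n,i}$ and apply $T_n$. Using the opposite-nested presentation $E_{n,i} = [E_{n-1}, E_{n-1,i}]_{q^{-1}}$ and the fact that $T_n$ fixes the inner factor $E_{n-1,i}$ (which involves only $E_k$ with $k \leq n-2$), one obtains $T_n(E_{n,i}) = [-[E_n,E_{n-1}]_{q^{-1}}, E_{n-1,i}]_{q^{-1}} = -E_{n+1,i}$ directly from the defining formula for $E_{n+1,i}$. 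Combined with the inductive sign this gives the required $(-1)^{n-i} E_{n+1,i}$.

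For Part (2), I would induct on $j-i$, and the base case $j=i+1$ is the technical heart. Here $E_{ji} = E_i$ and I need $T_{n,i-1}(E_i) = E_{i-1}$. Since $T_k$ fixes $E_{i-1}$ for all $k \geq i+1$, it suffices to show $T_i T_{i-1}(E_i) = E_{i-1}$. Writing $T_{i-1}(E_i) = -[E_{i-1}, E_i]_{q^{-1}}$ and applying $T_i$, the problematic factor $T_i(E_i) = -F_i K_i$ takes us out of the positive subalgebra, so the core step is the $q$-commutator identity
\[
\bigl[[E_i, E_{i-1}]_{q^{-1}},\, F_i K_i\bigr]_{q^{-1}} = -E_{i-1},
\]
which I would verify by a direct calculation using only $[E_i,F_i] = (K_i - K_i^{-1})/\nu$ and the $K$-$E$ commutations $K_i E_j = q^{a_{ij}} E_j K_i$. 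Once this identity is in hand the base case is immediate.

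For the inductive step of Part (2), the opposite-nested recursion gives
\[
T_{n,i-1}(E_{j,i}) = \bigl[T_{n,i-1}(E_{j-1}),\, T_{n,i-1}(E_{j-1,i})\bigr]_{q^{-1}}.
\]
The second factor is handled by the inductive hypothesis applied to $E_{j-1,i}$, contributing $(-1)^{j-i}E_{j-2,i-1}$. For the first factor, the key observation is that $T_k$ fixes $E_{j-1}$ for $k \leq j-3$, so $T_{n,i-1}(E_{j-1}) = T_{n,j-2}(E_{j-1})$; interpreting $E_{j-1} = E_{j,j-1}$, this is Part (2) in its base case and evaluates to $E_{j-2}$. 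Reassembling yields $[E_{j-2}, E_{j-2,i-1}]_{q^{-1}} = E_{j-1,i-1}$ by the opposite-nested recursion, with careful bookkeeping of the signs producing the claimed overall factor $(-1)^{j-i+1}$. The main obstacle throughout will be the direct verification of the $q$-commutator identity above; all subsequent manipulations are essentially formal once this identity is established.
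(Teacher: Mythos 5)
Your strategy is essentially the paper's: both proofs run inductions that exploit the algebra-automorphism property of the $T_k$, the fact that $T_k$ fixes $E_\ell$ whenever $|k-\ell|>1$, and a single rank-two braid computation $T_kT_{k-1}(E_k)=\pm E_{k-1}$. The differences are modest. For that rank-two step the paper simply cites Lusztig's formula $T_{k,k\pm1}(E_k)=-E_{k\pm1}$, whereas you propose to verify the $q$-commutator identity $\bigl[[E_i,E_{i-1}]_{q^{-1}},F_iK_i\bigr]_{q^{-1}}=-E_{i-1}$ directly; that identity is correct and makes the argument self-contained, which is a real gain. You are also more careful than the paper about which nesting of the iterated $q$-commutators is in play (the paper writes $E_{ji}$ with two different-looking bracketings in \textsection 3.1 and in the appendix and silently uses whichever is convenient); your preliminary ``opposite-nested'' lemma, proved via $[E_{j-1},E_k]=0$ for $k\le j-3$, is exactly the observation that reconciles them. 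For Part (2) the paper avoids your induction on $j-i$ altogether: it first establishes $T_{n,i-1}(E_k)=\pm E_{k-1}$ for each $k=i,\dots,j-1$ and then applies $T_{n,i-1}$ to the whole nested commutator in a single step. Either route works.

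The one place you should not wave your hands is the sign in Part (2). Your (correct) base case is $T_iT_{i-1}(E_i)=+E_{i-1}$, with no sign; feeding this into your recursion, the first factor of $\bigl[T_{n,i-1}(E_{j-1}),\,T_{n,i-1}(E_{j-1,i})\bigr]_{q^{-1}}$ contributes $+E_{j-2}$ at every stage, so the induction propagates a sign of $+1$ for all $j$, not $(-1)^{j-i+1}$. No amount of ``careful bookkeeping'' can produce the stated exponent from your computation: if the base case carries sign $+1$ and each inductive step multiplies by $+1$, the answer is $+E_{j-1,i-1}$. (The paper's own proof has the mirror-image problem: with its quoted value $-E_{k-1}$, the total sign accumulated over the $j-i$ generators $E_{j-1},\dots,E_i$ is $(-1)^{j-i}$, again not $(-1)^{j-i+1}$.) This is harmless for the application, since the subsequent proposition only identifies the root vectors up to sign and only their span is ever used, but you should report the sign your argument actually yields rather than asserting that it matches the one in the statement.
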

\begin{proof}
1. ~  Assume that, for some $n > b \geq i+1$, we have
\begin{align*}
T_{b,i+1}(E_i) = (-1)^{b-i} E_{b+1,i}.
\end{align*}
Then it follows that 
\begin{align*}
T_{b+1} \circ T_{b,i+1}(E_{i}) = & \, (-1)^{b-i}T_{b+1}(E_{b+1,i})\\
= & \, (-1)^{b-i}T_{b+1}\!\left([\cdots [E_b,E_{b-1}]_{q^{-1}},E_{b-2}]_{q^{-1}}, \cdots , E_{i}]_{q^{-1}}\right)\\
= & \,(-1)^{b-i} [\cdots [T_{b+1}(E_b),E_{b-1}]_{q^{-1}},E_{b-2}]_{q^{-1}}, \cdots , E_{i}]_{q^{-1}}\\
= & \, (-1)^{b-i+1}[\cdots [[E_{b+1},E_b]_{q^{-1}},E_{b-1}]_{q^{-1}},E_{b-2}]_{q^{-1}}, \cdots , E_{i}]_{q^{-1}}\\
= & \, (-1)^{(b+1)-i}E_{b+2,i}.
\end{align*}
Since the identity clearly holds for $T_{i+1}(E_{i}) = - E_{i+2,i}$, the general case now follows by an inductive argument.

2. ~ It follows from \eqref{eqn:BraidActionFormulae} that, for any $k = i, \dots, j-1$, we have 
\begin{align*}
T_{n,i-1}(E_{k}) = & \, T_{n,k-1} \circ T_{k-2} \cdots T_{i-1}(E_{ji})\\
= & \, T_{n,k+2} \circ T_{k} \circ T_{k-1}(E_{k}) \\
= & \, - T_{n,k+2}(E_{k-1})\\
= & \, -E_{k-1}.
\end{align*}
It now follows that
\begin{align*}
T_{n,i-1}(E_{ji}) = & \, T_{n,i-1}([\cdots [E_{j},E_{j-1}]_{q^{-1}},E_{j-2}]_{q^{-1}}, \cdots , E_{i}]_{q^{-1}})\\
                            = & \, ([\cdots [T_{n,i-1}(E_{j}),T_{n,i-1}(E_{j-1})]_{q^{-1}},T_{n,i-1}(E_{j-2})]_{q^{-1}}, \cdots , T_{n,i-1}(E_{i})]_{q^{-1}})\\
                             = & \,(-1)^{j-i+1} [\cdots [E_{j-1},E_{j-2}]_{q^{-1}},E_{j-3}]_{q^{-1}}, \cdots , E_{i-1}]_{q^{-1}})\\
                             = & \,(-1)^{j-i+1} E_{j-1,i-1},
\end{align*}
giving the claimed identity.
\end{proof}

\begin{prop}
For the reduced decomposition $\mathbf{j}$, the associated root vectors are given, up to sign, by the elements
$$
\left \{E_{ji} \,|\, \textrm{ for } \alpha_{ij} \in \Delta^+ \right \}\!.
$$
\end{prop}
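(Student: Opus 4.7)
The plan is to compute each root vector $E_{\beta_k}$ explicitly for the decomposition $\mathbf{j}$ by iterated application of Lemma \ref{lem:formulabis}, organising the calculation according to the block structure of $\mathbf{j}$. Decompose $\mathbf{j}$ into $n$ blocks, where block $r$ (for $r = 1, \ldots, n$) consists of the subword $s_n s_{n-1} \cdots s_r$, contributing $n - r + 1$ simple reflections. Within block $r$, at local position $k \in \{1, \ldots, n-r+1\}$, the simple reflection is $s_{n-k+1}$. Denote by $\tau_r := T_{n,1} T_{n,2} \cdots T_{n,r}$ the product of all Lusztig braid operators from blocks $1$ through $r$ (with $\tau_0 := \mathrm{id}$), so that the root vector at block $r$, position $k$ is
$$E_{\beta} \;=\; \tau_{r-1} \bigl( T_{n, n-k+2}(E_{n-k+1}) \bigr),$$
with the convention that $T_{n, n+1}$ is the empty product.

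The within-block step follows directly from Lemma \ref{lem:formulabis}(1):
$$T_{n, n-k+2}(E_{n-k+1}) \;=\; (-1)^{k-1} E_{n+1, \, n-k+1}.$$
The between-block step will be established by induction on $s$: for all $s \geq 0$ and every $(i,j) \in \Delta^+$ with $i \geq s+1$,
$$\tau_s(E_{j,i}) \;=\; \pm\, E_{j-s,\, i-s}.$$
The base case $s = 0$ is immediate. For the inductive step, I will exploit the fact that $T_p$ acts trivially on $E_{j,i}$ whenever $p \leq i-2$ (since $E_{j,i}$ is built from $E_i, E_{i+1}, \ldots, E_{j-1}$, and $|p - \ell| > 1$ for all such $\ell$), so the initial braid operators in $T_{n,s} = T_n T_{n-1} \cdots T_s$ drop out. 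This collapses $T_{n,s}(E_{j,i})$ to $T_{n, i-1}(E_{j,i})$, to which Lemma \ref{lem:formulabis}(2) applies, giving $(-1)^{j-i+1} E_{j-1, i-1}$. Then applying the inductive hypothesis (which is valid since $i - 1 \geq s$) delivers the claim.

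Combining both steps with $s = r-1$, $j = n+1$, $i = n-k+1$, and noting that the bound $i \geq r$ of the inductive claim is exactly equivalent to $k \leq n-r+1$, the valid range for block $r$, one obtains
$$E_{\beta} \;=\; (-1)^{k-1}\, \tau_{r-1}(E_{n+1,\, n-k+1}) \;=\; \pm\, E_{n-r+2,\; n-r-k+2}.$$
As $r$ ranges over $\{1, \ldots, n\}$ and $k$ over $\{1, \ldots, n-r+1\}$, the pair $(n-r-k+2,\, n-r+2)$ ranges exactly over $\Delta^+$, covering each positive root once, which proves the proposition. The main obstacle is purely bookkeeping: tracking which $T_p$ act trivially at each stage, keeping the range $i \geq s+1$ of the inductive claim compatible with the valid range of $k$ in each block, and correctly chaining Lemma \ref{lem:formulabis}(2) across blocks. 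None of the individual steps is deep, but the double indexing requires careful orchestration.
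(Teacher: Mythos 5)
Your proof is correct and follows essentially the same route as the paper's: both split the prefix of $\mathbf{j}$ into complete blocks plus a partial block, apply Lemma \ref{lem:formulabis}(1) to the partial block, and then iterate Lemma \ref{lem:formulabis}(2) across the complete blocks. Your write-up is somewhat more explicit than the paper's, in that it records why $T_{n,s}(E_{ji})$ collapses to $T_{n,i-1}(E_{ji})$ (triviality of the low-index operators $T_p$ on $E_{ji}$) and verifies that the resulting pairs $(i,j)$ exhaust $\Delta^+$, both of which the paper leaves implicit.
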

\begin{proof}
Any root vector will, by definition, be of the form
\begin{align*}
T_{n1} \circ T_{n2} \circ \cdots \circ T_{n,k+1}(E_{k}), & & \textrm{ for some }  1  \leq k \leq  n.
\end{align*}
From the first identity in Lemma \ref{lem:formulabis}, we know that 
$$
T_{n,k+1}(E_{k}) = (-1)^{n-k} E_{n+1,k}.
$$
Moreover, the second identity in Lemma \ref{lem:formulabis} tells us that when we operate on $E_{ji}$ by the operator $T_{n,i-1}$, we get the element $E_{j-1,i-1}$ up to sign. Moreover, we see that each $E_{ji}$ can be realised in this way, giving the claimed presentation of the root vectors.
\end{proof}

Finally, we note that each root vector is a weight vector with respect to the adjoint action of $U_q(\mathfrak{sl}_{n+1})$ on itself. Explicitly, it holds that  
\begin{align*}
|E_{ji}| = \alpha_{j-1} + \alpha_{j-2} + \cdots + \alpha_i, & & \textrm{ for } \alpha_{ij} \in \Delta^+.
\end{align*}

\subsection{The Opposite Involution of $U_q(\mathfrak{sl}_{n+1})$} \label{appendix.opposite}

For the $A$-series Lie algebras, the element $-w_0$ permutes the simple roots of $\mathfrak{sl}_{n+1}$, and this permutation corresponds to the non-trivial symmetry of the associated Dynkin diagram. 
This in turn gives the Hopf algebra involution 
$$
\overline{w_0}: U_q(\mathfrak{sl}_{n+1}) \to U_q(\mathfrak{sl}_{n+1})
$$
defined on generators as
\begin{align*}
\overline{w_0}(E_i) := E_{i'}, & & \overline{w_0}(K_i) = K_{i'}, & & \overline{w_0}(F_i) = F_{i'}, 
\end{align*}
where of course $i' := n+1-i$. We call $\overline{w_0}$ the \emph{opposite involution} of $U_q(\mathfrak{sl}_{n+1})$. For more details on automorphisms of Drinfeld--Jimbo quantum groups, see \cite[\textsection 6.1.6]{KSLeabh}.

We also have a natural associated inner automorphism of the Weyl group
\begin{align*}
S_{n+1} \to S_{n+1}, & & s_i \mapsto s_{i'},
\end{align*}
and we call it the \emph{opposite involution} of $W$. This in turn gives a permutation of the set of commutation classes of reduced decompositions, which we denote by $\mathbf{i} \mapsto \mathbf{i}'$, for any commutation class $\mathbf{i}$. We note that different choices of reduced decomposition of the longest element of the Weyl group can give different sets of root vectors. However, two decompositions in the same commutation class will give the same set of root vectors.

\begin{prop} \label{prop:wzerontonprime}
Let $\mathbf{i}$ be a reduced decomposition of $w_0$. Then it holds that 
\begin{align} 
\overline{w_0}(\mathfrak{n}^+_{\mathbf{i}}) = \mathfrak{n}^+_{\mathbf{i}'}, 
\end{align}
for all reduced decompositions of the longest element of the Weyl group $S_{n+1}$. 
\end{prop}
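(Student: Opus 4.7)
The plan is to reduce the statement to the following intertwining relation between the opposite involution $\overline{w_0}$ and Lusztig's braid operators $T_1,\ldots,T_n$:
\begin{align*}
\overline{w_0} \circ T_i = T_{i'} \circ \overline{w_0}, & & \text{for all } i = 1,\ldots,n.
\end{align*}
Once this identity is established, the proposition follows by a direct induction. Given a reduced decomposition $\mathbf{i}=(s_{i_1},\ldots,s_{i_d})$, the opposite decomposition is $\mathbf{i}'=(s_{i_1'},\ldots,s_{i_d'})$, and applying the intertwining relation iteratively to a root vector yields
\begin{align*}
\overline{w_0}(E_{\beta_k}) = \overline{w_0}\big(T_{i_1}\cdots T_{i_{k-1}}(E_{i_k})\big) = T_{i_1'}\cdots T_{i_{k-1}'}\big(\overline{w_0}(E_{i_k})\big) = T_{i_1'}\cdots T_{i_{k-1}'}(E_{i_k'}) = E_{\beta_k'}.
\end{align*}
Since $\mathfrak{n}^+_{\mathbf{i}}=\mathrm{span}\{E_{\beta_1},\ldots,E_{\beta_d}\}$ and likewise for $\mathfrak{n}^+_{\mathbf{i}'}$, this gives the claimed equality.

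To establish the intertwining relation, I would verify it on the algebra generators $E_j, F_j, K_j$ of $U_q(\mathfrak{sl}_{n+1})$. Since both $\overline{w_0}$ and $T_i$ are algebra maps, checking on generators suffices. For $K_j$, both sides produce $K_{j'}K_{i'}^{-a_{ij}}$, using that the Cartan matrix satisfies $a_{i'j'}=a_{ij}$. For the case $j=i$, one computes directly $\overline{w_0}(T_i(E_i)) = \overline{w_0}(-F_i K_i) = -F_{i'}K_{i'} = T_{i'}(E_{i'}) = T_{i'}(\overline{w_0}(E_i))$, and similarly for $F_i$. For $j=i\pm 1$, one uses that the involution $j\mapsto j'$ on simple roots preserves adjacency (i.e.\ $(i\pm 1)' = i' \mp 1$), so that
\begin{align*}
\overline{w_0}\big(T_i(E_{i\pm 1})\big) = -[E_{i'},E_{i'\mp 1}]_{q^{-1}} = T_{i'}(E_{i'\mp 1}) = T_{i'}\big(\overline{w_0}(E_{i\pm 1})\big),
\end{align*}
with the analogous identity for $F_{i\pm 1}$. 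Finally, when $|i-j|>1$, both sides fix $E_{j'}$.

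The verification is entirely routine — the essential point, which underlies every case, is the observation that the Dynkin diagram automorphism $i\mapsto i'$ preserves the Cartan matrix, adjacency of simple roots, and thus the structural formulae defining $T_i$. So rather than a computational obstacle, the main conceptual content is just recognising that Lusztig's braid action is equivariant with respect to Dynkin diagram automorphisms lifted to $U_q(\mathfrak{sl}_{n+1})$. I would present the proof by stating the intertwining lemma, verifying it case-by-case on generators (a short calculation), and then giving the one-line inductive argument above.
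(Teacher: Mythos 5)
Your proposal is correct and follows essentially the same route as the paper: both arguments rest on the intertwining relation $\overline{w_0}\circ T_i = T_{i'}\circ\overline{w_0}$ and then apply it iteratively to the root vectors $E_{\beta_k}=T_{i_1}\cdots T_{i_{k-1}}(E_{i_k})$. The only (minor, favourable) difference is that you verify the relation on all generators $E_j,F_j,K_j$, so it holds on the whole of $U_q(\mathfrak{sl}_{n+1})$, whereas the paper checks it only on the $E_k$ and their products, which implicitly relies on the intermediate braid-operator images staying in the positive part.
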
 
\begin{proof}
It follows from the  \eqref{eqn:BraidActionFormulae} that 
\begin{align*}
\overline{w_0} \circ T_i(E_k) = T_{i'} \circ \overline{w_0}(E_k), & & \textrm{ for all } i,k = 1, \dots, n.
\end{align*}
It now follows that 
\begin{align*}
\overline{w_0} \circ T_i(E_{k_1}E_{k_2} \cdots E_{k_l}) = & \,  \overline{w_0}(T_i(E_{k_1}))\overline{w_0}(T_i(E_{k_2})) \cdots \overline{w_0}(T_i(E_{k_l})) \\
= & \,  T_{i'}(\overline{w_0}(E_{k_1})) T_{i'}(\overline{w_0}(E_{k_2}) \cdots T_{i'}(\overline{w_0}(E_{k_l}) \\
= & \, T_{i'} \circ \overline{w_0}(E_{k_1}E_{k_2} \cdots E_{k_l}).
\end{align*}
Thus we see that, for any $X \in U_q(\mathfrak{n}_{\mathbf{i}}^+)$, we have that 
\[
\overline{w_0} \circ T_i(X) = T_{i'} \circ \overline{w_0}(X).
\]
From this we now see that 
\begin{align*}
\overline{w_0}(T_{i_1} \circ \cdots T_{{i_k}}(E_j)) =  & T_{i'_1} \circ \cdots \circ T_{i'_k} \circ \overline{w_0}(E_j)
=   T_{i'_1} \circ \cdots \circ T_{i'_k}(E_{j'}).
\end{align*}
Thus we see that a root vector corresponding to the $w_0$ substring $s_{i_1}s_{i_2} \cdots s_{i_k}$ is mapped by $\overline{w_0}$ to the root vector corresponding to the substring $s_{i'_1}s_{i'_2} \cdots s_{i'_k}$. This means that $\overline{w_0}$ maps the positive quantum root space $\mathfrak{n}^+_{\mathbf{i}}$ to the positive quantum root space $\mathfrak{n}^+_{\mathbf{i}'}$ as claimed.
\end{proof}

In other words, the above proposition says that the following diagram  commutes:
\begin{align*}
\xymatrix{ 
\textbf{i}  \ar@{|->}[rrrr]^{'} \ar@{|->}[d] & & & & \ar@{|->}[d]  \textbf{i}' \\       
\mathfrak{n}^+_{\mathbf{i}}  \ar@{|->}[rrrr]_{\overline{w_0}}  & & & & \mathfrak{n}^+_{\mathbf{i}'},
}
\end{align*}
where the vertical arrows are given by the function that associates to a reduced decomposition the space of the corresponding root vectors.

We finish with an explicit calculation of the automorphism of $\OO_q(\mathrm{SU}_{n+1})$ dual to $\overline{w_0}$. This is undoubtedly known to the experts, but in the absence of a suitable reference, we include a proof. 

\begin{prop}
The algebra map $\overline{w_0}^*:\OO_q(\mathrm{SU}_{n+1}) \to \OO_q(\mathrm{SU}_{n+1})$, determined by 
\begin{align*}
\overline{w_0}^*(u_{ij}) \mapsto -q S(u_{n+2-j, n+2-i}), & & \textrm{ for } i,j = 1, \dots, n + 1,
\end{align*}
satisfies the identity 
\begin{align} \label{eqn:dualopposite}
\langle \overline{w_0}(X),a \rangle = \langle X,\overline{w_0}^*(a)\rangle, & & \textrm{ for all } X \in U_q(\mathfrak{sl}_{n+1}), \, a \in \OO_q(\mathrm{SU}_{n+1}),
\end{align}
which is to say, it is \emph{dual} to $\overline{w_0}$ with respect to the bilinear pairing. 
\end{prop}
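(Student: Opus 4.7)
The plan is to first observe that $\overline{w_0}^*$, being a map on an algebra presented by generators and relations, is uniquely extended from the given formula on matrix generators provided one checks that the images satisfy the defining relations of $\OO_q(\mathrm{SU}_{n+1})$. Since $S$ is an anti-algebra map and the substitution $u_{ij} \mapsto u_{n+2-j,\,n+2-i}$ is precisely the composition of matrix transposition with the flip of rows/columns, the relations translate into a rescaled version of the FRT relations, and the $-q$ prefactor is the normalisation that makes the two sets of relations match. This verification is routine and forms the first step.

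The second and main step is to verify the pairing identity on a set of algebra generators of $U_q(\mathfrak{sl}_{n+1})$ and of $\OO_q(\mathrm{SU}_{n+1})$, namely on pairs $(X, u_{ij})$ with $X \in \{E_k, F_k, K_k^{\pm 1}\}$. Using the standard identity $\langle X, S(a)\rangle = \langle S(X), a\rangle$, the right-hand side becomes
$$
\langle X, \overline{w_0}^*(u_{ij})\rangle \;=\; -q\,\langle S(X),\, u_{n+2-j,\, n+2-i}\rangle,
$$
which can be evaluated directly by combining the explicit formulas $S(E_k) = -E_kK_k^{-1}$, $S(F_k) = -K_kF_k$, $S(K_k^{\pm 1}) = K_k^{\mp 1}$ with the generator pairings \eqref{eqn:dualpairing} and the coproduct rule $\langle XY, a\rangle = \langle X, a_{(1)}\rangle\langle Y, a_{(2)}\rangle$. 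On the left, $\langle \overline{w_0}(X), u_{ij}\rangle$ reduces to a single Kronecker delta under the index substitution $k \leftrightarrow k' = n+1-k$. The generator-level identity then becomes a direct index-chase in which the $-q$ prefactor precisely cancels the $q^{\pm 1}$ coming from pairing with the diagonal $K_k^{\mp 1}$.

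The third step extends the identity from generators to all $(X,a)$. Introducing the defect bilinear form
$$
D(X,a) \;:=\; \langle \overline{w_0}(X), a\rangle - \langle X, \overline{w_0}^*(a)\rangle,
$$
one exploits that $\overline{w_0}$ is a Hopf algebra map and that $\overline{w_0}^*$, being the transpose of $\overline{w_0}$ restricted to $\OO_q(\mathrm{SU}_{n+1})$, is also a Hopf algebra map. The Hopf pairing identities $\langle XY, a\rangle = \langle X, a_{(1)}\rangle\langle Y, a_{(2)}\rangle$ and $\langle X, ab\rangle = \langle X_{(1)}, a\rangle\langle X_{(2)}, b\rangle$ then show that $D$ is ``multiplicative'' in each argument in the sense that its vanishing on generators forces it to vanish on products. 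Hence $D \equiv 0$.

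The main obstacle, conceptually, is the Hopf-algebra-map status of $\overline{w_0}^*$ in Step 3: the formula as written involves $S$ applied to a single generator, so verifying that $\Delta \circ \overline{w_0}^* = (\overline{w_0}^* \otimes \overline{w_0}^*) \circ \Delta$ requires using the coproduct identity for $S$ (namely $\Delta S = (S \otimes S)\tau\Delta$) together with the $-q$ normalisation. An alternative and cleaner route that sidesteps this bookkeeping is to define $\overline{w_0}^*$ intrinsically as the transpose of $\overline{w_0}$, note that this is automatically a Hopf algebra automorphism of $U_q(\mathfrak{sl}_{n+1})^\circ$ preserving $\OO_q(\mathrm{SU}_{n+1})$ (since it sends $C(V_\lambda)$ to $C(V_{-w_0(\lambda)})$), and then identify the stated formula by tracing through the $U_q$-module isomorphism $V_{\varpi_n}^{\overline{w_0}} \cong V_{\varpi_1} \cong V_{\varpi_n}^*$; under this identification, matrix coefficients of $V_{\varpi_n}^{\overline{w_0}}$ correspond to $S$-applied matrix coefficients of $V_{\varpi_n}$, and the $-q$ prefactor is precisely the scalar arising from the weight-basis normalisation of this isomorphism.
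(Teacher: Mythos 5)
Your proposal follows the paper's proof essentially step for step: first check that $\overline{w_0}^*$ is a well-defined Hopf algebra automorphism, then verify \eqref{eqn:dualopposite} on the generators $E_i, F_i, K_i^{\pm 1}$ against the matrix generators using $\langle S(X),a\rangle = \langle X,S(a)\rangle$ and the pairings \eqref{eqn:dualpairing}, and finally pass from generators to arbitrary elements via the Hopf-pairing identities. The paper compresses your third step into the single remark that a dual pairing of Hopf algebras makes the generator check sufficient; your defect form $D$ and your closing alternative (defining $\overline{w_0}^*$ intrinsically as the transpose of $\overline{w_0}$) are elaborations of the same idea rather than a genuinely different route.

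The one point of substance is the verification you explicitly defer, namely that $\Delta \circ \overline{w_0}^* = (\overline{w_0}^* \otimes \overline{w_0}^*)\circ \Delta$ holds ``together with the $-q$ normalisation.'' Carried out literally with a prefactor that is a fixed scalar $c$, this check does not close. Writing $\sigma(k) := n+2-k$, the identity $\Delta \circ S = (S \otimes S)\circ \tau \circ \Delta$ gives
\begin{align*}
\Delta\big(cS(u_{\sigma(j)\sigma(i)})\big) = c\sum_b S(u_{b,\sigma(i)})\otimes S(u_{\sigma(j),b}),
& &
(\overline{w_0}^*\otimes\overline{w_0}^*)\Delta(u_{ij}) = c^2\sum_b S(u_{b,\sigma(i)})\otimes S(u_{\sigma(j),b}),
\end{align*}
after reindexing the second sum, so comultiplicativity forces $c = c^2$; a workable prefactor must instead have the form $d_i/d_j$, i.e.\ depend on the indices, and the generator-level computations only ever probe the entries with $|i-j|=1$. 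Since \eqref{eqn:dualopposite} for all $X$ forces $\overline{w_0}^*$ to be a coalgebra map (by nondegeneracy of the pairing and multiplicativity of $\overline{w_0}$), your Step~3 cannot be completed with the normalisation taken as a bare constant. Your alternative intrinsic route --- transposing $\overline{w_0}$ and reading the scalar off the weight-basis identification of $V_{\varpi_n}$ twisted by $\overline{w_0}$ with $V_{\varpi_n}^*$ --- is precisely the argument that would surface the correct index-dependent normalisation, and is the version worth writing out in full. Note that the paper's own proof asserts the Hopf-automorphism property of $\overline{w_0}^*$ without performing this check, so the gap is one you share with, and correctly isolated from, the published argument.
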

\begin{proof}
A direct check of the defining relations of $\OO_q(\mathrm{SU}_{n+1})$ confirms that $\overline{w_0}^*$ gives a well-defined Hopf algebra algebra automorphism of $\OO_q(\mathrm{SU}_{n+1})$. Now since we have a dual pairing of Hopf algebras, it is sufficient to check \eqref{eqn:dualopposite} for the generators of $U_q(\mathfrak{sl}_{n+1})$ and $\OO_q(\mathrm{SU}_{n+1})$. 
For the generator $E_i$, we see that  
\begin{align*}
\big \langle \overline{w_0}(E_i), u_{n-i+2,n-i+1}) \big\rangle =  \big \langle E_{i'}, u_{n-i+2,n-i+1}) \big \rangle = 1,
\end{align*}
while on the other hand
\begin{align*}
\big\langle E_i, \overline{w_0}^*(u_{n-i+2,n-i+1}) \big\rangle =  -q \big \langle E_i, S(u_{i+1,i}) \big \rangle 
=  -q \big \langle S(E_i), u_{i+1,i} \big \rangle 
= q \big \langle E_iK_i, u_{i+1,i} \big \rangle
=  1.
\end{align*}
For all other matrix generators, both sides of \eqref{eqn:dualopposite} clearly give zero, which is to say, the identity is trivially satisfied. The case of the generators $F_i$ and $K_i$ is similarly confirmed. Thus we see that \eqref{eqn:dualopposite} holds.
\end{proof}


\section{Some Remarks on Filtered Algebras} \label{app:filtration}

We first present a simple system for constructing a filtration  from a total order on a set of algebra generators. Our motivation is to provide a convenient framework in which to state the proof of Lemma \ref{lem:theassgradedlemma}, and we make no claims of originality.

Consider an algebra $A$, together with a set of algebra generators $G$. This gives a surjective algebra map $\proj_A: \mathcal{T}(V) \twoheadrightarrow A$, where $V$ is the linear span of the elements of $G$, and $\mathcal{T}(V)$ is its tensor algebra. A set of \emph{relations} for $A$, with respect to $G$, is a choice of subset $R \subseteq  \mathrm{ker}(\proj_A)$ that generates $\mathrm{ker}(\proj_A)$ as a two-sided ideal in $\mathcal{T}(V)$.

Let $X$ be a non-empty totally-ordered finite set with $n$ elements. Moreover, let $V$ be an $n$-dimensional vector space with a basis $\{e_x\}_{x \in X}$ indexed by the elements of $X$. We identify $\mathbb{Z}_{\geq 0}^n$ with the free abelian monoid generated by $X$, and we endow it with the degree lexicographical order $<$ induced by the total order of $X$. A $\mathbb{Z}^n_{\geq 0}$-algebra grading on $\mathcal{T}(V)$ is uniquely  determined by $|e_{x}| = x$. Consider a general element $f \in \mathcal{T}(V)$, written as a sum $f = \sum_{d=1}^m f_d$,  where each summand is homogeneous with respect to the $\mathbb{Z}^n_{\geq 0}$-grading, and $|f_i| < |f_m|$, for all $i = 1, \dots, m-1$. We call $f_m$ the \emph{leading term} of $f$ and denote
$
\mathrm{LT}(f) := f_m.
$

An algebra filtration of the tensor algebra 
$$
\mathscr{F}^{\mathcal{T}} = \bigcup_{\lambda \in\mathbb{Z}^n_{\geq 0}} \mathscr{F}_{\lambda}^{\mathcal{T}} \subseteq \mathcal{T}(V)
$$
can then be defined by 
\begin{align*}
\mathscr{F}_{\lambda}^{\mathcal{T}} := \mathrm{span}_{\mathbb{C}} \left \{e_{x_1} \otimes \cdots \otimes e_{x_k} | \sum_{i=1}^k x_i \leq \lambda \right\}.
\end{align*}
For any two-sided ideal $J \subseteq \mathcal{T}(V)$,  we have the induced filtration $\mathscr{F}$ on the quotient $A := \mathcal{T}(V)/J$. Explicitly
\begin{align*}
A =  \bigcup_{\lambda \in\mathbb{Z}^n_{\geq 0}} \proj_A(\mathscr{F}_{\lambda}^{\mathcal{T}})  =:  \bigcup_{\lambda \in\mathbb{Z}^n_{\geq 0}} \mathscr{F}_{\lambda}.
\end{align*}
Consider the associated graded algebra
\begin{align*}
\mathrm{gr}^{\mathscr{F}} = \bigoplus_{\lambda \in\mathbb{Z}^n_{\geq 0}} \mathrm{gr}^{\mathscr{F}}_{\lambda} := \bigoplus_{\lambda \in\mathbb{Z}^n_{\geq 0}} \mathscr{F}_{\lambda}/\mathscr{F}_{ \lambda - 1}.
\end{align*}
For any $a \in \mathscr{F}_{\lambda}$, we find it convenient to denote the coset of $a$ in $\mathrm{gr}^{\mathscr{F}}_{\lambda}$ by $[a]_{\lambda}$. By construction of the filtration, the elements $[e_x]_x$ generate $\mathrm{gr}^{\mathscr{F}}$. We denote by
$$
\proj_{\mathscr{F}}: \mathcal{T}(V) \to \mathrm{gr}^{\mathscr{F}}
$$
the associated  surjective algebra map. Moreover, for $j \in J$, we see that 
\begin{align*}
\mathrm{LT}(j) \in \mathrm{ker}(\proj_{\mathscr{F}}).
\end{align*}

We finish with some remarks on Bongale's theorem \cite[Theorem 2]{Bongo} for filtered Frobenius algebras (see \textsection \ref{section:Frob} for the definition of Frobenius algebras). This result was originally stated for $\mathbb{Z}$-filtrations, but as observed in \cite[Lemma 4.10]{MTSUK}, it holds more generally. We state it for the free abelian monoid $\mathbb{Z}^n_{\geq 0}$ endowed with some total  order $\geq$. Consider  a finite-dimensional  algebra $A$, endowed with a filtration 
$$
A = \bigcup_{\mu \in \mathbb{Z}^n_{\geq 0}} \mathscr{F}_{\mu}.
$$
Let us assume that the associated graded algebra $\mathrm{gr}^{\mathscr{F}}$ satisfies
\begin{align*}
\mathrm{dim}\Big(\mathrm{gr}^{\mathscr{F}}_0\Big) = \mathrm{dim}\Big(\mathrm{gr}^{\mathscr{F}}_{\lambda}\Big) = 1, & & \mathrm{gr}^{\mathscr{F}}_{\mu} = 0, \textrm{ ~~~ for all } \mu > \lambda,
\end{align*}
for some $\lambda \in \mathbb{Z}^n_{\geq 0}$. \emph{Bongale's theorem} states that if $\mathrm{gr}^{\mathscr{F}}$ is a Frobenius algebra, then a Frobenius structure for $A$ is given by
\begin{align*}
B: A \otimes A \to \mathbb{C},  &  & (a,b) \mapsto  \iota\big([ab]_{\lambda}\big),
\end{align*}
where $\iota: \mathrm{gr}^{\mathscr{F}}_{\lambda} \to \mathbb{C}$ is any choice of linear isomorphism. Moreover, all Frobenius structures are of this form.


\begin{thebibliography}{10}

\bibitem{BeggsMajid:Leabh}
{\sc E.~Beggs and S.~Majid}, {\em Quantum Riemannian Geometry}, vol.~355 of
  Grundlehren der mathematischen Wissenschaften, Springer International
  Publishing, 1~ed., 2019.
  
  
  \bibitem{BergDiam}
{\sc G.~M. Bergman}, {\em The diamond lemma for ring theory}, Adv. in Math., 29
  (1978), pp.~178--218.

\bibitem{Bongo}
{\sc P.~R. Bongale}, {\em Filtered {F}robenius algebras}, Math. Z., 97 (1967),
  pp.~320--325.

\bibitem{BG02}
{\sc K.~A. Brown and K.~R. Goodearl}, {\em Lectures on algebraic quantum
  groups}, Advanced Courses in Mathematics. CRM Barcelona, Birkh\"{a}user
  Verlag, Basel, 2002.

\bibitem{CDOBBW}
{\sc A.~Carotenuto, F.~D\'{\i}az~Garc\'{\i}a, and R.~\'{O}~Buachalla}, {\em A
  {B}orel--{W}eil theorem for the irreducible quantum flag manifolds}, Int.
  Math. Res. Not. IMRN,  (2023), pp.~12977--13006.

\bibitem{BwGrassDM}
{\sc A.~Carotenuto, C.~Mrozinski, and R.~\'O~Buachalla}, {\em A {B}orel--{W}eil
  theorem for the quantum {G}rassmannians}, Doc. Math., 28 (2023),
  pp.~261--314.
\newblock Doc. Math.

\bibitem{GAPP}
{\sc A.~Carotenuto and R.~\'{O}~Buachalla}, {\em Principal pairs of quantum
  homogeneous spaces}.
\newblock {arXiv preprint math.QA/2111.11284}.

\bibitem{ACROBJR}
{\sc A.~Carotenuto, R.~\'{O}~Buachalla, and J. Razzaq}, {\em Noncommutative complex structures for the full quantum flag manifold of $\OO_q(\mathrm{SU}_3)$}.
\newblock {arXiv preprint math.QA/2411.07767}.

\bibitem{Bimodule.AC.ROB}
\leavevmode\vrule height 2pt depth -1.6pt width 23pt, {\em Bimodule connections
  for relative line modules over the irreducible quantum flag manifolds}, SIGMA
  Symmetry Integrability Geom. Methods Appl., 18 (2022), pp.~Paper No. 070, 21.

\bibitem{DOSFred}
{\sc B.~Das, R.~\'O~Buachalla, and P.~Somberg}, {\em {D}olbeault--{D}irac
  {F}redholm operators on quantum homogeneous spaces}.
\newblock {arXiv preprint math.QA/1910.14007}.

\bibitem{DOS1}
{\sc B.~Das, R.~\'{O}~Buachalla, and P.~Somberg}, {\em A {D}olbeault--{D}irac
  spectral triple for quantum projective space}, Doc. Math., 25 (2020),
  pp.~1079--1157.

\bibitem{HVBQFM}
{\sc F.~D\'{\i}az~Garc\'{\i}a, A.~Krutov, R.~\'{O}~Buachalla, P.~Somberg, and
  K.~R. Strung}, {\em Holomorphic relative {H}opf modules over the irreducible
  quantum flag manifolds}, Lett. Math. Phys., 111 (2021), pp.~Paper No. 10, 24.

\bibitem{SpectTripBGG}
{\sc F.~D\'{\i}az~Garc\'{\i}a, R.~\'{O}~Buachalla, and E.~Wagner}, {\em A
  {D}olbeault--{D}irac spectral triple for the {$B_2$}-irreducible quantum flag
  manifold}, Comm. Math. Phys., 395 (2022), pp.~365--403.

\bibitem{Elnitsky}
{\sc S.~Elnitsky}, {\em Rhombic tilings of polygons and classes of reduced
  words in {C}oxeter groups}, J. Combin. Theory Ser. A, 77 (1997),
  pp.~193--221.

\bibitem{RhomBottSam}
{\sc L.~Escobar, O.~Pechenik, B.~E. Tenner, and A.~Yong}, {\em Rhombic tilings
  and {B}ott-{S}amelson varieties}, Proc. Amer. Math. Soc., 146 (2018),
  pp.~1921--1935.

\bibitem{Valtr}
{\sc S.~Felsner and P.~Valtr}, {\em Coding and counting arrangements of
  pseudolines}, Discrete Comput. Geom., 46 (2011), pp.~405--416.

\bibitem{GrL2011}
{\sc J.~E. Grabowski and S.~Launois}, {\em Quantum cluster algebra structures
  on quantum {G}rassmannians and their quantum {S}chubert cells: the
  finite-type cases}, Int. Math. Res. Not. IMRN,  (2011), pp.~2230--2262.

\bibitem{GrL2014}
\leavevmode\vrule height 2pt depth -1.6pt width 23pt, {\em Graded quantum
  cluster algebras and an application to quantum {G}rassmannians}, Proc. Lond.
  Math. Soc. (3), 109 (2014), pp.~697--732.

\bibitem{HKTangent}
{\sc I.~Heckenberger and S.~Kolb}, {\em Differential calculus on quantum
  homogeneous spaces}, Lett. Math. Phys., 63 (2003), pp.~255--264.

\bibitem{HK}
\leavevmode\vrule height 2pt depth -1.6pt width 23pt, {\em The locally finite
  part of the dual coalgebra of quantized irreducible flag manifolds}, Proc.
  London Math. Soc. (3), 89 (2004), pp.~457--484.

\bibitem{HKdR}
\leavevmode\vrule height 2pt depth -1.6pt width 23pt, {\em De {R}ham complex
  for quantized irreducible flag manifolds}, J. Algebra, 305 (2006),
  pp.~704--741.

\bibitem{HKBGG}
\leavevmode\vrule height 2pt depth -1.6pt width 23pt, {\em On the
  {B}ernstein--{G}elfand--{G}elfand resolution for {K}ac--{M}oody algebras and
  quantized enveloping algebras}, Transform. Groups, 12 (2007), pp.~647--655.

\bibitem{KSLeabh}
{\sc A.~Klimyk and K.~Schm\"udgen}, {\em Quantum Groups and Their
  Representations}, Texts and Monographs in Physics, Springer-Verlag, 1997.

\bibitem{MTSUK}
{\sc U.~Kr\"{a}hmer and M.~Tucker-Simmons}, {\em On the {D}olbeault-{D}irac
  operator of quantized symmetric spaces}, Trans. London Math. Soc., 2 (2015),
  pp.~33--56.

\bibitem{Resh.Lak}
{\sc V.~Lakshmibai and N.~Reshetikhin}, {\em Quantum deformations of flag and
  {S}chubert schemes}, C. R. Acad. Sci. Paris S\'{e}r. I Math., 313 (1991),
  pp.~121--126.

\bibitem{LittleCrystal}
{\sc P.~Littelmann}, {\em Cones, crystals, and patterns}, Transform. Groups, 3
  (1998), pp.~145--179.

\bibitem{LusztigJAMS}
{\sc G.~Lusztig}, {\em Finite-dimensional {H}opf algebras arising from
  quantized universal enveloping algebra}, J. Amer. Math. Soc., 3 (1990),
  pp.~257--296.

\bibitem{LusztigLeabh}
\leavevmode\vrule height 2pt depth -1.6pt width 23pt, {\em Introduction to
  quantum groups}, Modern Birkh\"{a}user Classics, Birkh\"{a}user/Springer, New
  York, 2010.
\newblock Reprint of the 1994 edition.

\bibitem{BraidLieMaj}
{\sc S.~Majid}, {\em Quantum and braided-{L}ie algebras}, J. Geom. Phys., 13
  (1994), pp.~307--356.

\bibitem{Maj}
\leavevmode\vrule height 2pt depth -1.6pt width 23pt, {\em Noncommutative
  {R}iemannian and spin geometry of the standard {$q$}-sphere}, Comm. Math.
  Phys., 256 (2005), pp.~255--285.

\bibitem{MarcoConj}
{\sc M.~Matassa}, {\em K\"{a}hler structures on quantum irreducible flag
  manifolds}, J. Geom. Phys., 145 (2019), pp.~103477, 16.

\bibitem{MMF2}
{\sc R.~\'{O}~Buachalla}, {\em Noncommutative complex structures on quantum
  homogeneous spaces}, J. Geom. Phys., 99 (2016), pp.~154--173.

\bibitem{MMF3}
\leavevmode\vrule height 2pt depth -1.6pt width 23pt, {\em Noncommutative
  {K}\"{a}hler structures on quantum homogeneous spaces}, Adv. Math., 322
  (2017), pp.~892--939.

\bibitem{OSV}
{\sc R.~\'O~Buachalla, J.~\v{S}\v{t}ovi\v{c}ek, and A.-C. van Roosmalen}, {\em
  A {K}odaira vanishing theorem for noncommutative {K}\"ahler structures}.
\newblock {arXiv preprint math.QA/1801.08125}.

\bibitem{papi}
{\sc P.~Papi}, {\em A characterization of a special ordering in a root system},
  Proc. Amer. Math. Soc., 120 (1994), pp.~661--665.

\bibitem{PodlesCalc}
{\sc P.~Podle\'{s}}, {\em Differential calculus on quantum spheres}, Lett.
  Math. Phys., 18 (1989), pp.~107--119.

\bibitem{Leonid.Quadratic}
{\sc A.~Polishchuk and L.~Positselski}, {\em Quadratic algebras}, vol.~37 of
  University Lecture Series, American Mathematical Society, Providence, RI,
  2005.

\bibitem{RigalZadun}
{\sc L.~Rigal and P.~Zadunaisky}, {\em Quantum toric degeneration of quantum
  flag and {S}chubert varieties}, Transform. Groups, 26 (2021), pp.~1113--1143.

\bibitem{Rosen.NCS}
{\sc A.~L. Rosenberg}, {\em Noncommutative schemes}, Compositio Math., 112
  (1998), pp.~93--125.

\bibitem{Sch.YD}
{\sc P.~Schauenburg}, {\em Hopf modules and {Y}etter--{D}rinfeld modules}, J.
  Algebra, 169 (1994), pp.~874--890.

\bibitem{Schneider90}
{\sc H.-J. Schneider}, {\em Principal homogeneous spaces for arbitrary {H}opf
  algebras}, vol.~72, 1990, pp.~167--195.
\newblock Hopf algebras.

\bibitem{Soilbelman.flag}
{\sc Y.~S. Soibelman}, {\em On the quantum flag manifold}, Funktsional. Anal. i
  Prilozhen., 26 (1992), pp.~90--92.
  
\bibitem{Stanley}
{\sc R.~P.~Stanley}, {\em On the number of reduced decompositions of elements of
              {C}oxeter groups}, European J. Combin., 5 (1984), pp.~359--372.

\bibitem{Stok}
{\sc J.~V. Stokman}, {\em The quantum orbit method for generalized flag
  manifolds}, Math. Res. Lett., 10 (2003), pp.~469--481.

\bibitem{Sweedler}
{\sc M.~E. Sweedler}, {\em Hopf algebras}, Mathematics Lecture Note Series, W.
  A. Benjamin, Inc., New York, 1969.

\bibitem{TaftTowber}
{\sc E.~Taft and J.~Towber}, {\em Quantum deformation of flag schemes and
  {G}rassmann schemes. {I}. {A} {$q$}-deformation of the shape-algebra for
  {${\rm GL}(n)$}}, J. Algebra, 142 (1991), pp.~1--36.

\bibitem{Tak}
{\sc M.~Takeuchi}, {\em Relative {H}opf modules---equivalences and freeness
  criteria}, J. Algebra, 60 (1979), pp.~452--471.

\bibitem{VoigtYuncken}
{\sc C.~Voigt and R.~Yuncken}, {\em Equivariant {F}redholm modules for the full
  quantum flag manifold of {${\rm SU}_q(3)$}}, Doc. Math., 20 (2015),
  pp.~433--490.

\bibitem{WoroDC}
{\sc S.~L. Woronowicz}, {\em Twisted {${\rm SU}(2)$} group. {A}n example of a
  noncommutative differential calculus}, Publ. Res. Inst. Math. Sci., 23
  (1987), pp.~117--181.

\bibitem{Ziegler.Bruhat}
{\sc G.~M. Ziegler}, {\em Higher {B}ruhat orders and cyclic hyperplane
  arrangements}, Topology, 32 (1993), pp.~259--279.

\end{thebibliography}
\end{document}